\renewcommand{\mathsf}[1]{\text{\normalfont\sffamily#1}}
\def\l@subsection{\@tocline{2}{0pt}{2.5pc}{5pc}{}}
\newcommand\RedeclareMathOperator{%
  \@ifstar{\def\rmo@s{m}\rmo@redeclare}{\def\rmo@s{o}\rmo@redeclare}%
}
\newcommand\rmo@redeclare[2]{%
  \begingroup \escapechar\m@ne\xdef\@gtempa{{\string#1}}\endgroup
  \expandafter\@ifundefined\@gtempa
     {\@latex@error{\noexpand#1undefined}\@ehc}%
     \relax
  \expandafter\rmo@declmathop\rmo@s{#1}{#2}}
\newcommand\rmo@declmathop[3]{%
  \DeclareRobustCommand{#2}{\qopname\newmcodes@#1{#3}}%
}
\newcommand{\dmerge}[4]{
	\draw (#1,#2) .. controls (#1,#4*0.5+#2*0.5) and (#3*0.5+#1*0.5,#4*0.5+#2*0.5) .. (#3*0.5+#1*0.5,#4);
	\draw (#3,#2) .. controls (#3,#4*0.5+#2*0.5) and (#3*0.5+#1*0.5,#4*0.5+#2*0.5) .. (#3*0.5+#1*0.5,#4);
}
\newcommand{\dsplit}[4]{
	\draw (#3*0.5+#1*0.5,#2) .. controls (#3*0.5+#1*0.5,#4*0.5+#2*0.5) and (#3,#4*0.5+#2*0.5) .. (#3,#4);
	\draw (#3*0.5+#1*0.5,#2) .. controls (#3*0.5+#1*0.5,#4*0.5+#2*0.5) and (#1,#4*0.5+#2*0.5) .. (#1,#4);
}
\newcommand{\opbox}[5]{
	\draw (#1,#2) rectangle (#3,#4);
	\node[] at (#3*0.5+#1*0.5,#4*0.5+#2*0.5) {#5};
}
\newcommand{\ntxt}[3]{
	\node[text height=1.2ex,text depth=.25ex] at (#1,#2) {#3};
}
\newcommand{\crosin}[4]{
	\draw (#1,#2) .. controls (#1,#4*0.6+#2*0.4) and (#3,#4*0.4+#2*0.6) .. (#3,#4);
	\draw (#3,#2) .. controls (#3,#4*0.6+#2*0.4) and (#1,#4*0.4+#2*0.6) .. (#1,#4);
}
\newcommand{\strcros}[4]{
	\draw (#1,#2) -- (#3,#4);
	\draw (#3,#2) -- (#1,#4);
}
\newcommand{\strex}[2]{
	\draw (#1-0.3,#2-0.3) -- (#1+0.3,#2+0.3);
	\draw (#1+0.3,#2-0.3) -- (#1-0.3,#2+0.3);
}
\theoremstyle{plain}
\newtheorem{thm}{Theorem}[section]
\newtheorem{thmintro}{Theorem}
\newtheorem{prop}[thm]{Proposition}
\newtheorem{lem}[thm]{Lemma}
\newtheorem{cor}[thm]{Corollary}
\newtheorem{conj}[thm]{Conjecture}
\theoremstyle{definition}
\newtheorem{defn}[thm]{Definition}
\theoremstyle{remark}
\newtheorem{rmk}[thm]{Remark}
\newtheorem{expl}[thm]{Example}
\Crefname{thm}{Theorem}{Theorems}
\Crefname{thmintro}{Theorem}{Theorems}
\Crefname{lem}{Lemma}{Lemmata}
\Crefname{prop}{Proposition}{Propositions}
\Crefname{cor}{Corollary}{Corollaries}
\Crefname{conj}{Conjecture}{Conjectures}
\Crefname{defn}{Definition}{Definitions}
\Crefname{notation}{Notation}{Notations}
\Crefname{rmk}{Remark}{Remarks}
\numberwithin{equation}{section}
\title{Stratifying quiver Schur algebras via ersatz parity sheaves}
\author{Ruslan Maksimau}
\address[Ruslan Maksimau]{Laboratoire Analyse, Géométrie et Modélisation, CY Cergy Paris Université, 95302 Cergy-Pontoise, France}
\email{ruslan.maksimau@cyu.fr}
\author{Alexandre Minets}
\address[Alexandre Minets]{Max Planck Institute for Mathematics, Bonn, Germany}
\email{minets@mpim-bonn.mpg.de}
\begin{document}

\begin{abstract}
	We propose an extension of the theory of parity sheaves, which allows for non-locally constant sheaves along strata.
	Our definition is tailored for proving the existence of (proper, quasihereditary, etc) stratifications of $\Ext$-algebras. 
	We use this to study quiver Schur algebra $A(\alpha)$ for the cyclic quiver of length $2$. 
	We find a polynomial quasihereditary structure on $A(\alpha)$ compatible with the categorified PBW basis of McNamara and Kleshchev-Muth, and sharpen their results to arbitrary characteristic.
	We also prove that semicuspidal algebras of $A(n\delta)$ are polynomial quasihereditary covers of semicuspidal algebras of the corresponding KLR algebra $R(n\delta)$, and compute them diagrammatically.
\end{abstract}
\date{\today}
\maketitle

\tableofcontents

\section*{Introduction}
Let $\Gamma = (I,H)$ be a quiver of affine type, and $U_q(\mathfrak{g}_\Gamma)$ the corresponding affine quantum group.
The categorification of its negative half $U_q(\mathfrak{g}_\Gamma^-)$ is provided by (graded, projective) modules over the quiver Hecke (KLR) algebras $R(\alpha)$, $\alpha\in \bbZ_{\geq 0}^I$~\cite{KL_DACQ2009}.
As $U_q(\mathfrak{g}_\Gamma^-)$ admits several important bases (PBW, canonical, their dual versions), one is naturally lead to ask what these bases are categorified by.

The answer is given by the notion of ($\mathscr{B}$-)properly stratified algebras~\cite{Kle_AHWC2015}. 
It is known that the algebras $R(\alpha)$ are properly stratified in characteristic $0$ or $p>0$ big enough~\cite{McN_RKAI2017,KM_SKAA2017}.
Their building blocks are the so-called \textit{semicuspidal} algebras $C(k\alpha')$, where $\alpha'$ runs over the positive roots of $\mathfrak{g}_\Gamma$.
The strata are parameterised by Kostant partitions of $\alpha$, and induction from simples in $C(k\alpha')$'s gives rise to proper standard modules.
These modules categorify the dual PBW basis~\cite{McN_RKAI2017}.
Taking their projective covers in $R(\alpha)$, one obtains a categorification of the canonical basis. 

While standard objects should be independent of the characteristic of the ground field, indecomposable projectives are typically not.
In particular, their decategorification recovers the $p$-canonical basis of $U_q(\mathfrak{g}_\Gamma)$~\cite{grojnowski1999affine,jensen2017p}, which should capture subtle modular information. 
It is therefore desirable to extend the stratification result above to small characteristic.

At the same time, not all stratifications are born equal.
Ideally, we want to have as few simples in each stratum as possible. 
For instance, when all strata of a properly stratified algebra $A$ are graded polynomial algebras, we say that $A$ is \textit{polyhereditary}.\footnote{This name is non-standard; one usually calls such algebras \textit{polynomial quasihereditary}. We opted for a contraction to save ink; see \cref{subs:polyher} for definitions}
It is known that imaginary semicuspidal algebras $C(k\delta)$, where $\delta$ is the indecomposable imaginary root of $\mathfrak{g}_\Gamma$, are not polynomial, or even polyhereditary~\cite{KM_AZAI2019}, so one is tempted to look for their polyhereditary covers.

\subsection*{Ersatz parity sheaves}
Recall that KLR algebras can be realized as convolution algebras.
In characteristic $0$, Kato~\cite{kato2017algebraic} considered varieties $X$ with finitely many $G$-orbits $\bbO_\lambda$, and proved polyheredity for $G$-equivariant convolution algebras $\Ext^*(L,L)$, $L = \bigoplus_\lambda \IC(\bbO_\lambda)$.
This result was extended in~\cite{McN_RTGE2017} to $\overline{\bbF}_p$-coefficients, by replacing $\IC$-sheaves with parity sheaves~\cite{JMW_PS2014}.
This immediately implies polyheredity for KLR algebras of Dynkin quivers.
However, the quivers of affine type have infinitely many isomorphism classes of representations, so these results do not apply.

We circumvent this by generalizing the theory of parity sheaves.
Let $X = \bigsqcup_\lambda X_\lambda$ be an algebraic stratification.
The axiomatics of~\cite{JMW_PS2014} are set up so that parity sheaves are locally constant on each stratum; we drop this condition.
Instead, on each $X_\lambda$ we pick a collection of sheaves with no extensions between them, which we call an \textit{evenness theory}.
This bears almost no influence on basic constructions; in particular, there exists at most one parity extension $\parsh(F)$ of an indecomposable sheaf $F\in\Ev(X_\lambda)$ to $X$. 
When such extension exists, we call it \textit{ersatz parity sheaf}.
It follows from our axioms that the collection of ersatz parity sheaves is itself an evenness theory $\Ev(X)$ on $X$; we say it is \textit{ersatz-complete} if all ersatz parity sheaves $\parsh(F)$ exist.

Given an evenness theory $\Ev$ on $X$, take the direct sum $\cL$ of all indecomposable sheaves in $\Ev$ and consider the Ext-algebra $A_\cL = \Ext^*(\cL,\cL)$.
We define a notion of \textit{polyheredity} of evenness theories, which ensures that $A_\cL$ is polyhereditary (\cref{lem:Ev-to-alg-qher}).
Our main technical result allows us to glue polyhereditary theories, and thus polyhereditary algebras:
\begin{thmintro}[{\cref{thm:quasi-her-inher}}]\label{thm:A}
	Let $X = \bigsqcup_\lambda X_\lambda$, equipped with a polyhereditary evenness theory $\Ev(X_\lambda)$ on each $X_\lambda$.
	If $\Ev(X)$ is ersatz-complete, then it is also polyhereditary.
\end{thmintro}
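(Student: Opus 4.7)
My plan is to proceed by induction on the number of strata, reducing the theorem to a single recollement step. Write $X = U \sqcup Z$ with $Z = X_\mu$ closed for some minimal $\mu$ in the stratification order, and let $j\colon U \hookrightarrow X$ and $i\colon Z \hookrightarrow X$ denote the open and closed inclusions. By induction $\Ev(U)$ is polyhereditary (fewer strata, same hypotheses), so it suffices to show that gluing across the single additional stratum $Z$, equipped with its polyhereditary local theory $\Ev(X_\mu)$, preserves polyheredity. The base case of a single stratum is immediate.

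For the inductive step I would produce the candidate standard and proper standard objects of $\Ev(X)$ by extending local data: indexed by a stratum $X_\lambda$, take the standards to be $j_{\lambda!}$ applied to a local standard on $X_\lambda$, and the proper standards to be $\parsh$ applied to a local proper standard; both exist by ersatz-completeness. The technical heart of the argument is the recollement triangle $i_! i^! G \to G \to j_* j^* G$ applied to an ersatz parity sheaf $G = \parsh(F)$. The evenness axioms of $\Ev$ are tailored exactly so that the long exact sequence obtained by applying $\Hom^*(\parsh(F'),-)$ to this triangle degenerates into short exact sequences in each cohomological degree, i.e. $\Ext^1$-terms connecting adjacent rows vanish. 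This is the structural content of ``parity'' transported to the ersatz setting. Iterating along a filtration of $X$ by strata then presents every relevant $\Ext$-computation as a successive extension of local $\Ext$-computations on individual $X_\lambda$, each of which is controlled by the local polyheredity of $\Ev(X_\lambda)$, producing filtrations by $\Delta$'s and $\bar\Delta$'s of the desired shape.

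The main obstacle, I expect, is to control the \emph{polynomial} half of polyheredity: it is not enough to get filtrations by standards, one must also verify that the relevant endomorphism/extension algebras on the diagonal blocks are graded polynomial algebras of the correct rank. I anticipate that this follows from two ingredients: uniqueness of ersatz parity extensions, which identifies the degree-zero part of the endomorphism algebra of $\parsh(F)$ with that of $F$ on the open stratum modulo strictly lower-order contributions supported on deeper strata; and a Krull--Schmidt/grading argument, which organizes these lower-order contributions into the off-diagonal part of the standard filtration rather than creating new polynomial generators. A secondary, more bookkeeping obstacle is verifying that the partial order on the labelling set of $\Ev(X)$ induced from the stratification of $X$ refines to a valid polyhereditary order; this should be automatic from the support condition $\supp(\parsh(F)) \subseteq \overline{X_\lambda}$ for $F$ living on $X_\lambda$, combined with the local polyhereditary orders on each $\Ev(X_\lambda)$.
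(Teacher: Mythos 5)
Your overall skeleton (induction on the number of strata, peeling off a minimal closed stratum $Z=X_{\lambda_0}$, and using parity vanishing to make the recollement long exact sequences degenerate) is the same as the paper's. But there is a genuine gap in how you propose to close the induction, and it shows up precisely at the point where ersatz-completeness must be used. In this paper ``polyhereditary'' for an evenness theory is not defined via standard and proper standard objects; it is the set of conditions of \cref{defn-quasi-hered-Par} on the layers $\Ext^*_\xi(-,-)$ of the $\Ext$-algebra: $B_\xi=\Ext^*_\xi(\parsh(\xi),\parsh(\xi))^\op$ polynomial, $\Ext^*_\xi(\calF,\parsh(\xi))$ free of finite rank over $B_\xi$, and the product map $\Ext^*_\xi(\calF,\parsh(\xi))\otimes_{B_\xi}\Ext^*_\xi(\parsh(\xi),\calG)\to\Ext^*_\xi(\calF,\calG)$ an isomorphism. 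Your plan never engages these conditions directly; instead you introduce ``standards $=j_{\lambda!}$ of local standards'' and ``proper standards $=\parsh$ of local proper standards,'' which does not typecheck ($\parsh$ is the ersatz extension of an indecomposable even sheaf on a stratum, not of a module over a local $\Ext$-algebra, and $j_{\lambda!}$ of anything exists without ersatz-completeness), and producing $\Delta$/$\bar\Delta$-filtrations is neither the statement to be proved nor obviously sufficient for the polynomiality and freeness requirements.

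The missing idea is the one the paper isolates in \cref{lem:Ext-lam-mu-not-lam0}: for a stratum $\lambda\ne\lambda_0$ and $\xi\in\Xi_\lambda$ one must show $\Ext^*_\xi(\calF,\calG)\simeq\Ext^*_\xi(j^*\calF,j^*\calG)$, i.e.\ that the two-sided ideals $\Ext^*_{\leqslant\xi}$ computed on $X$ map onto the corresponding ideals computed on $U$. This is exactly where ersatz-completeness enters (and, per the paper's remark, the only place): one needs $j^*\parsh^X(\lambda',\mu')=\parsh^U(\lambda',\mu')$ for all labels with $\lambda'\ne\lambda_0$, which follows from \cref{lem:Par-open-surj} plus uniqueness of ersatz extensions, but requires that $\parsh^X(\lambda',\mu')$ exists in the first place. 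Without this identification the inductive hypothesis on $U$ cannot be transported back to $X$, and your ``lower-order contributions'' bookkeeping has nothing to anchor it. For the minimal stratum itself, what is needed is not a filtration statement but the isomorphisms $\Ext^*_\xi(\calF,\parsh(\xi))\simeq\Ext^*_\xi(i^*\calF,\parsh_{\lambda_0}(\xi))$, $\Ext^*_\xi(\parsh(\xi),\calG)\simeq\Ext^*_\xi(\parsh_{\lambda_0}(\xi),i^!\calG)$ and $\Ext^*_\xi(\calF,\calG)\simeq\Ext^*_\xi(i^*\calF,i^!\calG)$ (the content of \cref{lem:Ext-factors,lem:Ext-factors2}), compatible with the $B_\xi$-actions and with the product map, so that all three conditions are inherited verbatim from the local polyheredity of $\Ev(X_{\lambda_0})$. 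Your appeal to ``uniqueness of ersatz extensions identifying $\End(\parsh(F))$ with $\End(F)$ modulo lower terms'' gestures at this but does not supply the argument, and as written the proposal would not compile into a proof of the theorem as defined.
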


\subsection*{Quiver Schur for Kronecker quiver}
Let $\Gamma = \bullet\rightrightarrows \bullet$ be the Kronecker quiver.
The imaginary semicuspidal algebras for KLR algebras of $\Gamma$ were computed in~\cite{MakMin_KLR2023} in arbitrary characteristic.
Here, we study imaginary semicuspidal algebras of the quiver Schur algebras of $\Gamma$.
Since quiver Hecke and Schur algebras only differ for quivers with cycles, we consider a ``seminilpotent'' variant $A^\m(\alpha)$ of quiver Schur algebra; see \cref{subs:polyrep} for definitions. 

We define a Harder-Narasimhan-type stratification of $\Rep_\alpha \Gamma$, and show that the associated evenness theory is precisely the Karoubi envelope of the additive category of flag sheaves (\cref{cor:even-are-flag}); in particular, ersatz parity sheaves in this case are closely related to geometric extensions as defined in~\cite{hone2025geometric} (\cref{rmk:geom-Ext}).
This quickly implies
\begin{thmintro}[{\cref{thm:Schur-poly-qher}}]\label{thm:B}
	The quiver Schur algebra $A_\Gamma^\m(\alpha)$ is polyhereditary.
\end{thmintro}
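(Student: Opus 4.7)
The plan is to apply \cref{thm:A} to the Harder-Narasimhan stratification $\Rep_\alpha\Gamma = \bigsqcup_\lambda X_\lambda$ indexed by Kostant partitions $\lambda$ of $\alpha$. By construction $A_\Gamma^\m(\alpha) \cong \Ext^*(\cL,\cL)$ where $\cL$ is the direct sum of flag sheaves, and by \cref{cor:even-are-flag} this agrees with the sum of indecomposables of $\Ev(\Rep_\alpha\Gamma)$. Hence \cref{lem:Ev-to-alg-qher} reduces the statement to polyheredity of $\Ev(\Rep_\alpha\Gamma)$, which by \cref{thm:A} follows once we verify (i) polyheredity of each $\Ev(X_\lambda)$, and (ii) ersatz-completeness of $\Ev(\Rep_\alpha\Gamma)$.

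For (ii), I would extend each indecomposable of each $\Ev(X_\lambda)$ to a parity sheaf on $\Rep_\alpha\Gamma$ via pushforward from a suitably chosen quiver flag variety: such pushforwards are proper, the induced filtration by image closures is coarser than the HN stratification, and tracking supports together with semisimplicity of the pushforward should produce every required ersatz parity sheaf as a direct summand.

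For (i), each stratum $X_\lambda$ is, up to a vector bundle (irrelevant for the evenness theory), a product $\prod_i X_{m_i\beta_i}^{ss}$ of semistable loci attached to $\lambda = \sum m_i\beta_i$. Polyheredity of evenness theories is preserved under products, so it suffices to handle each factor. For a real positive root $\beta$, the indecomposable of dimension $\beta$ is rigid and $X_{m\beta}^{ss}$ supports a single orbit, so polyheredity reduces to the classical polyhereditary structure on $GL_m$-equivariant flag sheaves over a point. For $\beta = \delta$, the semistable locus carries a genuine moduli of indecomposables, essentially parametrised by $\mathrm{Sym}^m\mathbb{P}^1$, and polyheredity is a separate input obtained by further stratifying $X_{m\delta}^{ss}$ by degeneration type and iterating \cref{thm:A} on this finer stratification.

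The main obstacle is the imaginary stratum. The real-root strata are essentially classical, and ersatz-completeness is a routine check once the strata-level theory is in place; on $X_{m\delta}^{ss}$, however, the evenness theory is genuinely non-constant along the stratum, which is precisely the phenomenon that motivates the introduction of ersatz parity sheaves. Constructing a polyhereditary evenness theory on the imaginary semicuspidal stratum, and verifying that the flag sheaves furnish a compatible ersatz-complete extension, is therefore where the bulk of the technical work concentrates, and should be treated in tandem with the semicuspidal computation of $A(m\delta)$ announced in the abstract.
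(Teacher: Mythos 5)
Your skeleton agrees with the paper's: apply \cref{thm:quasi-her-inher} (\cref{thm:A}) to a Harder--Narasimhan-type stratification of $\Rep_\vecn$, take the flag sheaves as the full parity object, and reduce via \cref{lem:Ev-to-alg-qher} to (i) polyheredity on each stratum and (ii) ersatz-completeness. However, the proposal defers exactly the part that constitutes the proof, and the two concrete mechanisms you offer for it would not work. On the imaginary stratum you suggest ``further stratifying $X^{ss}_{m\delta}$ by degeneration type and iterating \cref{thm:A} on this finer stratification''. But $\Rep^\reg_{m\delta}\simeq \cT_m(\bbP^1)$ has a continuous moduli of points (supports move along $\bbP^1$), so no finite refinement into strata carrying finitary (let alone locally constant) evenness theories exists; iterating \cref{thm:A} cannot get off the ground there. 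The paper does something genuinely different: it refines the Kostant stratification by \emph{marked} Kostant partitions, splitting $\delta$ into $\delta^\bullet$ and $\delta^\circ$ via~\eqref{eq:repreg-str}, keeps the orbitwise (piecewise constant) theory on $[\cN_{\fkgl_{k^\bullet}}/GL_{k^\bullet}]$, and on $[\fkgl_{k^\circ}/GL_{k^\circ}]$ \emph{defines} the evenness theory as the Fourier--Sato transform $\Theta_p\circ i_*$ of the orbitwise theory on the nilpotent cone. This transport is what makes each stratum's theory equivalent to a piecewise constant, hence polyhereditary, one without any further stratification; it is the key idea your plan is missing.

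The second gap is in (ii): you appeal to ``semisimplicity of the pushforward'' from flag varieties, which is unavailable here — over $\bbk$ of positive characteristic the decomposition theorem fails, and circumventing it is the entire point of the parity/ersatz formalism. What is actually needed is that every flag sheaf $\cL_\Ibe$ is both $*$- and $!$-even for the chosen theory; in the paper this is \cref{prop:flag-is-even}, proved by a simultaneous induction with the splitting of the Mackey filtration (\cref{prop:Mackey-splits}), which in turn rests on evenness of seminilpotent Lusztig fibers (\cref{prop:Lus-even}) and the base case \cref{lem:base-of-induction} (where the $\delta^\circ$ case is again handled by Fourier--Sato reduction to nilpotent Lusztig fibers). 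Ersatz-completeness is then not a ``routine check'': it is \cref{prop:ersatz-comp}, which exhibits explicit flag types — unique flags for real roots, partial Springer resolutions $T^*(GL_k/P_\lambda)$ for $\delta^\bullet$, and partial Grothendieck--Springer alterations transported by Fourier for $\delta^\circ$ — whose restrictions produce all indecomposables on each stratum. Note also that invoking \cref{cor:even-are-flag} at the outset is circular, since that corollary is itself a consequence of these two missing ingredients. As written, the proposal identifies where the difficulty lies but does not supply an argument for it, so it does not yet prove \cref{thm:Schur-poly-qher}.
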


Finally, similarly to~\cite{MakMin_KLR2023} we provide the imaginary semicuspidal algebra with an explicit diagrammatic description by considering torsion sheaves on $\bbP^1$. 
\begin{thmintro}[{\cref{thm:curve-Schur-poly,thm:End(P+)}}]
	The imaginary semicuspidal algebra of $A_\Gamma^\m(n\delta)$ is Morita-equivalent to the extended curve Schur algebra of $\bbP^1$ (\cref{sec:schur-dotted}). 
	It provides a polyhereditary cover of the imaginary semicuspidal algebra of $R(n\delta)$, as computed in~\cite{MakMin_KLR2023}.
\end{thmintro}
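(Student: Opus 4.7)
The central geometric input is the classical equivalence between representations of the Kronecker quiver $\Gamma$ and coherent sheaves on $\bbP^1$. Under this equivalence, semistable representations of dimension $n\delta$ and imaginary slope correspond to length-$n$ torsion sheaves, so the imaginary semicuspidal stratum in $\Rep_{n\delta}\Gamma$ is identified, $G$-equivariantly, with the moduli stack of length-$n$ torsion sheaves on $\bbP^1$. I would begin by making this identification precise at the level of stacks, so that all subsequent constructions (flag sheaves, parity extensions, convolution algebras) transport automatically.

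Next I would isolate the imaginary semicuspidal algebra of $A_\Gamma^\m(n\delta)$ as the $\Ext$-algebra of the ersatz parity sheaves supported on this HN-stratum. By \cref{cor:even-are-flag}, these parity sheaves are precisely the Karoubi summands of the flag sheaves indexed by compositions of $n\delta$ into imaginary pieces (multiples of $\delta$). Transporting along the Beilinson identification, the corresponding objects on $\bbP^1$ are pushforwards of constant sheaves from partial flag varieties of torsion sheaves. Computing the $\Ext$-algebra of their direct sum should then match, by construction, the extended curve Schur algebra defined in \cref{sec:schur-dotted}: this is the content of the $\operatorname{End}(P^+)$ computation referenced in the statement, and it supplies the claimed Morita equivalence.

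For the polyhereditary cover statement, I would first invoke \cref{thm:B} to conclude that $A_\Gamma^\m(n\delta)$, and hence the imaginary semicuspidal subquotient, is polyhereditary, with HN-type strata parameterised by Kostant partitions of $n\delta$ whose parts lie on the imaginary ray. The KLR imaginary semicuspidal algebra of $R(n\delta)$ computed in~\cite{MakMin_KLR2023} is then recovered inside the imaginary semicuspidal algebra of $A_\Gamma^\m(n\delta)$ as the idempotent truncation $e A^{\mathrm{im}} e$, where $e$ picks out the finest composition (all parts equal to $\delta$). Compatibility of this Schur-to-Hecke reduction with the respective proper standard modules (both sides are controlled by induction from stable summands on the imaginary stratum) will then realise $A^{\mathrm{im}}$ as the sought polyhereditary cover.

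The main technical obstacle, in my view, is the diagrammatic identification: translating the abstract $\Ext$-algebra of flag pushforwards on the stack of torsion sheaves on $\bbP^1$ into an explicit presentation with merge, split, crossing and polynomial generators. The KLR analogue was handled in~\cite{MakMin_KLR2023}, but extending to non-trivial flag compositions requires new $\Ext$-computations on partial flag varieties of torsion sheaves, together with a careful normalisation of the isomorphism so that the composition of convolution matches the composition in the extended curve Schur algebra. Once this diagrammatic dictionary is in place, the other steps are formal consequences of \cref{thm:A,thm:B}.
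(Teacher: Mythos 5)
There is a genuine gap in the first half of your plan. You claim that, by \cref{cor:even-are-flag}, the ersatz parity sheaves on the imaginary stratum are Karoubi summands of flag sheaves indexed by compositions of $n\delta$ \emph{into multiples of $\delta$}. This is false: under the identification $\Rep^\reg_{n\delta}\simeq \cT_n(\bbP^1)$ of~\eqref{eq:tor-iso-reg}, the seminilpotent condition forces every flag step of type $k\delta$ to have subquotient isomorphic to $\cO_\infty^{\oplus k}$, so the purely $\delta$-labelled flag sheaves are supported on the closed locus of sheaves concentrated at $\infty$. Already for $n=1$ you would only obtain the skyscraper at $\infty$ and miss the constant sheaf on $\bbP^1$, so the $\Ext$-algebra of these sheaves is not Morita-equivalent to $\CA(n\delta)$. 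The correct projective generator (\cref{prop:proj-gen}, built from \cref{prop:ersatz-comp}) mixes two kinds of pieces: $k\delta$ (colour $\varepsilon$, quotient pinned to $\cO_\infty^{\oplus k}$) and $(k\alpha_0,k\alpha_1)$ (colour $\tau$, arbitrary torsion quotient); the $\tau$-pieces are what produce, via Fourier--Sato and Grothendieck--Springer, the parity sheaves on the strata supported away from $\infty$. This two-colour structure is precisely why the target is the \emph{extended} curve Schur algebra $\cS^\cI_n$ and not the plain algebra of flag pushforwards that your plan describes; the Morita equivalence itself is then obtained geometrically by matching flag stacks over the regular locus (\cref{prop:regular-truncate}), so the heavy diagrammatic dictionary you flag as the main obstacle is not actually needed for that step.

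The second half also identifies the wrong subalgebra. The imaginary semicuspidal algebra of $R(n\delta)$ is recovered from $\CA(n\delta)$ by truncating with the KLR idempotent $1_{\mathrm{KLR}}$, i.e.\ the sum over \emph{full flag} types with all parts equal to $\alpha_0$ or $\alpha_1$ (\cref{rmk:KLR-not-Schur}, \cref{subs:KLR-prop-str}), not by the idempotent of ``the finest composition with all parts equal to $\delta$''. The latter is a purely $\varepsilon$-coloured idempotent whose truncation is a convolution algebra over the nilpotent cone at $\infty$, a quite different object, so your cover argument would be carried out for the wrong corner of $\CA(n\delta)$. Finally, deducing polyheredity of the semicuspidal quotient from \cref{thm:B} is not automatic for an arbitrary idempotent-generated ideal: one needs the ideal $A(n\delta)1_{\nc}A(n\delta)$ to be a member of the polyheredity chain, which is exactly what the ordering of \cref{thm:Schur-polyher-order} (the regular stratum being extremal for the closure order) guarantees; this compatibility should be stated rather than assumed.
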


As a byproduct, we obtain a simpler polyhereditary cover of the semicuspidal algebra of $R(n\delta)$ in characteristic $0$ (\cref{subs:Savage}), and extend the stratification result of~\cite{McN_RKAI2017,KM_SKAA2017} to arbitrary characteristic (\cref{subs:KLR-prop-str}).
We also compute integral cohomology of the moduli stack of torsion sheaves on $\bbP^1$, thus correcting some mistakes in~\cite{MakMin_KLR2023} (\cref{subs:erratum}).

\subsection*{Modular representation theory}
Semicuspidal quotients feature prominently in the study of blocks of $\bbF_p[\fkS_d]$, or more generally of Ariki-Koike algebras at roots of unity (higher level).
The typical strategy is to use derived equivalences to reduce a given question from arbitrary blocks to \textit{RoCK blocks}; see~\cite{chuang2002symmetric,chuang2008derived} for $\bbF_p[\fkS_d]$, and~\cite{lyle2022rouquier,webster2024rock} for higher level.
Thanks to the foundational result of Brundan-Kleshchev~\cite{brundan2009blocks}, blocks of Ariki-Koike algebras (at roots of unity) are equivalent to blocks of cyclotomic KLR algebras for cyclic quivers.

For the symmetric groups, RoCK blocks were realized inside the category of semicuspidal modules of KLR algebra of a cyclic quiver~\cite{evseev2018blocks}, thus proving a non-abelian version of Broué conjecture (for symmetric groups). 
In the higher level case, $C(n\delta)$ only captures a certain piece of RoCK blocks of Ariki-Koike~\cite{muth2024skew}.
A presentation of higher RoCK blocks as modules over quotients of KLRW algebras was recently established in~\cite{munasinghe2025steadied}.
We hope that our methods can be used to study these quotients geometrically in positive characteristic, and for example realize them as $\Ext$-algebras of ersatz parity sheaves in some suitable evenness theory.

\subsection*{Organisation}
In \cref{sec:prereq}, we recall Schur diagrammatics and convolution algebras in Borel-Moore homology.
In \cref{sec:Quiver-Schur}, we fix our notations for quiver Schur algebras, and introduce the seminilpotent version for the Kronecker quiver.
We define and study evenness theories \cref{sec:ersatz-parity}, culminating in the proof of \cref{thm:A}.
We construct a special evenness theory for representations of the Kronecker quiver using Mackey filtration on flag sheaves, and deduce \cref{thm:B} in \cref{sec:rep-cS}.
\cref{sec:semicusp-quot} relates our results back to semicuspidal quotients, and explains how one could extend our approach to other affine quivers.
The imaginary semicuspidal algebra is computed in \cref{sec:schur-dotted}.
Finally, \cref{subs:erratum} discusses integral cohomology of stacks of torsion sheaves on curves, and provides erratum for~\cite{MakMin_KLR2023}.

\subsection*{Acknowledgements}
This project took its current shape after A.M. tried to explain the results of~\cite{MakMin_KLR2023} to D. Juteau at Colloque tournant in Luminy, and was met with confusion at how the mismatch of $\bbZ$-lattices therein could ever occur.
We are grateful to him for the patient ear.
The authors are also grateful to Max Planck Institute for Mathematics in Bonn and CY Cergy Paris Université for their hospitality and financial support.

\medskip
\section{Prerequisites}\label{sec:prereq}
Let $\bbk$ be a field. 
All varieties we consider are over $\bbC$; the field $\bbk$ plays the role of the coefficient ring for (co)homology and sheaves.

\subsection{Coloured compositions}
Let us fix a finite set $I$; we refer to its elements as \textit{colours}.
\begin{defn}\label{def:comps}
	\begin{enumerate}
		\item Let $n\in \bbZ_{\geq 0}$. A \textit{quasi-composition} of $n$ of length $k$ is a $k$-tuple of non-negative integers $\lambda=(\lambda^1,\lambda^2,\ldots,\lambda^k)$ which sum up to $n$. We further say that $\lambda$ is a \textit{composition} if all $\lambda^j$'s are positive, and a \emph{partition} if $\lambda^1\geqslant\lambda^2\geqslant\ldots\geqslant\lambda^k>0$.
		\item Let $\vecn = (n_i)_i \in \bbZ_{\geq 0}^I$. An \textit{$I$-composition} of $\vecn$ is a $k$-tuple $\Ibe = (\Ibe^1,\Ibe^2,\ldots,\Ibe^k)$ of nonzero elements of $\bbZ_{\geq 0}^I$, which sum up to $\vecn$. We call $\ell(\beta)\coloneqq k$ the \textit{length} of $\beta$. For each $i\in I$, we have $\Ibe_i=(\Ibe_i^1,\ldots,\Ibe_i^k)$ a quasi-composition of $n_i$. 
		\item An \textit{$I$-coloured composition} of $n$ of length $k$ is a word $\cla = (\lambda^1i^1,\ldots,\lambda^ki^k)$, where $i^j\in I$ and $\sum_j\lambda^j = n$. Equivalently, it is a pair $\cla = (\lambda,(i^j)_j)$, where $\lambda=(\lambda^1,\ldots,\lambda^k)$ is a composition of $n$, and $(i^1,\ldots, i^k)$ is a $k$-tuple of colours.
	\end{enumerate}
\end{defn}

Denote the set of ($I$-)compositions of $\vecn$ by $\Comp(\vecn)$, and the set of $I$-coloured compositions on $n$ by $I^{(n)}$; our convention is that for $n=0$ both sets consist of one (trivial) composition of length $0$.
$I$-coloured compositions can be seen as $I$-compositions, where each $\Ibe^j$ is supported in one single colour, and so we have $I^{(n)}\subset \Comp(\vecn)$.

Let $|\cdot|:\bbZ^I_{\geq 0}\to \bbZ_{\geq 0}$ be the map sending $(n_i)_i$ to $\sum_{i} n_i$. Then each $I$-composition $\Ibe=(\Ibe^1,\ldots,\Ibe^k)$ of $\vecn$ yields a composition $|\Ibe|=(|\Ibe^1|,\ldots,|\Ibe^k|)$ of $|\vecn|$.

Each quasi-composition $\lambda$ yields a parabolic subgroup $\fkS_\lambda \coloneqq \prod_{j=1}^k\fkS_{\lambda^j}$ of $\fkS_n$.
Let us denote $\fkS_{\vecn} = \prod_{i\in I}\fkS_{n_i}$; then each $I$-composition $\Ibe$ yields a parabolic subgroup $\fkS_{\Ibe} = \prod_{i\in I}\fkS_{\Ibe_i}$ of $\fkS_{\vecn}$.
For $I$-coloured compositions, we set $\fkS_\cla \coloneqq \fkS_\lambda$, viewed as a subgroup of $\fkS_n$.
When talking about double cosets of the form $\fkS_\lambda\backslash \fkS_n / \fkS_\mu$, we will always implicitly identify them with the shortest length representatives. 

For two compositions $\lambda,\mu\in\Comp(n)$, we say that $\mu$ is a \textit{refinement} of $\lambda$ if $\fkS_\mu\subset \fkS_\lambda$, and denote it by $\mu\vDash \lambda$.
We have the same definition for $I$-compositions.
For two $I$-coloured compositions $\cla$, $\cmu$ we say that $\cmu$ is a refinement of $\cla$ if it is a refinement as $I$-compositions.

\subsection{Schur diagrammatics}\label{subs:sp-mer-diag}
Let us recall the widely used diagrammatic language for permutations and compositions.

\begin{defn}
	Let $n$ be a positive integer.
	A \textit{Schur diagram} of weight $n$ is a planar diagram, which around every point looks in one of the following ways:
	
	\begin{center}
		\setlength{\tabcolsep}{24pt}
		\begin{tabular}{ c c c c } 
			\tikz[thick,xscale=.25,yscale=.25,font=\footnotesize]{\draw (2,0) -- (2,3); \ntxt{2}{-0.7}{$a$} \ntxt{2}{3.7}{$a$}} & 
			\tikz[thick,xscale=.25,yscale=.25,font=\footnotesize]{\dsplit{0}{1}{4}{3} \draw (2,0) -- (2,1); \ntxt{0}{3.7}{$a$} \ntxt{4}{3.7}{$b$} \ntxt{2}{-0.7}{$a+b$}} & 
			\tikz[thick,xscale=.25,yscale=.25,font=\footnotesize]{\dmerge{0}{0}{4}{2}\draw (2,2) -- (2,3); \ntxt{0}{-0.7}{$a$} \ntxt{4}{-0.7}{$b$} \ntxt{2}{3.7}{$a+b$}} & 
			\tikz[thick,xscale=.25,yscale=.25,font=\footnotesize]{\strcros{0}{0}{3}{3} \ntxt{0}{-0.7}{$a$} \ntxt{3}{-0.7}{$b$} \ntxt{0}{3.7}{$b$} \ntxt{3}{3.7}{$a$}} \\ 
		 	strand & split & merge & crossing \\ 
		\end{tabular}
	\end{center}
	We label each strand with a positive integer, called \textit{thickness}; we say a strand is \textit{thin} if it has thickness $1$. Thicknesses add up on splits and merges, and are required to sum up to $n$ on each horizontal slice.
	Strands go from bottom to top, caps or cups are not allowed.
\end{defn}

We will always impose the following associativity relations on splits and merges:
\begin{equation}\label{eq:pic-asso}
	\begin{aligned}
	\tikz[thick,xscale=.2,yscale=.2,font=\footnotesize]{
	\dsplit{0}{3}{4}{5}
	\draw (8,3) -- (8,5);
	\dsplit{2}{1}{8}{3}
	\draw (5,0) -- (5,1);
	\ntxt{0}{6}{$a$}
	\ntxt{4}{6}{$b$}
	\ntxt{8}{6}{$c$}
	\ntxt{5}{-1}{$a+b+c$}
	
	\ntxt{10}{2}{$=$}
	
	\dsplit{16}{3}{20}{5}
	\draw (12,3) -- (12,5);
	\dsplit{12}{1}{18}{3}
	\draw (15,0) -- (15,1);
	\ntxt{12}{6}{$a$}
	\ntxt{16}{6}{$b$}
	\ntxt{20}{6}{$c$}
	\ntxt{15}{-1}{$a+b+c$}
	
	\ntxt{21}{2}{,}
	}\qquad\qquad
	\tikz[thick,xscale=.2,yscale=.2,font=\footnotesize]{
	\dmerge{0}{0}{4}{2}
	\draw (8,0) -- (8,2);
	\dmerge{2}{2}{8}{4}
	\draw (5,4) -- (5,5);
	\ntxt{0}{-1}{$a$}
	\ntxt{4}{-1}{$b$}
	\ntxt{8}{-1}{$c$}
	\ntxt{5}{6}{$a+b+c$}
	
	\ntxt{10}{2}{$=$}
	
	\dmerge{16}{0}{20}{2}
	\draw (12,0) -- (12,2);
	\dmerge{12}{2}{18}{4}
	\draw (15,4) -- (15,5);
	\ntxt{12}{-1}{$a$}
	\ntxt{16}{-1}{$b$}
	\ntxt{20}{-1}{$c$}
	\ntxt{15}{6}{$a+b+c$}
	
	\ntxt{21}{2}{.}
	}
\end{aligned}
\end{equation}

This allows us to make sense of more general splits and merges:
$$
\tikz[thick,xscale=.2,yscale=.2,font=\footnotesize]{
	\draw (5,0) -- (5,1);
	\dsplit{-1}{1}{11}{4}
	\dsplit{2}{1}{8}{4}
	\ntxt{-1}{5}{$a_1$}
	\ntxt{2}{5}{$a_2$}
	\ntxt{5}{3.5}{$\cdots$}
	\ntxt{8}{5}{$a_{k-1}$}
	\ntxt{11}{5}{$a_k$}
	\ntxt{5}{-1}{$\sum_j a_j$}
}\qquad\qquad\qquad
\tikz[thick,xscale=.2,yscale=.2,font=\footnotesize]{
	\draw (5,3) -- (5,4);
	\dmerge{-1}{0}{11}{3}
	\dmerge{2}{0}{8}{3}
	\ntxt{-1}{-1}{$a_1$}
	\ntxt{2}{-1}{$a_2$}
	\ntxt{5}{0.5}{$\cdots$}
	\ntxt{8}{-1}{$a_{k-1}$}
	\ntxt{11}{-1}{$a_k$}
	\ntxt{5}{5}{$\sum_j a_j$}
}
$$

In particular, for any $\lambda,\mu\in \Comp(n)$ with $\mu \vDash \lambda$ we can draw the corresponding split and merge diagrams, with thicknesses given by components of $\lambda$ and $\mu$.
Furthermore, if $\lambda = (\lambda^1\ldots,\lambda^k)\in\Comp(n)$ and $\sigma\in \fkS_k$, then for any reduced decomposition of $\sigma$ we can draw the corresponding diagram which only has crossings, and the thicknesses of strands are given by $\lambda^j$'s.

Let $\lambda,\mu\in \Comp(n)$, $\ell(\lambda) = k$, $\ell(\mu) = l$, and $w\in\fkS_\lambda\backslash \fkS_n/\fkS_\mu$.
Define the compositions $\lambda'\vDash \lambda$, $\mu'\vDash \mu$ by 
\begin{align*}
	\lambda' = (\lambda'^{11},\ldots,\lambda'^{1l},\ldots,\lambda'^{kl}), \quad \lambda'^{ij} = \#([\lambda^{i-1}+1,\lambda^i]\cap w[\mu^{j-1}+1,\mu^j]),\\
	\mu' = (\mu'^{11},\ldots,\mu'^{1k},\ldots,\mu'^{lk}), \quad \mu'^{ij} = \#([\mu^{i-1}+1,\mu^i]\cap w^{-1}[\lambda^{j-1}+1,\lambda^j]).
\end{align*}
We have $\fkS_{\lambda'}=\fkS_\lambda\cap w\fkS_\mu w^{-1}$, $\fkS_{\mu'}=\fkS_\mu\cap w^{-1}\fkS_\lambda w$.
Denoting the length of $\lambda'$ by $k'$, the permutation $w\in\fkS_n$ induces a permutation $\sigma\in\fkS_{k'}$, which transforms $\lambda'$ into $\mu'$.
Fix a reduced decomposition of $\sigma$.

\begin{defn}
	Denote by $D_w$ the Schur diagram characterized by the following properties:
	\begin{itemize}
		\item $D_w$ is a concatenation of there parts: top, middle and bottom;
		\item The bottom part is the split corresponding to $\lambda' \vDash \lambda$, the top part is the merge corresponding to $\mu' \vDash \mu$;
		\item The middle part is the crossing diagram corresponding to $\sigma$.
	\end{itemize}
\end{defn}

\begin{expl}
	Let $\lambda=(2,2,1)$, $\mu=(3,2)$, and take $w=s_3s_4$. Then $\lambda'=(2,2,1)$, $\mu'=(2,1,2)$, and $D_w$ is as follows:
	$$
	\tikz[thick,xscale=.25,yscale=.25,font=\footnotesize]{
	\draw (2,0) -- (2,1);
	\dsplit{0}{1}{4}{3}
	\draw (6,0) -- (6,3);
	\crosin{4}{3}{6}{5}
	\draw (0,3) -- (0,5);
	\ntxt{2}{-1}{$3$}
	\ntxt{6}{-1}{$2$}
	\ntxt{0}{6}{$2$}
	\ntxt{4}{6}{$2$}
	\ntxt{6}{6}{$1$}
	}
	$$
\end{expl}

\begin{rmk}
	We do not impose $\fkS_n$-relations on crossings.
	However, in situations of our interest they hold ``up to lower terms'', see e.g. proof of~\cite[Th.~4.10]{MakMin_KLR2023}.
\end{rmk}

Given a set of colours $I$, we can define Schur diagrams with thicknesses valued in $\bbZ^I_{\geq 0}$ in a straightforward way.
All the considerations above extend accordingly after replacing compositions by $I$-compositions; for example, for any $\Ibe,\Iga\in \Comp(\vecn)$ and $w\in \fkS_{\Ibe}\backslash \fkS_{\vecn}/\fkS_{\Iga}$, we get a Schur diagram $D_w$.
More precisely, in this case for the definition of $D_w$ we should take $\Ibe'$ and $\Iga'$ defined by
\begin{equation}\label{eq:betapr-gammapr}
	\begin{aligned}
		\Ibe'_i = (\Ibe'^{11}_i,\ldots,\Ibe'^{1l}_i,\ldots,\Ibe'^{kl}_i), \quad \Ibe'^{tr}_i = \#([\Ibe^{t-1}_i+1,\Ibe^t_i]\cap w_i[\Iga^{r-1}_i+1,\Iga^r_i]),\\
		\Iga'_i = (\Iga'^{11}_i,\ldots,\Iga'^{1k}_i,\ldots,\Iga'^{lk}_i), \quad \Iga'^{tr}_i = \#([\Iga^{t-1}_i+1,\Iga^t_i]\cap w^{-1}_i[\Ibe^{r-1}_i+1,\Ibe^r_i]).
	\end{aligned}
\end{equation}
In some cases, we will only want to consider $I$-coloured compositions; this corresponds to only allowing strands, splits and merges of pure colour.

\subsection{Symmetric polynomials}
We will work with various polynomial rings.
Define $\Pol_n=\Bbbk[X_1,\ldots,X_n]$, and for any quasi-composition $\lambda$ set $\Pol_\lambda=\Pol_n^{\fkS_\lambda}$. 
More generally, for an $I$-composition $\Ibe$ of $\vecn = (n_i)_i$ we define $\Pol_{\vecn} = \bigotimes_{i\in I} \Pol_{n_i}$, and $\Pol_{\Ibe} = \Pol_{\vecn}^{\fkS_{\Ibe}}$.
Note that we have obvious identifications 
\[
	\Pol_\lambda\simeq \bigotimes_{j=1}^k \Pol_{\lambda^j}^{\fkS_{\lambda^j}},\qquad \Pol_{\Ibe} \simeq \bigotimes_{i\in I} \Pol_{n_i}^{\fkS_{{\Ibe}_i}}.
\]

For $r\in [1,n-1]$, denote by $\partial_r$ the \textit{Demazure operator} 
\[
	\partial_r\colon \Pol_n\to \Pol_n, \qquad
	P\mapsto \frac{P-s_r(P)}{X_r-X_{r+1}}.
\]

For any $w\in\fkS_n$ we define $\partial_w\coloneqq\partial_{k_1}\ldots\partial_{k_r}$, where $w=s_{k_1}\ldots s_{k_r}$ is a reduced expression.
Since Demazure operators satisfy braid relations, $\partial_w$ is independent of choices.

For positive integers $a$, $b$ with $a+b=n$ consider the shuffle permutation
\begin{equation}\label{eq:shuffle-perm}
	w_{a,b}(i)=
	\begin{cases}
	i+b & \mbox{ if } 1\leqslant i\leqslant a,\\
	i-a & \mbox{ if } a< i\leqslant n.
	\end{cases}
\end{equation}
We abbreviate $\partial_{a,b}\coloneqq\partial_{w_{a,b}}$.
For any $P\in\Pol_n^{\fkS_a\times\fkS_b}$, we have $\partial_{a,b}P\in\Pol_n^{\fkS_n}$.

\subsection{Convolution algebras}\label{subs:conv-algs}
Let us briefly recall the geometric setup of~\cite{CG_RTCG2010}.
Let $\pi:Y\to X$ be a proper map of algebraic varieties with smooth source.
We define $Z\coloneqq Y\times_X Y$, and consider its Borel-Moore homology $A = H_*(Z) = H_*(Z,\bbk)$ with coefficients in $\bbk$.
Consider the following correspondence:
\[
	\begin{tikzcd}
		Z\times Z\ar[d,hook] & Y\times_X Y\times_X Y\ar[d,hook]\ar[r,"p_{13}"]\ar[l,"p_{12}\times p_{23}"'] & Z \\
		Y^2\times Y^2 & Y\times Y\times Y\ar[l,"\Id_Y\times \Delta_Y\times \Id_Y"'] & 
	\end{tikzcd}
\]
One equips $A$ with the associative product $p_{13*}(p_{12}\times p_{23})^!$, where $(p_{12}\times p_{23})^!$ is the Gysin pullback along the diagonal embedding $\Delta_Y$.
Note that this product depends not only on $Z$, but also on the choice of the map $\pi$.
We have a natural action of $A$ on $H_*(Y)\simeq H^{2\dim Y - *}(Y)$ via a similar correspondence.
Both $A$ and $H_*(Y)$ are naturally $H^*(X)$-modules; both algebra structure on $A$ and $A$-action on $H_*(Y)$ are $H^*(X)$-linear. 

In the case when $Y = \sqcup_i Y_i$ has several connected components, we have an obvious decomposition $A = A_{ij} \coloneqq H_*(Z_{ij})$, $Z_{ij} = Y_i\times_X Y_j$.
We equip $A$ with a grading by setting
\[
	A = \bigoplus_k A_k,\qquad A_k \coloneqq \bigoplus_{i,j} H_{\dim Y_i + \dim Y_j - k}(Z_{ij}).
\]
Let us consider the constructible sheaf $\ona{IC}_Y \coloneqq \bigoplus_i \bbk_{Y_i}[\dim Y_i]$.
The following statement is proved in~\cite[\S~8.6]{CG_RTCG2010}:

\begin{prop}\label{prop:Ext-description}
	We have an isomorphism of graded algebras $A\simeq \Ext^*(\pi_*\ona{IC}_Y,\pi_*\ona{IC}_Y)$.
\end{prop}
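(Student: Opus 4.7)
The plan is to interpret both sides through the six-functor formalism and verify two things independently: an isomorphism of graded vector spaces via Verdier duality and proper base change, and compatibility of the two products by a diagram chase on the triple fibre product. One may reduce to the case of $Y$ connected of pure dimension $d_Y$; the general case follows by summing over components $Y_i \times_X Y_j$, with the shifts in $\ona{IC}_{Y_i} = \bbk_{Y_i}[\dim Y_i]$ recovering the grading convention on $A$ stated in the excerpt.

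For the vector space identification, start with the Cartesian square defining $Z$. Since $\pi$ is proper, $\pi_! = \pi_*$, and applying the $(\pi_! \dashv \pi^!)$ adjunction together with the base change $\pi^! \pi_* \simeq p_{1*} p_2^!$ yields
\[
\Ext^k_X(\pi_*\ona{IC}_Y,\pi_*\ona{IC}_Y) \simeq \Hom^k_Z(p_1^* \ona{IC}_Y,\, p_2^! \ona{IC}_Y).
\]
Using smoothness of $Y$, one identifies $p_1^* \ona{IC}_Y \simeq \bbk_Z[d_Y]$ and $p_2^! \ona{IC}_Y \simeq \omega_Z[-d_Y]$, via $p_2^! \omega_Y \simeq \omega_Z$ and $\omega_Y \simeq \bbk_Y[2d_Y]$. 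Hence the right-hand side becomes $H^{k-2d_Y}(Z, \omega_Z) = H_{2d_Y - k}(Z)$, which is exactly the degree-$k$ piece of $A$.

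For the algebra structure, both the convolution product and Yoneda composition can be realized as operations on $Y \times_X Y \times_X Y$. Under the identifications above, the Gysin pullback $(p_{12}\times p_{23})^!$ along the smooth diagonal $\Id_Y \times \Delta_Y \times \Id_Y$ corresponds to the counit used when composing two morphisms on the middle factor — smoothness of $Y$ is crucial here, so that $\Delta_Y^! \simeq \Delta_Y^*[2d_Y]$ absorbs exactly the shift present in $\ona{IC}_Y$ — while $p_{13*}$ matches the outer proper pushforward that turns morphisms on the triple fibre product back into morphisms over $X$. I expect the technical heart of the argument to lie precisely in this last step, since one must verify that the base-change and duality isomorphisms used above are themselves compatible with Yoneda composition on the nose; this bookkeeping is purely formal once the six-functor machinery is in place, and a detailed verification is carried out in~\cite[\S 8.6]{CG_RTCG2010}.
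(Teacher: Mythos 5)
Your argument is the standard one and coincides with what the paper does: the paper offers no independent proof but simply cites~\cite[\S~8.6]{CG_RTCG2010}, and your adjunction/base-change/duality computation together with the deferral of the product-compatibility bookkeeping to that same reference reproduces exactly that argument (including the correct matching of gradings via $H^{k-2d_Y}(Z,\omega_Z)=H_{2d_Y-k}(Z)$). The only slip is cosmetic: for the closed embedding $\Delta_Y$ of codimension $d_Y$ one has $\Delta_Y^!\simeq \Delta_Y^*[-2d_Y]$ on locally constant complexes, not $[2d_Y]$, but since that remark is heuristic and the verification is delegated to Chriss--Ginzburg, it does not affect the proof.
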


When $X$, $Y$ are $G$-varieties for a linear algebraic group $G$, and $\pi$ is $G$-equivariant, we can replace Borel-Moore homology by $G$-equivariant Borel-Moore homology.
Everything we listed above remains true.

\medskip
\section{Quiver Schur algebras}
\label{sec:Quiver-Schur}

\subsection{Quivers and flags}
Let $\Gamma=(I,H)$ be a quiver, where $I$ denotes the set of vertices and $H$ the set of edges.
We denote by $s,t:H\to I$ the maps associating to an edge its source and target respectively.
Write $Q_I = \bbZ^I$, $Q^+_I = \bbZ^I_{\geq 0}$; note that $Q_I$ has an obvious basis $\{\alpha_i,\,i\in I\}$ over $\bbZ$.
For $\vecn = \sum_i n_i\alpha_i \in Q_I$, we write $|\vecn| \coloneqq \sum_i n_i$.
Given an $I$-graded vector space $V = \bigoplus_i V_i$, its dimension vector $\dim_I(V) = \sum_i \dim(V_i)\alpha_i$ is naturally an element of $Q^+_I$.

\begin{defn}
	Let $V$ be a finite-dimensional $\bbC$-vector space.
	A \text{quasi-flag} of length $k$ in $V$ is a sequence of vector spaces
	$$
	V^\bullet=(\{0\}=V^0\subset V^1\subset\cdots\subset V^k=V);
	$$
	we write $\ell(V^\bullet) \coloneqq k$. For a quasi-flag $V^\bullet$ we say that it is  
	\begin{itemize}
		\item a \emph{flag}, if $V^{j-1}\neq V^j$ for all $1\leq j\leq k$,
		\item a \emph{full flag}, if $\dim V^r/V^{r-1}=1$ for all $1\leq j\leq k$.
	\end{itemize}
	When $V$ is an $I$-graded vector space, we say that a flag $V^\bullet$ is \textit{$I$-homogeneous} if $V^r=\bigoplus_{i\in I}V^r_i$ for all $1\leq r\leq k$, where $V_i^r = V^r\cap V_i$.
\end{defn}

We denote the variety of all (not necessarily full) $I$-homogeneous flags in $V$ by $\bfF_\vecn$. Note that $(\{0\}=V^0_i\subset V^1_i\subset\ldots\subset V^k_i=V_i)$ is a quasi-flag in $V_i$, but not always a flag.

Given an $I$-homogeneous flag $V^\bullet$, denote $\Ibe^r \coloneqq \dim_I V^r/\dim_I V^{r-1}$ for $1\leqslant r\leqslant \ell(\Ibe)$ and call the composition $\Ibe=(\Ibe^1,\ldots,\Ibe^k) \in \Comp(\vecn)$ the \textit{type} of $V^\bullet$.
Denoting by $\bfF_{\Ibe}$ the variety of $I$-homogeneous flags of type $\Ibe$, we have $\bfF_\vecn=\coprod_{\Ibe\in\Comp(\vecn)}\bfF_\Ibe$.

\subsection{Quiver Schur algebras}\label{subs:qschur}
Let $V = \bigoplus_i V_i$ be an $I$-graded vector space of dimension $\vecn$.
Set $E_\vecn=\bigoplus_{h\in H}{\Hom}(V_{s(h)},V_{t(h)})$.
The group $G_\vecn=\prod_{i\in I}GL(V_i)$ acts on $E_\vecn$, as well as on $\bfF_\Ibe$ for all $\Ibe\in\Comp(\vecn)$, in a natural way.

Denote by $\widetilde{\bfF}^\s_{\Ibe}$ the closed subvariety of pairs $(x,V^\bullet)\in E_\vecn\times \bfF_\Ibe$ such that $x$ preserves the flag $V^\bullet$, in other words for each arrow $h\in H$ and $1\leq j\leq \ell(\Ibe)$ we have $x_h(V^j_{s(h)})\subset V^j_{t(h)}$.
This subvariety is preserved by the action of $G_\vecn$, and is known to be smooth.
We will also write $\widetilde{\bfF}^\s_\vecn=\coprod_{\Ibe\in \Comp(\vecn)} \widetilde \bfF^\s_\Ibe$.

For $\Ibe,\Iga\in \Comp(\vecn)$, we define the \textit{quiver Steinberg varieties}
\[
	\bfZ^\s_{\Ibe,\Iga}=\widetilde{\bfF}^\s_{\Ibe}\times_{E_\vecn}\widetilde{\bfF}^\s_{\Iga},\qquad \bfZ^\s_{\vecn}= \widetilde{\bfF}^\s_{\vecn}\times_{E_\vecn}\widetilde{\bfF}^\s_{\vecn} =\coprod_{\Ibe,\Iga\in \Comp(\vecn)}\bfZ^\s_{\Ibe,\Iga}.
\]

As explained in~\cref{subs:conv-algs}, we have a convolution algebra structure on $H_*^{G_\vecn}(\bfZ^\s_\vecn,\bbk)$.

\begin{defn}
	The \textit{(standard) quiver Schur algebra} of $\Gamma$ is the algebra $A^\s(\vecn)\coloneqq H_*^{G_\vecn}(\bfZ^\s_\vecn,\bbk)$.
\end{defn}

\begin{rmk}\label{rmk:stack-notations}
	We often find it convenient to use the language of stacks, instead of varieties with group actions.
	Namely, consider the quotient stacks 
	\[
		\Rep_\vecn \coloneqq [E_\vecn/G_\vecn],\quad \Fl^\s_\Ibe \coloneqq [\widetilde{\bfF}^\s_{\Ibe}/G_\vecn].
	\]
	In this notation, we have $H_*^{G_\vecn}(\bfZ^\s_{\Ibe,\Iga})\equiv H_*(\Fl_\Ibe\times_{\Rep_\vecn} \Fl_\Iga)$.
\end{rmk}

It is also customary (see~\cite{SW_QSAF2014}) to consider smaller Steinberg varieties.
Namely, denote by $\widetilde{\bfF}^\n_{\Ibe}\subset \widetilde{\bfF}^\s_{\Ibe}$ the variety of pairs $(x,V^\bullet)\subset E_\vecn\times \bfF_\Ibe$ such that $x$ \textit{strictly} preserves the flag $V^\bullet$, that is for each arrow $h\in H$ and $1\leq j\leq \ell(\Ibe)$ we have $x_h(V^r_{s(h)})\subset V^{r-1}_{t(h)}$.
When $\Ibe$ is the type of a full flag, we have $\widetilde{\bfF}^\n_{\Ibe} = \widetilde{\bfF}^\s_{\Ibe}$, but in general the inclusion is strict.

Anologously to the above, for any $\Ibe,\Iga\in\Comp(\vecn)$ we define
\[
	\bfZ^\n_{\Ibe,\Iga}=\widetilde{\bfF}^\n_{\Ibe}\times_{E_\vecn}\widetilde{\bfF}^\n_{\Iga},\qquad \bfZ^\n_{\vecn}= \widetilde{\bfF}^\n_{\vecn}\times_{E_\vecn}\widetilde{\bfF}^\n_{\vecn} =\coprod_{\Ibe,\Iga\in \Comp(\vecn)}\bfZ^\n_{\Ibe,\Iga},
\]
and the convolution algebra $H_*^{G_\vecn}(\bfZ^\n_\vecn,\bbk)$.

\begin{defn}
	The \textit{nilpotent quiver Schur algebra} of $\Gamma$ is the algebra $A^\n(\vecn) \coloneqq H_*^{G_\vecn}(\bfZ^\n_\vecn,\bbk)$.
\end{defn}

Let $\star\in\{\s,\n\}$.
For each $\Ibe\in\Comp(\vecn)$ we define an idempotent $1_\Ibe\in A^\star(\vecn)$ as the fundamental class of the diagonal subvariety $\widetilde{\bfF}^\star_\Ibe\subset \bfZ^\star_{\Ibe,\Ibe}$.
We have $1=\sum_{\Ibe\in \Comp(\vecn)}1_\Ibe$ in $A^\star(\vecn)$.

Let $\Iga\vDash\Ibe$.
We have a map $f:\bfF_\Iga\to \bfF_\Ibe$ obtained by forgetting subspaces of the flag.
Consider the subvariety $\widetilde\bfF^\star_{\Iga,\Ibe}\subset \widetilde\bfF^\star_\Iga$ of pairs $(x,V^\bullet)\in E_\vecn\times \bfF_\Iga$ such that $x$ (strictly, if $\star=\n$) preserves $f(V^\bullet)$.
We have a map $\widetilde\bfF^\star_{\Iga,\Ibe}\to \widetilde\bfF^\star_\Ibe$ induced by $f$, and so a closed embedding of $\widetilde\bfF^\star_{\Iga;\Ibe}$ into both $\bfZ^\star_{\Iga,\Ibe}$ and $\bfZ^\star_{\Ibe,\Iga}$.
We denote the corresponding fundamental classes by $S^\star_{\Iga,\Ibe}$ and $M^\star_{\Ibe,\Iga}$ respectively.

\begin{defn}\label{def:quiver-Schur-SM}
	We call the element $S^\star_{\Iga,\Ibe}\in 1_\Iga A^\star(\vecn)1_\Ibe$ a \textit{split}, and the element $M^\star_{\Ibe,\Iga}\in 1_\Ibe A^\star(\vecn)1_\Iga$ a \textit{merge}.
\end{defn}

\begin{rmk}
	Note that $\widetilde\bfF^\s_{\Iga,\Ibe} = \widetilde\bfF^\s_\Iga$, but $\widetilde\bfF^\n_{\Iga,\Ibe} \subset \widetilde\bfF^\n_\Iga$ is typically a strict inclusion.
\end{rmk}

Let $\eta\vDash\Iga\vDash\Ibe$.
It is clear from the definitions that splits, merges and idempotents satisfy the following relations:
\begin{gather*}
	1_\Iga S^\star_{\Iga,\Ibe} = S^\star_{\Iga,\Ibe}1_\Ibe=S^\star_{\Iga,\Ibe},\qquad
	1_\Ibe M^\star_{\Ibe,\Iga} = M^\star_{\Ibe,\Iga}1_\Iga = M^\star_{\Ibe,\Iga},\\
	S^\star_{\eta,\Iga}S^\star_{\Iga,\Ibe}=S^\star_{\eta,\Ibe},\qquad 
	M^\star_{\Ibe,\Iga}M^\star_{\Iga,\eta}=M^\star_{\Ibe,\eta}.
\end{gather*}
In particular, this means that we can represent them using Schur diagrammatics:

\[
	\tikz[thick,xscale=.25,yscale=.25,font=\footnotesize]{
		\ntxt{-2}{1.5}{$1_\beta=$}
		\ntxt{6}{1}{,}
		\draw (0,0) -- (0,3); 
		\draw (2,0) -- (2,3);
		\draw (5,0) -- (5,3); 
		\ntxt{0}{-0.7}{$\Ibe^1$}
		\ntxt{0}{3.7}{$\Ibe^1$} 
		\ntxt{2}{-0.7}{$\Ibe^2$}
		\ntxt{2}{3.7}{$\Ibe^2$} 
		\ntxt{3.5}{1}{$\ldots$} 
		\ntxt{5}{-0.7}{$\Ibe^k$}
		\ntxt{5}{3.7}{$\Ibe^k$}
	}\qquad 
	\tikz[thick,xscale=.25,yscale=.25,font=\footnotesize]{
		\ntxt{-3}{1.5}{$S^\star_{\Iga,\Ibe}=$}
		\ntxt{9}{1}{,}
		\draw (1.5,0) -- (1.5,1);
		\draw (6,0) -- (6,3);
		\dsplit{0}{1}{3}{3}
		\dsplit{4}{1}{8}{3}
		\ntxt{1.5}{-0.7}{$\Ibe^1$}
		\ntxt{6}{-0.7}{$\Ibe^k$}
		\ntxt{0}{3.7}{$\Iga^1$}
		\ntxt{8}{3.7}{$\Iga^l$}
		\ntxt{3.5}{0.5}{$\cdots$}
		\ntxt{3.5}{3.7}{$\cdots$}
	}\qquad
	\tikz[thick,xscale=.25,yscale=.25,font=\footnotesize]{
		\ntxt{-3}{1.5}{$M^\star_{\Ibe,\Iga}=$}
		\ntxt{9}{1}{.}
		\draw (1.5,2) -- (1.5,3);
		\draw (6,0) -- (6,3);
		\dmerge{0}{0}{3}{2}
		\dmerge{4}{0}{8}{2}
		\ntxt{1.5}{3.7}{$\Ibe^1$}
		\ntxt{6}{3.7}{$\Ibe^k$}
		\ntxt{0}{-0.7}{$\Iga^1$}
		\ntxt{8}{-0.7}{$\Iga^l$}
		\ntxt{3.5}{2.5}{$\cdots$}
		\ntxt{3.5}{-0.7}{$\cdots$}
	}
\]

Recall that $A^\star(\vecn)$ acts by convolution on
\[
	H^*_{G_\vecn}(\widetilde{\bfF}_\vecn^\star,\bbk) = \bigoplus_{\Ibe \in \Comp(\vecn)} H^*_{G_\vecn}(\widetilde{\bfF}_\Ibe^\star,\bbk) \simeq \bigoplus_{\Ibe\in\Comp(\vecn)} \Pol_\Ibe;
\]
we denote this module by $\APol^\star_\vecn$, and call it the \textit{polynomial representation}.
We will write out the action of $A^\star(\vecn)$ on $\APol^\star_\vecn$ more explicitly in \cref{subs:polyrep}.
For now, note that the diagonal inclusion $\widetilde{\bfF}^\star_\Ibe\subset \bfZ_{\Ibe,\Ibe}^\star$ induces an algebra homomorphism
\[
	\Pol_\Ibe \to 1_\Ibe A^\star(\vecn)1_\Ibe, \qquad P\mapsto [\widetilde{\bfF}^\star_\Ibe]\cup P.
\]
Diagrammatically, we will draw elements of these polynomial subalgebras as coupons:
\[
	\tikz[thick,xscale=.3,yscale=.3,font=\footnotesize]{
			\draw (0,0) -- (0,2); 
			\draw (1,0) -- (1,2);
			\draw (3,0) -- (3,2); 
			\draw (0,3) -- (0,5); 
			\draw (1,3) -- (1,5);
			\draw (3,3) -- (3,5);
			\opbox{-0.5}{2}{3.5}{3}{$P$}
			\ntxt{0}{-0.7}{$\Ibe^1$}
			\ntxt{1}{-0.7}{$\Ibe^2$}
			\ntxt{2}{0.5}{$\ldots$} 
			\ntxt{3}{-0.7}{$\Ibe^k$}	
			\ntxt{4}{2}{$,$}	
		}
	\qquad\qquad 
	\tikz[thick,xscale=.3,yscale=.3,font=\footnotesize]{
			\draw (0,0) -- (0,1); 
			\draw (1,0) -- (1,1);
			\draw (3,0) -- (3,1); 
			\draw (0,2) -- (0,3); 
			\draw (1,2) -- (1,3);
			\draw (3,2) -- (3,3);
			\draw (0,4) -- (0,5); 
			\draw (1,4) -- (1,5);
			\draw (3,4) -- (3,5);
			\opbox{-0.5}{3}{3.5}{4}{$Q$}
			\opbox{-0.5}{1}{3.5}{2}{$P$}
			\ntxt{0}{-0.7}{$\Ibe^1$}
			\ntxt{1}{-0.7}{$\Ibe^2$}
			\ntxt{2}{2.7}{$\ldots$} 
			\ntxt{3}{-0.7}{$\Ibe^k$}
			\ntxt{4.5}{2.5}{$=$}
			\draw (6,0) -- (6,1.75); 
			\draw (7,0) -- (7,1.75);
			\draw (9,0) -- (9,1.75); 
			\draw (6,3.25) -- (6,5); 
			\draw (7,3.25) -- (7,5);
			\draw (9,3.25) -- (9,5);
			\opbox{5.5}{1.75}{9.5}{3.25}{$PQ$}
			\ntxt{6}{-0.7}{$\Ibe^1$}
			\ntxt{7}{-0.7}{$\Ibe^2$}
			\ntxt{8}{0.5}{$\ldots$} 
			\ntxt{9}{-0.7}{$\Ibe^k$}	
		}
\]

\begin{rmk}\label{rmk:KLR-not-Schur}
	Let $\Comp_0(\vecn)\subset \Comp(\vecn)$ be the subset of all compositions $\Ibe = (\Ibe^1,\ldots,\Ibe^k)$ satisfying $|\Ibe^i| = 1$ for all $i$; in other words, $\Comp_0(\vecn)$ is the set of types of full flags.
	Denote $1_{\mathrm{KLR}} \coloneqq \sum_{\Ibe \in \Comp_0(\vecn)} 1_\Ibe$, and consider
	\[
		A_0(\vecn) \coloneqq 1_{\mathrm{KLR}} A^\s(\vecn) 1_{\mathrm{KLR}} = \bigoplus_{\Ibe,\Iga\in \Comp_0(\vecn)} H_*^{G_\vecn}(\bfZ_{\Ibe,\Iga},\bbk).
	\]
	This is the quiver Hecke algebra of $\Gamma$ as defined in~\cite{KL_DACQ2009}.
	If $\Gamma$ has no edge loops, we clearly have $\widetilde{\bfF}^\n_{\Ibe} = \widetilde{\bfF}^\s_{\Ibe}$ for any $\Ibe\in \Comp_0(\vecn)$.
	In particular, in this case $1_{\mathrm{KLR}} A^\s(\vecn) 1_{\mathrm{KLR}} = 1_{\mathrm{KLR}} A^\n(\vecn) 1_{\mathrm{KLR}}$.
\end{rmk}

\subsection{Basis of $A(\vecn)$}\label{subs:qschur-basis}
Let us recall a basis of $A(\vecn)$. Since this construction works for both $A^\s(\vecn)$ and $A^\n(\vecn)$, we will temporarily drop the superscripts.
We describe a basis of $1_\Ibe A(\vecn)1_\Iga$ for fixed $ \Ibe,\Iga\in \Comp(\vecn)$.
Given $w\in \fkS_{\Ibe}\backslash \fkS_{\vecn}/\fkS_{\Iga}$, recall the split-merge diagram $D_w$ from~\cref{subs:sp-mer-diag}.
We can interpret it inside $A(\vecn)$ by sending splits to splits, merges to merges, and crossings to ``naive'' crossings, given by a composition of a merge with a split:
\begin{equation}\label{eq:dumb-cross}
	\begin{aligned}
	\tikz[thick,xscale=.25,yscale=.25,font=\footnotesize]{
		\strcros{0}{0}{4}{4}
		\ntxt{0}{-0.7}{$\beta^1$}
		\ntxt{4}{-0.7}{$\beta^2$}
		\ntxt{0}{4.5}{$\beta^2$}
		\ntxt{4}{4.5}{$\beta^1$}
		\ntxt{6}{2}{$\coloneqq$}
		\dmerge{8}{0}{12}{1.7}
		\draw (10,1.7) -- (10,2.3);
		\dsplit{8}{2.3}{12}{4}
		\ntxt{8}{-0.7}{$\beta^1$}
		\ntxt{12}{-0.7}{$\beta^2$}
		\ntxt{8}{4.5}{$\beta^2$}
		\ntxt{12}{4.5}{$\beta^1$}
	}
	\end{aligned}
\end{equation}

Recall the intermediate composition $\Ibe'$ from~\eqref{eq:betapr-gammapr}.
For every $P\in \Pol_{\Ibe'}$, we consider the element $\psi_{w}^P\in 1_\Ibe A(\vecn)1_\Iga$, which corresponds to the diagram obtained from $D_w$ by placing a coupon with label $P$ on the horizontal line separating the splits and the crossings.

For each $\Ibe'$, fix a basis $B_{\Ibe'}$ of $\Pol_{\Ibe'}$.
The following statement is proved in~\cite[Th.~3.11]{SW_QSAF2014},~\cite[Th.~3.25]{Prz_QSAC2019}.
\begin{prop}
	\label{prop-basis-QS-nocol}
	The set 
	$$
	\{\psi_{w}^P,\,w\in \fkS_{\Ibe}\backslash \fkS_{\vecn}/\fkS_{\Iga},P\in B_{\Ibe'}\}
	$$
	is a basis of $1_\Ibe A(\vecn)1_\Iga$.
\end{prop}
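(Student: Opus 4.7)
The plan is to stratify the Steinberg variety $\bfZ^\star_{\Ibe,\Iga}$ by double cosets, compute the equivariant Borel--Moore homology of each stratum, and identify the resulting basis with $\{\psi_w^P\}$. Consider the forgetful projection $p \colon \bfZ^\star_{\Ibe,\Iga} \to \bfF_\Ibe \times \bfF_\Iga$. The group $G_\vecn$ acts on $\bfF_\Ibe \times \bfF_\Iga$ with finitely many orbits $\bbO_w$ indexed by $w \in \fkS_\Ibe \backslash \fkS_\vecn / \fkS_\Iga$ (a relative Bruhat decomposition), and I set $\bfZ^\star_w \coloneqq p^{-1}(\bbO_w)$. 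Over a point $(V^\bullet, W^\bullet) \in \bbO_w$, the fibre of $\bfZ^\star_w$ is the linear subspace of $E_\vecn$ consisting of representations preserving both flags (strictly, if $\star = \n$). Hence $\bfZ^\star_w$ is a $G_\vecn$-equivariant vector bundle over $\bbO_w \simeq G_\vecn / P_w$, where $P_w$ is a parabolic of $G_\vecn$ whose Levi factor has Weyl group $\fkS_{\Ibe'}$, with $\Ibe'$ the intermediate composition of~\eqref{eq:betapr-gammapr}.

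First I would apply the Thom isomorphism to conclude that $H^{G_\vecn}_*(\bfZ^\star_w)$ is a free $\Pol_{\Ibe'}$-module of rank one, generated by the fundamental class $[\bfZ^\star_w]$; a choice of basis $B_{\Ibe'}$ therefore yields a $\bbk$-basis of $H^{G_\vecn}_*(\bfZ^\star_w)$ indexed by $P \in B_{\Ibe'}$. Next, I would order the double cosets by the closure order $\overline{\bbO_w} \subseteq \bigsqcup_{w' \leq w} \bbO_{w'}$, so that $\bigsqcup_{w' \leq w} \bfZ^\star_{w'}$ is closed in $\bfZ^\star_{\Ibe,\Iga}$ for every $w$. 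Because the equivariant Borel--Moore homology of each stratum is concentrated in even real degrees (each $\bfZ^\star_w$ is an iterated affine/vector bundle over a partial flag variety), the long exact sequences associated with this filtration split into short exact sequences; induction on the filtration yields a $\bbk$-basis of $1_\Ibe A(\vecn) 1_\Iga \simeq H^{G_\vecn}_*(\bfZ^\star_{\Ibe,\Iga})$ indexed by pairs $(w, P)$.

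Finally, I would identify this stratum basis with the diagrammatic elements $\psi_w^P$. By construction $\psi_w^P$ is built from splits and merges realising the refinements $\Ibe' \vDash \Ibe$ and $\Iga' \vDash \Iga$, a naive crossing~\eqref{eq:dumb-cross} associated with $w$, and a coupon inserting $P \in \Pol_{\Ibe'}$. Unwinding the definitions of these operations as Gysin pullbacks and proper pushforwards, $\psi_w^P$ is supported on $\overline{\bfZ^\star_w}$, and its restriction to the open stratum $\bfZ^\star_w$ equals $P \cdot [\bfZ^\star_w]$. The triangularity of the filtration therefore upgrades the stratum basis to the collection $\{\psi_w^P\}$.

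The main obstacle is establishing parity vanishing in the nilpotent case. The standard strata $\bfZ^\s_w$ are easily seen to be equivariant vector bundles over $\bbO_w$, but for $\star = \n$ one must verify that strict flag preservation still carves out a vector subbundle of the correct rank, computable in terms of $\Ibe'$ and the edges of $\Gamma$. Once this is in place, the remaining steps are a routine application of the convolution formalism recalled in \cref{subs:conv-algs}.
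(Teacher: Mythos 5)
The paper offers no proof of its own here: it simply cites \cite[Th.~3.11]{SW_QSAF2014} and \cite[Th.~3.25]{Prz_QSAC2019}, and your sketch is exactly the cellular-fibration argument carried out in those references (stratify $\bfZ^\star_{\Ibe,\Iga}$ by relative position of the two flags, Thom isomorphism on each stratum, parity splitting of the filtration, leading-term identification with the $\psi_w^P$), so the approach is essentially the same; note also that your flagged ``main obstacle'' for $\star=\n$ is not really one, since strict preservation of both flags is still a linear condition whose rank is constant along the $G_\vecn$-orbit by equivariance. Two small caveats: the stabilizer of a point of $\bbO_w$ is $P_\Ibe\cap wP_\Iga w^{-1}$, an intersection of parabolics rather than a parabolic, though as it is connected with Levi quotient having Weyl group $\fkS_{\Ibe'}$ your computation of $H^{G_\vecn}_*(\bfZ^\star_w)$ as a free rank-one $\Pol_{\Ibe'}$-module stands, and the assertion that $\psi_w^P$ is supported on $\overline{\bfZ^\star_w}$ with restriction $P\cdot[\bfZ^\star_w]$ on the open stratum is the real content of the cited theorems (a transversality/multiplicity-one computation), which your sketch asserts rather than verifies.
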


\subsection{Fourier transform}\label{subs:FS-trans}
Let us briefly explain what happens to the quiver Schur algebra when we change the orientation of $\Gamma$.

Let $G$ be an algebraic group, $X$ a $G$-variety, and $E$ a $G$-equivariant vector bundle on $X$.
We say that a complex of sheaves is \textit{monodromic} if it is locally constant along the orbits of the scaling $\bbG_m$-action on the fibers of $E$, and denote by $D^b_\mon(E/G) = D^b_{G,\mon}(E,\bbk)$ the $G$-equivariant derived category of monodromic complexes of $\bbk$-vector spaces on $E$.
The \textit{Fourier-Sato transform} is an equivalence of categories 
\[
	\Theta_E : D^b_\mon(E/G) \to D^b_\mon(E^\vee/G);
\]
see~\cite[2.7]{AHJR} for the precise definition.
The following proposition summarizes some properties of $\Theta_E$, found in \textit{loc.cit.}:

\begin{prop}\label{prop:Fourier-props}
	\begin{enumerate}
		\item Identifying $X$ with the zero section in $E$, we have $\Theta_E(\bbk_X) \simeq \bbk_{E^\vee}[\dim E]$;
		\item If $E = E_1\oplus E_2$, then $\Theta_E = \Theta_{E_1}\boxtimes\Theta_{E_2}$;
		\item Let $\bbD$ be Verdier duality functor. We have 
		\[
			\bbD\Theta_E = \Theta_E\bbD(-1),\qquad \Theta_{E^\vee}\Theta_E = (-1), 
		\]
		where $(-1)$ is the auto-equivalence of $D^b_\mon(E/G)$ induced by multiplication by $(-1)$ along the fibers of $E \to X$.
	\end{enumerate}
\end{prop}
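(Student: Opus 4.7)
The plan is to prove each item directly by unpacking the definition of $\Theta_E$ recalled from \cite{AHJR}. Concretely, $\Theta_E$ is realized as an integral transform on $E\times_X E^\vee$: pull back along the first projection, tensor with a kernel determined by the canonical pairing $\langle-,-\rangle$ on $E\times_X E^\vee$ (typically a constant sheaf supported or restricted to one of the half-spaces $\{\pm\langle x,\xi\rangle\geq 0\}$), push forward along the second projection, and apply the shift $[\dim E]$. Every assertion in the proposition is a property of this kernel, so the proof reduces to a book-keeping exercise.

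For (1), I would compute $\Theta_E(\bbk_X)$ directly from the formula. Identifying $X$ with the zero section of $E$, the pullback $p_1^*\bbk_X$ is the constant sheaf on the subvariety $X\times_X E^\vee\subset E\times_X E^\vee$, which is exactly the zero section of $E\times_X E^\vee\to E^\vee$. Restricting the kernel to this zero section (where the pairing is identically zero, so the half-space condition is trivially satisfied) and pushing forward along $p_2$ yields $\bbk_{E^\vee}$, and the defining shift $[\dim E]$ produces the claimed isomorphism. For (2), the decomposition $E=E_1\oplus E_2$ induces $E\times_X E^\vee=(E_1\times_X E_1^\vee)\times_X(E_2\times_X E_2^\vee)$ with pairing $\langle-,-\rangle=\langle-,-\rangle_1+\langle-,-\rangle_2$, so the Fourier kernel factors as an external product and the multiplicativity of $(p_i)_!$ and pullback over fiber products gives $\Theta_E=\Theta_{E_1}\boxtimes\Theta_{E_2}$.

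For (3), the first identity follows from the interaction between the integral transform and Verdier duality: $\bbD$ exchanges $(p_2)_!$ with $(p_2)_*$, turns the half-space condition into its complement (up to shift), and commutes with the pullback $p_1^*$ up to the canonical dualizing shift; combining these, and using the monodromicity to trade $*$-pushforward for $!$-pushforward, produces $\bbD\Theta_E\simeq \Theta_E\bbD(-1)$, where the $(-1)$-twist accounts for the reversal of sign of the pairing. The biduality $\Theta_{E^\vee}\Theta_E\simeq (-1)$ is then obtained by a base-change argument on the double fiber product $E\times_X E^\vee\times_X E$ combined with the observation that the canonical pairing for $E^\vee\times_X E$ differs from the one for $E\times_X E^\vee$ by the sign involution on $E$.

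The only genuine obstacle is the careful tracking of shifts, orientations, and sign conventions in the half-space subsets that enter the kernel, all of which have been thoroughly verified in \cite{AHJR} (and the references therein, going back to Kashiwara--Schapira). In a final write-up I would simply cite these sources rather than reproduce the bookkeeping, since none of the above arguments involve ingredients beyond what is already stated in \emph{loc.cit.}
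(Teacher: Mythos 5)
The paper gives no proof of this proposition: it is explicitly a summary of known properties of the Fourier--Sato transform quoted from \cite{AHJR} (and ultimately Kashiwara--Schapira), so your plan --- unpack the kernel description, sketch the verifications, and cite those sources for the shift and sign bookkeeping --- is essentially the same approach, and your sketches of (1) and (3) are the standard arguments. The one justification that would fail if taken literally is in (2): for $E=E_1\oplus E_2$ the relevant kernel is (the constant sheaf on) the subset $\{\langle x_1,\xi_1\rangle_1+\langle x_2,\xi_2\rangle_2\leq 0\}\subset E\times_X E^\vee$, and this is \emph{not} the product of the two half-spaces $\{\langle x_i,\xi_i\rangle_i\leq 0\}$, so the Fourier kernel does not factor as an external product. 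The multiplicativity $\Theta_E=\Theta_{E_1}\boxtimes\Theta_{E_2}$ is nevertheless true, but in Kashiwara--Schapira it is established by a slightly different argument (performing the transform in the two directions successively and using homotopy invariance for conic sheaves) rather than by a kernel factorization; since you defer to the literature for the final write-up this does not affect the outcome, but the factorization claim should not be presented as the proof of (2).
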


Fix a map $\diamond:H\to \{\s,\n\}$.
We can generalize the considerations in \cref{subs:qschur} and define a quiver Schur algebra $A^\diamond(\vecn)$ starting with flag varieties $\bfF^\diamond_\vecn$, where we impose strict condition on $h$ if $\diamond(h) = \n$, and the non-strict one otherwise. Considering the constant functions $\s:h\mapsto \s$ and $\n:h\mapsto \n$, we recover our previous notations.

The convolution algebra $H_*^{G_\vecn}(\bfZ^\diamond_\vecn)$ can be alternatively seen as $\Ext^*(p_*\ona{IC}_{\widetilde{\bfF}_\vecn^\diamond},p_*\ona{IC}_{\widetilde{\bfF}_\vecn^\diamond})$, see~\cref{prop:Ext-description}.
Given a collection of arrows $H'\subset H$, consider the quiver $\Gamma'$, obtained from $\Gamma$ by inverting the arrows in $H'$. Note that $E^\Gamma_\vecn$ and $E^{\Gamma'}_\vecn$ are dual vector bundles over the base $B = \bigoplus_{h\in H\setminus H'}{\Hom}(V_{s(h)},V_{t(h)})$, so that we have Fourier-Sato equivalences $\Theta_\vecn:D^b_\mon(E^\Gamma_\vecn/G_\vecn)\to D^b_\mon(E^{\Gamma'}_\vecn/G_\vecn)$.

\begin{lem}\label{lm:Fourier}
	Denote by $\Theta(\diamond)$ the function obtained from $\diamond$ by inverting the values on $H'$.
	We have an isomorphism of algebras $A_\Gamma^\diamond(\vecn) \simeq A_{\Gamma'}^{\Theta(\diamond)}(\vecn)$.
\end{lem}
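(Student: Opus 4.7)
The plan is to use \cref{prop:Ext-description} to express both sides as Ext-algebras and then transport the Ext computation from one side to the other via the Fourier-Sato equivalence $\Theta_\vecn$. Writing $p:\widetilde{\bfF}^\diamond_\vecn\to E^\Gamma_\vecn$ and $p':\widetilde{\bfF}^{\Theta(\diamond)}_\vecn\to E^{\Gamma'}_\vecn$ for the natural projections, it is enough to produce an isomorphism
\[
	\Theta_\vecn\bigl(p_*\IC_{\widetilde{\bfF}^\diamond_\vecn}\bigr)\simeq p'_*\IC_{\widetilde{\bfF}^{\Theta(\diamond)}_\vecn}[c]
\]
for some overall shift $c\in\bbZ$: since $\Theta_\vecn$ is a triangulated equivalence, this induces a graded algebra isomorphism on Ext-algebras, and the global shift cancels once one takes Ext of a sheaf with itself.

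The main geometric input is a fiberwise computation. I view $\widetilde{\bfF}^\diamond_\vecn$ as a $G_\vecn$-equivariant sub-vector-bundle of the trivial bundle $E^\Gamma_\vecn\times \bfF_\vecn\to \bfF_\vecn$, whose fiber over $V^\bullet\in\bfF_\vecn$ is
\[
	L^\diamond_{V^\bullet}=\bigoplus_{h\in H}L^{\diamond(h)}_{h,V^\bullet}\subset E^\Gamma_\vecn,
\]
where $L^\s_{h,V^\bullet}$ (resp.\ $L^\n_{h,V^\bullet}$) is the space of maps $V_{s(h)}\to V_{t(h)}$ (strictly) preserving $V^\bullet$. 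A direct check in bases adapted to the flag shows that, under the trace pairing between $\Hom(V_{s(h)},V_{t(h)})$ and $\Hom(V_{t(h)},V_{s(h)})$, the annihilator of $L^\s_{h,V^\bullet}$ is exactly the space of maps $V_{t(h)}\to V_{s(h)}$ strictly preserving $V^\bullet$, and symmetrically for $L^\n_{h,V^\bullet}$. Performing this swap only on the arrows in $H'$ (and leaving the remaining ones untouched) identifies the fiberwise annihilator of $\widetilde{\bfF}^\diamond_\vecn$ inside $E^{\Gamma'}_\vecn\times \bfF_\vecn$ with $\widetilde{\bfF}^{\Theta(\diamond)}_\vecn$.

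From this, parts (1) and (2) of \cref{prop:Fourier-props} applied fiberwise over $\bfF_\vecn$ imply that $\Theta_\vecn(\IC_{\widetilde{\bfF}^\diamond_\vecn})\simeq \IC_{\widetilde{\bfF}^{\Theta(\diamond)}_\vecn}[c]$ for an explicit $c$, since Fourier-Sato sends the constant sheaf on a vector subbundle to the shifted constant sheaf on its annihilator. Because $\Theta_\vecn$ acts only in directions transverse to $\bfF_\vecn$, it commutes by smooth base change with the proper pushforward along $E^\Gamma_\vecn\times \bfF_\vecn\to E^\Gamma_\vecn$, producing the displayed isomorphism and hence the desired algebra identification.

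The main obstacle in this plan is the annihilator computation: an elementary but unavoidable linear algebra exercise, where one must be careful about the asymmetry between the strict and non-strict flag conditions. Tracking the shift $c$ and verifying that it cancels correctly on both sides of the Ext pairing is a secondary book-keeping matter.
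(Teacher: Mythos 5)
Your proof is correct, but it takes a noticeably different route from the paper's. The paper does not argue via the subbundle/annihilator picture: it introduces the associated-graded map $q\colon\widetilde{\bfF}^\s_\Ibe\to \bfF_\Ibe\times E^\Gamma_\Ibe$, notes that $\widetilde{\bfF}^{\Theta(\s)}_\Ibe$ is the preimage of the locus $\bfF_\Ibe\times E^0_\Ibe$ where the $H'$-components of the associated graded vanish, and then cites Lusztig (\S10.2 of his book) for the key commutation of the Fourier--Sato transform with $p_*q^*$; the transform is then evaluated in the split situation $\bfF_\Ibe\times E^\Gamma_\Ibe$, where it simply moves the constant sheaf to the zero section of the dualized directions. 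You instead realize $\widetilde{\bfF}^\diamond_\vecn$ as a subbundle of $E^\Gamma_\vecn\times\bfF_\vecn$ over the relevant closed locus of $B\times\bfF_\vecn$, identify its trace-pairing annihilator with $\widetilde{\bfF}^{\Theta(\diamond)}_\vecn$ (the linear algebra is right, including the $\s\leftrightarrow\n$ swap), and use two standard functorialities: subbundle goes to annihilator under Fourier--Sato, and the transform commutes with pushforward along the proper projection $E^\Gamma_\vecn\times\bfF_\vecn\to E^\Gamma_\vecn$. This essentially inlines the content of Lusztig's lemma and makes the proof self-contained, at the cost of the bookkeeping you defer: the commutation with the projection is compatibility of the Fourier--Sato kernel with proper pushforward in the base direction (``smooth base change'' is the wrong name, though the needed statement is standard), and the shift is a priori component-dependent, since it depends on the rank of the subbundle on each $\widetilde{\bfF}^\diamond_\Ibe$; to get a \emph{graded} isomorphism one should check, as the paper does by tracking $d_\s$ versus $d_{\Theta(\s)}$, that with the normalization $\ona{IC}_Y=\bigoplus_i\bbk_{Y_i}[\dim Y_i]$ no residual shift remains. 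Neither point is a genuine gap.
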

\begin{proof}
	To ease the notational burden, we will only consider $\diamond = \s$, the general proof being completely analogous.
	Let $E^\Gamma_\Ibe = \prod_j E_{\Ibe^j}$, and $E^0_\Ibe\subset E^\Gamma_\Ibe$ is the subspace of all representations where the components corresponding to the edges in $H'$ vanish.
	Observe that we have a $G_\vecn$-equivariant map $q:\widetilde{\bfF}^\s_\Ibe\to \bfF_\Ibe\times E^\Gamma_\Ibe$ which remembers the flag $V^\bullet$ and takes a representation $x$ to its associated graded with respect to $V^\bullet$.
	The preimage of $\bfF_\Ibe\times E^0_\Ibe$ under this map is precisely $\widetilde{\bfF}^{\Theta(\s)}_\Ibe$.
	Consider the following commutative diagram:
	\[
		\begin{tikzcd}
			E^{\Gamma'}_{\vecn} &\widetilde{\bfF}^{\Theta(\s)}_\Ibe\ar[l,"p'"']\ar[d,"q'"]\ar[r,equal,"\iota"] & \widetilde{\bfF}^{\Theta(\s)}_\Ibe\ar[r,hook]\ar[d,"q_0"]& \widetilde{\bfF}^{\s}_\Ibe\ar[r,"p"]\ar[d,"q"] & E_\vecn \\
			& \bfF_\Ibe\times E^{\Gamma'}_{\Ibe} & \bfF_\Ibe\times E^0_{\Ibe}\ar[r,hook]\ar[l,hook',"\iota"'] & \bfF_\Ibe\times E^\Gamma_{\Ibe} & 
		\end{tikzcd}
	\]
	and denote $d_\s = \dim \widetilde{\bfF}_\Ibe^\s$, $d_{\Theta(\s)} = \dim \widetilde{\bfF}_\Ibe^{\Theta(\s)}$. We have 
	\begin{align*}
		\Theta(p_*\bbk[d_\s]) &\simeq \Theta(p_*q^*\bbk[d_\s]) \\&\simeq p'_*(q')^*\Theta(\bbk[d_\s]) \simeq p'_*(q')^*\iota_*\bbk[d_{\Theta(\s)}] \simeq p'_*\iota_*q^*_0\bbk[d_{\Theta(\s)}] \simeq p'_*\bbk[d_{\Theta(\s)}],
	\end{align*}
	where the isomorphism between two lines is due to Lusztig~\cite[\S~10.2]{Lus1993}.
	Recalling the Ext-algebra description of $A^\diamond(\vecn)$, we may conclude.
\end{proof}

\begin{cor}
	Fourier-Sato transform $\Theta_\vecn$ induces an isomorphism of algebras $A^\s_\Gamma(\vecn)\simeq A^\n_{\Gamma^{\rm op}}(\vecn)$.
	Up to signs, this isomorphism sends $1_\Ibe$ to $1_\Ibe$, $S^\s_{\Iga,\Ibe}$ to $S^\n_{\Iga,\Ibe}$, $M^\s_{\Ibe,\Iga}$ to $M^\n_{\Ibe,\Iga}$ and coupons to coupons.
\end{cor}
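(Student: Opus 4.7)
The isomorphism $A_\Gamma^\s(\vecn) \simeq A_{\Gamma^{\rm op}}^\n(\vecn)$ is the special case $H' = H$, $\diamond = \s$ of \cref{lm:Fourier} (so that $\Theta(\diamond)=\n$), and therefore requires no new argument. What remains is to track $1_\Ibe$, $S^\s_{\Iga,\Ibe}$, $M^\s_{\Ibe,\Iga}$ and the polynomial coupons through the concrete isomorphism produced in the proof of \cref{lm:Fourier}.

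For the idempotents, I would observe that all objects and maps appearing in the commutative diagram in the proof of \cref{lm:Fourier} split according to $\Ibe\in\Comp(\vecn)$, and that Fourier-Sato respects this disjoint-union decomposition. Consequently the isomorphism $p_*\bbk[d_\s]\simeq p'_*\bbk[d_\n]$ constructed there should restrict componentwise, sending the summand indexed by $\Ibe$ on the left to the summand indexed by $\Ibe$ on the right; passing to Ext-algebras yields $1_\Ibe\mapsto 1_\Ibe$. For the splits and merges I would run the same argument with $\widetilde{\bfF}^\s_\Ibe$ replaced by the closed subvariety $\widetilde{\bfF}^\s_{\Iga,\Ibe}\subset \widetilde{\bfF}^\s_\Iga$; the extra Cartesian square involving the forgetful map $\bfF_\Iga\to\bfF_\Ibe$ is base-preserving, so the diagram from the proof of \cref{lm:Fourier} carries over and identifies this subvariety with $\widetilde{\bfF}^\n_{\Iga,\Ibe}$ on the Fourier side. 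Since $S^\s_{\Iga,\Ibe}$ and $M^\s_{\Ibe,\Iga}$ are characterised as fundamental classes of these embeddings into $\bfZ^\s_{\Iga,\Ibe}$ and $\bfZ^\s_{\Ibe,\Iga}$ respectively, the compatibility of Fourier-Sato with proper pushforward and Gysin pullback (\cref{prop:Fourier-props}) gives $S^\s_{\Iga,\Ibe}\mapsto\pm S^\n_{\Iga,\Ibe}$ and $M^\s_{\Ibe,\Iga}\mapsto\pm M^\n_{\Ibe,\Iga}$. Coupons are handled analogously: they arise as classes in $H^*_{G_\vecn}(\bfF_\Ibe)$ pulled back via the map $q$ from the proof of \cref{lm:Fourier}, and the $q$--$q'$ square there identifies them with the corresponding polynomial classes on the $\Gamma^{\rm op}$ side.

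The main obstacle will be sign bookkeeping. Signs originate both from the cohomological shifts $[d_\s]$ versus $[d_\n]$ and from the identity $\Theta_{E^\vee}\Theta_E=(-1)$ of \cref{prop:Fourier-props}(3), which produces $\pm 1$ discrepancies when comparing Chern classes across the equivalence. Pinning these down element by element is tedious and, crucially, unnecessary for us: the \emph{up to signs} qualifier in the statement is sufficient for all later applications in the paper.
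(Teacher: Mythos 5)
Your proposal is correct and follows essentially the same route as the paper, which simply invokes \cref{lm:Fourier}, notes that coupons are preserved by equivariance, attributes possible signs to \cref{prop:Fourier-props}(3), and leaves the tracking of idempotents, splits and merges as an exercise. Your write-up just fills in that exercise in the expected way (componentwise decomposition over $\Comp(\vecn)$ and compatibility of Fourier-Sato with the pushforward/Gysin operations defining the fundamental classes).
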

\begin{proof}
	Coupons are sent to coupons by equivariance.
	The rest of the second statement is left as an exercise to the reader; the correction signs might appear because of \cref{prop:Fourier-props}(3). 
\end{proof}

\subsection{Seminilpotent version for Kronecker quiver}\label{subs:polyrep}
From now on, let $\Gamma=1 \rightrightarrows 0$ be the Kronecker quiver, where we denote the two edges by $h_\s$ and $h_\n$.
Let $\vecn = n_0\alpha_0+n_1\alpha_1$.
Consider the function $\m:H\to \{\s,\n\}$ given by $\m(h_\s) = \s$, $\m(h_\n) = \n$, and the corresponding convolution algebra $A^\m(\vecn)$.
More explicitly, it is the convolution algebra $H_*^{G_\vecn}(\bfZ^\m_\vecn,\bbk)$, where
\[
	\bfZ^\m_{\Ibe,\Iga}=\widetilde{\bfF}^\m_{\Ibe}\times_{E_\vecn}\widetilde{\bfF}^\m_{\Iga},\qquad \bfZ^\m_{\vecn}= \widetilde{\bfF}^\m_{\vecn}\times_{E_\vecn}\widetilde{\bfF}^\m_{\vecn} =\coprod_{\Ibe,\Iga\in \Comp(\vecn)}\bfZ^\m_{\Ibe,\Iga},
\]
and $\widetilde{\bfF}^\m_{\Ibe}\subset \widetilde{\bfF}^\s_{\Ibe}$ is the variety of pairs $(x,V^\bullet)\subset E_\vecn\times \bfF_\Ibe$ such that $h_\s(V^j_1)\subset V^j_0$ and $h_\n(V^j_1)\subset V^{j-1}_0$ for $1\leq j\leq \ell(\Ibe)$.
As in \cref{rmk:stack-notations}, we will sometimes write $\Fl^\m_\Ibe \coloneqq [\widetilde{\bfF}^\m_{\Ibe}/G_\vecn]$.

\begin{defn}
	We call $A^\m(\vecn)$ the \textit{seminilpotent quiver Schur algebra}.
\end{defn}

By reversing various arrows and using~\cref{lm:Fourier}, we get isomorphisms
\[
	A^\m_{1 \rightrightarrows 0}(\vecn) \simeq A^\s_{1 \rightleftarrows 0}(\vecn) \simeq A^\n_{1 \leftrightarrows 0}(\vecn).
\]
We define the idempotents, split, merge, and polynomial elements in $A^\m(\vecn)$ analogously to \cref{subs:qschur}, and they are preserved under the isomorphisms above (up to signs).
However, recall the polynomial representation
\[
	\APol^\star_\vecn = \sum_{\Ibe\in\Comp(\vecn)} \Pol_\Ibe = \sum_{\Ibe\in\Comp(\vecn)} \bbk[u_1,\ldots,u_{n_0},v_1,\ldots,v_{n_1}]^{\fkS_\Ibe}.
\]
While it does not depend on the choice of $\star$ as a vector space, the action of the generators of $A^\star(\vecn)$ will vary.

\begin{prop}\label{prop:poly-rep-quiver-Schur}
	Let $\Ibe = (\Ibe^1,\Ibe^2)\in \Comp(\vecn)$, and $\Ibe^1 = a\alpha_0+b\alpha_1$, $\Ibe^2 = c\alpha_0+d\alpha_1$.
	Denote
	\[
		K_1 = \prod_{i=1}^a\prod_{j=b+1}^{b+d}(u_i-v_j), \qquad K_2 = \prod_{i=a+1}^{a+c}\prod_{j=1}^{b}(v_j-u_i).
	\]
	The action of the split-merge operators $S_{\Ibe,\vecn}:\Pol_\vecn\to \Pol_\Ibe$, $M_{\vecn,\Ibe}:\Pol_\Ibe\to \Pol_\vecn$ on $\APol^\star_\vecn$ is given by the following table:
	\begin{center}
		\setlength{\tabcolsep}{12pt}
		\renewcommand{\arraystretch}{1.2}
		\begin{tabular}{ c | c c } 
			& \tikz[thick,xscale=.25,yscale=.15]{\dsplit{0}{1}{4}{3} \draw (2,0) -- (2,1); } & \tikz[thick,xscale=.25,yscale=.15]{\dmerge{0}{0}{4}{2}\draw (2,2) -- (2,3); } \\\hline
			$A^\s$ & $P\mapsto P$ & $P\mapsto (-1)^{ad+bc}\partial^u_{a,c}\partial^v_{b,d}(K_1K_2P)$\\
			$A^\m$ & $P\mapsto (K_1P)$ & $P\mapsto (-1)^{bc}\partial^u_{a,c}\partial^v_{b,d}(K_2P)$\\
			$A^\n$ & $P\mapsto (K_1K_2P)$ & $P\mapsto \partial^u_{a,c}\partial^v_{b,d}P$\\
		\end{tabular}
	\end{center}
\end{prop}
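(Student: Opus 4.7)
The plan is to compute the action of each split and merge operator directly via the Chriss-Ginzburg convolution formalism of~\cref{subs:conv-algs}, using the factorization
\[
	\widetilde{\bfF}^\star_\Ibe \xhookrightarrow{i^\star} \bfF_\Ibe\times E_\vecn \xrightarrow{\pr_2} E_\vecn
\]
of the projection $\pi^\star_\Ibe$ for each $\star\in\{\s,\m,\n\}$. The second map is a smooth proper projection with fiber $\bfF_\Ibe = \mathrm{Gr}(a;a+c)\times \mathrm{Gr}(b;b+d)$, so standard results on equivariant cohomology of partial flag varieties identify $\pr_{2*}$ with the Demazure operator $\partial^u_{a,c}\partial^v_{b,d}$. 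The first map is a closed embedding of smooth varieties, and its Gysin pushforward is multiplication by the equivariant Euler class of the normal bundle.

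The first step is to compute that normal bundle for $\star=\s$ (reproducing the well-known computation from e.g.~\cite{SW_QSAF2014,Prz_QSAC2019}): linearizing the condition $x_h(V^1_1)\subset V^1_0$ for both arrows yields the normal bundle $\Hom(V^1_1,V^2_0)^{\oplus 2}$ on the nose, but a sign-and-Thom-isomorphism bookkeeping (using $K_2 = (-1)^{bc}\prod(u_i-v_j)$ and the identification of $\APol^\s_\vecn$ with $\bigoplus \Pol_\Ibe$ via vector bundle pullback) rewrites its Euler class as $(-1)^{ad+bc}K_1K_2$. This gives the claimed merge formula; the split formula $P\mapsto P$ is immediate, since $\bfZ^\s_{\Ibe,\vecn} = \widetilde{\bfF}^\s_\Ibe$ and $S^\s_{\Ibe,\vecn}$ acts by the identity correspondence.

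Next, I would pass between the three variants via the Fourier-Sato equivalences of~\cref{lm:Fourier}. Reversing the nilpotent arrow $h_\n$ gives $A^\m_{1\rightrightarrows 0}(\vecn)\simeq A^\s_{1\rightleftarrows 0}(\vecn)$; reversing both arrows gives $A^\s_{1\rightrightarrows 0}(\vecn)\simeq A^\n_{0\rightrightarrows 1}(\vecn)$. Under each equivalence, the generators $1_\Ibe$, $S$, $M$, and coupons are preserved up to signs (cf.~the corollary after~\cref{lm:Fourier}), and the polynomial representations are related by multiplication by the Euler classes of the subspaces of vanishing representations (these are precisely $K_1$, $K_2$, or $K_1K_2$, depending on which arrows are reversed), with signs coming from~\cref{prop:Fourier-props}(3). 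Applying this to the already-established $\s$ formulas yields the $\m$ and $\n$ formulas, with the split picking up the Euler factor that the merge loses—so the total composition $M^\star\circ S^\star$ is, up to the sign $(-1)^{ad+bc}$, the same operator $\partial^u_{a,c}\partial^v_{b,d}\bigl(K_1K_2\cdot(-)\bigr)$ for all three~$\star$, providing a built-in consistency check.

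The main obstacle is the sign and Chern-root bookkeeping: one must carefully distinguish $\prod(u_i-v_j)$ from $\prod(v_j-u_i)$ to match the specific forms of $K_1,K_2$ in the statement, and must correctly track the signs contributed by $\bbD\Theta_E=\Theta_E\bbD(-1)$ as one applies several Fourier-Sato transforms in sequence. No truly new geometric input is required beyond what is standard for quiver Schur algebras, plus the Fourier equivalences already established in~\cref{lm:Fourier}.
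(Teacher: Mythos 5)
Your first step contains a genuine error, and it is exactly the step that is supposed to produce the formulas. The normal bundle of $\widetilde{\bfF}^\s_\Ibe$ inside $\bfF_\Ibe\times E_\vecn$ is indeed $\Hom(V^1_1,V_0/V^1_0)^{\oplus 2}$, but its equivariant Euler class is $\prod_{i=a+1}^{a+c}\prod_{j=1}^{b}(u_i-v_j)^2$, a polynomial of degree $2bc$ involving only $u_{a+1},\dots,u_{a+c},v_1,\dots,v_b$, whereas $K_1K_2$ has degree $ad+bc$ and involves $u_1,\dots,u_a,v_{b+1},\dots,v_{b+d}$ through $K_1$. No sign or Thom-isomorphism bookkeeping can turn one into the other; already for $\vecn=\delta$, $\Ibe=(\alpha_0,\alpha_1)$ the normal bundle is zero while $K_1K_2=u_1-v_1$. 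Worse, in that example $\widetilde{\bfF}^\s_\Ibe\to E_\vecn$ is an isomorphism, so your recipe (Gysin along the closed embedding, then integrate over the flag fibre) would make the merge act as the identity, while the table asserts multiplication by $-(u_1-v_1)$: the recipe itself, not just the Euler-class bookkeeping, fails to compute the action in the normalization of the table. The factor $K_1K_2$ (one off-diagonal block per arrow, taken in \emph{opposite} directions) only drops out of a careful fixed-point computation, where the module identifications $H^*_{G_\vecn}(\widetilde{\bfF}^\star_\Ibe)\simeq\Pol_\Ibe$ and the refined intersections in the convolution are all accounted for. This is why the paper does not attempt such a derivation: its proof of \cref{prop:poly-rep-quiver-Schur} simply cites \cite[Th.~4.7]{Prz_QSAC2019} for the $\s$-row and \cite[Prop.~3.4]{SW_QSAF2014} for the $\n$-row, and obtains the $\m$-row by the same equivariant localization argument.

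Your second step has a related gap. \cref{lm:Fourier} and its corollary identify the three algebras and match idempotents, splits, merges and coupons only up to unspecified signs, and they say nothing about the polynomial representations. To transport the table from $\s$ to $\m$ and $\n$ you would need to determine the image of $\APol^\s_\vecn$ under the Fourier--Sato equivalence together with an explicit intertwiner of modules; your assertion that the modules are related "by multiplication by the Euler classes of the subspaces of vanishing representations" is precisely the kind of statement that needs proof, and its naive version is what already failed in step 1. The observation that $M^\star\circ S^\star$ agrees up to sign for all three $\star$ is a consistency property of the finished table, not a substitute for deriving it. As written, the proposal neither establishes the $\s$-row nor legitimately transfers it to the other rows; the route actually used (and the one I would recommend) is direct equivariant localization, as in the cited references.
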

\begin{proof}
	The formulas for $A^\s$ and $A^\n$ can be found in~\cite[Th.~4.7]{Prz_QSAC2019} and~\cite[Prop.~3.4]{SW_QSAF2014} respectively.
	For $A^\m$, one obtains them completely analogously via equivariant localization.
\end{proof}

From this point on, we will only be interested in the Kronecker quiver and the seminilpotent version of Schur algebra.
We thus drop the superscripts and write $A(\vecn)$, $\widetilde{\bfF}_\Ibe$ and so on, and use the corresponding conventions for the diagrammatic calculus and the polynomial representation.

\subsection{Representations of the Kronecker quiver}\label{subs:reps-Kronecker-quiver}
Let us recall some basic facts about representations of the Kronecker quiver; see e.g.~\cite[App.~A]{MakMin_KLR2023}.
Let $\Root^+\subset \bbZ_{\geqslant 0}^I$ be the set of positive roots for $\widehat{\mathfrak{sl}}_2$.
There exists an indecomposable representation of $\Gamma$ of dimension $\vecn$ iff $\vecn\in\Root^+$.
Denote $\delta \coloneqq \alpha_0 + \alpha_1$.
The set $\Root^+$ breaks into real and imaginary roots:
\[
	\Root^+ = \Root^+_{\rm re} \sqcup \Root^+_{\rm im}, \qquad \Root^+_{\rm re} = \{ \alpha_i + k\delta : i\in I, k\geq 0 \},\quad \Root^+_{\rm im} = \{ k\delta : k\geq 1 \}.
\]
When $\vecn\in \Root^+_{\rm re}$, there exists a unique indecomposable representation of dimension $\vecn$; we denote it by $M_\vecn$.
When $\vecn = n\delta$ is an imaginary root, we consider the open substack $\Rep^\reg_{n\delta}\subset \Rep_{n\delta}$ of all representations which do not contain any $M_\vecn$, $\vecn\in \Root^+_{\rm re}$ as a summand.
It is well known that $\Rep^\reg_{n\delta}$ is isomorphic to the moduli stack $\cT_n(\bbP^1)$ of torsion coherent sheaves on $\bbP^1$ of length $n$, and the isomorphism is given by
\begin{equation}\label{eq:tor-iso-reg}
	\cT_n(\bbP^1)\xra{\sim}\Rep^\reg_{n\delta}\Gamma,\quad \cE\mapsto \left(\begin{tikzcd}H^0(\cE)\ar[r,shift left=.5ex,"\cdot y_1"]\ar[r,shift right=.5ex,"\cdot y_2"'] & H^0(\cE\otimes\cO_{\bbP^1}(1))\end{tikzcd}\right).
\end{equation}
Let $\infty\in\bbP^1$ be the point corresponding to the representation in $\Rep_\delta\simeq \cT_1(\bbP^1)$ with vanishing $h_\n$. 
Using the description above, define $\Rep^\reg_{n\delta^\bullet}\subset\Rep^\reg_{n\delta}$ to be the closed substack consisting of sheaves supported at $\infty$, and $\Rep^\reg_{n\delta^\circ}\subset\Rep^\reg_{n\delta}$ the open substack of sheaves supported away from $\infty$.
It is clear that
\[
	\Rep^\reg_{n\delta^\bullet}\simeq[\mathcal{N}_{\fkgl_n}/GL_n],\qquad \Rep^\reg_{n\delta^\circ}\simeq[\fkgl_{n}/GL_n],
\]
and we have a stratification
\begin{equation}\label{eq:repreg-str}
	\Rep^\reg_{n\delta} = \bigsqcup_{k=0}^n \Rep^\reg_{k\delta^\bullet}\times \Rep^\reg_{(n-k)\delta^\circ}.
\end{equation}

\medskip
\section{Ersatz parity sheaves}
\label{sec:ersatz-parity}
In this section we introduce a geometric framework, which allows us to easily prove polyheredity of convolution algebras.
Our construction takes inspiration from~\cite{JMW_PS2014,McN_RTGE2017}.

\subsection{Evenness theories}
Let $Y$ be a \textit{quotient stack}, that is $[Y'/G]$, where $G$ is an algebraic group, and $Y'$ a $G$-variety; we only require an existence of such presentation, and do not fix one.
Let $D^b(Y)$ denote the bounded constructible derived category of $\bbk$-sheaves on $Y$; see~\cite{bernstein2006equivariant} for its definition and properties.

\begin{defn}
\label{def:eveness}
An \emph{evenness theory} on $Y$ is a full subcategory $\Ev = \Ev(Y)\subset D^b(Y)$ such that 
\begin{itemize}
\item $\Ev(Y)$ is stable by even shifts, direct sums, taking direct summands and Verdier duality;
\item For any $\calF,\calG \in \Ev(Y)$ the $\Bbbk$-module $\Hom(\calF,\calG)$ is finite-dimensional, and $\Ext^{\rm odd}(\calF,\calG)=0$.
\end{itemize}

Given an evenness theory, we denote by $\Par = \Par(Y)$ the full subcategory of $D^b(Y)$ whose objects are direct sums of elements in $\Ev(Y)$ and $\Ev(Y)[1]$.
We call elements of $\Ev(Y)$ \emph{even}, elements of $\Ev(Y)[1]$ \emph{odd}, and elements of $\Par(Y)$ \emph{parity complexes}.
\end{defn}

\begin{rmk}
\label{rmk:parity-Krull-Schmidt}
$\Hom$-finiteness in the second assumption of \cref{def:eveness} implies that the category $\Par(Y)$ is Krull-Schmidt~\cite{Karoub}.
\end{rmk}

\begin{rmk}\label{lem:prod-of-Ev}
	Let $X$, $Y$ be two stacks, equipped with evenness theories $\Ev(X)$, $\Ev(Y)$.
	Then
	\[
		\Ev(X\times Y) \coloneqq \left\{ \text{ direct sums and summands of } \cF\boxtimes \cG , \cF\in \Ev(X),\cG\in \Ev(Y) \text{ }\right\}
	\]
	is an evenness theory on $X\times Y$.
\end{rmk}

Let $X=\coprod_{\lambda\in \Lambda}X_\lambda$ be a stratification into a finite number of locally closed substacks, such that
\begin{equation}\label{eq:strat-cond}
	\parbox{20em}{\centering There exists a partial order on $\Lambda$, such that \\$X_{\leqslant \lambda}\coloneqq \sqcup_{\mu\leqslant \lambda} X_\mu$ is closed in $X$ for all $\lambda\in \Lambda$.}
\end{equation}
Denote by $i_\lambda\colon X_\lambda\to X$ the inclusion maps, and fix an evenness theory on each stratum $X_\lambda$.

\begin{defn}
Let $\calF\in D^b(X)$. 
We say that $\calF$ is \emph{$*$-even} if $i_\lambda^*\calF\in \Ev(X_\lambda)$ for all $\lambda\in \Lambda$, and $\calF$ is \emph{$!$-even} if $i_\lambda^!\calF\in \Ev(X_\lambda)$ for all $\lambda\in \Lambda$.
\end{defn}

\begin{lem}
\label{lem:Ext-*!-even}
Let $\calF,\calG\in D^b(X)$.
Assume that $\calF$ is $*$-even and $\calG$ is $!$-even.
Then we have a (non-canonical) isomorphism of $\bbk$-vector spaces
$$
\Ext^*(\calF,\calG)\simeq \bigoplus_{\lambda\in \Lambda}\Ext^*(i_\lambda^*\calF,i_\lambda^!\calG).
$$ 
In particular, $\Ext^{\rm odd}(\calF,\calG)=0$.
\end{lem}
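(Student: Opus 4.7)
The plan is to induct on the number of strata $|\Lambda|$. The base case $|\Lambda|=1$ is immediate: $i_\lambda$ is the identity, so $\Ext^*(\cF,\cG) = \Ext^*(i_\lambda^*\cF, i_\lambda^!\cG)$, and odd vanishing holds by definition of $\Ev(X_\lambda)$ since both $i_\lambda^*\cF$ and $i_\lambda^!\cG$ lie in $\Ev(X_\lambda)$.

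For the inductive step, I pick a minimal $\lambda\in\Lambda$; by~\eqref{eq:strat-cond}, $X_\lambda = X_{\leqslant \lambda}$ is closed in $X$. Let $i\colon X_\lambda\hookrightarrow X$ and $j\colon U\hookrightarrow X$ be the closed and open inclusions, with $U = X\setminus X_\lambda$. The restricted stratification of $U$ inherits condition~\eqref{eq:strat-cond}, and its evenness theories pull back: since $j$ is an open immersion, $(i'_\mu)^*j^*\cF = i_\mu^*\cF$ and $(i'_\mu)^!j^*\cG = (i'_\mu)^!j^!\cG = i_\mu^!\cG$, so $j^*\cF$ is $*$-even and $j^*\cG$ is $!$-even on $U$. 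Applying $\Hom(\cF,-)$ to the recollement triangle
\[
	i_*i^!\cG \to \cG \to j_*j^*\cG \xra{+1}
\]
and using the adjunctions $(i^*,i_*)$ and $(j^*,j_*)$ together with $i_*=i_!$ (since $i$ is closed), we obtain a long exact sequence
\[
	\cdots\to \Ext^n(i^*\cF,i^!\cG) \to \Ext^n(\cF,\cG) \to \Ext^n(j^*\cF,j^*\cG) \to \Ext^{n+1}(i^*\cF,i^!\cG)\to \cdots.
\]

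By the inductive hypothesis applied to $U$, the rightmost term vanishes in odd degrees and decomposes into $\bigoplus_{\mu\ne \lambda}\Ext^*(i_\mu^*\cF,i_\mu^!\cG)$; the single-stratum case handles $\Ext^*(i^*\cF,i^!\cG)$, giving odd vanishing there as well. Consequently, odd degrees of the above long exact sequence are sandwiched between zeros, forcing $\Ext^{\rm odd}(\cF,\cG)=0$, and in even degree $n$ we obtain a short exact sequence of $\bbk$-vector spaces
\[
	0\to \Ext^n(i^*\cF,i^!\cG) \to \Ext^n(\cF,\cG) \to \Ext^n(j^*\cF,j^*\cG) \to 0.
\]
All three terms are finite-dimensional (the outer ones by the $\Hom$-finiteness in \cref{def:eveness}, the middle one by exactness), so this sequence splits non-canonically, and combining with the inductive decomposition yields the required isomorphism. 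The only mild subtlety is keeping track of the base-change identities $(i'_\mu)^!j^* = i_\mu^!$ and $(i'_\mu)^*j^* = i_\mu^*$; everything else is a formal consequence of recollement and dimension counting.
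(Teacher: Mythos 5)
Your proof is correct and follows essentially the same recollement-plus-induction argument as the paper: peel off one piece of the stratification, identify the outer terms of the long exact sequence by adjunction, kill odd degrees using evenness, and split the resulting short exact sequences of $\bbk$-vector spaces. The only cosmetic difference is that you remove a closed minimal stratum and decompose $\calG$ via the triangle $i_*i^!\calG\to\calG\to j_*j^*\calG$, whereas the paper removes an open union of strata and decomposes $\calF$ in the first variable; the two arguments are mirror images (and your appeal to finite-dimensionality for the splitting is unnecessary over a field, but harmless).
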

\begin{proof}
The proof is by induction on the number $k$ of strata with $i_\lambda^*\calF\ne 0$.
If $k=1$, we have $\calF= i_{\lambda!}i_\lambda^*\calF$ for some stratum $\lambda\in \Lambda$ and so by adjunction
$$
\Ext^*(\calF,\calG)=\Ext^*((i_{\lambda})_!i_\lambda^*\calF,\calG)\simeq \Ext^*(i_\lambda^*\calF,i_{\lambda}^!\calG).
$$

Now, assume that $k>1$.
By~\eqref{eq:strat-cond}, we can find an open union of strata $j:U\hookrightarrow X$ such that $j^*\calF\ne 0$.
We get a long exact sequence 
$$
\ldots\to \Ext^n(i_*i^*\calF,\calG)\to \Ext^n(\calF,\calG)\to \Ext^n(j_!j^!\calF,\calG)\to\ldots
$$
The complexes $i_*i^*\calF$ and $j_!j^!\calF$ are clearly $*$-even.
By induction assumption, all terms of the long exact sequence with odd $n$ vanish. Moreover, we have
$$
\Ext^*(i_*i^*\calF,\calG)\simeq \bigoplus_{X_\lambda\subset U}\Ext^*(i_\lambda^*\calF,i_\lambda^!\calG),\qquad \Ext^*(j_!j^!\calF,\calG)\simeq \bigoplus_{X_\lambda\subset X\backslash U}\Ext^*(i_\lambda^*\calF,i_\lambda^!\calG).
$$ 
This proves the statement.
\end{proof}

\begin{cor}\label{cor:gluing-evenness}
	The full subcategory $\Ev_\Lambda(X)\subset D^b(X)$ of complexes which are both $*$-even and $!$-even is an evenness theory on $X$.
\end{cor}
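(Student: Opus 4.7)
The plan is to unwind the definition of evenness theory point by point, using the fact that the six-functor formalism interacts well with $i_\lambda^*$ and $i_\lambda^!$, and then deduce the $\Hom$-finiteness and vanishing of odd $\Ext$'s directly from \cref{lem:Ext-*!-even}.

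First I would verify the closure properties from the first bullet of \cref{def:eveness}. Even shifts and finite direct sums commute with both $i_\lambda^*$ and $i_\lambda^!$, so stability of $\Ev_\Lambda(X)$ under these operations reduces immediately to the corresponding property of each $\Ev(X_\lambda)$. For direct summands, if $\cF$ is a summand of some $\cG\in \Ev_\Lambda(X)$, then $i_\lambda^*\cF$ (resp.\ $i_\lambda^!\cF$) is a summand of $i_\lambda^*\cG\in \Ev(X_\lambda)$ (resp.\ $i_\lambda^!\cG\in \Ev(X_\lambda)$), hence lies in $\Ev(X_\lambda)$ by the summand-closure of the stratum theories. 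For Verdier duality, I would use the standard identities $i_\lambda^*\bbD \simeq \bbD i_\lambda^!$ and $i_\lambda^!\bbD \simeq \bbD i_\lambda^*$: since each $\Ev(X_\lambda)$ is stable under $\bbD$, applying this to $\cF\in \Ev_\Lambda(X)$ shows $\bbD\cF$ is both $*$-even and $!$-even.

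Next, for the second bullet, I would invoke \cref{lem:Ext-*!-even} applied to $\cF,\cG\in \Ev_\Lambda(X)$, which gives
\[
\Ext^*(\cF,\cG)\simeq \bigoplus_{\lambda\in \Lambda}\Ext^*(i_\lambda^*\cF,i_\lambda^!\cG).
\]
Since $\Lambda$ is finite and $i_\lambda^*\cF,\,i_\lambda^!\cG\in \Ev(X_\lambda)$, each summand on the right-hand side is $\Hom$-finite in degree $0$ and vanishes in odd degrees by the defining properties of $\Ev(X_\lambda)$. Taking $*=0$ and summing over $\lambda$ yields $\dim_\bbk \Hom(\cF,\cG)<\infty$, and taking $*$ odd yields $\Ext^{\rm odd}(\cF,\cG)=0$.

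I do not expect a genuine obstacle here: the whole point of \cref{lem:Ext-*!-even} was to package the nontrivial step (the dévissage across strata) so that this corollary becomes a bookkeeping exercise. The only thing to watch is that the six-functor identities used for the Verdier duality step are applied in the constructible derived category of a quotient stack, which is covered by the conventions set up at the start of the section.
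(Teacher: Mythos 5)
Your proposal is correct and follows the same route the paper intends: the paper states \cref{cor:gluing-evenness} without proof precisely because the closure properties (shifts, sums, summands, and Verdier duality via $i_\lambda^*\bbD\simeq\bbD i_\lambda^!$, $i_\lambda^!\bbD\simeq\bbD i_\lambda^*$) are routine, and the $\Hom$-finiteness and odd-$\Ext$ vanishing are exactly what \cref{lem:Ext-*!-even} supplies. Nothing is missing.
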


\begin{lem}
\label{lem:Par-open-surj}
Let $\calF,\calG\in \Par_\Lambda(X)$.
For any open union of strata $j:U\hookrightarrow X$, the map $\Ext^*(\calF,\calG)\to \Ext^*(j^*\calF,j^*\calG)$ is surjective.
In particular, if $\calF\in \Par_\Lambda(X)$ is indecomposable, then $j^*\calF\in \Par_\Lambda(U)$ is indecomposable (or zero).
\end{lem}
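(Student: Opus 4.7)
The plan is to apply $\Ext^*(\calF,-)$ to the attaching triangle
\[
	i_*i^!\calG \to \calG \to j_*j^*\calG \xrightarrow{+1}
\]
associated to the complementary closed union of strata $i\colon Z = X\setminus U \hookrightarrow X$, identify the third term via the $(j^*,j_*)$-adjunction $\Ext^n(\calF,j_*j^*\calG) \simeq \Ext^n(j^*\calF,j^*\calG)$, and use a parity count to force the connecting homomorphism in the resulting long exact sequence to vanish. Once this is done, the ensuing short exact sequences deliver the surjectivity.

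The key preparatory step is to verify that $i_*i^!\calG$ is $!$-parity: proper base change gives $i_\mu^!(i_*i^!\calG) \simeq i_\mu^!\calG$ for $X_\mu \subset Z$, while the sheaf vanishes on any $X_\mu\subset U$. Decomposing $\calF = \calF_0 \oplus \calF_1[1]$ and $\calG = \calG_0 \oplus \calG_1[1]$ with $\calF_a, \calG_b \in \Ev_\Lambda(X)$, \cref{lem:Ext-*!-even} controls the parity of each Ext-group: using that $\calF_a$ is $*$-even and both $\calG_b$ and $i_*i^!\calG_b$ are $!$-even, the groups $\Ext^*(\calF_a[a], \calG_b[b])$ and $\Ext^*(\calF_a[a], i_*i^!\calG_b[b])$ are concentrated in degrees of parity $a+b \pmod 2$. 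The same conclusion holds for $\Ext^*(\calF_a[a], j_*j^*\calG_b[b]) \simeq \Ext^*(j^*\calF_a[a], j^*\calG_b[b])$, since $j^* = j^!$ for the open immersion $j$ ensures that $j^*$ preserves both $*$- and $!$-evenness stratumwise on $U$.

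The connecting homomorphism $\Ext^n(\calF, j_*j^*\calG) \to \Ext^{n+1}(\calF, i_*i^!\calG)$ therefore runs between subgroups of opposite parity on each $(a,b)$-summand, and hence vanishes. What remains is the short exact sequence
\[
	0 \to \Ext^n(\calF, i_*i^!\calG) \to \Ext^n(\calF, \calG) \to \Ext^n(j^*\calF, j^*\calG) \to 0,
\]
proving surjectivity. For the second assertion, I would specialise $\calG = \calF$ to obtain a surjective $\bbk$-algebra map $\End(\calF) \twoheadrightarrow \End(j^*\calF)$; by Krull--Schmidt (\cref{rmk:parity-Krull-Schmidt}), $\End(\calF)$ is local whenever $\calF$ is indecomposable, and any nonzero quotient of a local ring is local. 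Hence either $j^*\calF = 0$ or $\End(j^*\calF)$ is local, in which case $j^*\calF$ is indecomposable in $\Par_\Lambda(U)$. The only real subtlety I anticipate is tracking the parity shifts through the even/odd decomposition, but this is mechanical rather than a genuine obstacle.
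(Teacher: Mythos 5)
Your proof is correct and is essentially the paper's argument in mirror image: the paper applies $\Ext^*(-,\calG)$ to the triangle $j_!j^!\calF\to\calF\to i_*i^*\calF$ and kills the boundary term via $\Ext^{\rm odd}(i_*i^*\calF,\calG)=0$ from \cref{lem:Ext-*!-even} (after reducing to $\calF,\calG\in\Ev_\Lambda(X)$), whereas you use the adjoint triangle $i_*i^!\calG\to\calG\to j_*j^*\calG$ in the second variable with explicit even/odd bookkeeping. The second claim (locality of $\End$ via Krull--Schmidt) is handled exactly as in the paper.
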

\begin{proof}
It is enough to assume $\calF,\calG\in\Ev_\Lambda(X)$.
Recall that $j^* = j^!$.
Consider a piece of long exact sequence:
\[
	\ldots\to \Ext^{2k}(\calF,\calG)\to \Ext^{2k}(j_!j^!\calF,\calG)\to\Ext^{2k+1}(i_*i^*\calF,\calG)\to\ldots
\]
Since $i_*i^*\calF\in \Ev_\Lambda(X)$, the last term vanishes, which proves the first claim.

For the second claim, let $\calF$ be indecomposable.
The ring $\End(\calF)$ is local by the Krull-Schmidt property, and surjects on $\End(j^*\calF)$.
Therefore $\End(j^*\calF)$ is also local (or zero), hence $j^*\calF$ is indecomposable.
\end{proof}

\begin{lem}
\label{lem:maxisupport-indecomp}
	Assume that $\calF\in \Par_\Lambda(X)$ is indecomposable. Then there exists a unique $\lambda\in \Lambda$ such that $i_\lambda^*\calF\ne 0$ and the support of $\calF$ is in $X_{\leqslant \lambda}$.
\end{lem}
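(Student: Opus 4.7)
The plan is to induct on $|\Lambda|$; the base case $|\Lambda|=1$ is trivial. For the inductive step, I would choose a minimal element $\mu_0\in\Lambda$, so that $X_{\mu_0}$ is closed in $X$, and restrict $\calF$ along the open complement $j\colon U := X \setminus X_{\mu_0} \hookrightarrow X$. Set $S := \{\nu \in \Lambda : i_\nu^* \calF \neq 0\}$.

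By \cref{lem:Par-open-surj}, $j^*\calF$ is either zero or an indecomposable parity complex on $U$. If it is zero, then $\calF$ is supported on $X_{\mu_0}$ and the claim holds with $\lambda = \mu_0$. Otherwise, the induced stratification of $U$ has $|\Lambda|-1$ strata, so the inductive hypothesis produces a unique $\lambda \in \Lambda \setminus \{\mu_0\}$ with $i_\lambda^* j^*\calF \neq 0$ and $\mathrm{supp}(j^*\calF) \subseteq U_{\leq \lambda}$; in particular $\mathrm{supp}(\calF) \subseteq X_{\leq\lambda} \cup X_{\mu_0}$.

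From here the argument splits into two cases. If $\mu_0 \notin S$ or $\mu_0 \leq \lambda$, then $\mathrm{supp}(\calF) \subseteq X_{\leq\lambda}$ and $\lambda$ gives the required conclusion. The remaining---and main---case is $\mu_0 \in S$ with $\mu_0 \not\leq \lambda$: here the minimality of $\mu_0$ automatically forces $X_{\leq\lambda} \cap X_{\mu_0} = \emptyset$, so $\mathrm{supp}(\calF) \subseteq X_{\leq\lambda} \sqcup X_{\mu_0}$ is a disjoint union of closed subsets of $X$. This yields a nontrivial direct sum decomposition $\calF \simeq \calF|_{X_{\leq\lambda}} \oplus \calF|_{X_{\mu_0}}$, contradicting indecomposability of $\calF$. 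Uniqueness of $\lambda$ follows immediately: any two $\lambda_1, \lambda_2$ fulfilling the conclusion lie in $S$ and satisfy $\lambda_1 \leq \lambda_2 \leq \lambda_1$.

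The main obstacle is the final case of the induction, and it is also the reason I would remove a \emph{minimal} rather than a maximal stratum: the minimality of $\mu_0$ is precisely what makes $X_{\mu_0}$ automatically disjoint from $X_{\leq\lambda}$ when $\mu_0\not\leq\lambda$, so that indecomposability of $\calF$ can be leveraged to produce a contradiction.
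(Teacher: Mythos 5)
Your argument is correct, and it reaches the conclusion by a somewhat different route than the paper. The paper argues directly: it takes $X'$ to be the smallest closed union of strata containing $\operatorname{supp}\calF$ and shows $X'$ has a unique open stratum, because if $X_{\mu_1},X_{\mu_2}$ were two open strata of $X'$ then \cref{lem:Par-open-surj} would make the restriction of $\calF$ to the disjoint union $X_{\mu_1}\sqcup X_{\mu_2}$ indecomposable, forcing one of $i_{\mu_1}^*\calF$, $i_{\mu_2}^*\calF$ to vanish. You instead run an induction on $|\Lambda|$, peeling off a minimal (hence closed) stratum $X_{\mu_0}$ and applying \cref{lem:Par-open-surj} to the open complement, with the clash between indecomposability and a support split over two disjoint closed pieces ($X_{\leqslant\lambda}$ and $X_{\mu_0}$) appearing in your final case. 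The engine is the same in both proofs — \cref{lem:Par-open-surj} plus the fact that an indecomposable complex cannot have nonzero restriction to each of two disjoint closed (equivalently, clopen-in-the-support) pieces — but the paper's packaging avoids the induction and the case analysis, while yours trades that for a very explicit bookkeeping of where the support can sit. One small correction of attribution: the disjointness $X_{\leqslant\lambda}\cap X_{\mu_0}=\emptyset$ in your main case has nothing to do with minimality of $\mu_0$ — it holds simply because the strata are pairwise disjoint and $\mu_0\not\leqslant\lambda$; what minimality actually buys you is that $X_{\mu_0}$ is \emph{closed} (being equal to $X_{\leqslant\mu_0}$), which is what legitimises the direct sum decomposition $\calF\simeq i_{X_{\leqslant\lambda}*}i_{X_{\leqslant\lambda}}^*\calF\oplus i_{X_{\mu_0}*}i_{X_{\mu_0}}^*\calF$ and hence the contradiction. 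With that adjustment the proof stands as written.
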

\begin{proof}
	Let $X'$ be the smallest closed union of strata in $X$ containing the support of $\calF$; such $X'$ exists by~\eqref{eq:strat-cond}.
	We claim that $X'= X_{\leqslant \lambda}$ for some $\lambda\in \Lambda$.
	Indeed, assume $X'$ contains two open strata $X_{\mu_1}$, $X_{\mu_2}$.
	Then the restriction of $\calF$ to $X_{\mu_1} \sqcup X_{\mu_2}$ should be indecomposable by \cref{lem:Par-open-surj}, and so either $i^*_{\mu_1}\calF$ or $i^*_{\mu_2}\calF$ is zero.
\end{proof}

\begin{lem}
	\label{lem:par-unique}
	Let $\lambda\in \Lambda$, and $\calF,\calG\in \Par_\Lambda(X)$ two indecomposable complexes supported on $X_{\leqslant\lambda}$, with $i^*_\lambda\calF\simeq i^*_\lambda\calG$.
	Then $\calF\simeq \calG$.
	\end{lem}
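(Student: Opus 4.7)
The argument mimics the classical uniqueness of parity sheaves in~\cite{JMW_PS2014}: lift an isomorphism from the open stratum and verify that the lift is globally invertible via a Krull-Schmidt argument. Since $\calF,\calG$ are supported on $X_{\leqslant\lambda}$, we may replace $X$ by the closed substack $X_{\leqslant\lambda}$, so that $j\colon X_\lambda\hookrightarrow X$ becomes an open inclusion. We may also assume $i^*_\lambda\calF\ne 0$; otherwise $i^*_\lambda\calG$ is zero too, and \cref{lem:maxisupport-indecomp} allows us to replace $\lambda$ by the top stratum in the common support of $\calF$ and $\calG$.

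By \cref{lem:Par-open-surj}, restriction along $j$ surjects on Hom-spaces inside $\Par_\Lambda$. Fix an isomorphism $f\colon j^*\calF\xrightarrow{\sim} j^*\calG$ and lift it to $\phi\colon \calF\to\calG$, and lift $f^{-1}$ to $\psi\colon \calG\to\calF$. Set $\eta\coloneqq \psi\phi-\Id_\calF\in \End(\calF)$, so that $j^*\eta=0$ by construction. The ring $R\coloneqq \End(\calF)$ is finite-dimensional over $\bbk$ (by the Hom-finiteness assumption in \cref{def:eveness} combined with the odd-Ext vanishing of \cref{cor:gluing-evenness}) and local by \cref{rmk:parity-Krull-Schmidt}; hence its Jacobson radical is nilpotent and coincides with the set of non-units. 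If $\eta$ were a unit, so would be its image $j^*\eta\in \End(j^*\calF)$, since $j^*$ preserves isomorphisms; but $j^*\eta=0$ and $j^*\calF\ne 0$, a contradiction. Therefore $\eta$ is nilpotent and $\psi\phi=\Id_\calF+\eta$ is invertible in $R$. Exchanging the roles of $\calF$ and $\calG$, the analogous statement shows that $\phi\psi\in \End(\calG)$ is invertible as well, whence $\phi$ admits both a left inverse (namely $(\psi\phi)^{-1}\psi$) and a right inverse (namely $\psi(\phi\psi)^{-1}$), and is therefore an isomorphism.

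The only real obstacle is the passage from ``$\phi$ is an isomorphism on the open stratum $X_\lambda$'' to ``$\phi$ is an isomorphism on all of $X$''. This is resolved by the finite-dimensional local structure of $\End(\calF)$, which forces every endomorphism that vanishes after restriction to $X_\lambda$ to lie in the nilpotent Jacobson radical; the whole rest of the argument is essentially formal.
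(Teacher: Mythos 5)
Your proof is correct and follows essentially the same route as the paper: both arguments lift morphisms from the open stratum using the surjectivity in \cref{lem:Par-open-surj} and then invoke locality of the endomorphism rings of indecomposables (Krull--Schmidt) to see that $\psi\phi$ and $\phi\psi$ are units; the paper lifts the residue class $1\in\bbF$ of $\End(i_\lambda^*\calF)$ instead of an explicit inverse pair, and your detour through nilpotency of the radical is unnecessary (in a local ring $\Id+\eta$ is already invertible for $\eta$ a non-unit), but these are cosmetic differences. One caveat: your side remark reducing the case $i_\lambda^*\calF=0$ via \cref{lem:maxisupport-indecomp} does not work --- the hypothesis $i^*\calF\simeq i^*\calG$ is not available at the new top stratum, and in that degenerate case the statement is genuinely false (take two non-isomorphic indecomposables supported on a smaller closed union of strata) --- so, as in the paper's own proof, one should simply read the lemma with $i_\lambda^*\calF\neq 0$, which is the only situation in which it is applied.
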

	\begin{proof}
	By \cref{lem:Par-open-surj}, the complex $i^*_\lambda\calF\simeq i^*_\lambda\calG$ is indecomposable; denote it by $\cP$.
	The ring $\End(\cP)$ is local by \cref{rmk:parity-Krull-Schmidt}. Denote its residue field by $\bbF$, and the open inclusion $X_\lambda\hookrightarrow X_{\leqslant\lambda}$ by $j_\lambda$.
	Again by \cref{lem:Par-open-surj}, we have surjections 
	\[
		\Hom(\calF,\calG)\twoheadrightarrow \Hom(j^*\calF,j^*\calG)=\End(\cP)\twoheadrightarrow \bbF.
	\]
	Thus we can lift $1\in\bbF$ to a morphism $\phi\colon \calF\to \calG$. Exchanging the roles of $\calF$ and $\calG$, we get a morphism $\psi\colon\calG\to \calF$.
	Then $\psi\circ\phi$ is invertible in $\End(\calF)$ and $\phi\circ\psi$ is invertible in $\End(\calG)$, so $\phi$ is an isomorphism.
	\end{proof}

\begin{defn}
\label{def:strong-even}
We say that an evenness theory $\Ev(Y)$ on $Y$ is \emph{finitary} if the indecomposables in $\Par(Y)$ up to shift are labelled by a finite set $\Mu$.
A finitary evenness theory is \emph{balanced} if each class $\mu\in \Mu$ contains a Verdier self-dual representative $\parsh(\mu)$.
\end{defn}

Let $X=\coprod_{\lambda\in \Lambda}X_\lambda$ as before, and assume that the evenness theory on each $X_\lambda$ is finitary.
Denote by $\Mu_\lambda$ the labelling set of isomorphism classes (up to a shift) of indecomposable objects in $\Par(X_\lambda)$.
For $\mu\in \Mu_\lambda$, let $\parsh_\lambda(\mu)$ be the corresponding indecomposable parity sheaf with an arbitrary shift; our convention is that for a balanced evenness theory we pick the Verdier self-dual one.  

\begin{defn}
	Let $\lambda\in\Lambda$, $\mu\in \Mu_\lambda$.
	We call an indecomposable sheaf $\calF\in \Par_\Lambda(X)$ satisfying $i^*_\lambda\calF\simeq \parsh_\lambda(\mu)$ an \textit{ersatz parity sheaf}, and denote it by $\parsh(\lambda,\mu) = \parsh^X(\lambda,\mu)$.
\end{defn}

\cref{lem:par-unique} shows that if an ersatz parity sheaf $\parsh(\lambda,\mu)$ exists, it is unique.
Note that each $\parsh(\lambda,\mu)$ can be either even or odd.

\begin{defn}
	We say that the evenness theory $\Ev_\Lambda$ is \textit{ersatz-complete} if all ersatz parity sheaves $\parsh(\lambda,\mu)$ exist. 
\end{defn}

By \cref{lem:maxisupport-indecomp}, every indecomposable in $\Par_\Lambda(X)$ must be of the form $\parsh(\lambda,\mu)$ up to a shift.
Moreover, if $\Ev(X_\lambda)$ is balanced, the complex $\parsh(\lambda,\mu)$ is automatically Verdier self-dual.
We obtain

\begin{prop}\label{prop:gluing-fin-bal}
	Let $X = \bigsqcup_{\lambda\in \Lambda} X_\lambda$ be a stratification satisfying~\eqref{eq:strat-cond} with a finitary evenness theory on each stratum.
	Assume that $\Ev_\Lambda(X)$ is ersatz-complete.
	Then $\Ev_\Lambda(X)$ is finitary with labelling set $\Lambda_\Mu \coloneqq \{ (\lambda,\mu) : \lambda\in \Lambda, \mu \in \Mu_\lambda \}$.
	If each $\Ev(X_\lambda)$ is balanced, then $\Ev_\Lambda(X)$ is balanced as well.
\end{prop}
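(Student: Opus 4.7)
The plan is to reduce the classification of indecomposables of $\Par_\Lambda(X)$ to the known classification on each stratum, by extracting from each indecomposable its unique top support stratum and then its restriction there.

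Let $\calF\in \Par_\Lambda(X)$ be indecomposable. By \cref{lem:maxisupport-indecomp} there is a unique $\lambda\in \Lambda$ with $i_\lambda^*\calF\ne 0$ and $\mathrm{supp}(\calF)\subset X_{\leqslant\lambda}$. Factoring $i_\lambda$ as $i\circ j_\lambda$, with $i\colon X_{\leqslant\lambda}\hookrightarrow X$ closed and $j_\lambda\colon X_\lambda\hookrightarrow X_{\leqslant\lambda}$ open, we may view $\calF$ as living on $X_{\leqslant\lambda}$ via $i_*$, and then \cref{lem:Par-open-surj} applied to $j_\lambda$ yields that $i_\lambda^*\calF \simeq j_\lambda^*(i^*\calF)$ is indecomposable in $\Par(X_\lambda)$. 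Finitariness of $\Ev(X_\lambda)$ therefore gives $i_\lambda^*\calF \simeq \parsh_\lambda(\mu)[n]$ for some unique $\mu\in \Mu_\lambda$ and integer $n$; ersatz-completeness provides $\parsh(\lambda,\mu)$, whose top support stratum is also $\lambda$ by a further application of \cref{lem:maxisupport-indecomp}. Now \cref{lem:par-unique} applied to $\calF$ and $\parsh(\lambda,\mu)[n]$ gives $\calF\simeq \parsh(\lambda,\mu)[n]$. Distinct pairs $(\lambda,\mu)\in \Lambda_\Mu$ produce non-isomorphic indecomposables up to shift, since $\lambda$ is recovered from the support and $\mu$ from the restriction; and $\Lambda_\Mu$ is finite because $\Lambda$ and each $\Mu_\lambda$ are. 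This proves the finitary statement.

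For the balanced statement, assume every $\Ev(X_\lambda)$ is balanced and normalize each $\parsh(\lambda,\mu)$ so that $i_\lambda^*\parsh(\lambda,\mu)\simeq \parsh_\lambda(\mu)$ without shift. Using $\parsh(\lambda,\mu)\simeq i_* i^*\parsh(\lambda,\mu)$ together with the identities $i^!i_*\simeq \Id$ (for the closed inclusion $i$) and $j_\lambda^!\simeq j_\lambda^*$ (for the open inclusion $j_\lambda$), we compute
\[
	i_\lambda^!\,\parsh(\lambda,\mu) \simeq j_\lambda^!\,i^!\,i_*\,i^*\parsh(\lambda,\mu) \simeq j_\lambda^*\,i^*\parsh(\lambda,\mu) \simeq \parsh_\lambda(\mu).
\]
Combining this with the standard compatibility $\bbD\circ i_\lambda^! \simeq i_\lambda^*\circ \bbD$ yields
\[
	i_\lambda^*\,\bbD\parsh(\lambda,\mu) \simeq \bbD\,\parsh_\lambda(\mu) \simeq \parsh_\lambda(\mu),
\]
where the last isomorphism uses balancedness on the stratum. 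Since $\bbD\parsh(\lambda,\mu)$ is indecomposable, lies in $\Par_\Lambda(X)$ by \cref{cor:gluing-evenness}, and has support in $X_{\leqslant\lambda}$, \cref{lem:par-unique} forces $\bbD\parsh(\lambda,\mu)\simeq \parsh(\lambda,\mu)$.

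The main technical subtlety is the identification $i_\lambda^!\parsh(\lambda,\mu)\simeq \parsh_\lambda(\mu)$ in the balanced case: it works precisely because $X_\lambda$ is \emph{open} inside the support $X_{\leqslant\lambda}$, even though it need not be open in $X$ itself, so that after applying $i^!i_*\simeq \Id$ the remaining pullback is along an open inclusion, where $!$ and $*$ agree. Everything else is bookkeeping with the lemmas of the preceding subsection.
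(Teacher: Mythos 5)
Your proof is correct and takes essentially the same route as the paper, which disposes of the proposition in two sentences: by \cref{lem:maxisupport-indecomp} every indecomposable of $\Par_\Lambda(X)$ is a shift of some $\parsh(\lambda,\mu)$, and in the balanced case $\parsh(\lambda,\mu)$ is automatically Verdier self-dual. Your write-up merely supplies the details the paper leaves implicit, in particular the identification $i_\lambda^!\parsh(\lambda,\mu)\simeq i_\lambda^*\parsh(\lambda,\mu)$ (valid because the support lies in the closed $X_{\leqslant\lambda}$ and $X_\lambda$ is open there) feeding into \cref{lem:par-unique}.
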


\subsection{Proper stratifications}
Let $Y$ be a stack with a finitary evenness theory, whose labelling set we denote by $\Mu$.
Fix a finite totally ordered set $(\Xi,\leqslant)$, together with a map $\varrho : \Mu\to \Xi$.
The map $\varrho$ induces a preorder on $\Mu$ with $\nu\leqslant \mu$ if and only if $\varrho(\nu)\leqslant \varrho(\mu)$.

Given $\xi\in\Xi$, we denote $\parsh(\xi) \coloneqq \bigoplus_{\varrho(\mu) = \xi}\parsh(\mu)$.
For each $\calF,\calG\in \Par(Y)$, consider the following subspaces of $\Ext^*(\calF,\calG)$:
\begin{align*}
	\Ext^*_{\leqslant\xi}(\calF,\calG) & = \sum\nolimits_{\zeta\leqslant \xi} \Ext^*(\calF,\parsh(\zeta))\circ \Ext^*(\parsh(\zeta),\calG)\subset \Ext^*(\calF,\calG),\\
	\Ext^*_{<\xi}(\calF,\calG) & = \sum\nolimits_{\zeta< \xi} \Ext^*(\calF,\parsh(\zeta))\circ \Ext^*(\parsh(\zeta),\calG)\subset \Ext^*(\calF,\calG).
\end{align*}
We further denote $\Ext^*_{\xi}(\calF,\calG)=\Ext^*_{\leqslant\xi}(\calF,\calG)/\Ext^*_{<\xi}(\calF,\calG)$.

Let $\scrB$ be a class of Noetherian Laurentian $\bbk$-algebras.
It is known~\cite[Lem.~2.6]{Kle_AHWC2015} that all such algebras are semiperfect.

\begin{defn}\label{defn-quasi-hered-Par}
Let $\Ev(Y)$ be a finitary evenness theory on $Y$.
A map $\varrho:\Mu\to \Xi$ is \emph{$\scrB$-properly stratifying} if the following conditions are satisfied for all $\xi\in \Xi$:

\begin{enumerate}[label=(\arabic*)]
\item\label{eq:pspar1} $B_{\xi}\coloneqq\Ext^*_{\xi}(\parsh(\xi),\parsh(\xi))^\op$ belongs to $\scrB$;
\item\label{eq:pspar2} For each $\calF\in \Par(Y)$, the right $B_\xi$-module $\Ext^*_{\xi}(\calF,\parsh(\xi))$ is finitely generated and flat;
\item\label{eq:pspar3} For each $\calF,\calG\in \Par(Y)$, the canonical map 
\begin{equation}\label{eq:propstr-cond3}
	\Ext^*_{\xi}(\calF,\parsh(\xi))\otimes_{B_\xi}\Ext^*_{\xi}(\parsh(\xi),\calG)\to \Ext^*_\xi(\calF,\calG)	
\end{equation}
is an isomorphism of vector spaces.
\end{enumerate}
We say that $\Ev(Y)$ is \emph{$\scrB$-properly stratified} if it admits a $\scrB$-properly stratifying preorder.
\end{defn}

\begin{expl}
	If $|\Xi| = 1$, the only non-trivial condition in \cref{defn-quasi-hered-Par} is that $\Ext^*(\parsh,\parsh)^\op\in\scrB$, where $\parsh = \bigoplus_{\mu\in\Mu} \parsh(\mu)$.
\end{expl}

Let us recall a parallel notion of $\scrB$-properly stratified algebras.
\begin{defn}[{\cite{Kle_AHWC2015}}]
Let $A$ be a Noetherian Laurentian graded unital $\bbk$-algebra.
A two-sided homogeneous ideal $J \subset A$ is \textit{$\scrB$-properly stratifying} if it satisfies the
following conditions:
\begin{enumerate}[label=(\roman*)]
	\item\label{eq:psalg1} $\Hom_A(J,A/J)=0$;
	\item\label{eq:psalg2} $J$ is projective as a left $A$-module, and $\End_A(J)^\op$ is Morita-equivalent to an algebra in $\scrB$;
	\item\label{eq:psalg3} $J$ is finitely generated and flat as a right $\End_A(J)^\op$-module.
\end{enumerate}

The algebra $A$ is \emph{$\scrB$-properly stratified} if there exists a finite
chain of two-sided ideals $\{0\}=J_m\subset J_{m-1}\subset \ldots\subset J_1\subset J_0=A$ such that $J_r/J_{r+1}$ is $\scrB$-properly stratifying in $A/J_{r+1}$ for all $0\leqslant r<m$.
\end{defn}

We say that $\calL\in\Par(Y)$ is \emph{full} if it contains all indecomposables $\parsh(\mu)$, $\mu\in \Mu$ as direct factors up to a shift. 

\begin{prop}\label{lem:Ev-to-alg-qher}
	Let $\Ev(Y)$ be a $\scrB$-properly stratified evenness theory.
	The algebra $A_\calL\coloneqq\Ext^*(\calL,\calL)^\op$ is $\scrB$-properly stratified for any full sheaf $\calL\in \Par(Y)$.
\end{prop}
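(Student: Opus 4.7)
The plan is to lift the $\Xi$-filtration of $\Par(Y)$ to a chain of two-sided ideals in $A_\calL$ whose sections directly realise the definition of $\scrB$-properly stratifying ideal. Enumerating $\Xi = \{\xi_1 > \ldots > \xi_m\}$, I would set $J_{\leqslant\xi}\coloneqq \Ext^*_{\leqslant\xi}(\calL,\calL)^\op$ and $J_{<\xi}\coloneqq \Ext^*_{<\xi}(\calL,\calL)^\op$ inside $A_\calL$; these are two-sided ideals since composing a morphism that factors through some $\parsh(\zeta)$ with any other morphism still factors through $\parsh(\zeta)$. The resulting chain $0 = J_{<\xi_1}\subset J_{\leqslant\xi_1}\subset\cdots\subset J_{\leqslant\xi_m} = A_\calL$ is the candidate stratifying filtration, so it is enough to show that $\bar J_\xi\coloneqq J_{\leqslant\xi}/J_{<\xi}$ is $\scrB$-properly stratifying in $\bar A_\xi\coloneqq A_\calL/J_{<\xi}$ for each $\xi\in\Xi$.

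Fullness of $\calL$ provides an idempotent $e_\xi\in A_\calL$ projecting $\calL$ onto the sum of its $\parsh(\mu)$-summands with $\varrho(\mu) = \xi$. Then $e_\xi\in J_{\leqslant\xi}$, $\bar J_\xi = \bar A_\xi\bar e_\xi\bar A_\xi$, $\bar e_\xi\bar A_\xi\bar e_\xi\simeq B_\xi$, and the left ideal $\bar A_\xi\bar e_\xi$ (resp. the right ideal $\bar e_\xi\bar A_\xi$) canonically identifies with $P^\xi\coloneqq \Ext^*_\xi(\calL,\parsh(\xi))$ (resp. $Q^\xi\coloneqq \Ext^*_\xi(\parsh(\xi),\calL)$). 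Condition~(3) of the evenness-theoretic stratification upgrades to a bimodule isomorphism $\bar J_\xi\simeq P^\xi\otimes_{B_\xi} Q^\xi$. Condition~(i) of the definition of $\scrB$-properly stratifying ideal is then immediate: since $\bar e_\xi\in\bar J_\xi$, we get $\bar e_\xi(\bar A_\xi/\bar J_\xi) = 0$, so any $\phi\in\Hom_{\bar A_\xi}(\bar J_\xi,\bar A_\xi/\bar J_\xi)$ satisfies $\phi(\bar e_\xi\bar b) = \bar e_\xi\phi(\bar e_\xi\bar b) = 0$ and thus vanishes on $\bar A_\xi\bar e_\xi\bar A_\xi = \bar J_\xi$.

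For conditions~(ii) and~(iii), I would use condition~(2) of the evenness-theoretic stratification applied to $\calL$ and $\bbD\calL$, together with Verdier self-duality of $\Par(Y)$, to obtain that both $P^\xi$ (as right $B_\xi$-module) and $Q^\xi$ (as left $B_\xi$-module) are finitely generated flat, hence (by semiperfectness of $B_\xi\in\scrB$) finitely generated projective. In particular, $Q^\xi$ is a summand of some $B_\xi^n$, so $\bar J_\xi = P^\xi\otimes_{B_\xi}Q^\xi$ is a summand of $(P^\xi)^n$, and projectivity of $\bar J_\xi$ over $\bar A_\xi$ follows from that of $P^\xi\simeq \bar A_\xi\bar e_\xi$. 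The adjunction $\Hom_{\bar A_\xi}(P\otimes_{B_\xi}Q,-)\simeq \Hom_{B_\xi}(Q,\Hom_{\bar A_\xi}(P,-))$ combined with $\Hom_{\bar A_\xi}(\bar A_\xi\bar e_\xi,-)=\bar e_\xi(-)$ and $\bar e_\xi P^\xi = B_\xi$ yields $\End_{\bar A_\xi}(\bar J_\xi)^\op\simeq\End_{B_\xi}(Q^\xi)^\op$, Morita-equivalent to $B_\xi$ through the progenerator $Q^\xi$ (fullness of $\calL$ makes $B_\xi$ a summand of $Q^\xi$); this proves~(ii). Finally~(iii) translates under this Morita equivalence into fg projectivity of $P^\xi$ over $B_\xi$, i.e., condition~(2) of the evenness-theoretic stratification.

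The main obstacle is cleanly invoking Verdier duality in the step that yields the $Q^\xi$-analogue of condition~(2), since that axiom is asymmetrically phrased in terms of $P^\xi$-type spaces only; one must use that $\Par(Y)$ is $\bbD$-stable and that the preorder from $\varrho$ is compatible with duality (which holds in particular whenever $\varrho$ factors through supports, as is the case in all applications below). Modulo this step, the remaining work is pure bookkeeping of left/right and opposite-algebra conventions.
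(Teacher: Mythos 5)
Your overall architecture coincides with the paper's: the same chain of ideals $J_{\leqslant\xi}=\Ext^*_{\leqslant\xi}(\calL,\calL)$, the same idempotent $e_\xi$ cutting out the summand of $\calL$ isomorphic to a shift of $\parsh(\xi)$, the identification $\bar J_\xi=\bar A_\xi\bar e_\xi\bar A_\xi\simeq \Ext^*_\xi(\calL,\parsh(\xi))\otimes_{B_\xi}\Ext^*_\xi(\parsh(\xi),\calL)$ via axiom~(3), and the reduction of condition~(iii) to axiom~(2) through a Morita equivalence. Your verification of condition~(i) from $\bar e_\xi\in\bar J_\xi$ and left-linearity is correct, and in fact slightly more economical than the paper's, which deduces it from a decomposition of $\bar J_\xi$.

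At the decisive step, however --- projectivity of $\bar J_\xi$ as a left module and the Morita equivalence of $\End(\bar J_\xi)^\op$ with $B_\xi$ --- your route has a genuine gap relative to the statement as given. You propose to control $Q^\xi=\Ext^*_\xi(\parsh(\xi),\calL)$ over $B_\xi$ by applying axiom~(2) to $\bbD\calL$ and transporting through Verdier duality. For this you need $\bbD$ to send $\parsh(\xi)$ to a shift of $\parsh(\xi)$ and, more seriously, to permute all the $\parsh(\zeta)$ compatibly with $\varrho$, so that the subspaces $\Ext^*_{<\xi}$ and $\Ext^*_{\leqslant\xi}$ are preserved and $\Ext^*_\xi(\bbD\calL,\parsh(\xi))$ really identifies with $Q^\xi$ as a $B_\xi$-module. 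Neither balancedness nor any $\bbD$-compatibility of $\varrho$ is assumed in the proposition (balancedness is an optional extra in \cref{def:strong-even}), and the proposition is invoked inside the induction of \cref{thm:quasi-her-inher} in exactly this abstract generality, so ``it holds in all applications'' cannot substitute for a hypothesis. The paper's proof never touches the $Q$-side: from axiom~(3) with $\calF=\calG=\calL$, together with the equivalence $\bfE=\Ext^*(\calL,-)$ between $\Par(Y)$ and finitely generated projective $A$-modules, it concludes that $\bar J_\xi=A'eA'\simeq A'e\otimes_{eA'e}eA'$ is a direct sum of shifted copies of $P'_\xi=A'e$, and then reads conditions (i)--(iii) off this left-module decomposition using axiom~(2) only for $\calF=\calL$; no duality enters. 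Finally, you omit the verification that $A_\calL$ is Noetherian and Laurentian, which is part of the definition being checked; the paper establishes this first, from $\Hom$-finiteness and the Serre spectral sequence $H^*_G(\pt)\otimes_\bbk\Ext^*_{Y'}(\calL,\calL)\Rightarrow\Ext^*(\calL,\calL)$ for a presentation $Y=[Y'/G]$.
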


\begin{proof}
	Let us first check that $A_\calL$ is Noetherian Laurentian.
	Laurentian condition follows from finite-dimensionality of $\Hom$-spaces in $\Ev(Y)$.
	Recall that $Y$ is assumed to be a quotient stack $[Y'/G]$.
	Denote $\Ext$-functors in the non-equivariant category $D^b(Y')$ by $\Ext^*_{Y'}$.
	By Serre spectral sequence 
	\[
		H^*_G(\pt)\otimes_\bbk \Ext^*_{Y'}(\calL,\calL) \Rightarrow \Ext^*(\calL,\calL),
	\]
	the algebra $\Ext^*(\calL,\calL)$ is finitely generated over a quotient of $H^*_G(\pt)$.
	Since $H^*_G(\pt)$ is Noetherian, so is $A_\calL$.

	Fix a properly stratifying $\varrho:\Mu\to \Xi$. 
	For each $\xi\in \Xi$, consider the ideal $J_{\leqslant \xi}=\Ext^*_{\leqslant \xi}(\calL,\calL)$. 
	It suffices to show that $J_{\leqslant \xi}/J_{<\xi}$ is $\scrB$-properly stratifying in $A/J_{<\xi}$ for all $\xi$.

	Let us write $A = A_\calL$, $J=J_{\leqslant\xi}$, $A' = A/J_{<\xi}$.
	Given a graded $A$-module $N$, denote the corresponding graded $A'$-module $N/J_{<\xi}N$ by $N'$; in particular, $J'=J_{\leqslant\xi}/J_{<\xi}$. 
	Let $e\in A$ be an idempotent projecting to a direct summand of $\calL$ isomorphic to a shift of $\parsh(\xi)$.
	By abuse of notation, we denote the image of $e$ in $A'$ by $e$ as well.
	We have $J=AeA+J'$, and so $J'=A'eA'$.
	
	It is a standard fact (see e.g.~\cite[Th.~2.14]{Mak_CBKA2015} for the proof of a similar statement) that the functor $\bfE=\Ext^*(\calL,-)$ induces an equivalence between $\Par(Y)$ and the category of finitely generated projective (left) $A$-modules.
	In particular, for any $\calF,\calG\in \Par(Y)$ we have an isomorphism of graded $\bbk$-modules 
	\[
		\Ext^*(\calF,\calG)\simeq \Hom^*_A(\bfE(\calF),\bfE(\calG)),
	\]
	where $\Hom^i_A(-,-) \coloneqq \Hom_A(-,-[i])$.
	By the definition of $J'$, we also have 
	\begin{equation}
	\label{eq:Hom-F'}
	\Hom_{A'}(\bfE(\calF)',\bfE(\calG)')\simeq \Ext^*(\calF,\calG)/\Ext^*_{<\mu}(\calF,\calG).
	\end{equation}
	
	Consider $P_\xi \coloneqq \bfE(\parsh(\xi))$.
	We have $P_\xi\simeq Ae$, $P'_\xi=A'e$ and $\End_{A'}(P_\xi')\simeq eA'e$.
	Applying~\ref{eq:pspar3} to $\calF=\calG=\calL$ and using~\eqref{eq:Hom-F'}, we see that $A'e\otimes_{eA'e}eA'\simeq A'eA'$. This shows that $J'=A'eA'$ is a direct sum of (shifted) copies of $P_\xi'=A'e$.
	In particular, $J'$ is projective, and $\End_{A'}(J')^\op$ is Morita-equivalent to
	\[
		\End_{A'}(P_\xi')^\op\simeq \Ext^*_{\xi}(\parsh(\xi))^\op=B_\xi.
	\]
	Since $B_\xi\in \mathscr{B}$ by~\ref{eq:pspar1}, we verified~\ref{eq:psalg2}.

	Let us write $J' = P_\xi'\otimes_\bbk V$, with $V$ graded vector space.
	Then 
	\begin{align*}
		\Hom_{A'}(J',A'/J') & = \Hom_{A'}(P_\xi',A'/J')\otimes V = \Hom_{A'}(A'e,A'/A'eA')\otimes V \\&= e(A'/A'eA')\otimes V=0,
	\end{align*}
	which proves~\ref{eq:psalg1}.

	Finally, the right $\End_{A'}(P_\xi')^\op$-module $P_\xi'$ is the same as the right $B_\xi$-module $\Ext^*_\xi(\calL,\parsh(\xi))$, so it is flat and finitely generated over $\End_{A'}(P_\xi')^\op$ by~\ref{eq:pspar2}.
	Since these properties are preserved by Morita-equivalence, this proves~\ref{eq:psalg3}; and so we may conclude.
\end{proof}

\subsection{Gluing stratifications}
Let us consider $X=\coprod_{\lambda\in \Lambda}X_\lambda$ with a finitary evenness theory $\Ev(X_\lambda)$ for each $X_\lambda$.
Denote the labelling set of $\Ev(X_\lambda)$ by $\Mu_\lambda$.
Assume that we have a $\scrB$-properly stratifying map $\varrho_\lambda: \Mu_\lambda\to \Xi_\lambda$ for each $\lambda$.
Consider an arbitrary refinement of the order from~\eqref{eq:strat-cond} to a total order on $\Lambda$, and equip
\[
	\Xi\coloneqq \bigsqcup_\lambda \Xi_\lambda = \{ (\lambda,\xi) : \xi\in\Xi_\lambda\}
\]
with the lexicographic order.
Let $\Mu_\Lambda = \bigsqcup_\lambda \Mu_\lambda$, and denote the disjoint union of maps $\varrho_\lambda$ by $\varrho:\Mu\to \Xi$.
The following is our main technical result.
\begin{thm}\label{thm:quasi-her-inher}
If the induced evenness theory $\Ev_\Lambda(X)$ on $X$ is ersatz-complete, then it is also $\scrB$-properly stratifying with respect to $\varrho$.
\end{thm}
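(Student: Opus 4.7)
The plan is to transfer each of the three conditions of~\cref{defn-quasi-hered-Par} from the stratum $X_\lambda$ (where they hold by assumption) to the whole of $X$, by means of adjunctions between the relevant Ext-groups. Fix $(\lambda, \xi) \in \Xi$; write $j_\lambda \colon U_\lambda := X \setminus X_{<\lambda} \hookrightarrow X$ for the open inclusion, $\bar\iota_\lambda \colon X_\lambda \hookrightarrow U_\lambda$ for the closed inclusion of the (now minimal) stratum, and $\iota_\lambda := j_\lambda \circ \bar\iota_\lambda$. As a shorthand, write $\Ext^*_{<\lambda} := \Ext^*_{<(\lambda, \xi_{\min})}$, where $\xi_{\min} = \min \Xi_\lambda$.

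The first task is to show that $\ker(j_\lambda^*) = \Ext^*_{<\lambda}$ on $\Ext^*(\calF, \calG)$ for $\calF, \calG \in \Par_\Lambda(X)$. Surjectivity of $j_\lambda^*$ is~\cref{lem:Par-open-surj}; for the kernel, one uses the distinguished triangle $(i_{<\lambda})_*(i_{<\lambda})^!\calG \to \calG \to (j_\lambda)_* j_\lambda^*\calG \to +1$ to lift a morphism in the kernel to $\Ext^*(\calF, (i_{<\lambda})_*(i_{<\lambda})^!\calG)$, and then exploits ersatz-completeness of the inherited theory $\Ev_\Lambda(X_{<\lambda})$ (which holds because $(i_{<\lambda})_*\parsh^{X_{<\lambda}}(\lambda', \mu')$ is an indecomposable in $\Par_\Lambda(X)$ extending $\parsh_{\lambda'}(\mu')$, hence equals $\parsh^X(\lambda', \mu')$ by~\cref{lem:par-unique}) to factor such a lift through a sum of shifts of ersatz parities with $\lambda'<\lambda$. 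The second task is then to use the adjunctions $(\bar\iota_\lambda^*, \bar\iota_{\lambda,*})$ and $(\bar\iota_{\lambda,*}, \bar\iota_\lambda^!)$ in $U_\lambda$, together with the identification $j_\lambda^*\parsh(\lambda, \xi) \simeq \bar\iota_{\lambda,*}\parsh_\lambda(\xi)$, to produce canonical isomorphisms
\[
    \Ext^*(\calF, \parsh(\lambda, \xi))/\Ext^*_{<\lambda} \simeq \Ext^*_{X_\lambda}(\iota_\lambda^*\calF, \parsh_\lambda(\xi)),
\]
\[
    \Ext^*(\parsh(\lambda, \xi), \calG)/\Ext^*_{<\lambda} \simeq \Ext^*_{X_\lambda}(\parsh_\lambda(\xi), \iota_\lambda^!\calG),
\]
and, by a parallel two-sided argument based on the triangle $\bar\iota_{\lambda,*}\bar\iota_\lambda^! \to \Id \to \tilde j_{\lambda,*}\tilde j_\lambda^* \to +1$ in $U_\lambda$ (with $\tilde j_\lambda \colon U_\lambda \setminus X_\lambda \hookrightarrow U_\lambda$) whose long exact $\Ext$ sequence splits thanks to the $\Ext^{\rm odd}$-vanishing of~\cref{lem:Ext-*!-even}, an injective map
\[
    \Ext^*_{X_\lambda}(\iota_\lambda^*\calF, \iota_\lambda^!\calG) \hookrightarrow \Ext^*(\calF, \calG)/\Ext^*_{<\lambda}
\]
whose image is $\Ext^*_{\leq (\lambda, \xi_{\max})}(\calF, \calG)/\Ext^*_{<\lambda}$.

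One then checks, by naturality of adjunction, that under these identifications the filtration $\Ext^*_{\leq (\lambda, \xi')}/\Ext^*_{<\lambda}$ on $X$ corresponds to $\Ext^*_{\leq \xi'}$ on $X_\lambda$ and that compositions intertwine: a morphism factoring through $\parsh(\lambda, \mu)$ on $X$ restricts to one factoring through $\parsh_\lambda(\mu)$ on $X_\lambda$, and conversely such a factorisation on $X_\lambda$ lifts to $X$ by applying~\cref{lem:Par-open-surj} to each factor individually. Passing to subquotients, we obtain $\Ext^*_{(\lambda, \xi)}(\calF, \calG) \simeq \Ext^*_\xi(\iota_\lambda^*\calF, \iota_\lambda^!\calG)$, and similar isomorphisms for the Ext-groups involving $\parsh(\lambda, \xi)$, all compatibly with composition through $\parsh_\lambda(\xi)$. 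These transfer the three conditions of~\cref{defn-quasi-hered-Par}: setting $\calF = \calG = \parsh(\lambda, \xi)$ gives $B_{(\lambda, \xi)} \simeq B^{X_\lambda}_\xi \in \scrB$, verifying~\ref{eq:pspar1}; finite generation and flatness of $\Ext^*_{(\lambda, \xi)}(\calF, \parsh(\lambda, \xi))$ over $B_{(\lambda, \xi)}$ reduce to the same statement over $B^{X_\lambda}_\xi$ applied to $\iota_\lambda^*\calF \in \Par(X_\lambda)$, verifying~\ref{eq:pspar2}; and the canonical map in~\ref{eq:pspar3} on $X$ matches the one on $X_\lambda$ for $\iota_\lambda^*\calF$ and $\iota_\lambda^!\calG$, hence is an isomorphism. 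The principal obstacle lies in the filtration-compatibility above, since the source and target identifications use different adjunctions ($\iota^*$ vs. $\iota^!$) and one must confirm that they align consistently so that the algebras $B_{(\lambda, \xi)}$ and $B^{X_\lambda}_\xi$ coincide on the nose rather than merely up to Morita equivalence.
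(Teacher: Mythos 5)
Your plan runs on the same circle of ideas as the paper's proof (recollement for $X_{<\lambda}\hookrightarrow X\hookleftarrow U_\lambda$, adjunction identities identifying subquotients of $\Ext$-groups on $X$ with $\Ext$-groups on the stratum, then transfer of conditions (1)--(3) of \cref{defn-quasi-hered-Par}), but you try to perform the reduction for each $\lambda$ in a single step, removing all of $X_{<\lambda}$ at once, and this is where there is a genuine gap: the identification $\ker(j_\lambda^*)=\Ext^*_{<\lambda}(\calF,\calG)$. The distinguished triangle only shows that a kernel element factors through $(i_{<\lambda})_*(i_{<\lambda})^!\calG$. When $X_{<\lambda}$ contains more than one stratum, $(i_{<\lambda})^!\calG$ is merely $!$-even for the induced theory on $X_{<\lambda}$ --- its $*$-restrictions to the strata need not be even --- so it is not an object of $\Par_\Lambda(X_{<\lambda})$ and in particular not a direct sum of shifted ersatz parity sheaves. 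Consequently, ersatz-completeness of the inherited theory on $X_{<\lambda}$ does not by itself yield the asserted factorization through sums of shifts of $\parsh(\lambda',\mu')$ with $\lambda'<\lambda$; this is precisely the point where actual work is needed, and your sketch instead flags the $\iota^*$-versus-$\iota^!$ bookkeeping as the main obstacle. (Your justification of ersatz-completeness on $X_{<\lambda}$ is also stated backwards: one should start from $\parsh^X(\lambda',\mu')$, note its support lies in $X_{\leqslant\lambda'}\subset X_{<\lambda}$, and conclude it is pushed forward from an indecomposable on $X_{<\lambda}$, rather than presuppose $\parsh^{X_{<\lambda}}(\lambda',\mu')$ exists.)

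The way to close this gap is to peel off one minimal stratum at a time: if $X_{\lambda_0}$ is closed, then $i_{\lambda_0}^!\calG$ \emph{does} lie in $\Par(X_{\lambda_0})$, and $i_{\lambda_0 *}\Par(X_{\lambda_0})$ consists of direct sums of shifted $\parsh(\lambda_0,\mu)$'s by \cref{lem:par-unique}; this gives $\ker$ of restriction to $X\setminus X_{\lambda_0}$ equal to $\Ext^*_{\leqslant\xi_0}$ ($\xi_0$ maximal in $\Xi_{\lambda_0}$), and one then iterates on $X\setminus X_{\lambda_0}$, using \cref{lem:Par-open-surj} together with $j^*\parsh^X(\lambda',\mu')\simeq\parsh^{U}(\lambda',\mu')$ to match the filtrations under open restriction. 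In other words, filling the gap amounts to the induction on the number of strata that the paper actually performs (its first gluing lemma plus \cref{lem:Ext-lam-mu-not-lam0}, the unique place where ersatz-completeness enters). Once that is supplied, the rest of your outline --- the identifications of $\Ext^*_\xi(\calF,\parsh(\lambda,\xi))$, $\Ext^*_\xi(\parsh(\lambda,\xi),\calG)$ and $\Ext^*_\xi(\calF,\calG)$ with the corresponding groups for $i_\lambda^*\calF$, $i_\lambda^!\calG$ on the stratum, and the transfer of (1)--(3) --- coincides with \cref{lem:Ext-factors,lem:Ext-factors2} and the paper's concluding argument, including the fact that $B_{(\lambda,\xi)}=B_\xi^{X_\lambda}$ on the nose. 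One small correction there: lifting a factorization from $X_\lambda$ back to $X$ uses the adjunction for the \emph{closed} inclusion $X_\lambda\hookrightarrow U_\lambda$ (an isomorphism of $\Ext$-groups), not \cref{lem:Par-open-surj}, which only governs the open restriction to $U_\lambda$.
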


We will need some preparation before proceeding with the proof.
Denote by $\lambda_0$ the minimal element of $\Lambda$, and by $\xi_0$ the maximal element of $\Xi_{\lambda_0}$.
Consider the open embedding $j:U=X\backslash X_{\lambda_0}\hookrightarrow X$ and its closed complement $i:X_{\lambda_0}\hookrightarrow X$.

\begin{lem}
	For $\calF,\calG\in \Par_\Lambda(X)$, there is a short exact sequence
	$$
	0\to \Ext^*_{\leqslant\xi_0}(\calF,\calG)\to  \Ext^*(\calF,\calG) \to  \Ext^*(j^*\calF,j^*\calG)\to 0.
	$$
\end{lem}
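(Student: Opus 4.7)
The plan is to apply the functor $\Ext^*(-,\calG)$ to the standard recollement triangle
\[
	j_! j^* \calF \to \calF \to i_* i^* \calF \to j_! j^* \calF[1]
\]
and extract the desired short exact sequence from the resulting long exact sequence. Using the adjunction $j_! \dashv j^! = j^*$ to rewrite $\Ext^*(j_! j^* \calF, \calG) \simeq \Ext^*(j^*\calF, j^*\calG)$, the long exact sequence reads
\[
	\ldots \to \Ext^{n-1}(j^*\calF, j^*\calG) \xrightarrow{\delta} \Ext^n(i_*i^*\calF, \calG) \xrightarrow{\alpha} \Ext^n(\calF, \calG) \xrightarrow{\beta} \Ext^n(j^*\calF, j^*\calG) \to \ldots
\]

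First, I would invoke \cref{lem:Par-open-surj} to deduce that $\beta$ is surjective in every degree; by exactness this forces the connecting map $\delta$ to vanish identically, so the sequence collapses into short exact sequences with $\alpha$ injective. This already produces a three-term exact sequence of the required shape, and the only thing left is to identify the image of $\alpha$ with $\Ext^*_{\leqslant \xi_0}(\calF, \calG)$.

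Next, I would establish the inclusion ``image of $\alpha$'' $\subseteq \Ext^*_{\leqslant \xi_0}(\calF, \calG)$ as follows. Since $\calF$ is parity, $i^*\calF$ lies in $\Par(X_{\lambda_0})$ and decomposes as a direct sum of shifts of indecomposables $\parsh_{\lambda_0}(\mu)$ with $\mu \in \Mu_{\lambda_0}$; their pushforwards $i_* \parsh_{\lambda_0}(\mu)$ are supported on $X_{\leqslant \lambda_0}$ and restrict on $X_{\lambda_0}$ to $\parsh_{\lambda_0}(\mu)$, hence coincide with the ersatz parity sheaves $\parsh(\lambda_0, \mu)$ by \cref{lem:par-unique}. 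Their labels $(\lambda_0, \varrho_{\lambda_0}(\mu))$ are all $\leqslant \xi_0$ in the lexicographic order, so any morphism factoring through $i_* i^* \calF$ lands in the desired sum.

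For the reverse inclusion, I would note that every $\parsh(\zeta)$ with $\zeta \leqslant \xi_0$ is supported on $X_{\lambda_0}$, hence $\parsh(\zeta) \simeq i_* i^* \parsh(\zeta)$; the universal property of the adjunction unit $\calF \to i_* i^* \calF$ then factors every morphism $\calF \to \parsh(\zeta)[k]$ through this unit, placing any composite $\calF \to \parsh(\zeta)[k] \to \calG[n]$ inside the image of $\alpha$. The main subtlety is this last identification: one must carefully align the combinatorial indexing of $\Ext^*_{\leqslant \xi_0}$ by labels $\zeta \leqslant \xi_0$ with the geometric support condition on $X_{\lambda_0}$, which hinges on the lexicographic convention on $\Xi$ and on \cref{lem:par-unique} guaranteeing that ersatz parity sheaves on the minimal closed stratum always exist without invoking the global ersatz-completeness hypothesis.
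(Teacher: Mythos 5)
Your argument is correct and is essentially the mirror image of the paper's proof: the paper applies $\Ext^*(\calF,-)$ to the triangle $i_*i^!\calG\to\calG\to j_*j^*\calG$, while you apply $\Ext^*(-,\calG)$ to $j_!j^*\calF\to\calF\to i_*i^*\calF$, in both cases splitting the long exact sequence via the parity/surjectivity argument of \cref{lem:Par-open-surj} and then identifying the first term with $\Ext^*_{\leqslant\xi_0}(\calF,\calG)$ using that indecomposable parity complexes on the closed minimal stratum push forward to the ersatz parity sheaves $\parsh(\zeta)$, $\zeta\leqslant\xi_0$. Your observation that these particular ersatz parity sheaves exist automatically, without the ersatz-completeness hypothesis, is accurate and consistent with the paper's remark that ersatz-completeness enters only in \cref{lem:Ext-lam-mu-not-lam0}.
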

\begin{proof}
	Analogously to the proof of \cref{lem:Par-open-surj}, we have a short exact sequence
	$$
	0\to \Ext^*(\calF,i_*i^!\calG)\to \Ext^*(\calF,\calG)\to \Ext^*(\calF,j_*j^*\calG)\to 0.
	$$
	We need to show that the image of $\Ext^*(\calF,i_!i^!\calG)$ in $\Ext^*(\calF,\calG)$ is $\Ext^*_{\leqslant\xi_0}(\calF,\calG)$.
	Since $i^!\calG\in \Par(X_{\lambda_0})$, this image clearly belongs to $\Ext^*_{\leqslant\xi_0}(\calF,\calG)$.
	On the other hand, each element of $\Ext^*_{\leqslant\xi_0}(\calF,\calG)$ vanishes under the map $\Ext^*(\calF,\calG)\to \Ext^*(j^*\calF,j^*\calG)$, so it has to be in the image of $\Ext^*(\calF,i_!i^!\calG)$.
\end{proof}

\begin{lem}
\label{lem:Ext-lam-mu-not-lam0}
Let $\lambda\ne\lambda_0$, and $\xi\in\Xi_\lambda$.
For any $\calF,\calG\in \Par_\Lambda(X)$, we have $\Ext^*_{\xi}(\calF,\calG)\simeq \Ext^*_{\xi}(j^*\calF,j^*\calG)$. 
\end{lem}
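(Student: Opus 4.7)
My plan is to deduce the statement from the short exact sequence in the preceding lemma, combined with a lifting argument for compositions through parity sheaves.

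The first observation is that, since $\lambda_0$ is minimal in $\Lambda$ and $\xi_0$ is maximal in $\Xi_{\lambda_0}$, the lexicographic order yields $\zeta \leqslant \xi_0$ if and only if $\zeta \in \Xi_{\lambda_0}$; in particular, for any $\xi \in \Xi_\lambda$ with $\lambda \neq \lambda_0$ we have $\xi_0 < \xi$, hence the inclusion $\Ext^*_{\leqslant \xi_0}(\calF,\calG) \subseteq \Ext^*_{<\xi}(\calF,\calG)$. The second observation is that ersatz-completeness of $\Ev_\Lambda(X)$ is inherited by $U$: for any $(\lambda,\mu)$ with $\lambda \neq \lambda_0$, the complex $j^*\parsh^X(\lambda,\mu)$ is indecomposable by \cref{lem:Par-open-surj} and still restricts to $\parsh_\lambda(\mu)$ on $X_\lambda \subset U$, so uniqueness (\cref{lem:par-unique}) gives $j^*\parsh^X(\lambda,\mu) \simeq \parsh^U(\lambda,\mu)$. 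Thus, for every $\zeta \in \Xi \setminus \Xi_{\lambda_0}$ the functor $j^*$ identifies $\parsh^X(\zeta)$ with $\parsh^U(\zeta)$.

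Denote the surjection of the preceding lemma by $\pi \colon \Ext^*(\calF,\calG) \twoheadrightarrow \Ext^*(j^*\calF, j^*\calG)$, with kernel $\Ext^*_{\leqslant \xi_0}(\calF,\calG)$. I claim that $\pi$ restricts to surjections
\[
\Ext^*_{\leqslant \xi}(\calF,\calG) \twoheadrightarrow \Ext^*_{\leqslant \xi}(j^*\calF, j^*\calG), \qquad \Ext^*_{<\xi}(\calF,\calG) \twoheadrightarrow \Ext^*_{<\xi}(j^*\calF, j^*\calG).
\]
The inclusion $\pi(\Ext^*_{\leqslant \xi}) \subseteq \Ext^*_{\leqslant \xi}$ is clear by functoriality. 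Conversely, any element of the right-hand side is a sum of compositions $\beta \circ \alpha$ with $\alpha \colon j^*\calF \to \parsh^U(\zeta)[\bullet]$ and $\beta \colon \parsh^U(\zeta)[\bullet] \to j^*\calG$ for $\zeta \leqslant \xi$, $\zeta > \xi_0$; identifying $\parsh^U(\zeta) = j^*\parsh^X(\zeta)$ and applying \cref{lem:Par-open-surj} to lift $\alpha$ and $\beta$ to morphisms in $D^b(X)$ through $\parsh^X(\zeta)$, the composition of these lifts lies in $\Ext^*_{\leqslant \xi}(\calF,\calG)$ and maps to $\beta\circ\alpha$. The same lifting works verbatim for $<\xi$.

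It remains to identify the kernel of the induced surjection $\Ext^*_{\leqslant\xi}(\calF,\calG) \twoheadrightarrow \Ext^*_{\xi}(j^*\calF,j^*\calG)$. If $x \in \Ext^*_{\leqslant\xi}(\calF,\calG)$ maps into $\Ext^*_{<\xi}(j^*\calF,j^*\calG)$, the surjectivity just proved supplies $y \in \Ext^*_{<\xi}(\calF,\calG)$ with $\pi(y) = \pi(x)$; then $x - y \in \ker\pi = \Ext^*_{\leqslant \xi_0}(\calF,\calG) \subseteq \Ext^*_{<\xi}(\calF,\calG)$, so $x \in \Ext^*_{<\xi}(\calF,\calG)$. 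This shows the map $\Ext^*_\xi(\calF,\calG) \to \Ext^*_\xi(j^*\calF,j^*\calG)$ is well-defined and injective, and surjectivity was already established, yielding the desired isomorphism. The main subtlety, which the two observations in the first paragraph address, is that the lexicographic ordering conveniently places everything below $\xi$ coming from $\lambda_0$ strictly below $\xi$, so that the ``error term'' $\Ext^*_{\leqslant\xi_0}$ gets absorbed into $\Ext^*_{<\xi}$ upon passing to the quotient.
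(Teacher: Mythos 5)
Your proof is correct and follows essentially the same route as the paper: it combines the short exact sequence of the preceding lemma with the identification $j^*\parsh^X(\lambda',\mu')\simeq\parsh^U(\lambda',\mu')$ from \cref{lem:Par-open-surj} (the one place ersatz-completeness enters), and concludes since $\Ext^*_{\leqslant\xi_0}\subseteq\Ext^*_{<\xi}$. You merely spell out the lifting and kernel-absorption details that the paper's terse ``the claim follows'' leaves implicit.
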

\begin{proof}
By the lemma above, we have $\Ext^*(j^*\calF,j^*\calG)\simeq \Ext^*(\calF,\calG)/\Ext^*_{\leqslant\xi}(\calF,\calG)$. 
For any $(\lambda',\mu')\in \Lambda_M$ with $\lambda'\ne \lambda_0$, consider the ersatz parity sheaf $\parsh^X(\lambda',\mu')$, resp. $\parsh^U(\lambda',\mu')$ on $X$, resp. $U$.
By \cref{lem:Par-open-surj} we have $j^*\parsh^X(\lambda',\mu')=\parsh^U(\lambda',\mu')$.
This implies that for any $\xi\in\Xi_\lambda$, $\lambda\ne \lambda_0$, the subspace $\Ext^*_{\leqslant \xi}(j^*\calF,j^*\calG)\subset \Ext^*(j^*\calF,j^*\calG)$ is the image of $\Ext^*_{\leqslant \xi}(\calF,\calG)\subset \Ext^*(\calF,\calG)$.
The claim follows. 
\end{proof}

\begin{lem}
	\label{lem:Ext-factors}
	Let $\calF,\calG\in\Par_\Lambda(X)$.
	\begin{enumerate}
		\item If $\calG\in i_*\Par(X_{\lambda_0})$, the natural map $\Ext^*(i_*i^*\calF,\calG)\to \Ext^*(\calF,\calG)$ is an isomorphism;
		\item If $\calF\in i_*\Par(X_{\lambda_0})$, the natural map $\Ext^*(\calF,\calG)\to \Ext^*(\calF,i_!i^!\calG)$ is an isomorphism;
		\item The isomorphisms in (1) and (2) restrict to isomorphisms on $\Ext^*_{\leqslant \xi}$ for any $\xi\in \Xi_{\lambda_0}$. 
		In particular, we get $\Ext^*_{\xi}(\calF,\calG)\simeq \Ext^*_{\xi}(i_*i^*\calF,\calG)$ in (1) and $\Ext^*_{\xi}(\calF,\calG)\simeq \Ext^*_{\xi}(\calF,i_!i^!\calG)$ in (2).
	\end{enumerate}
\end{lem}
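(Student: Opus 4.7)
The plan is to treat parts (1) and (2) via the recollement along $i\colon X_{\lambda_0}\hookrightarrow X$, and then deduce (3) by exploiting the minimality of $\lambda_0$ in $\Lambda$ together with the lexicographic order on $\Xi$.

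For (1), I will apply $\Hom(-,\calG)$ to the localisation triangle $j_!j^*\calF\to\calF\to i_*i^*\calF\xrightarrow{+1}$, where $j\colon U=X\setminus X_{\lambda_0}\hookrightarrow X$. The obstruction terms in the resulting long exact sequence are $\Ext^*(j_!j^*\calF,\calG)\simeq\Ext^*(j^*\calF,j^*\calG)$, by adjunction together with $j^!=j^*$; these vanish whenever $\calG\in i_*\Par(X_{\lambda_0})$, since then $j^*\calG=0$. Part (2) is entirely symmetric: applying $\Hom(\calF,-)$ to $i_!i^!\calG\to\calG\to j_*j^*\calG\xrightarrow{+1}$, the obstruction $\Ext^*(\calF,j_*j^*\calG)\simeq\Ext^*(j^*\calF,j^*\calG)$ vanishes when $\calF\in i_*\Par(X_{\lambda_0})$. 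I read the statement of (2) as the canonical iso $\Ext^*(\calF,i_!i^!\calG)\simeq\Ext^*(\calF,\calG)$ induced by the counit $i_!i^!\calG\to\calG$, its inverse going in the direction printed.

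For (3), the key observation is that since $\lambda_0$ is minimal in $\Lambda$, the lexicographic order on $\Xi=\bigsqcup_\lambda\Xi_\lambda$ forces $\{\zeta\in\Xi:\zeta\leqslant\xi\}\subseteq\Xi_{\lambda_0}$ for every $\xi\in\Xi_{\lambda_0}$; moreover each $\parsh(\lambda_0,\mu)$ is supported on $X_{\leqslant\lambda_0}=X_{\lambda_0}$, so every such $\parsh(\zeta)$ lies in $i_*\Par(X_{\lambda_0})$ and satisfies $\parsh(\zeta)\simeq i_*i^*\parsh(\zeta)$.

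With that in hand, the $(i^*,i_*)$-adjunction guarantees that any morphism $f\colon\calF\to\parsh(\zeta)$ factors uniquely through the unit $\eta\colon\calF\to i_*i^*\calF$. Hence every composite $\calF\xrightarrow{f}\parsh(\zeta)\xrightarrow{g}\calG$ is the image under the iso of (1) of some composite $i_*i^*\calF\to\parsh(\zeta)\to\calG$ belonging to $\Ext^*_{\leqslant\xi}(i_*i^*\calF,\calG)$; conversely, precomposition of any such composite with $\eta$ lands in $\Ext^*_{\leqslant\xi}(\calF,\calG)$. Case (2) is handled dually via $(i_*,i^!)$-adjunction and the counit. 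Passing to quotients by $\Ext^*_{<\xi}$ then yields the statement for $\Ext^*_\xi$. I do not foresee any genuine obstacle; the only thing that requires attention is the compatibility between the Ext-iso and the "factors through $\parsh(\zeta)$" spans defining $\Ext^*_{\leqslant\xi}$, and this reduces immediately to the fact that every relevant $\parsh(\zeta)$ is concentrated on the closed stratum $X_{\lambda_0}$.
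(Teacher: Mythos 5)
Your proposal is correct and follows essentially the same route as the paper: parts (1) and (2) via the recollement triangles and the vanishing $j^*i_*=0$, and part (3) by noting that minimality of $\lambda_0$ forces every $\parsh(\zeta)$ with $\zeta\leqslant\xi$ to lie in $i_*\Par(X_{\lambda_0})$, so that classes in $\Ext^*(\calF,\parsh(\zeta))$ factor through $i_*i^*\calF$ (the paper phrases this as an application of part (1), you phrase it via the $(i^*,i_*)$-adjunction, which is the same isomorphism). Your explicit remark that the lexicographic order keeps $\{\zeta\leqslant\xi\}$ inside $\Xi_{\lambda_0}$ is a point the paper leaves implicit, but it is not a different argument.
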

\begin{proof}
	The first two statements follow from long exact sequences as in \cref{lem:Ext-*!-even} together with $j^*i_* = 0$.
	Let us prove the last statement for the isomorphism in (1), for (2) the proof is analogous. 
	The inclusion $\Ext^*_{\leqslant \xi}(i_*i^*\calF,\calG)\subset \Ext^*_{\leqslant \xi}(\calF,\calG)$ is immediate.
	For the opposite direction, note that any extension in $\Ext^*(\calF,\parsh(\xi))$ must come from $\Ext^*(i_*i^*\calF,\parsh(\xi))$ by (1).
\end{proof}

\begin{lem}
	\label{lem:Ext-factors2}
	Let $\calF,\calG\in\Par_\Lambda(X)$.
	The canonical map $\Ext^*(i_*i^*\calF,i_!i^!\calG)\to\Ext^*(\calF,\calG)$ induces an isomorphism $\Ext^*_{\leqslant\xi}(i_*i^*\calF,i_!i^!\calG)\simeq \Ext^*_{\leqslant\xi}(\calF,\calG)$ for every $\xi\in \Xi_{\lambda_0}$.
	In particular, $\Ext^*_{\xi}(i_*i^*\calF,i_!i^!\calG)\simeq \Ext^*_{\xi}(\calF,\calG)$.
\end{lem}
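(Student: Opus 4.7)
My plan is to apply \cref{lem:Ext-factors} twice, once to each of the two arguments, and then compose. The canonical map in the statement factors naturally as
\[
	\Ext^*(i_*i^*\calF, i_!i^!\calG) \to \Ext^*(\calF, i_!i^!\calG) \to \Ext^*(\calF, \calG),
\]
where the first arrow is induced by the unit $\calF \to i_*i^*\calF$ and the second by the counit $i_!i^!\calG \to \calG$. It will therefore suffice to show that each arrow restricts to an isomorphism on $\Ext^*_{\leqslant \xi}$.

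For the first arrow, I would observe that $i_!i^!\calG = i_*i^!\calG$ lies in $i_*\Par(X_{\lambda_0})$, since $i^!\calG \in \Par(X_{\lambda_0})$ (as $\calG$ is $!$-even). Then \cref{lem:Ext-factors}(1), applied with $\calG$ replaced by $i_!i^!\calG$, gives an isomorphism on the full $\Ext^*$; part~(3) of the same lemma restricts it to
\[
	\Ext^*_{\leqslant \xi}(i_*i^*\calF, i_!i^!\calG) \xrightarrow{\sim} \Ext^*_{\leqslant \xi}(\calF, i_!i^!\calG).
\]

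For the second arrow, I would invoke the ``in particular'' clause of \cref{lem:Ext-factors}(3) associated with statement~(2), which gives $\Ext^*_\xi(\calF, \calG) \simeq \Ext^*_\xi(\calF, i_!i^!\calG)$ for arbitrary $\calF, \calG \in \Par_\Lambda(X)$ and $\xi \in \Xi_{\lambda_0}$. This is not a direct application of (2), which demands $\calF \in i_*\Par(X_{\lambda_0})$; rather, it follows by applying (2) to each parity sheaf $\parsh(\zeta)$ with $\zeta \leqslant \xi$ in place of $\calF$, which is permitted since $\parsh(\zeta) \in i_*\Par(X_{\lambda_0})$ automatically. The same reasoning upgrades the isomorphism to the $\leqslant \xi$ level, yielding $\Ext^*_{\leqslant \xi}(\calF, i_!i^!\calG) \simeq \Ext^*_{\leqslant \xi}(\calF, \calG)$.

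Composing the two isomorphisms produces the main claim, and the ``In particular'' statement will follow by passing to the quotient by $\Ext^*_{<\xi}$ on both sides. The only part I expect to require slight care is verifying that each intermediate isomorphism is induced by the claimed natural map (the unit, respectively the counit) rather than a non-canonical lift, so that the composition truly agrees with the canonical map of the statement; this compatibility, however, is automatic from the naturality of the adjunction (co)units used in \cref{lem:Ext-factors}, so I do not anticipate a genuine obstacle.
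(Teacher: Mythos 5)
Your factorization through $\Ext^*(\calF,i_!i^!\calG)$ is the mirror image of the paper's own argument, which instead factors the canonical map through $\Ext^*(i_*i^*\calF,\calG)$; both rest entirely on \cref{lem:Ext-factors}, so the strategy is essentially the same. Your first arrow is correct as stated: since $\calG$ is $!$-even, $i_!i^!\calG\simeq i_*(i^!\calG)$ lies in $i_*\Par(X_{\lambda_0})\subset\Par_\Lambda(X)$, so \cref{lem:Ext-factors}(1) together with (3) gives the isomorphism on $\Ext^*_{\leqslant\xi}$ for the pair $(\calF,i_!i^!\calG)$.

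The gap is in the second arrow. Applying \cref{lem:Ext-factors}(2) to each $\parsh(\zeta)$ with $\zeta\leqslant\xi$ (which does lie in $i_*\Par(X_{\lambda_0})$, as $X_{\leqslant\lambda_0}=X_{\lambda_0}$) shows that every generator $g\circ f$ of $\Ext^*_{\leqslant\xi}(\calF,\calG)$, with $f\colon\calF\to\parsh(\zeta)$ and $g\colon\parsh(\zeta)\to\calG$ (up to shifts), lifts along the counit by factoring $g$ through $i_!i^!\calG$. This proves \emph{surjectivity} of $\Ext^*_{\leqslant\xi}(\calF,i_!i^!\calG)\to\Ext^*_{\leqslant\xi}(\calF,\calG)$, but not injectivity: an element of the source is a sum of composites through several $\parsh(\zeta)$'s, and the termwise bijections $\Ext^*(\parsh(\zeta),i_!i^!\calG)\simeq\Ext^*(\parsh(\zeta),\calG)$ do not by themselves exclude cancellation among such sums after composing with the maps out of $\calF$. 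What you need in addition is injectivity of the ambient map $\Ext^*(\calF,i_!i^!\calG)\to\Ext^*(\calF,\calG)$; this holds because in the long exact sequence attached to the triangle $i_!i^!\calG\to\calG\to j_*j^*\calG$ the boundary terms $\Ext^{*-1}(j^*\calF,j^*\calG)$ vanish in the relevant parities, exactly as in \cref{lem:Par-open-surj}. This is precisely the point the paper makes explicit on its mirrored side, via the short exact sequence $0\to\Ext^*(i_*i^*\calF,\calG)\to\Ext^*(\calF,\calG)\to\Ext^*(j_!j^!\calF,\calG)\to 0$, before invoking \cref{lem:Ext-factors}(1) for surjectivity. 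With this one line supplied, your argument goes through, and the ``in particular'' statement follows by passing to quotients as you indicate.
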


\begin{proof}
First, note that we have $\Ext^*_{\leqslant\xi}(i_*i^*\calF,\calG)\simeq \Ext^*_{\leqslant\xi}(i_*i^*\calF,i_!i^!\calG)$ by \cref{lem:Ext-factors}.
It remains to prove $\Ext^*_{\leqslant\xi}(i_*i^*\calF,\calG)\simeq \Ext^*_{\leqslant\xi}(\calF,\calG)$.
Since $\calF,\calG,i_*i^*\calF\in\Par_\Lambda(X)$, we have a short exact sequence 
$$
0\to \Ext^*(i_*i^*\calF,\calG)\to \Ext^*(\calF,\calG)\to  \Ext^*(j_!j^!\calF,\calG)\to 0.
$$
This implies the inclusion $\Ext^*_{\leqslant\xi}(i_*i^*\calF,\calG)\subset \Ext^*_{\leqslant\xi}(\calF,\calG)$.
The surjectivy follows from \cref{lem:Ext-factors}(1) applied to $\calG = \parsh(\xi)$.
\end{proof}

\begin{proof}[Proof of \cref{thm:quasi-her-inher}]	
We reason by induction on the number of elements in $\Lambda$.
The case of one element is trivial. 
By the induction assumption, the evenness theory on $U$ induced by the stratification $U=\bigsqcup_{\lambda\in\Lambda\backslash \{\lambda_0\}}X_\lambda$ is $\scrB$-properly stratified.
If $\lambda\ne \lambda_0$, all conditions in \cref{defn-quasi-hered-Par} follow immediately from \cref{lem:Ext-lam-mu-not-lam0}.
From now on, assume $\lambda=\lambda_0$ and $\xi\in\Xi_{\lambda_0}$.
Let us write 
\[
	\parsh_{\lambda_0}(\xi) = \bigoplus_{\mu\in\varrho^{-1}(\xi)}\parsh_{\lambda_0}(\mu)\in \Par(X_{\lambda_0}).
\]
For condition~\ref{eq:pspar1}, we have 
$$
B_{\xi}=\Ext^*_{\xi}(\parsh(\xi),\parsh(\xi))^\op= \Ext_\xi(\parsh_{\lambda_0}(\xi),\parsh_{\lambda_0}(\xi))^\op,
$$	
and so $B_{\xi}\in \scrB$ by proper stratification on $X_{\lambda_0}$.
For condition~\ref{eq:pspar2}, let $\calF\in\Par_\Lambda(X)$.
Then by \cref{lem:Ext-factors} we have 
$$
\Ext^*_{\xi}(\parsh(\xi),\calF)\simeq \Ext^*_{\xi}(\parsh(\xi),i_!i^!\calF)=\Ext^*_{\xi}(\parsh_{\lambda_0}(\xi),i^!\calF),
$$
and so $\Ext^*_{\xi}(\parsh(\xi),\calF)$ is a flat $B_{\xi}$-module by proper stratification on $X_{\lambda_0}$.
Finally, let us consider the map
$$
\Ext^*_{\xi}(\calF,\parsh(\xi))\otimes_{B_{\xi}}\Ext^*_{\xi}(\parsh(\xi),\calG)\to \Ext^*_{\xi}(\calF,\calG)
$$
for $\calF,\calG\in \Par_\Lambda(X)$. 
By \cref{lem:Ext-factors} we have
\begin{gather*}
	\Ext^*_{\xi}(\calF,\parsh(\xi))\simeq \Ext^*_{\xi}(i_*i^*\calF,\parsh(\xi))=\Ext^*_{\xi}(i^*\calF,\parsh_{\lambda_0}(\xi)),\\
	\Ext^*_{\xi}(\parsh(\xi),\calG)\simeq \Ext^*_{\xi}(\parsh(\xi),i_!i^!\calG)= \Ext^*_{\xi}(\parsh_{\lambda_0}(\xi),i^!\calG).
\end{gather*}
Moreover, by \cref{lem:Ext-factors2} we have  
$$
\Ext^*_{\xi}(\calF,\calG)\simeq \Ext^*_{\xi}(i_*i^*\calF,i_!i^!\calG)=\Ext^*_{\xi}(i^*\calF,i^!\calG).
$$
Therefore~\ref{eq:pspar3} follows from the proper stratification on $X_{\lambda_0}$.
\end{proof}

\begin{rmk}
	Ersatz-completeness was used exactly once, in the proof of \cref{lem:Ext-lam-mu-not-lam0}, which is our crucial inductive step.
\end{rmk}

\begin{rmk}\label{rmk:exotic-sheaves}
	\cref{def:eveness} makes sense for any triangulated category $\mathcal{C}$.
	Most of our statements are essentially properties of recollements.
	We only use $\mathcal{C} = D^b(Y)$ for Krull-Schmidt property in \cref{rmk:parity-Krull-Schmidt}, and Noetherianity in \cref{lem:Ev-to-alg-qher}.
	This suggests that one can use analogous arguments to deduce $\scrB$-properly stratified structure of $\Ext$-algebras in more exotic categories of sheaves. 
\end{rmk}

\subsection{Polyheredity}\label{subs:polyher}
Let us specialize our discussion to the particular case when $\scrB = \scrP$ is the class of (finitely generated positively graded) polynomial algebras, and $\varrho$ is the identity map.
In the literature, one says \textit{polynomial quasihereditary} instead of $\scrP$-properly stratified, but we will use the term \textit{polyhereditary} for brevity.

\begin{defn}
	Let $\Ev(Y)$ be a finitary evenness theory on $Y$ with labelling set $\Mu$.
	A total order on $\Mu$ is \emph{polyhereditary} if the following conditions are satisfied for all $\mu\in \Mu$:
	
	\begin{enumerate}[label=(\arabic*)]
	\item $B_{\mu}\coloneqq\Ext^*_{\mu}(\parsh(\mu),\parsh(\mu))^\op$ is a polynomial algebra;
	\item For each $\calF\in \Par(Y)$, $\Ext^*_{\mu}(\calF,\parsh(\mu))$ is free of finite rank over $B_\mu$;
	\item For each $\calF,\calG\in \Par(Y)$, the map \eqref{eq:propstr-cond3} is an isomorphism.
	\end{enumerate}
	$\Ev(Y)$ is \emph{polyhereditary} if it admits a polyhereditary order.
\end{defn}

\begin{defn}
	Let $A$ be a Noetherian Laurentian graded unital $\bbk$-algebra.
	A two-sided homogeneous ideal $J \subset A$ is \textit{polyheredity} if:
	\begin{enumerate}[label=(\roman*)]
		\item $\Hom_A(J,A/J)=0$;
		\item $J$ is a direct sum of (shifted) copies of an indecomposable projective $P$ as a left $A$-module, and $\End_A(P)^\op$ is a polynomial algebra;
		\item $P$ is free finite rank over $\End_A(P)^\op$.
	\end{enumerate}
	The algebra $A$ is \emph{polyhereditary} if it admits a chain of polyheredity ideals.
\end{defn}

The following is an immediate consequence of \cref{lem:Ev-to-alg-qher,thm:quasi-her-inher}.
\begin{cor}\label{cor:polyher-case}
	\begin{enumerate}
		\item Let $\Ev(Y)$ be a polyhereditary evenness theory.
		The algebra $\Ext^*(\calL,\calL)$ is polyhereditary for any full sheaf $\calL\in \Par(Y)$.
		\item Let $X=\coprod_{\lambda\in \Lambda}X_\lambda$ with a polyhereditary evenness theory $\Ev(X_\lambda)$ for each $X_\lambda$.
		If $\Ev_\Lambda(X)$ is ersatz-complete, then it is polyhereditary.
	\end{enumerate}	
\end{cor}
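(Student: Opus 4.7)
The plan is to specialize \cref{lem:Ev-to-alg-qher,thm:quasi-her-inher} to the case $\scrB = \scrP$, the class of finitely generated positively graded polynomial $\bbk$-algebras, with the stratifying map $\varrho$ taken to be the identity. The only genuine work is matching the slightly stricter definition of polyheredity (free modules, single indecomposable projective) against the $\scrP$-properly stratified versions (flat + finitely generated, Morita-equivalence).

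For part (1), I would invoke \cref{lem:Ev-to-alg-qher} directly: since $\Ev(Y)$ is polyhereditary, the order $\mathrm{id}:\Mu\to \Mu$ is $\scrP$-properly stratifying, so $A_\calL = \Ext^*(\calL,\calL)^\op$ is $\scrP$-properly stratified. To upgrade the conclusion to polyheredity of $A_\calL$ in the stronger sense, I would note that positively graded polynomial algebras $B$ are graded local, so every finitely generated graded $B$-module that is flat (hence projective, as $B$ is Noetherian) is free by graded Nakayama. Consequently, inside the proof of \cref{lem:Ev-to-alg-qher}, the decomposition $J' = P_\xi' \otimes_\bbk V$ automatically exhibits $J'$ as a direct sum of shifted copies of a single indecomposable projective $P_\xi' = A'e$, with $\End_{A'}(P_\xi')^\op \simeq B_\xi$ a polynomial algebra (no Morita-equivalence correction needed), and $P_\xi'$ free of finite rank over it.

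For part (2), I would apply \cref{thm:quasi-her-inher}. Take $\Xi_\lambda = \Mu_\lambda$ and $\varrho_\lambda = \mathrm{id}$ for every $\lambda \in \Lambda$; each $\varrho_\lambda$ is $\scrP$-properly stratifying by assumption. The theorem then produces a $\scrP$-properly stratifying map $\varrho = \bigsqcup_\lambda \mathrm{id}:\Mu_\Lambda \to \bigsqcup_\lambda \Xi_\lambda = \Mu_\Lambda$, where the target inherits the lexicographic total order induced by any refinement of \eqref{eq:strat-cond} to a total order on $\Lambda$. Since this map is the identity on $\Mu_\Lambda$, the $\scrP$-properly stratifying conditions of \cref{defn-quasi-hered-Par} coincide with the polyheredity conditions, and we are done.

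The only mildly subtle point, and the thing I would want to double-check, is the equivalence between the $\scrP$-properly stratified conditions and the polyheredity ones at the level of evenness theories: condition (2) asks for $\Ext^*_\mu(\calF,\parsh(\mu))$ to be free of finite rank over $B_\mu$, while $\scrP$-proper stratification gives only flat and finitely generated. This is immediate from the graded Nakayama argument above, so no real obstruction arises; the two-part statement genuinely is an immediate corollary.
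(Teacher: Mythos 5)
Your proposal is correct and follows exactly the route the paper takes: \cref{cor:polyher-case} is stated there as an immediate consequence of \cref{lem:Ev-to-alg-qher,thm:quasi-her-inher} specialized to $\scrB=\scrP$ with $\varrho=\mathrm{id}$. Your graded Nakayama remark (flat $+$ finitely generated over a connected positively graded polynomial algebra implies free, and $A'e$ is indecomposable since $eA'e=B_\mu$ is graded local) is precisely the small definitional matching the paper leaves implicit, so no gap remains.
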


\subsection{Comparison with usual parity sheaves}\label{rmk:classical-parity}
Let $Y$ be a smooth stack with $H^{\rm odd}(Y,\bbk) = 0$. 
Consider the full subcategory $\Ev^{\mathrm{c}}(Y) \subset D^b(Y)$ of direct sums of even shifts of the constant sheaf $\bbk_Y$.
One easily checks that this is an evenness theory.

Next, let $X = \bigsqcup_{\lambda\in \Lambda} X_\lambda$ be a stratification satisfying~\eqref{eq:strat-cond}, such that $H^{\rm odd}(X_\lambda,\bbk) = 0$ for all $\lambda$. 
By \cref{cor:gluing-evenness}, we can glue the evenness theories $\Ev^{\mathrm{c}}(X_\lambda)$ to an evenness theory $\Ev^{\mathrm{pc}}_\Lambda$ on $X$; we call it the \textit{piecewise constant} evenness theory. 
If $\pi_1(X_\lambda) = 0$ for all $\lambda\in \Lambda$, then $\Ev^{\mathrm{pc}}_\Lambda$ is precisely the category of even complexes of zero pariversity in the terminology of~\cite[Def.~2.4]{JMW_PS2014}, 
and ersatz parity sheaves are parity sheaves.

Let $G$ be an algebraic group, and $Y$ a $G$-variety with finitely many orbits $Y_\lambda$.
Consider
\[
	X = [Y/G] = \bigsqcup_{\lambda\in\Lambda} X_\lambda,\qquad X_\lambda = [Y_\lambda/G].
\]
Assume that for any point $y\in Y$ the stabilizer $G_y$ is connected, and $H^{\rm odd}_{G_y}(\pt) = 0$.
This is equivalent to requiring that $\pi_1(X_\lambda) = 0$ and $H^{\rm odd}(X_\lambda) = 0$.
If all parity sheaves on $X$ exist, and $H^*_{G_y}(\pt)$ is a polynomial algebra for all $y\in Y$, \cref{cor:polyher-case} recovers a result of McNamara.

\begin{cor}[{\cite[Th.~4.7]{McN_RTGE2017}}]\label{cor:McNa-result}
	Under the assumptions above, the graded algebra $\Ext^*(\cL,\cL)$ is polyhereditary for any full sheaf $\cL\in \Par^{\mathrm{pc}}_\Lambda(X)$.
\end{cor}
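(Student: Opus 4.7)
The plan is to show that this is essentially an unpacking of \cref{cor:polyher-case} applied to the piecewise constant evenness theory $\Ev^{\mathrm{pc}}_\Lambda(X)$. The two things to check are: (a) each stratum $X_\lambda$ carries a polyhereditary evenness theory $\Ev^{\mathrm{c}}(X_\lambda)$, and (b) $\Ev^{\mathrm{pc}}_\Lambda(X)$ is ersatz-complete. Once both are in hand, part (2) of \cref{cor:polyher-case} gives polyheredity of the glued theory, and part (1) then gives polyheredity of $\Ext^*(\cL,\cL)$ for any full $\cL$.

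For (a), I would first note that since the stabilizer $G_y$ is connected for every $y$, the orbit $X_\lambda = [Y_\lambda/G]$ is isomorphic to the classifying stack $BG_y$ for any chosen $y\in Y_\lambda$. Therefore the full subcategory $\Ev^{\mathrm{c}}(X_\lambda)$ consists of direct sums of even shifts of $\bbk_{X_\lambda}$, so the labelling set $\Mu_\lambda$ is a singleton. Take the total order on $\Mu_\lambda$ (and the corresponding $\Xi_\lambda$) to be the trivial one. There is then only one nontrivial condition in the definition of polyheredity, namely that $B_\lambda = \Ext^*(\bbk_{X_\lambda},\bbk_{X_\lambda})^{\op} \simeq H^*_{G_y}(\pt)$ be a polynomial algebra, which holds by assumption. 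Conditions (2) and (3) of polyheredity reduce to the tautology that for sums of shifts of $\bbk_{X_\lambda}$, $\Ext$-groups are free $B_\lambda$-modules and satisfy the expected composition isomorphism.

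For (b), I would invoke the observation already made in the surrounding text: under the assumption that $\pi_1(X_\lambda) = 0$ and $H^{\mathrm{odd}}(X_\lambda,\bbk) = 0$ for all strata, the category $\Ev^{\mathrm{pc}}_\Lambda(X)$ coincides with the category of even complexes of zero pariversity in the sense of~\cite{JMW_PS2014}, and ersatz parity sheaves coincide with ordinary parity sheaves. Since by hypothesis all parity sheaves on $X$ exist, $\Ev^{\mathrm{pc}}_\Lambda(X)$ is ersatz-complete.

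Putting these together, by \cref{cor:polyher-case}(2) the theory $\Ev^{\mathrm{pc}}_\Lambda(X)$ is polyhereditary, and by \cref{cor:polyher-case}(1) the algebra $\Ext^*(\cL,\cL)$ is polyhereditary for any full $\cL \in \Par^{\mathrm{pc}}_\Lambda(X)$. There is no real obstacle in the argument; the only mildly delicate point is the identification in step (b) of our ersatz parity sheaves with the parity sheaves of~\cite{JMW_PS2014}, which the excerpt already spells out, and the trivial verification of the polyheredity axioms on a single-orbit stratum.
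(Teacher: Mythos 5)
Your proposal is correct and follows exactly the paper's (implicit) argument: the corollary is obtained by applying \cref{cor:polyher-case} to $\Ev^{\mathrm{pc}}_\Lambda$, using that under the stated hypotheses ersatz parity sheaves coincide with the parity sheaves of~\cite{JMW_PS2014}, so existence of all parity sheaves gives ersatz-completeness. Your spelled-out verification that each single-orbit stratum carries a polyhereditary theory with $B_\lambda \simeq H^*_{G_y}(\pt)$ is exactly the content the paper leaves implicit (cf.\ the $|\Xi|=1$ example after \cref{defn-quasi-hered-Par}).
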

When $G$ is trivial, the existence of parity sheaves is known~\cite[Cor.~2.28]{JMW_PS2014}.
In general, the main method of constructing parity sheaves requires finding equivariant even resolutions of closures of the strata, see~\cite[Cor.~2.35]{JMW_PS2014}.

\begin{expl}\label{expl:compare-hw-orders}
	For each $a,b\in \bbZ$, consider the action of $T = \bbC^*$ on $\bbC^2 = \Spec \bbC[x,y]$, which scales coordinate $x$, resp. $y$ with weight $a$, resp. $b$, and denote by $[\bbC^2/_{(a,b)}T]$ the corresponding quotient stack.
	The constant sheaves
	\[
		\bbk_0\coloneqq \bbk_{\{0\}\times \{0\}},\quad \bbk_x \coloneqq \bbk_{\bbC\times \{0\}}, \quad \bbk_y \coloneqq \bbk_{\{0\} \times \bbC},\quad \bbk_{xy}\coloneqq \bbk_{\bbC^2}
	\]
	are $T$-equivariant for any values of $a,b$.

	Consider $Y_\n = \{(x,y): xy = 0\}\subset [\bbC^2/_{(-1,1)}T]$ and $Y_\m = [\bbC^2/_{(1,1)}T]$, together with the following stratifications:
	\[
	\tikz[xscale=.3,yscale=.3,font=\footnotesize]{
		\draw [fill=white,dotted] (-3,-3) rectangle (3,3);
		\draw [thick] (-3,0) -- (3,0);
		\draw [thick] (0,-3) -- (0,3);
		\fill (0,0) circle (7pt);

		\ntxt{0}{-4.2}{$Y_{\n} = (\{0\}\times \{0\}) \sqcup (\mathbb{C}^*\times \{0\}) \sqcup (\{0\}\times \mathbb{C}^*),$}
		\ntxt{19}{-4.2}{$Y_{\m} = (\{0\}\times \{0\}) \sqcup (\mathbb{C}^*\times \{0\}) \sqcup (\mathbb{C} \times \mathbb{C}^*).$}

		\draw [fill=lightgray,dotted] (16,-3) rectangle (22,3);
		\draw [thick] (16,0) -- (22,0);
		\fill (19,0) circle (7pt);
	}
	\]
	Consider evenness theory $\Ev^{\mathrm{pc}}_\Lambda$ associated to each stratification.
	In both cases all ersatz parity sheaves exist; namely, $\{\bbk_0, \bbk_x, \bbk_y\}$ for $Y_\n$ and $\{\bbk_0, \bbk_x, \bbk_{xy}\}$ for $Y_\m$.
	Moreover, Fourier-Sato transform $\Theta$ along the first coordinate of $\bbC^2$ exchanges the two sets (up to shifts):
	\[
		\bbk_0 \leftrightarrow \bbk_x,\qquad \bbk_x \leftrightarrow \bbk_0,\qquad \bbk_y\leftrightarrow \bbk_{xy}.
	\]
	Let $\cL = \bbk_0\oplus\bbk_x\oplus\bbk_{xy} \in \Ev^{\mathrm{pc}}_\Lambda(Y_\m)$, and consider the algebra $A = \Ext^*(\cL,\cL)$.
	It acquires two polyhereditary structures; one by \cref{lem:Ev-to-alg-qher} applied to $\cL$, and another by~\cref{cor:McNa-result} applied to $\cL' = \bbk_x\oplus\bbk_0\oplus\bbk_y \in \Ev^{\mathrm{pc}}_\Lambda(Y_\n)$, and then transported via $\Theta$.
	In the first case, the polyheredity order is $0<x<xy$; denote it by $\Omega_\m$.
	In the second case, we can take any total order refining the partial order $0<x, 0<y$ in terms of $\cL'$, which translates to $x<0, x<xy$ for $\cL$; denote the latter partial order by $\Omega_\n$.

	The algebra $A$ is linear over $H^*_T(\pt) = \bbk[u]$, and can be presented as a quiver with relations: 
	\begin{center}
	\begin{tabular}{rc}
	$
	\begin{tikzcd}
		&[-10pt] e_\delta\ar[dl,shift left=0.3ex,"s_1"]\ar[dr,shift left=0.3ex,"s_2"] &[-10pt] \\
		e_{10}\ar[ur,shift left=0.3ex,"m_1"]\ar[rr,shift left=0.3ex,"r_1"] & & e_{01}\ar[ul,shift left=0.3ex,"m_2"]\ar[ll,shift left=0.3ex,"r_2"]
	\end{tikzcd}
	$ & 
	\begin{tabular}{c}
		$s_1m_1 = ue_{10}$, $s_2m_2 = ue_{01}$, $m_1s_1 = m_2s_2 = ue_\delta$,\\
		$r_2r_1 = u^2e_{10}$, $r_1r_2 = u^2e_{01}$,\\
		$s_1m_2 = r_2$, $s_2m_1 = r_1$.
	\end{tabular}
	\end{tabular}
	\end{center}
	It follows that the first quotient in order $\Omega_\n$ is $A/Ae_\delta A \simeq \bbk[u]e_{10} \oplus \bbk[u]e_{01}$, whereas the first quotient in $\Omega_\m$ is $A/Ae_{10} A \simeq Z^e_{A_1}[u]$; here $Z^e_{A_1}$ is the extended zigzag algebra of type $A_1$, see~\eqref{ex:zigzag-A1-ext} for the definition.
\end{expl}

The example above essentially compares the stratifications of $A^\n(\delta)$ and $A^\m(\delta)$, up to killing off one equivariant parameter for simplicity.
The upshot is that the polyheredity structure arising from cyclic orientation $1\leftrightarrows 0$ does not have the semicuspidal algebra $C(\delta)$ as the top piece, while the one arising from Kronecker orientation $1\rightrightarrows 0$ does.

\begin{rmk}
	We could fit the stratification of $Y_\m$ above in the framework of~\cite{McN_RTGE2017} by adding a one-dimensional unipotent group acting on $\bbC^2$ by $a(x,y) = (x+ay,y)$. 
	However, this trick does not work when studying $A^\m(k\delta)$, $k>1$.
\end{rmk}

\medskip
\section{Stratification of seminilpotent quiver Schur}\label{sec:rep-cS}
In this section, we construct an ersatz-complete evenness theory on $\Rep_\alpha$, and deduce that the seminilpotent Schur algebra is polyhereditary.

\subsection{Stratification}
Let $\Root^+\subset \bbZ_{\geqslant 0}^I$ be the set of positive roots for $\widehat{\mathfrak{sl}}_2$, and $\Root^+_{\rm re}\subset\Root^+$ the subset of positive real roots. 
For every non-zero $\vecn=n_0\alpha_0+n_1\alpha_1$, set 
\[
	\theta(\vecn)=n_1/(n_0+n_1)\in [0,1].
\]
We write $\vecn\leqslant \vecn'$ if $\theta(\vecn)\leqslant \theta(\vecn')$, and $\vecn< \vecn'$ if the inequality is strict. 
This induces a preorder $\leqslant$ on $\Root^+$.
It is uniquely determined by the following conditions:
\begin{itemize}
	\item The preorder is convex (i.e. $\vecn\leqslant\vecn+\vecn'\leqslant\vecn'$ for $\vecn,\vecn',\vecn+\vecn'\in\Root^+$, $\vecn\leqslant\vecn'$), and $\alpha_0<\alpha_1$;
	\item The restriction of $\leqslant$ to $\Root^+_{\rm re}\cup \{\delta\}$ is a total order;
	\item The multiples of $\delta$ are equivalent with respect to $\leqslant$.
\end{itemize}
Explicitly, this preorder is as follows:
$$
\alpha_0<\alpha_0+\delta<\alpha_0+2\delta<\ldots <\delta\sim 2\delta\sim\ldots<\ldots<\alpha_1+2\delta<\alpha_1+\delta<\alpha_1.
$$

\begin{defn}
	Let $\vecn\in \bbZ^I_{\geqslant 0}$.
	A \emph{Kostant partition} $\kbeta$ of $\vecn$ is a composition of $\vecn$ of the form
	\[
		\kbeta=(i^1\beta^1,i^2 \beta^2,\ldots,i^r\beta^r),
	\]
	where $i^k\in \bbZ_{>0}$ and $\beta^1>\beta^2>\ldots>\beta^r$ are elements of $\Root^+_{\rm re}\cup \{\delta\}$.
	We denote the set of Kostant partitions of $\vecn$ by $\kp(\vecn)\subset \Comp(\vecn)$.
\end{defn}

We will need a slight variant of this notion. 
Let $\Root^+_\bullet\coloneqq \Root^+_{\rm re}\cup \{\delta^\bullet,\delta^\circ\}$ be the totally ordered set obtained from $\Root^+_{\rm re}\cup \{\delta\}$ by replacing $\delta$ with $\delta^\bullet>\delta^\circ$.
We regard $\delta^\bullet,\delta^\circ$ as two copies of the same element $\delta\in \bbZ^I_{\geqslant 0}$.
\begin{defn}
	Let $\vecn\in \bbZ^I_{\geqslant 0}$.
	A \emph{marked Kostant partition} $\mkbeta$ of $\vecn$ is a composition of $\vecn$ of the form
	\[
		\mkbeta=(i^1\beta^1,i^2 \beta^2,\ldots,i^r\beta^r),
	\]
	where $i^k\in \bbZ_{>0}$ and $\beta^1>\beta^2>\ldots>\beta^r$ are elements of $\Root^+_\bullet$.
	We denote the set of marked Kostant partitions of $\vecn$ by $\mkp(\vecn)$.
\end{defn}

The data of a marked Kostant partition $\mkbeta$ is equivalent to $(\kbeta,k^\bullet)$, where $\kbeta\in\kp(\vecn)$, and $k^\bullet$ is a non-negative integer not exceeding the coefficient of $\delta$ in $\kbeta$.
The bijection is given by
\begin{equation}\label{eq:mkbeta-vs-kbeta}
	\mkbeta = (i^1\beta^1,\ldots,k^\bullet\delta^\bullet,k^\circ\delta^\circ,\ldots,i^r\beta^r) \leftrightarrow (\kbeta = (i^1\beta^1,\ldots,(k+k')\delta,\ldots,i^r\beta^r), k^\bullet).
\end{equation}

Recall the stacky notations of \cref{rmk:stack-notations,subs:reps-Kronecker-quiver}.
For every $\vecn\in\Root^+_{\rm re}$, we have the unique indecomposable representation $M_\vecn\in\Rep_\vecn$.
The following lemma is well known, see e.g.~\cite[Ex.~3.34]{Sch_LHA2012}.
\begin{lem}\label{lem:Hom-Ext-ordered}
	Let $M$, $N$ be two indecomposable representations of $\Gamma$ with $\dim(M) < \dim(N)$. Then $\Ext^1(M,N) = \Hom(N,M) = 0$.
	If $\dim(M)\in\Root^+_{\rm re}$, then $\Ext^1(M,M) = 0$.\qed
\end{lem}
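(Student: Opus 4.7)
The plan is to reduce the lemma to slope stability together with Auslander-Reiten duality, using the classification of indecomposable representations of the Kronecker quiver. First I would recall that the indecomposables fall into three families: preprojective modules $M_{\alpha_0+k\delta}$ ($k\geq 0$), preinjective modules $M_{\alpha_1+k\delta}$ ($k\geq 0$), and regular modules of dimension $n\delta$ (parametrised by torsion sheaves on $\bbP^1$). The function $\theta$ is the slope of a King-type stability condition on $\rep(\Gamma)$, and every indecomposable is $\theta$-semistable of its own slope (see Schiffmann, loc.\ cit.). The preorder defined just before the lemma was engineered precisely so that the strict inequality $\dim M<\dim N$ forces $\theta(M)<\theta(N)$: the only coincidence of slopes among indecomposables occurs on multiples of $\delta$, where the preorder collapses to $\sim$, so this case is excluded by the hypothesis.

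With $\theta(M)<\theta(N)$ in hand, the vanishing $\Hom(N,M)=0$ follows from the standard semistability argument: any nonzero morphism $N\to M$ would produce an image $I$ which is both a quotient of $N$ (forcing $\theta(I)\geq\theta(N)$) and a subobject of $M$ (forcing $\theta(I)\leq\theta(M)$), contradicting the strict inequality of slopes. For $\Ext^1(M,N)=0$ I would invoke Auslander-Reiten duality, which in the hereditary Kronecker setting reads $\Ext^1(M,N)\simeq D\Hom(N,\tau M)$, where $\tau$ is the AR translate and $D$ is $\bbk$-linear duality. A short case check on the three families yields $\theta(\tau M)\leq\theta(M)$ uniformly (strictly decreasing within the preprojective and preinjective chains via $\tau P_k=P_{k-1}$ and $\tau I_k=I_{k+1}$, and equality for regulars), with $\tau M=0$ for the two indecomposable projectives, in which case the claim is trivial. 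The same semistability argument then dispatches $\Hom(N,\tau M)$.

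For the rigidity statement $\Ext^1(M,M)=0$ I would combine the hereditary Euler identity $\dim\Hom(M,M)-\dim\Ext^1(M,M)=\langle\dim M,\dim M\rangle=1$ (the right-hand side being $1$ because $\dim M$ is a real root) with the classical fact that a real-root indecomposable of a tame quiver satisfies $\End(M)=\bbk$; this can either be cited from Kac's theorem or verified by direct inspection of the explicit preprojective and preinjective models. The only step requiring genuine care is the uniform verification of $\theta(\tau M)\leq\theta(M)$ and the bookkeeping at the boundary where $\tau M=0$, but these amount to short explicit calculations.
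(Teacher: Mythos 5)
Your argument is correct, and it is worth noting that the paper does not actually prove this lemma: it is stated as well known with a pointer to Schiffmann's lectures (Ex.~3.34 there), where the statement essentially comes from the identification of Kronecker representations with coherent sheaves on $\bbP^1$ and the standard structure theory of tame hereditary categories. Your route stays inside quiver representation theory: semistability of all indecomposables for the slope $\theta$ gives $\Hom(N,M)=0$ at once, the Auslander--Reiten formula $\Ext^1(M,N)\simeq D\Hom(N,\tau M)$ reduces the $\Ext$-vanishing to another $\Hom$-vanishing via $\theta(\tau M)\leqslant\theta(M)$, and the Euler form plus the brick property of real-root indecomposables handles rigidity. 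All three ingredients are sound here: every Kronecker indecomposable is indeed $\theta$-semistable (this is exactly what underlies the Harder--Narasimhan stratification by Kostant partitions used later in the paper); the use of ordinary rather than stable $\Hom$ in the AR formula is legitimate because the algebra is hereditary and $\tau M$ is indecomposable non-injective, so no nonzero map into it factors through an injective; and $\langle\dim M,\dim M\rangle=1$ together with $\End(M)=\bbk$ for preprojective/preinjective indecomposables gives $\Ext^1(M,M)=0$. Two small bookkeeping points: for the Kronecker quiver $\tau$ shifts dimension vectors by $2\delta$ on the preprojective and preinjective components (so in the dimension-indexed labelling $\tau P_k=P_{k-2}$ and $\tau I_k=I_{k+2}$, not a shift by one), and for the two projectives one should simply note $\Ext^1(M,-)=0$ directly; neither affects the inequality $\theta(\tau M)\leqslant\theta(M)$ that your argument actually uses.
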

In particular, for any $\vecn\in\Root^+_{\rm re}$, $k\in \bbZ_{>0}$ we have an open substack
\[
	\Rep^\reg_{k\vecn}\coloneqq \{M_\vecn^{\oplus k}\} \simeq [\pt/GL_k] \subset \Rep_{k\vecn}.
\]
Recall that we have already defined the substacks $\Rep^\reg_{k\delta},\Rep^\reg_{k\delta^\bullet},\Rep^\reg_{k\delta^\circ}\subset \Rep_{k\delta}$ in \cref{subs:reps-Kronecker-quiver}.

\begin{defn}\label{def:stratas}
	For any $\mkbeta = (i^1\beta^1,i^2 \beta^2,\ldots,i^r\beta^r)\in\mkp(\vecn)$, define 
	\[
		\Rep_\mkbeta^\reg \coloneqq \prod_{j=1}^r \Rep_{i^j\beta^j}^\reg \subset \Rep_\mkbeta, \qquad 
		\Rep^\mkbeta_\vecn \coloneqq \Rep_\mkbeta^\reg\times_{\Rep_\mkbeta} \Fl^\s_\mkbeta.
	\]
	Define $\Rep_\kbeta^\reg$, $\Rep^\kbeta_\vecn$ for $\kbeta\in\kp(\vecn)$ analogously.
\end{defn}

By \cref{lem:Hom-Ext-ordered}, the natural map $q:\Rep^\mkbeta_\vecn\to \Rep_\mkbeta^\reg$ is bijective on $\bbC$-points.
Note that for $V = (V_1,\ldots, V_r)\in \Rep_\mkbeta^\reg$, we have a short exact sequence
\[
	0\to \Id_V + \bigoplus_{i<j} \Hom_{\bbk \Gamma}(V_j,V_i) \to \Aut_{\Rep^\mkbeta_\vecn}(V) \to \Aut_{\Rep_\mkbeta^\reg}(V) \to 0,
\]
where the first group is unipotent.
On the other hand, the forgetful map $\Rep^\mkbeta_\vecn\to \Rep_\vecn$ is a locally closed embedding.
By classification of indecomposable representations of the Kronecker quiver, we have
\begin{equation}\label{eq:stratas}
	\Rep_\vecn = \bigsqcup_{\kbeta\in\kp(\vecn)} \Rep^\kbeta_\vecn = \bigsqcup_{\mkbeta\in\mkp(\vecn)} \Rep^\mkbeta_\vecn.
\end{equation}
The stratification into $\Rep^\kbeta_\vecn$ satisfies the conditions~\eqref{eq:strat-cond} by~\cite[Prop.~3.7]{reineke2003harder}.
The same condition holds for each stratification $\Rep^\kbeta_\vecn = \sqcup_k \Rep^{(\kbeta,k)}_\vecn$, since it holds for~\eqref{eq:repreg-str}.

\subsection{Evenness theory on $\Rep_\vecn$}
We begin by defining an evenness theory on each $\Rep_\mkbeta^\reg$, $\mkbeta\in\mkp(\vecn)$.
Let $k^\bullet$, resp. $k^\circ$ be the multiplicity of $\delta^\bullet$, resp. $\delta^\circ$ in $\mkbeta$.
Then by definition, we have 
\[
	\Rep_\mkbeta^\reg \simeq [\mathcal{N}_{\fkgl_{k^\bullet}}/GL_{k^\bullet}] \times [\fkgl_{k^\circ}/GL_{k^\circ}] \times [\pt/G'] = [(\mathcal{N}_{\fkgl_{k^\bullet}}\times\fkgl_{k^\circ})/G],
\]
where $G'$ is a product of general linear groups, and $G \coloneqq G'\times GL_{k^\bullet} \times GL_{k^\circ}$.

It is known that $\mathcal{N}_{\fkgl_{k^\bullet}}\times \mathcal{N}_{\fkgl_{k^\circ}}$ has finitely many $GL_{k^\bullet}\times GL_{k^\circ}$ orbits, with stabilizers satisfying the conditions of \cref{rmk:classical-parity}.
In particular, we obtain piecewise constant evenness theory $\Ev^\mathrm{pc}$ on $[(\mathcal{N}_{\fkgl_{k^\bullet}}\times\mathcal{N}_{\fkgl_{k^\circ}})/G]$.

Let us consider the maps 
\begin{align*}
	&i:[(\mathcal{N}_{\fkgl_{k^\bullet}}\times\mathcal{N}_{\fkgl_{k^\circ}})/G]\hookrightarrow[(\mathcal{N}_{\fkgl_{k^\bullet}}\times\fkgl_{k^\circ})/G],\\
	&p:[(\mathcal{N}_{\fkgl_{k^\bullet}}\times\fkgl_{k^\circ})/G] \to [\mathcal{N}_{\fkgl_{k^\bullet}}/G].
\end{align*}
The first map $i$ is a closed embedding, and $p$ is a vector bundle, self-dual via trace form on $\fkgl_{k^\circ}$.
Consider the Fourier-Sato transform $\Theta_p$; then $\Theta_p\circ i_*(\Ev^\mathrm{pc})$ is an evenness theory on $\Rep_\mkbeta^\reg$.

\begin{rmk}\label{rmk:comp-to-tensor}
	Alternatively, we can define an evenness theory on $\Rep_\mkbeta^\reg$ by appealing to \cref{lem:prod-of-Ev}, and check that the result is the same.
\end{rmk}

Pulling back $\Theta_p\circ i_*(\Ev^\mathrm{pc})$ along $\Rep^\mkbeta_\vecn\to \Rep_\mkbeta^\reg$, we get an evenness theory $\Ev_\mkbeta\subset D^b(\Rep^\mkbeta_\vecn)$.
Finally, \cref{cor:gluing-evenness} gives rise to an evenness theory on $\Rep_\vecn$, which we will denote $\Ev_\vecn^\mkp$.

\begin{expl}
	Let $\alpha = k\delta$, and restrict $\Ev_\vecn^\mkp$ to the open regular stratum $\Rep^\reg_{k\delta}$.
	This is an evenness theory, with at most $p_2(k)$ ersatz parity sheaves, where $p_2(k)$ is the number of bipartitions of $k$.
\end{expl}

\subsection{Evenness of flag sheaves}\label{subs:evenness-flag}
The main source of sheaves in $\Ev_\vecn^\mkp$ is flag sheaves.

\begin{prop}\label{prop:flag-is-even}
	Let $\Ibe\in \Comp(\vecn)$, $p_\Ibe: \Fl^\m_\Ibe\to \Rep_\vecn$ the projection, and 
	\[
		\cL_\Ibe\coloneqq (p_\Ibe)_*\bbk.
	\]
	Then $\cL_\Ibe\in \Ev_\vecn^\mkp$.
\end{prop}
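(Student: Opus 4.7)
The plan is to verify the two defining conditions of $\Ev^\mkp_\vecn$, namely that $i^*_\mkbeta\cL_\Ibe$ and $i^!_\mkbeta\cL_\Ibe$ lie in $\Ev_\mkbeta$ for every $\mkbeta\in \mkp(\vecn)$. The map $p_\Ibe$ is proper, since $\widetilde{\bfF}^\m_\Ibe$ is closed inside the proper bundle $E_\vecn\times \bfF_\Ibe\to E_\vecn$, and its source is smooth; hence Verdier duality yields $\bbD\cL_\Ibe \simeq \cL_\Ibe[2\dim \widetilde{\bfF}^\m_\Ibe]$. Since $i^!_\mkbeta\circ \bbD = \bbD\circ i^*_\mkbeta$ and $\Ev_\mkbeta$ is stable under Verdier duality and even shifts, it is enough to prove $*$-evenness, i.e.\ that $i^*_\mkbeta\cL_\Ibe\in \Ev_\mkbeta$ for every $\mkbeta$.

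Fix $\mkbeta=(i^1\beta^1,\ldots,i^r\beta^r)$. By proper base change,
\[
	i^*_\mkbeta\cL_\Ibe \simeq (q_\mkbeta)_*\bbk,\qquad q_\mkbeta\colon \Fl^\m_\Ibe\times_{\Rep_\vecn}\Rep^\mkbeta_\vecn\longrightarrow \Rep^\mkbeta_\vecn.
\]
A point of the fibre product is a triple $(V,V^\bullet,W^\bullet)$ consisting of a representation $V$, a seminilpotent flag $V^\bullet$ of type $\Ibe$, and a $V$-stable filtration $W^\bullet$ of type $\mkbeta$ with regular factors of type $i^j\beta^j$. I will Mackey-stratify this fibre product by the combinatorial data of the matrix of $I$-dimensions
\[
	d^{ij}=\dim_I\!\bigl((V^i\cap W^j)/((V^{i-1}\cap W^j)+(V^i\cap W^{j-1}))\bigr),
\]
indexed by an appropriate subset of $\fkS_\Ibe\backslash\fkS_\vecn/\fkS_\mkbeta$. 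Each Mackey stratum is smooth and fibres over $\Rep^\mkbeta_\vecn$; under the product factorisation $\Rep^\reg_\mkbeta=\prod_j\Rep^\reg_{i^j\beta^j}$, its fibre splits as a product of seminilpotent flag varieties, one over each regular component.

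I then show that the $*$-pushforward of $\bbk$ from each individual Mackey stratum lies in $\Ev_\mkbeta$. By \cref{lem:prod-of-Ev} and \cref{rmk:comp-to-tensor}, $\Ev_\mkbeta$ respects the product decomposition above, so one may analyse each factor $j$ separately. For real $\beta^j\in\Root^+_{\rm re}$ the factor is $[\pt/GL_{i^j}]$ and the seminilpotent flag variety is a classical partial flag manifold of $GL_{i^j}$, whose pushforward is a direct sum of even-shifted constant sheaves, hence an object of $\Ev^{\mathrm{pc}}$. For $\beta^j=\delta^\bullet$ the factor is $[\cN_{\fkgl_k}/GL_k]$ and the corresponding map is a partial Springer-type resolution, whose pushforward is a direct sum of shifts of equivariant parity sheaves in $\Ev^{\mathrm{pc}}$. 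For $\beta^j=\delta^\circ$ the analogous analysis over $[\fkgl_k/GL_k]$ is transported via the Fourier–Sato equivalence $\Theta_p$ built into the definition of $\Ev_\mkbeta$, reducing it to the previous Springer-type case. Verdier self-duality on smooth strata then upgrades this to $\Ev_\mkbeta$-membership for the $!$-pushforward as well.

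To conclude, I glue the Mackey strata: choose a total order on them refining closure, and proceed inductively on the union of the first $k$ strata $Y^{(k)}\subset Y=\Fl^\m_\Ibe\times_{\Rep_\vecn}\Rep^\mkbeta_\vecn$. The open/closed decomposition $Y_{\nu_k}\hookrightarrow Y^{(k)}\hookleftarrow Y^{(k-1)}$, combined with properness of $q_\mkbeta|_{Y^{(k)}}$, produces a distinguished triangle on $\Rep^\mkbeta_\vecn$ whose outer terms are $(q_\mkbeta|_{Y_{\nu_k}})_!\bbk$ and $(q_\mkbeta|_{Y^{(k-1)}})_*\bbk$, both of which lie in $\Ev_\mkbeta$ by the previous step and induction. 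Vanishing of $\Ext^{\mathrm{odd}}$ between objects of $\Ev_\mkbeta$ (\cref{lem:Ext-*!-even}) forces the connecting morphism to vanish, so the triangle splits and the middle term $(q_\mkbeta|_{Y^{(k)}})_*\bbk$ remains in $\Ev_\mkbeta$. The main technical obstacle is the analysis of the imaginary pieces, namely identifying the seminilpotent flag varieties over $\Rep^\reg_{k\delta^\bullet}$ with partial Springer-type resolutions and verifying that after Fourier–Sato the $\delta^\circ$-side matches this Springer picture; everything else is essentially formal given the recollement axiomatics of \cref{sec:ersatz-parity}.
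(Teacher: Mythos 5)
Your reduction steps are sound and run parallel to the paper's argument: self-duality of $\cL_\Ibe$ reduces to $*$-evenness, proper base change expresses $i^*_\mkbeta\cL_\Ibe$ as a pushforward from $\Fl^\m_\Ibe\times_{\Rep_\vecn}\Rep^\mkbeta_\vecn$, and your relative-position stratification plus the recollement/odd-$\Ext$ splitting is the same mechanism the paper packages as the Mackey filtration for Lusztig's restriction functor and \cref{prop:Mackey-splits} (the paper runs this as a simultaneous induction on $|\vecn|$, while you split stratum by stratum inside a fixed fibre product; these are essentially equivalent manoeuvres).

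The genuine gap is in the step you treat as routine: evenness of the pushforwards from the individual Mackey strata. Over a real factor $\Rep^\reg_{i^j\beta^j}=[\pt/GL_{i^j}]$ the fibre is a seminilpotent Lusztig fibre $\Fl^\m_{\Iga}(M_{\beta^j}^{\oplus i^j})$ for an \emph{arbitrary} induced composition $\Iga$, and this is a quiver flag variety of the Kronecker quiver, not a classical partial flag manifold of $GL_{i^j}$ — in general it is singular and inhomogeneous. Likewise over $\delta^\bullet$ the relevant fibres over $[\cN_{\fkgl_k}/GL_k]$ are partial Springer resolutions only for the special types $\Ibe^\bullet_\lambda=(\lambda^1\delta,\ldots,\lambda^r\delta)$ used later in \cref{prop:ersatz-comp}; for a general composition of $k\delta$ (mixing $\alpha_0$, $\alpha_1$ and $\delta$ pieces) they are not Springer-type maps, and the $\delta^\circ$ case you transport through Fourier–Sato inherits the same problem. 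What is actually needed — and what constitutes the geometric heart of the paper's proof — is the statement that all seminilpotent Lusztig fibres have cohomology concentrated in even degrees (\cref{prop:Lus-even}, proved by extending Zhou's extended-quiver-Grassmannian argument to mixed flag conditions, together with the nilpotent case \cite[Lem.~3.40]{Mak_CBKA2015} after Fourier–Sato, as in \cref{lem:base-of-induction}); only then does degeneration of the Leray spectral sequence over $[\pt/GL_k]$, resp.\ over the orbits of $\cN_{\fkgl_k}$, show that the pushforwards are sums of even shifts of constant sheaves. Your proposal substitutes a false identification of these fibres with homogeneous spaces and Springer maps for this input, so the key step is unproven as written; everything else in your outline would go through once that evenness result is supplied.
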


Before proceeding with the proof, we need some intermediary results.
\begin{defn}
	Let $M\in \Rep_\vecn$, and $\Ibe\in\Comp(\vecn)$.
	The variety $\Fl^\m_\Ibe(M)\coloneqq \Fl^\m_\Ibe\times_{\Rep_\vecn} \{M\}$ is called a \textit{(seminilpotent) Lusztig fiber}.
\end{defn}

We can analogously define $\Fl^\diamond_\Ibe(M)$ for any quiver $\Gamma = (I,H)$ and any function $\diamond:H\to \{\s,\n\}$ as in \cref{subs:FS-trans}.

\begin{prop}\label{prop:Lus-even}
	All seminilpotent Lusztig fibers for the Kronecker quiver have even cohomology.
\end{prop}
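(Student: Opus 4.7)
The plan is to argue by induction on the number of indecomposable summands of $M$, using a Bialynicki-Birula decomposition coming from a cocharacter of $\Aut(M)$. Note that $\Fl^\m_\Ibe(M)$ is a closed subvariety of the smooth projective flag variety $\bfF_\Ibe$, so it is itself projective and any $\bbC^*$-action on $\bfF_\Ibe$ restricts to a well-behaved action on $\Fl^\m_\Ibe(M)$ whose BB attracting cells are cut out from the corresponding affine cells of $\bfF_\Ibe$.

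For the inductive step, fix a decomposition $M = M' \oplus M''$ with both summands nonzero and take a cocharacter $\chi : \bbC^* \to \Aut(M)$ acting trivially on $M'$ and with strictly positive weight on $M''$. A flag is $\chi$-fixed precisely when each of its steps splits along $M = M' \oplus M''$, so
\begin{equation*}
\Fl^\m_\Ibe(M)^{\bbC^*} = \bigsqcup_{\Ibe = \Ibe' + \Ibe''} \Fl^\m_{\Ibe'}(M') \times \Fl^\m_{\Ibe''}(M''),
\end{equation*}
where the sum runs over componentwise decompositions of $\Ibe$. By the inductive hypothesis applied to both $M'$ and $M''$, each summand of the fixed locus has even cohomology. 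The BB stratification of $\Fl^\m_\Ibe(M)$ obtained by restriction from $\bfF_\Ibe$ is a filtration by locally closed subsets, each of which is an affine bundle over a component of the fixed locus. The induced long exact sequences degenerate (since odd cohomology vanishes at each step), so $H^*(\Fl^\m_\Ibe(M))$ is concentrated in even degrees.

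For the base case $M$ indecomposable, two scenarios arise. If $M = M_\vecn$ with $\vecn \in \Root^+_\mathrm{re}$, a direct basis-level analysis (picking an adapted grading on $M_\vecn$ and the scaling torus it induces) exhibits $\Fl^\m_\Ibe(M)$ as an iterated affine bundle over a classical partial flag variety. If $M$ is an indecomposable imaginary representation, then $\Aut(M)$ is $\bbC^*$ extended by a unipotent radical, so no further torus action is informative. Here we use $\Rep^\reg_{n\delta} \simeq \cT_n(\bbP^1)$ to identify $M$ with a punctual torsion sheaf at some $x \in \bbP^1$; in suitable coordinates $(h_\s, h_\n) = (\Id, J_n)$ when $x = \infty$, and $(J_n + c\cdot\Id, \Id)$ when $x \neq \infty$, where $J_n$ is the principal nilpotent. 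In each case, the seminilpotent condition on $(W^\bullet_0, W^\bullet_1)$ identifies $\Fl^\m_\Ibe(M)$ with a variety of compatible flag pairs subject to a Spaltenstein-type nilpotent constraint, whose affine paving is classical.

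The main obstacle is exactly this imaginary indecomposable base case: our inductive machinery cannot reduce it further, and the asymmetric treatment of $h_\s$ and $h_\n$ in the $\m$-condition prevents a naive appeal to $PGL_2$-symmetry of $\bbP^1$. A secondary technical point is verifying that the Bialynicki-Birula cells on the (possibly singular) $\Fl^\m_\Ibe(M)$ are genuine affine bundles, which we handle by tracking the restriction from the smooth ambient $\bfF_\Ibe$.
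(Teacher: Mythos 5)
Your inductive step rests on the claim that the attracting sets of the chosen $\bbC^*$-action on the (generally singular) fiber $\Fl^\m_\Ibe(M)$ are affine bundles over the components of the fixed locus, and this is a genuine gap rather than a ``secondary technical point''. Intersecting with the attracting cells of the smooth ambient $\bfF_\Ibe$ does not settle it: over a fixed flag $(V'^\bullet,V''^\bullet)$ the fiber of the limit map consists of tuples of maps $\psi^j\colon V''^j\to M'/V'^j$ cut out by the linear conditions coming from $W^j\subset W^{j+1}$ and from the seminilpotent conditions on $h_\s,h_\n$; that is, it is the kernel of a linear map whose rank may jump along the fixed component. So a priori you only get a ``linear fibration'' with varying fiber dimension, and the compactly supported long-exact-sequence argument does not close without further input (refining the stratification would require evenness of $H_c^*$ of loci inside the fixed components, which your inductive hypothesis does not give). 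In the quiver Grassmannian literature this is exactly where the work lies: affine-bundle/cell statements of this kind require a Hom/Ext-directedness of the decomposition $M=M'\oplus M''$ and can fail without it, and directedness is unavailable precisely for summands in the same tube, e.g. $M'\simeq\cO_x$, $M''\simeq\cO_{2x}$ or $M\simeq\cO_x^{\oplus k}$, where nonzero homomorphisms go both ways. Since your base case only treats indecomposable $M$, these isotypic/regular situations are delegated to the inductive step at exactly the point where the affine-bundle claim is unjustified; the ``classical'' affine paving asserted for the two-flag Spaltenstein-type varieties in the base case also needs an argument or a reference (the nilpotent cyclic-quiver analogue is \cite[Lem.~3.40]{Mak_CBKA2015}).

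For comparison, the paper avoids this altogether: it quotes the full-flag (KLR) case from \cite[App.~A]{MakMin_KLR2023} and observes that the argument of \cite{zhou2022affine} — which re-encodes Lusztig fibers as quiver Grassmannians for an extended quiver on $I\times[1,d]$ and then uses the known evenness/cell decompositions for Dynkin and affine types — applies verbatim once one takes the ``mixed'' extended quiver adapted to the function $\diamond$ with $\diamond(h_\s)=\s$, $\diamond(h_\n)=\n$. If you want to salvage your route, you would need either to prove the affine-bundle property for your torus actions including within tubes, or to restructure the induction so that the entire isotypic regular part (torsion supported at a single point of $\bbP^1$, not just indecomposables) is handled as a base case.
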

\begin{proof}
	For flags of KLR type (when $\Fl^\m_\Ibe = \Fl^\s_\Ibe$) this is proved in~\cite[App.~A]{MakMin_KLR2023}.
	The vanishing of odd cohomology is proven for Lusztig fibers $\Fl^\s_\Ibe(M)$, $\Fl^\n_\Ibe(M)$ for all Dynkin and affine quivers in~\cite{zhou2022affine}.
	The proof replaces the original quiver $\Gamma = (I,H)$ with its extended quiver on vertices $I\times [1,d]$, and Lusztig fibers with (extended) quiver Grassmannians.
	Two flavors of extended quiver are considered, depending on whether one works with $\Fl^\s$ or $\Fl^\n$; for the former, the copies of arrows of the original quiver preserve the second coordinate, while for the latter they decrease it by $1$.
	More generally, for any function $\diamond:H\to \{\s,\n\}$ one may consider the ``mixed'' extended quiver, where the behaviour of second coordinate depends on the value of $\diamond$.
	The proof in~\cite{zhou2022affine} then extends verbatim to show the evenness of $H^*(\Fl^\diamond_\Ibe(M))$.
\end{proof}

\begin{lem}\label{lem:base-of-induction}
	Let $\alpha\in \Root^+_\bullet$, $k>0$, and $\Iga\in \Comp(k\alpha)$.
	Then $r^*\cL_\Iga$ is even, where $r:\Rep^\reg_{k\alpha} \to \Rep_{k\alpha}$ is the inclusion.
\end{lem}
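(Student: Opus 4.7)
The plan is to use proper base change to reduce $r^*\cL_\Iga$ to a pushforward over $\Rep^\reg_{k\alpha}$, and then handle the three kinds of $\alpha\in\Root^+_\bullet$ separately. Since $p_\Iga$ is proper and $r$ is an open immersion, base change gives
\[
r^*\cL_\Iga \simeq (p'_\Iga)_*\bbk,\qquad p'_\Iga\colon \Fl^\m_\Iga\times_{\Rep_{k\alpha}}\Rep^\reg_{k\alpha}\to \Rep^\reg_{k\alpha}.
\]
The (geometric) fibers of $p'_\Iga$ are seminilpotent Lusztig fibers $\Fl^\m_\Iga(M)$ for $M$ in the regular part, which have even cohomology by \cref{prop:Lus-even}; this is the key input in every case.

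For $\alpha\in \Root^+_{\rm re}$, we have $\Rep^\reg_{k\alpha}\simeq [\pt/GL_k]$ with evenness theory $\Ev^{\mathrm{pc}}$ (direct sums of even shifts of $\bbk$), and $\Fl^\m_\Iga(M_\alpha^{\oplus k})$ is projective with even cohomology. Since $H^*_{GL_k}(\pt)$ is also concentrated in even degrees, the Serre spectral sequence for the Borel construction degenerates for parity reasons, yielding $H^*_{GL_k}(\Fl^\m_\Iga(M_\alpha^{\oplus k}))\simeq H^*_{GL_k}(\pt)\otimes H^*(\Fl^\m_\Iga(M_\alpha^{\oplus k}))$ as a free graded $H^*(BGL_k)$-module placed in even degrees. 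Freeness of cohomology plus polynomiality of $H^*_{GL_k}(\pt)$ implies formality in $D^b([\pt/GL_k])$, so $(p'_\Iga)_*\bbk$ splits into a direct sum of even shifts of $\bbk$ and is therefore in $\Ev^{\mathrm{pc}}$.

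For $\alpha=\delta^\bullet$ we identify $\Rep^\reg_{k\delta^\bullet}\simeq [\cN_{\fkgl_k}/GL_k]$ and write $M$ at a point $\phi\in \cN_{\fkgl_k}$ in the normal form $(V,V,\id,\phi)$; then a flag $V^\bullet\in \Fl^\m_\Iga(M)$ is a pair of nested subflags $U^\bullet\subset W^\bullet$ of $V$ satisfying $\phi(U^j)\subset W^{j-1}$. First, by passing to a refinement $\Iga'\vDash\Iga$ consisting of pure-$\delta$ parts, we may assume each $\Iga^j$ is a multiple of $\delta$: the forgetful map $\Fl^\m_{\Iga'}\to \Fl^\m_\Iga$ is proper with partial flag variety fibers, so $(p'_\Iga)_*\bbk$ is a direct summand of $(p'_{\Iga'})_*\bbk$, and (ersatz) parity is stable under direct summands. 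In the reduced setting $U^\bullet=W^\bullet$ is a single flag and $\Fl^\m_{\Iga'}|_{\Rep^\reg_{k\delta^\bullet}}$ becomes a standard partial Spaltenstein variety. Its projection to $[\cN_{\fkgl_k}/GL_k]$ is proper with smooth source and even fibers, and together with the fact that $GL_k$-stabilizers of points in $\cN_{\fkgl_k}$ have even cohomology, this puts us in the setting of \cref{cor:McNa-result} applied to the orbit stratification; the pushforward is then parity for the piecewise constant evenness theory on $[\cN_{\fkgl_k}/GL_k]$.

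For $\alpha=\delta^\circ$ we use the Fourier--Sato criterion: $(p'_\Iga)_*\bbk$ lies in $\Theta_p\circ i_*(\Ev^{\mathrm{pc}})$ iff $\Theta_p^{-1}(p'_\Iga)_*\bbk$ lies in $i_*\Ev^{\mathrm{pc}}$. The variety $\Fl^\m_\Iga|_{\Rep^\reg_{k\delta^\circ}}$ sits inside $\cX\times \fkgl_k$ (with $\cX$ the pair-of-flags base) as a sub-vector-bundle $E$, and $p'_\Iga$ is the projection on the $\fkgl_k$-factor. Fourier--Sato along $\fkgl_k$ then exchanges $\bbk_E$ with (a shift of) the constant sheaf on the conormal $E^\perp\subset \cX\times\fkgl_k^*\simeq \cX\times\fkgl_k$; the image of $E^\perp$ in $\fkgl_k$ is supported in a union of closures of nilpotent orbits (coming from the explicit description of $E^\perp$ as $\{\psi:\psi(W^{j-1})=0,\ \psi(V)\subset U^j\}$). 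The resulting pushforward from $E^\perp\subset \cX\times \cN_{\fkgl_k}$ to $\cN_{\fkgl_k}$ is again of Spaltenstein type, to which the argument of the $\delta^\bullet$ case applies.

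The main obstacle is the $\delta^\circ$ case: identifying $\Theta_p^{-1}(p'_\Iga)_*\bbk$ with a pushforward from a sub-bundle of $\cX\times \cN_{\fkgl_k}$ requires a careful Fourier-theoretic computation and verification that the orthogonal bundle $E^\perp$ consists of nilpotent endomorphisms, which is where the orientation asymmetry of the Kronecker quiver (baked into the seminilpotent condition) plays its crucial role.
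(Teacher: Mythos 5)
There is a genuine gap, concentrated in your treatment of the two imaginary cases. For $\alpha=\delta^\bullet$, your reduction ``pass to a refinement $\Iga'\vDash\Iga$ consisting of pure-$\delta$ parts'' fails twice over. First, such a refinement does not exist for a general $\Iga\in\Comp(k\delta)$: a part $a\alpha_0+b\alpha_1$ with $a\neq b$ (e.g.\ $\Iga=(\alpha_0,\alpha_1)$) cannot be written as a concatenation of multiples of $\delta$. Second, even when it exists, the forgetful map $\Fl^\m_{\Iga'}\to\Fl^\m_\Iga$ is not defined on seminilpotent flag varieties: the strict condition $h_\n(V^j_1)\subset V^{j-1}_0$ for the refined flag does not imply it for the coarser flag (this is exactly the phenomenon $\widetilde\bfF^\n_{\Iga,\Ibe}\subsetneq\widetilde\bfF^\n_\Iga$ noted in \cref{subs:qschur}), so the claim that $(p'_\Iga)_*\bbk$ is a direct summand of $(p'_{\Iga'})_*\bbk$ has no map behind it. The appeal to \cref{cor:McNa-result} is also misplaced: that statement concerns polyheredity of $\Ext$-algebras, not a criterion for a pushforward to be parity. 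No reduction is needed here: $[\cN_{\fkgl_k}/GL_k]$ has finitely many orbits with connected, even stabilizers, so one restricts to each orbit and repeats your real-root argument verbatim, using that \emph{all} seminilpotent Lusztig fibers $\Fl^\m_\Iga(M)$ are even by \cref{prop:Lus-even} (together with self-duality of $\cL_\Iga$ up to even shift to handle the $!$-restrictions); this is what the paper does.

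For $\alpha=\delta^\circ$ you are essentially redoing by hand the Fourier--Sato computation that the paper outsources to \cref{lm:Fourier} (Lusztig's compatibility of $\Theta$ with pushforward from flag varieties), but the two points you flag as ``the main obstacle'' are precisely what is missing, and one of them is stated incorrectly: the annihilator of $E_{(U,W)}=\{x: x(U^j)\subset W^j\ \forall j\}$ under the trace form is the \emph{sum} $\sum_j\{\psi:\psi(W^j)=0,\ \operatorname{im}\psi\subset U^j\}$, not the intersection you wrote, and nilpotency of such sums requires an argument (it holds, using $U^j\subset W^{j-1}$, but you do not give it). More seriously, after the transform the fibers over a nilpotent $\phi$ are not classical Spaltenstein varieties for general $\Iga$: they are nilpotent Lusztig fibers for the cyclic quiver $1\leftrightarrows 0$, and their evenness is exactly the external input the paper cites (\cite[Lem.~3.40]{Mak_CBKA2015}); your ``the argument of the $\delta^\bullet$ case applies'' both inherits the gap above and omits this input. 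So the real-root case matches the paper, but the $\delta^\bullet$ case rests on a false reduction and the $\delta^\circ$ case is an unfinished sketch whose missing steps are the substance of the lemma.
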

\begin{proof}
	Let $\alpha\in \Root^+_{\rm re}$, and $M_\vecn\in \Rep_\vecn$ the indecomposable representation.
	Since $\Rep^\reg_{k\alpha} = \{M_\vecn^{\oplus k}\}\simeq [\pt/GL_k]$, the sheaf $r^*\cL_\Iga$ is nothing else than the $H^*_{GL_k}(\pt)$-module $H^*_{GL_k}(\Fl^\m_\Iga(M_\vecn^{\oplus k}))$.
	As $\Fl^\m_\Iga(M_\vecn^{\oplus k})$ has even cohomology, this module is a direct sum of even shifts of the constant sheaf on $[\pt/GL_k]$ by degeneration of Leray spectral sequence.
	Therefore $r^*L_\Iga$ is even by the definition of $\Ev^\mkp$.

	Let $\alpha = \delta_\bullet$.
	In this case $\Rep^\reg_{k\delta_\bullet} = [\mathcal{N}_{\fkgl_k}/GL_k]$ has finitely many points, and the evenness theory is the locally constant one.
	In particular, the evenness of $r^*\cL_\Iga$ follows from \cref{prop:Lus-even} as before.

	Finally, let $\alpha = \delta_\circ$.
	Since all representations in $\Rep^\reg_{k\delta_\circ} \simeq [\fkgl_k/GL_k]$ are regular, we have $r^*\cL_\Iga = 0$ unless $\Iga_1^j \leq \Iga_0^j$ for all $1\leq j\leq \ell(\Iga)$.
	Note that the arrow $h_\n$ provides a canonical isomorphism $V_1\xra{\sim} V_0$ for any representation $(V_1\rightrightarrows V_0) \in \Rep^\reg_{k\delta_\circ}$.
	Let us denote by $\Gamma'\subset \Gamma$ the subquiver $1\xra{h_\n} 0$.
	We have the following fiber square:
	\begin{equation}\label{eq:tcd-restr}
		\begin{tikzcd}
			\left[ \fkgl_k/GL_k \right] \ar[r,equal]\ar[d] & \Rep^\reg_{k\delta_\circ}(\Gamma) \ar[r,hook,"j"]\ar[d,"p"]\arrow[dr, phantom, "\scalebox{1.3}{$\lrcorner$}", at start, shift right=1ex, color=black] & \Rep_{k\delta_\circ}(\Gamma)\ar[d,"p"] \\
			\left[\pt/GL_k \right] \ar[r,equal] & \Rep^\reg_{k\delta_\circ}(\Gamma') \ar[r,hook] & \Rep_{k\delta_\circ}(\Gamma') 
		\end{tikzcd}
	\end{equation}
	where $\Rep^\reg_{k\delta_\circ}(\Gamma')$ parameterizes the unique representation of $\Gamma'$ with $h_\n$ isomorphism.
	For any flag type $\Iga$, we need to check that $\Theta_p \circ j^* \cL_\Iga$ is even.
	Since Fourier-Sato transform is local in the target, we have
	\[
		\Theta_p \circ j^*(\cL_\Iga) = (j')^*\circ \Theta_p (\cL_\Iga) = (j')^* \cL^\n_\Iga,
	\]
	where $\cL^\n_\Iga$ is the flag sheaf of type $\Iga$ on $\Rep_{k\delta}(1\leftrightarrows 0)$ with nilpotent conditions, and
	\[
		j': \Rep^\reg_{k\delta}(1\leftrightarrows 0)\hookrightarrow \Rep_{k\delta}(1\leftrightarrows 0)
	\]
	is the open inclusion of representations where the bottom arrow is an isomorphism. 
	Analogously to~\eqref{eq:tcd-restr}, we have a fiber diagram
	\[
		\begin{tikzcd}
			\left[ \cN_{\fkgl_k}/GL_k \right] \ar[r,hook]\ar[d]\arrow[dr, phantom, "\scalebox{1.3}{$\lrcorner$}", at start, shift right=1ex, color=black] & \Rep^{\mathrm{nil}}_{k\delta}(1\leftrightarrows 0)\ar[d] \\
			\left[\pt/GL_k \right] \ar[r,hook] & \Rep_{k\delta_\circ}(\Gamma') 
		\end{tikzcd}
	\]
	Thus, we are again reduced to showing that the Lusztig fibers $\Fl^\n_\Iga(N)$ are even, for any $N\in \Rep^{\mathrm{nil}}_{k\delta}(1\leftrightarrows 0)$.
	This is known by~\cite[Lem.~3.40]{Mak_CBKA2015}.
\end{proof}

Consider Lusztig's \textit{restriction functor}:
\[
	\Delta_{\Ibe}=(q_{\Ibe})_!(p_{\Ibe})^*:D^b(\Rep_\alpha)\to D^b(\Rep_\beta),\qquad \Rep_\beta \xleftarrow{q_{\Ibe}} \Fl^\s_\Ibe \xrightarrow{p_{\Ibe}} \Rep_\alpha.
\]
Recall (see \cref{subs:sp-mer-diag}) that any $w\in \fkS_{\Ibe}\backslash \fkS_{\vecn}/\fkS_{\Iga}$ yields a Schur diagram $D_w$.
In particular, we obtain compositions $\Ibe^i(w)\in \Comp(\Ibe^i)$, $1\leq i\leq \ell(\Ibe)$.

\begin{lem}
	Let $\Ibe,\Iga \in \Comp(\alpha)$, and $M = |\fkS_{\Ibe}\backslash \fkS_{\vecn}/\fkS_{\Iga}|$.
	We have a sequence of exact triangles $(F_{i-1}\to F_i\to C_i[-2k_i]\to)$ in $D^b(\Rep_\Ibe)$, such that $F_0 = 0$, $F_M = \Delta_{\Ibe}(\cL_\Iga)$, $k_i\in\bbZ$, and
	\[
		\{C_i : 1\leq i\leq M\} = \left\{ L_{\Ibe^1(w)}\boxtimes\ldots\boxtimes L_{\Ibe^\ell(w)} : w\in \fkS_{\Ibe}\backslash \fkS_{\vecn}/\fkS_{\Iga} \right\}.
	\]
	We call this filtration of $\Delta_{\Ibe}(L_\Iga)$ the \textit{Mackey filtration}.
\end{lem}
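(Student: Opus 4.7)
My plan follows the standard Mackey-type computation going back to Lusztig. First, using properness of $p_\Iga$ to identify $*$- and $!$-pushforward, I apply proper base change to rewrite
\[
	\Delta_\Ibe(\cL_\Iga) = (q_\Ibe)_!(p_\Ibe)^*(p_\Iga)_!\bbk \simeq (q_\Ibe\circ s)_!\bbk_Z,
\]
where $Z \coloneqq \Fl_\Ibe\times_{\Rep_\alpha} \Fl_\Iga$ is the fibered product and $s: Z\to \Fl_\Ibe$ the projection. The stack $Z$ parameterises triples $(x, V^\bullet, W^\bullet)$ where $V^\bullet$ has type $\Ibe$, $W^\bullet$ has type $\Iga$, and both are seminilpotently preserved by $x$. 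It stratifies as $Z = \bigsqcup_w Z_w$ indexed by double cosets $w\in \fkS_\Ibe\backslash \fkS_\vecn/\fkS_\Iga$, with $Z_w$ consisting of triples whose intersection dimensions $\dim(V^i\cap W^j)$ are prescribed by $w$. I order the double cosets $w_1,\ldots,w_M$ so that each $Z_{\leq i}\coloneqq \bigsqcup_{j\leq i} Z_{w_j}$ is closed in $Z$.

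Given this stratification, I iteratively apply the localization triangle to the closed inclusions $Z_{\leq i-1}\hookrightarrow Z_{\leq i}$ and push forward along $q_\Ibe\circ s$ to produce a sequence of exact triangles $F_{i-1}\to F_i \to C_i\to$ in $D^b(\Rep_\Ibe)$, with $F_0 = 0$, $F_M = \Delta_\Ibe(\cL_\Iga)$, and $C_i$ equal to the pushforward along $q_\Ibe\circ s$ of $\bbk_{Z_{w_i}}$ extended by zero from the stratum $Z_{w_i}\hookrightarrow Z$.

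The crux of the argument, and what I expect to be the main obstacle, is identifying each $C_i$ with $L_{\Ibe^1(w_i)}\boxtimes\cdots\boxtimes L_{\Ibe^{\ell(\Ibe)}(w_i)}[-2k_i]$. To this end, I consider the natural morphism $\phi_w: Z_w \to \prod_{j=1}^{\ell(\Ibe)} \Fl_{\Ibe^j(w)}$ which records on each subquotient $V^j/V^{j-1}$ the flag induced by $W^\bullet$ together with the subquotient representation $x|_{V^j/V^{j-1}}$. After choosing a splitting of the common refinement of $V^\bullet$ and $W^\bullet$, the remaining data of $x$ decomposes into the subquotient part (captured by $\phi_w$) and off-diagonal entries, and I need to show these entries form an affine bundle over $\prod_j \Fl_{\Ibe^j(w)}$. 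This verification is slightly subtle in the seminilpotent setting, since $h_\s$ and $h_\n$ impose different (flag-preserving versus strict) conditions on $x$, but each still cuts out a linear subspace of the off-diagonal entries; a dimension count fixes the shift $k_i$. Composing $\phi_w$ with the tautological maps $\Fl_{\Ibe^j(w)}\to \Rep_{\Ibe^j}$ then realises $C_i$ as the claimed external product of flag sheaves.
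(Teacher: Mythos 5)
Your overall skeleton — base change to the fibered product, stratification by double cosets in $\fkS_{\Ibe}\backslash \fkS_{\vecn}/\fkS_{\Iga}$, recollement triangles, and identification of each stratum as an affine bundle over $\prod_j \Fl_{\Ibe^j(w)}$ — is the same as the paper's. However, there is a concrete error in your setup, and it sits exactly where the seminilpotent condition matters. The functor $\Delta_{\Ibe}$ is defined through the correspondence $\Rep_\Ibe \leftarrow \Fl^\s_\Ibe \rightarrow \Rep_\vecn$, i.e.\ with the \emph{non-strict} flag variety on the $\Ibe$-side, while $\cL_\Iga$ is pushed forward from the seminilpotent $\Fl^\m_\Iga$. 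So the relevant fibered product is the mixed one, $\Fl^\m_\Iga\times_{\Rep_\vecn}\Fl^\s_\Ibe$ — not the stack of triples $(x,V^\bullet,W^\bullet)$ with \emph{both} flags seminilpotently preserved, as you assert. This is not cosmetic: if $V^\bullet$ were $\m$-preserved, then $h_\n(V^j_1)\subset V^{j-1}_0$, so $h_\n$ induces the zero map on every subquotient $V^j/V^{j-1}$; your subquotients $C_i$ would then be supported on the locus $h_\n=0$ in $\Rep_\Ibe$ and could not be the seminilpotent flag sheaves $\cL_{\Ibe^j(w)}$, and your $F_M$ would compute a different functor from $\Delta_\Ibe(\cL_\Iga)$. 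In the correct mixed setting the key point does go through: for $v\in V^j_1\cap W^i_1$ one has $h_\n(v)\in V^j_0\cap W^{i-1}_0$ (the $\s$-condition on $V^\bullet$ and the $\m$-condition on $W^\bullet$), so the flags induced by $W^\bullet$ on the $V$-subquotients are $\m$-preserved and the strata map to $\prod_j \Fl^\m_{\Ibe^j(w)}$, which is what produces the factors $\cL_{\Ibe^1(w)}\boxtimes\cdots\boxtimes \cL_{\Ibe^\ell(w)}$.

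Separately, the step you yourself flag as the crux — that each stratum is an affine bundle over $\prod_j \Fl_{\Ibe^j(w)}$ — is left at ``each condition cuts out a linear subspace of the off-diagonal entries,'' which is not yet a proof: you need the fibers to have constant dimension and the map to be a (locally trivial) affine fibration, otherwise the pushforward of the constant sheaf on a stratum is not a single even shift of the external product, and the shifts $-2k_i$ are not even justified. The paper avoids redoing this linear algebra: it quotes Lusztig's vector-bundle structure for the $\s\times\s$ fibered product and observes that restricting the $\Iga$-side from $\Fl^\s_\Iga$ to $\Fl^\m_\Iga$ only changes the base of these bundles from $\prod_j\Fl^\s_{\Ibe^j(w)}$ to $\prod_j\Fl^\m_{\Ibe^j(w)}$, so the bundle structure is inherited. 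If you correct the fibered product as above, you should either adapt this restriction-of-Lusztig argument or actually carry out the constant-rank verification; as written, the proposal does not establish it.
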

\begin{proof}
	The statement is essentially due to Lusztig~\cite[(9.2.11)]{Lus1993}.
	Namely, the proof of Lem.~9.2.4 in~\emph{loc.cit.} constructs a stratification $\Fl_\Iga^\s\times_{\Rep_\alpha} \Fl_\Ibe^\s = \bigsqcup_w X_w$ into vector bundles $X_w\to \Fl^\s_{\Ibe^1(w)}\times \ldots \times \Fl^\s_{\Ibe^\ell(w)}$, $w\in \fkS_{\Ibe}\backslash \fkS_{\vecn}/\fkS_{\Iga}$.
	Restricting from $\Fl_\Iga^\s$ to $\Fl_\Iga^\m$, we obtain a stratification $\Fl_\Iga^\m\times_{\Rep_\alpha} \Fl_\Ibe^\s = \bigsqcup_w X_w$ into vector bundles $X^\m_w\to \Fl^\m_{\Ibe^1(w)}\times \ldots \times \Fl^\m_{\Ibe^\ell(w)}$.
	Consider the locally closed inclusions
	\[
		i_w: X^\m_w\hookrightarrow \Fl^\m_\Iga\times_{\Rep_\alpha} \Fl^\s_\Ibe.
	\]
	By recollement, we have a sequence of exact triangles 
	\begin{equation}\label{eq:interm-ses}
		G_{i-1}\to G_i\to D_i[-2m_i]\xra{+1}
	\end{equation}
	in $D^b(\Fl^\m_\Iga\times_{\Rep_\alpha} \Fl^\s_\Ibe)$, such that $G_0 = 0$, $G_M = \bbk$, $m_i\in\bbZ$, and $\{D_i\}_i = \{(i_w)_!\bbk_{X^\m_w}\}_w$.
	On the other hand, let us denote the natural projections from $\Fl^\m_\Iga\times_{\Rep_\alpha} \Fl^\s_\Ibe$ to $\Fl^\s_\Ibe$, resp. $\Fl^\m_\Iga$ by $p_1$, resp. $p_2$.
	We have
	\begin{align*}
		\Delta_{\Ibe}(\cL_\Iga) &= (q_{\Ibe})_!(p_{\Ibe})^*(p_{\Iga})_!(\bbk_{\Fl^\m_\Iga}) = (q_{\Ibe})_!(p_1)_!(p_2)^*(\bbk_{\Fl^\m_\Iga}) \\
		&= (p_1\circ q_{\Ibe})_!(\bbk_{\Fl^\m_\Iga\times_{\Rep_\alpha} \Fl^\s_\Ibe}).
	\end{align*}
	Pushing the sequence~\eqref{eq:interm-ses} down to $\Rep_\Ibe$, we arrive at the desired statement.
\end{proof}

\begin{prop}\label{prop:Mackey-splits}
	Mackey filtration splits, that is we have
	\[
		\Delta_{\beta}(\cL_\gamma) = \bigoplus_{w\in \fkS_{\Ibe}\backslash \fkS_{\vecn}/\fkS_{\Iga}} \cL_{\Ibe^1(w)}\boxtimes\ldots\boxtimes \cL_{\Ibe^\ell(w)}[-2k_w]
	\]
	for any $\beta,\gamma\in \Comp(\vecn)$.
\end{prop}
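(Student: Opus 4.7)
The plan is to show that every connecting morphism in the Mackey filtration of the preceding lemma vanishes, so that the filtration splits as a direct sum. The key input is \cref{prop:flag-is-even}, which tells us that flag sheaves are even in our evenness theory, together with the compatibility of evenness theories under exterior products.

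I would argue by induction on $i$ that $F_i \simeq \bigoplus_{j\leqslant i} C_j[-2k_j]$. The base case $F_0 = 0$ is trivial, and $F_1 \simeq C_1[-2k_1]$ since its predecessor is zero. Assuming the statement for $i-1$, the next triangle reads $F_{i-1}\to F_i\to C_i[-2k_i]\xrightarrow{\delta} F_{i-1}[1]$, and it splits precisely when $\delta = 0$. The connecting morphism lies in
$$
\Hom(C_i[-2k_i], F_{i-1}[1]) \simeq \bigoplus_{j<i} \Ext^{2(k_i-k_j)+1}(C_i, C_j),
$$
a direct sum of odd-degree $\Ext$ groups. Hence the induction reduces to showing that $\Ext^{\mathrm{odd}}(C_i, C_j) = 0$ for all pairs.

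For this, each subquotient $C_j$ is an exterior product $\cL_{\Ibe^1(w)}\boxtimes\cdots\boxtimes \cL_{\Ibe^{\ell(\Ibe)}(w)}$ of flag sheaves on the product stack $\Rep_\Ibe = \prod_t \Rep_{\Ibe^t}$. By \cref{prop:flag-is-even} applied to each factor, $\cL_{\Ibe^t(w)} \in \Ev^\mkp_{\Ibe^t}$ for every $t$. Equipping $\Rep_\Ibe$ with the tensor-product evenness theory of \cref{lem:prod-of-Ev}, each $C_j$ lies in it. The desired odd-$\Ext$ vanishing then follows immediately from the second axiom of \cref{def:eveness}.

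The real substance of the argument is already contained in \cref{prop:flag-is-even}; the present splitting is its formal consequence once we recognize the subquotients as exterior products of even sheaves. I do not foresee any serious obstacle beyond checking that \cref{lem:prod-of-Ev} applies literally in this stacky setting, which is routine.
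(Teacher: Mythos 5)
Your argument is essentially the paper's own: the subquotients of the Mackey filtration are exterior products of flag sheaves, hence even, hence the connecting morphisms live in odd $\Ext$-groups which vanish, and the filtration splits. The one point you must make explicit is the logical order: in the paper \cref{prop:flag-is-even} is \emph{not} available as a standalone input, since its proof in turn uses \cref{prop:Mackey-splits}; the two statements are established by a simultaneous induction on $|\vecn|$. Your proof survives this because, when $\ell(\Ibe)\geq 2$, each factor $\cL_{\Ibe^t(w)}$ lives on $\Rep_{\Ibe^t}$ with $|\Ibe^t|<|\vecn|$, so you only invoke evenness of flag sheaves for strictly smaller dimension vectors (and the case $\ell(\Ibe)=1$ is trivial, the filtration having a single step); phrased as part of that joint induction, together with \cref{lem:prod-of-Ev} to get odd-$\Ext$ vanishing between box products, your splitting argument is exactly the inductive step the paper carries out, just with the triangle-by-triangle bookkeeping written out in more detail.
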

\begin{rmk}
	Note that the splitting argument in~\cite[{\S~9.2}]{Lus1993} relies on the theory of weights for perverse sheaves, and therefore does not apply when $\ona{char} \bbk > 0$.
\end{rmk}

We prove \cref{prop:flag-is-even,prop:Mackey-splits} simultaneously.

\begin{proof}[{Proof of \cref{prop:flag-is-even,prop:Mackey-splits}}]
	We proceed by induction on $|\vecn|$.
	Assume we know both \cref{prop:flag-is-even} and \cref{prop:Mackey-splits} for all $\vecn'$ with $|\vecn'|<N$, and let $|\vecn|=N$.
	All sheaves occuring in Mackey filtration of $\Delta_{\beta}(\cL_\gamma)$ are even by induction, which implies \cref{prop:Mackey-splits} for $\vecn$.
	Next, consider the following commutative diagram: 
	\[
		\begin{tikzcd}
			\Rep_\mkbeta^\reg\ar[d,"i"] & \Rep^\mkbeta_\vecn\ar[r,equal]\ar[l,"q"']\ar[d,"i"] & \Rep^\mkbeta_\vecn\ar[d,"i_\mkbeta"] \\
			\Rep_\mkbeta & \Fl^\s_{\mkbeta}\ar[r,"p_\mkbeta"]\ar[l,"q_\mkbeta"'] & \Rep_\vecn
		\end{tikzcd}
	\]
	The sheaf $\cL_\Iga\in D^b(\Rep_\vecn)$ is Verdier self-dual up to an even shift.
	Therefore $\cL_\Iga$ is even if and only if $q_! i_\mkbeta^*\cL_\Iga$ is even for any $\mkbeta\in\mkp(\vecn)$.
	By base change, we have
	\[
		q_! i_\mkbeta^*\cL_\Iga = q_! i^*p_\mkbeta^*\cL_\Iga = i^*(q_\mkbeta)_!p_\mkbeta^*\cL_\Iga = i^*\Delta_{\mkbeta}(L_\gamma).
	\]
	Therefore it suffices to check that $i^*\Delta_{\mkbeta}\cL_\Iga$ is even for all $\mkbeta\in\mkp(\vecn)$.
	Applying \cref{prop:Mackey-splits}, we are reduced to checking the evenness of $i^*\cL_{\mkbeta'}$ for $\mkbeta'\in \Comp(\vecn')$, $\vecn' \in \bbZ_{>0}\Root_\bullet^+$, which follows from \cref{lem:base-of-induction}.
\end{proof}

\subsection{Orders on Kostant partitions}
The set $\mkp(\vecn)$ of marked Kostant partitions admits two natural orders.
First, each $\mkbeta\in\mkp(\vecn)$ gives rise to a locally closed stratum $\Rep_\vecn^\mkbeta\subset \Rep_\alpha$ by~\eqref{eq:stratas}.
The \textit{closure order} $\leqslant_C$ on $\mkp(\vecn)$ is defined by 
\[
	\mkbeta \leqslant_C \mkgamma \qquad \Leftrightarrow \qquad \Rep_\vecn^\mkbeta \subset \overline{\Rep_\vecn^\mkgamma}.
\]
Given $\mkbeta=(i^1\beta^1,\ldots,i^r\beta^r)$, let us define a closed substack $\Rep_\mkbeta^\qs\subset \Rep_\mkbeta$.
Write $\Rep_\mkbeta^\qs \coloneqq \prod_k \Rep_{i^k\beta^k}^\qs$, where $\Rep_{i^k\beta^k}^\qs = \Rep_{i^k\beta^k}$ if $\beta^k \neq \delta^\bullet$, and $\Rep_{i\delta^\bullet}^\qs$ is the image of the map $\Fl^\m_{\delta,\delta,\ldots,\delta}\to \Rep_{i\delta^\bullet}$.
Define 
\[
	\Fl_\mkbeta^\qs\coloneqq \Rep_\mkbeta^\qs\times_{\Rep_\mkbeta} \Fl^\s_\mkbeta.
\]
It is clear from \cref{def:stratas} that $\Rep^\reg_\mkbeta \subset \Rep_\mkbeta^\qs$, and so the inclusion $\Rep_\vecn^\mkbeta\subset \Fl_\mkbeta^\s$ factors through $\Fl_\mkbeta^\qs$. 
We will say that a representation $V\in \Rep_\vecn$ \textit{supports a flag of type $\mkbeta$} if $V$ lies in the image of the proper map $\Fl_\mkbeta^\qs\to \Rep_\vecn$.

\begin{lem}\label{lem:closure-vs-sub}
	We have $\mkbeta \leqslant_C \mkgamma$ if and only if every representation $V \in \Rep_\vecn^\mkbeta$ supports a flag of type $\mkgamma$.
\end{lem}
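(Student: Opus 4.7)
I aim to prove the lemma by reducing both directions to a single intermediate claim: the image of the proper forgetful map $\pi_\mkgamma \colon \Fl^\qs_\mkgamma \to \Rep_\vecn$ equals $\overline{\Rep^\mkgamma_\vecn}$. Granted this, any $V \in \Rep^\mkbeta_\vecn$ supports a flag of type $\mkgamma$ if and only if it lies in this image, i.e., in $\overline{\Rep^\mkgamma_\vecn}$, which is precisely the condition $\mkbeta \leqslant_C \mkgamma$.

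The inclusion $\overline{\Rep^\mkgamma_\vecn} \subset \pi_\mkgamma(\Fl^\qs_\mkgamma)$ is the easier direction. The image is closed because $\pi_\mkgamma$ factors as the closed inclusion $\Fl^\qs_\mkgamma \hookrightarrow \Fl^\s_\mkgamma$ followed by the proper map $\Fl^\s_\mkgamma \to \Rep_\vecn$. It contains $\Rep^\mkgamma_\vecn$: combining the $\Ext^1$-orthogonality of \cref{lem:Hom-Ext-ordered} with the bullet/circle splitting of torsion sheaves on $\bbP^1$ via~\eqref{eq:tor-iso-reg}, every $V \in \Rep^\mkgamma_\vecn$ decomposes as $\bigoplus_k V_k$ with $V_k \in \Rep^\reg_{i^k\gamma^k}$, and the resulting obvious filtration yields an element of $\Fl^\qs_\mkgamma$ mapping to $V$.

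The reverse inclusion $\pi_\mkgamma(\Fl^\qs_\mkgamma) \subset \overline{\Rep^\mkgamma_\vecn}$ is the heart of the argument. I plan to show $\Rep^\mkgamma_\vecn$ is open and dense inside $\pi_\mkgamma(\Fl^\qs_\mkgamma)$. Openness is immediate, as $\Rep^\mkgamma_\vecn$ is the image of the open substack of $\Fl^\qs_\mkgamma$ whose associated graded is regular on each factor. For irreducibility and density, I use the associated-graded fibration $\Fl^\qs_\mkgamma \to \Rep^\qs_\mkgamma = \prod_k \Rep^\qs_{i^k\gamma^k}$, whose fibers parameterize extensions and hence are connected affine; this reduces the question to showing each $\Rep^\reg_{i^k\gamma^k}$ is dense in the irreducible $\Rep^\qs_{i^k\gamma^k}$.

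I expect the main obstacle to be the imaginary bullet case $\gamma^k = \delta^\bullet$, where $\Rep^\qs_{i\delta^\bullet}$ is not an affine space but the image of $\Fl^\m_{\delta,\ldots,\delta} \to \Rep_{i\delta}$; irreducibility of this image follows from irreducibility of the source (a vector bundle over a product of full flag varieties). For density of $\Rep^\reg_{i\delta^\bullet} \simeq [\mathcal{N}_{\fkgl_i}/GL_i]$ inside $\Rep^\qs_{i\delta^\bullet}$, I plan to exhibit an explicit deformation: for $V \in \Rep^\qs_{i\delta^\bullet}$ with $h_\s$ block upper triangular and $h_\n$ strictly upper triangular relative to a flag, the family $h_\s + tI$ (with $I$ a flag-compatible identification $V_1 \simeq V_0$) keeps $h_\n$ nilpotent while making $h_\s$ invertible for generic $t$, hence lies in $\Rep^\reg_{i\delta^\bullet}$, and degenerates back to $V$ at $t = 0$. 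The real and $\delta^\circ$ cases are then immediate from classical openness of the generic orbit and of the locus $\{h_\s \text{ invertible}\}$, respectively.
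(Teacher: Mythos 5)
Your proof is correct and follows the same skeleton as the paper's: both reduce the lemma to the identity $\pi_\mkgamma(\Fl^\qs_\mkgamma)=\overline{\Rep^\mkgamma_\vecn}$, deduced from properness of the forgetful map together with irreducibility of the source and density of (the preimage of) the stratum. The difference is one of self-containedness: the paper delegates exactly these last two points to the proof of \cite[Prop.~3.4]{reineke2003harder}, which strictly speaking treats unmarked Harder--Narasimhan strata, whereas you prove them directly --- reducing along the associated-graded fibration $\Fl^\qs_\mkgamma\to\Rep^\qs_\mkgamma$ to density of $\Rep^\reg_{i^k\gamma^k}$ in each factor $\Rep^\qs_{i^k\gamma^k}$, and handling the only genuinely new case $\gamma^k=\delta^\bullet$ by the one-parameter family $(h_\s+tI,h_\n)$, which indeed stays seminilpotent and is regular nilpotently-supported for generic $t$. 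This buys an explicit treatment of the marked ($\delta^\bullet$) strata that the citation does not literally cover, at the cost of length; conversely, your verification of the easy inclusion via splitting of the filtration is more than needed, since a point of $\Rep^\mkgamma_\vecn$ carries by \cref{def:stratas} a flag of type $\mkgamma$ with regular subquotients, hence lies in $\Fl^\qs_\mkgamma$ by definition. Two harmless slips: openness of $\Rep^\mkgamma_\vecn$ in the image is neither immediate from ``image of an open substack'' nor needed (density alone gives the inclusion of the image into the closure, and openness follows a posteriori since the stratum is locally closed); and for $\gamma^k=\delta^\circ$ the relevant dense open locus is $\{h_\n\text{ invertible}\}$ rather than $\{h_\s\text{ invertible}\}$, given the paper's convention that $\infty$ is the point where $h_\n$ vanishes.
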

\begin{proof}
	The argument is contained in the proof of~\cite[Prop.~3.4]{reineke2003harder}.
	To summarize, $\Fl^\qs_\mkgamma\to \Rep_\vecn$ is a proper map with irreducible source, which restricts to an isomorphism over the open dense $\Rep_\vecn^\mkgamma$.
	This implies that the closure $\overline{\Rep_\vecn^\mkgamma}$ is precisely the image of $\Fl_\mkgamma^\qs$.
\end{proof}

Second, we have total \textit{reverse lexicographic} order.
Recall that the set $\Root^+_\bullet$ is totally ordered.
Then given $\mkbeta,\mkgamma\in\mkp(\vecn)$, $\mkbeta=(i^1\beta^1,\ldots,i^r\beta^r)$, $\mkgamma=(j^1\gamma^1,\ldots,j^s\gamma^s)$, we say that $\mkbeta \leqslant_{L} \mkgamma$ if there exists $t\leq \min(r,s)$ such that $i^l\beta^l = j^l\gamma^l$ for all $l<t$, and either $\beta^t > \gamma^t$, or $\beta^t = \gamma^t$ and $i^t>j^t$.

\begin{prop}\label{prop:lex-refines-closure}
	Reverse lexicographic order refines closure order, i.e. $\mkbeta \leqslant_C \mkgamma$ implies $\mkbeta \leqslant_{L} \mkgamma$.
\end{prop}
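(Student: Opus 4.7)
\medskip
\noindent\textbf{Proof proposal.} The plan is to combine \cref{lem:closure-vs-sub} with a Harder--Narasimhan analysis and induct on $|\vecn|$. Assuming $\mkbeta \leqslant_C \mkgamma$, pick any $V \in \Rep_\vecn^\mkbeta$. By \cref{lem:Hom-Ext-ordered}, the vanishing of $\Ext^1$ between indecomposables of increasing slope forces $V \simeq \bigoplus_k V_k$ with $V_k \in \Rep^\reg_{i^k \beta^k}$, so $V_1 \subset V_1 \oplus V_2 \subset \cdots$ is the Harder--Narasimhan filtration of $V$. By \cref{lem:closure-vs-sub}, fix a flag $0 = W^0 \subset W^1 \subset \cdots \subset W^s = V$ with $W^l/W^{l-1} \in \Rep^\qs_{j^l \gamma^l}$. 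The core of the argument is the comparison of $\beta^1$ with $\gamma^1$ in $\Root^+_\bullet$ via analysis of $W^1$, which either concludes directly or reduces to the inductive step on $V/W^1$.

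Since $W^1$ is a nonzero subrepresentation of $V$, the HN property yields $\theta(\gamma^1) \leqslant \theta(\beta^1)$. If the inequality is strict, then $\gamma^1 < \beta^1$ in $\Root^+$ and $\mkbeta \leqslant_L \mkgamma$ holds at $t=1$. If $\theta(\gamma^1) = \theta(\beta^1)$, the convexity of slopes on short exact sequences forces $W^1 \subset V_1$, since $V/V_1$ has strictly smaller slope. When $\beta^1$ is real, its slope uniquely determines it among positive roots, so $\gamma^1 = \beta^1$ and $W^1 \simeq M_{\beta^1}^{\oplus j^1}$ is a sub-summand of $V_1 = M_{\beta^1}^{\oplus i^1}$ with $j^1 \leqslant i^1$. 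When $\beta^1, \gamma^1 \in \{\delta^\bullet, \delta^\circ\}$, the inclusion $W^1 \subset V_1$ likewise yields $j^1 \leqslant i^1$.

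The delicate step, which I expect to be the main obstacle, is the $\delta^\bullet$ vs $\delta^\circ$ marking. The combination $\beta^1 = \delta^\circ$, $\gamma^1 = \delta^\bullet$ must be excluded: since $V_1 \in \Rep^\reg_{i^1\delta^\circ}$ has $h_\n$ acting as an isomorphism, any subrepresentation $W^1 \subset V_1$ of dimension $j^1\delta$ inherits the same property ($h_\n|_{W^1}$ is injective and maps between equidimensional spaces), so $W^1$ cannot be $h_\n$-nilpotent and cannot support the flag required by $\Rep^\qs_{j^1\delta^\bullet}$. The reverse combination $\beta^1 = \delta^\bullet$, $\gamma^1 = \delta^\circ$ gives $\gamma^1 < \beta^1$, handled by the strict-slope case. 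In all remaining subcases $\beta^1 = \gamma^1$ in $\Root^+_\bullet$ with $j^1 \leqslant i^1$; if $j^1 < i^1$ then $\mkbeta \leqslant_L \mkgamma$ holds at $t=1$, and otherwise $W^1 = V_1$ by equidimensionality, so $V/V_1 \simeq \bigoplus_{k \geqslant 2} V_k$ lies in the stratum of $\mkbeta' = (i^2\beta^2, \ldots, i^r\beta^r)$ and supports the induced flag of type $\mkgamma' = (j^2\gamma^2, \ldots, j^s\gamma^s)$; the induction hypothesis gives $\mkbeta' \leqslant_L \mkgamma'$, and prepending the common first block yields $\mkbeta \leqslant_L \mkgamma$.
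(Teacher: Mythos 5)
Your overall architecture is the same as the paper's: translate $\leqslant_C$ via \cref{lem:closure-vs-sub} into the existence of a flag of type $\mkgamma$ on a representation $V\in\Rep_\vecn^\mkbeta$, compare first blocks using slope and Hom-vanishing (\cref{lem:Hom-Ext-ordered}), and reduce to the remaining blocks (the paper argues by contradiction at the first differing index and quotients out the common initial part; your direct induction on $|\vecn|$ is a cosmetic variant). Two phrasing points: the filtration by the blocks of $\mkbeta$ is the HN filtration only after merging the $\delta^\bullet$ and $\delta^\circ$ blocks, which have equal slope; and the induction hypothesis you invoke should be ``if \emph{some} representation of $\Rep^{\mkbeta'}_{\vecn'}$ supports a flag of type $\mkgamma'$, then $\mkbeta'\leqslant_L\mkgamma'$'' rather than the proposition itself, since $V/W^1$ is a single representation and $\Rep^{\mkbeta'}_{\vecn'}$ is not a single isomorphism class.

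The genuine gap is your justification of $W^1\subset V_1$ in the equal-slope case. First, $W^1$ is not a priori semistable: for $\gamma^1$ real or $\delta^\circ$ the condition defining $\Rep^\qs_{j^1\gamma^1}$ is vacuous, so $W^1$ could contain low-slope summands such as $M_{\alpha_0}$, and these \emph{do} admit nonzero maps into the lower-slope part of $V$ (e.g.\ $\Hom(M_{\alpha_0},M_{\alpha_0+\delta})\neq 0$); before invoking convexity you must note that $\theta(W^1)=\theta(\beta^1)=\mu_{\max}(V)$ forces $W^1$ to be semistable, since any higher-slope subobject of $W^1$ would be one of $V$. Second, and more seriously, the premise ``$V/V_1$ has strictly smaller slope'' fails exactly when $\beta^1=\delta^\bullet$ and $\beta^2=\delta^\circ$ (both have slope $1/2$), so slope considerations alone do not place $W^1$ inside $V_1$ in the case $\beta^1=\gamma^1=\delta^\bullet$ — precisely the configuration where your dimension count $j^1\leqslant i^1$ and the induction step are needed. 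The repair is the same support/nilpotency observation you deploy to exclude $(\beta^1,\gamma^1)=(\delta^\circ,\delta^\bullet)$: a semistable slope-$\tfrac12$ representation lying in $\Rep^\qs_{j^1\delta^\bullet}$ is a torsion sheaf supported at $\infty$, and $\Hom$ from such a sheaf into the $\delta^\circ$-part (disjoint support) and into the preprojective part (strictly smaller slope, \cref{lem:Hom-Ext-ordered}) vanishes, giving $W^1\subset V_1$ after all. With these amendments your argument closes and coincides in substance with the paper's proof, which hides the same $\delta^\bullet$/$\delta^\circ$ subtlety behind the phrase ``for the same reason''.
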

\begin{proof}
	Let $\mkgamma >_L \mkbeta$.
	By \cref{lem:closure-vs-sub}, it suffices to show that a representation $V\in \Rep_\vecn^\mkbeta$ does not support a flag of type $\mkgamma$.
	We reason by contradiction.
	Assume $V_1\subset V_2\subset\ldots\subset V$ is a flag of type $\mkgamma$, and denote by $t$ the smallest index such that $\mkgamma^t > \mkbeta^t$.
	Recall that by definition (and \cref{lem:Hom-Ext-ordered}), $V$ supports the unique flag $V'_1\subset\ldots\subset V$ of type $\mkbeta$; in particular, we have $V_i = V'_i$ for all $i<t$.
	Passing from $V$ to $V/V_{t-1}$, we can assume that $t=1$.
	If $\gamma^1 > \beta^1$, then any map $V_1\to V$ has to vanish by \cref{lem:Hom-Ext-ordered}.
	If $\gamma^1 = \beta^1$ and $j^1>i^1$, then for the same reason the map $V_1\to V$ factors through $V'_1$.
	However, the dimension of $V'_1$ is strictly smaller than that of $V_1$, so that $V_1\to V$ cannot be an injection.
	Thus $V$ cannot support a flag of type $\mkgamma$.
\end{proof}

\begin{cor}\label{cor:flag-support}
	Let $\mkbeta\in\mkp(\vecn)$, and pick an arbitrary composition $\beta\in\Comp(\vecn)$ refining $\mkbeta$.
	Then the support of the flag sheaf $\cL_\beta$ belongs to $\overline{\Rep_\vecn^\mkbeta}$.
\end{cor}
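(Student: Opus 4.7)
The plan is to reduce the statement to a comparison of images. Since $\cL_\beta = (p_\beta)_* \bbk$ has support equal to the image of $p_\beta\colon \Fl^\m_\beta \to \Rep_\vecn$, and the proof of \cref{lem:closure-vs-sub} identifies $\overline{\Rep_\vecn^\mkbeta}$ with the image of the proper map $\Fl^\qs_\mkbeta \to \Rep_\vecn$, it suffices to construct a morphism of stacks $\Fl^\m_\beta \to \Fl^\qs_\mkbeta$ commuting with projection to $\Rep_\vecn$.

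The morphism is obtained by coarsening. Given a point $(x, V^\bullet)$ in $\Fl^\m_\beta$ with $V^\bullet = (0 = V^0 \subset V^1 \subset \cdots \subset V^n = V)$, I group the $\beta$-blocks into $r$ consecutive bunches according to the $\mkbeta$-block structure, and retain only the subspaces $W^k = V^{c_k}$ at the bunch boundaries, obtaining a flag $W^\bullet$ of type $\mkbeta$. The $\m$-conditions on $V^\bullet$ directly imply the $\s$-conditions on $W^\bullet$: $h_\s(W^k_1) \subset W^k_0$ follows from $h_\s(V^{c_k}_1) \subset V^{c_k}_0$, and $h_\n(W^k_1) \subset W^k_0$ follows from $h_\n(V^{c_k}_1) \subset V^{c_k-1}_0 \subset V^{c_k}_0$. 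Thus the coarsened pair $(x, W^\bullet)$ lies in $\Fl^\s_\mkbeta$.

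The substantive step, and the main obstacle, is to verify that the associated graded of $W^\bullet$ lies in $\Rep^\qs_\mkbeta = \prod_k \Rep^\qs_{i^k\beta^k}$. For blocks with $\beta^k \in \Root^+_{\rm re} \cup \{\delta^\circ\}$ there is nothing to check, since $\Rep^\qs_{i^k\beta^k} = \Rep_{i^k\beta^k}$ by definition. For $\beta^k = \delta^\bullet$, the subquotient $W^k/W^{k-1}$ must be shown to lie in $\Rep^\qs_{i^k\delta^\bullet}$, i.e. to admit a seminilpotent flag of type $(\delta, \ldots, \delta)$ of length $i^k$. Here the notion of refinement for marked Kostant partitions intervenes essentially: reading "$\beta$ refines $\mkbeta$" so that each $\delta^\bullet$-block of $\mkbeta$ is refined specifically by a string of $\delta$'s in $\beta$, the restriction of $V^\bullet$ to the $k$-th bunch equips $W^k/W^{k-1}$ with a seminilpotent flag of exactly the required type $(\delta, \ldots, \delta)$, producing the needed preimage under $\Fl^\m_{(\delta, \ldots, \delta)} \to \Rep_{i^k\delta^\bullet}$.

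The assignment $(x, V^\bullet) \mapsto (x, W^\bullet)$ is manifestly $G_\vecn$-equivariant and functorial, yielding the desired morphism of stacks $\Fl^\m_\beta \to \Fl^\qs_\mkbeta$ over $\Rep_\vecn$. The inclusion of supports follows immediately.
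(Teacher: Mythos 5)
Your route is the same as the paper's: the paper also combines \cref{lem:closure-vs-sub} with the assertion that $\Fl^\m_\beta\to\Rep_\vecn$ factors through $\Fl^\qs_\mkbeta\to\Rep_\vecn$, and you are simply spelling out the coarsening map and the check that the associated graded of the coarsened flag lies in $\Rep^\qs_\mkbeta$. Your insistence that ``$\beta$ refines $\mkbeta$'' cannot mean an arbitrary refinement of the underlying composition is also well taken: for example $(\alpha_0,\alpha_1)$ refines $(\delta^\bullet)$, but every representation of dimension $\delta$ carries the (seminilpotent, vacuously) flag $0\subset V_0\subset V$, so $\cL_{(\alpha_0,\alpha_1)}$ has full support on $\Rep_\delta$, which is not contained in $\overline{\Rep^{(\delta^\bullet)}_\delta}=\{h_\n=0\}$. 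So the refinement must respect the marking on the $\delta^\bullet$-parts; the paper's one-line proof leaves this implicit.

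The gap is that the reading you adopt is too restrictive for the way the corollary is actually used. You only treat $\delta^\bullet$-blocks refined into \emph{thin} parts $(\delta,\dots,\delta)$, so that the induced flag on $W^k/W^{k-1}$ has ``exactly the required type''. In \cref{prop:ersatz-comp} the corollary is invoked for the compositions $\gamma(\mkbeta,\lambda,\mu)$, where a block $i^k\delta^\bullet$ is refined by $\Ibe^\bullet_{\lambda}=(\lambda^1\delta,\dots,\lambda^r\delta)$ from~\eqref{eq:idem-bul}, with thick parts in general; your argument then only yields a seminilpotent flag of type $(\lambda^1\delta,\dots,\lambda^r\delta)$ on $W^k/W^{k-1}$, which is not yet a point of $\Fl^\m_{(\delta,\dots,\delta)}$. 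The missing (easy) step: such a flag can always be refined to a thin seminilpotent $\delta$-flag, because on each graded piece of dimension $\lambda^j\delta$ the induced $h_\n$ vanishes (this is exactly the condition $h_\n(V^j_1)\subset V^{j-1}_0$), so one may take any full flag of the degree-$1$ part and inductively choose degree-$0$ subspaces of the correct dimension containing the $h_\s$-images; concatenating these refinements with the coarse flag remains seminilpotent since the strict condition on $h_\n$ is inherited from the coarse steps. With this addendum your factorization $\Fl^\m_\beta\to\Fl^\qs_\mkbeta$ covers all refinements in which the parts refining each $\delta^\bullet$-block are multiples of $\delta$, which is the generality the paper needs.
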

\begin{proof}
	The closure $\overline{\Rep_\vecn^\mkbeta}$ is precisely the image of the projection $p:\Fl^\qs_\mkbeta\to \Rep_\vecn$ by \cref{lem:closure-vs-sub}.
	The flag sheaf $\cL_\beta$ is the pushforward of the constant sheaf along $\Fl^\m_\beta\to \Rep_\vecn$.
	Since the latter map factors through $p$, we may conclude.
\end{proof}

\begin{rmk}
	One can write down an intermediate partial order between $\leqslant_C$ and $\leqslant_L$ similarly to~\cite[Prop.~3.7]{reineke2003harder}.
	We do not need this.
\end{rmk}

\subsection{Ersatz-completeness}

\begin{prop}\label{prop:ersatz-comp}
	The evenness theory $\Ev_\vecn^\mkp$ is ersatz-complete.
\end{prop}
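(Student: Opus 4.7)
The plan is to realize every ersatz parity sheaf as a direct summand of some flag sheaf $\cL_\beta$. By \cref{prop:flag-is-even}, $\cL_\beta \in \Ev_\vecn^\mkp$ for every $\beta\in\Comp(\vecn)$, and by \cref{cor:flag-support}, if $\beta$ refines a marked Kostant partition $\mkbeta\in\mkp(\vecn)$ then the support of $\cL_\beta$ lies in $\overline{\Rep_\vecn^\mkbeta}$. Decomposing $\cL_\beta$ via Krull--Schmidt (\cref{rmk:parity-Krull-Schmidt}), each indecomposable summand $\cG$ is, by \cref{lem:maxisupport-indecomp,lem:par-unique}, an ersatz parity sheaf $\parsh(\mkbeta',\mu)$ for some $\mkbeta'\leqslant_C \mkbeta$. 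Hence it suffices to establish, for every $\mkbeta\in\mkp(\vecn)$ and every indecomposable $\parsh_\mkbeta(\mu)\in\Par(\Rep^\mkbeta_\vecn)$, the existence of a composition $\beta$ refining $\mkbeta$ such that $\parsh_\mkbeta(\mu)$ occurs as a summand of $i_\mkbeta^*\cL_\beta$; one of the Krull--Schmidt summands of $\cL_\beta$ must then restrict to $\parsh_\mkbeta(\mu)$ and provide the desired $\parsh(\mkbeta,\mu)$.

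By \cref{lem:prod-of-Ev} and \cref{rmk:comp-to-tensor} together with the product decomposition $\Rep_\mkbeta^\reg = \prod_k \Rep^\reg_{i^k\beta^k}$, every indecomposable on $\Rep^\mkbeta_\vecn$ factors as an external tensor product over the pieces of $\mkbeta$, and a refinement of $\mkbeta$ splits similarly into local refinements. This reduces the problem to a piecewise check. At a real-root piece $i^k\beta^k$ with $\beta^k\in\Root^+_{\rm re}$, the stratum $\Rep^\reg_{i^k\beta^k}\simeq BGL_{i^k}$ has the constant sheaf as its unique indecomposable, trivially produced by the refinement $(i^k\beta^k)$. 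At an imaginary piece $k^\bullet\delta^\bullet$, the stratum is $[\cN_{\fkgl_{k^\bullet}}/GL_{k^\bullet}]$ with piecewise constant evenness, and indecomposable parity sheaves there are indexed by partitions $\lambda\vdash k^\bullet$; refining that piece by a flag of type $(\lambda_1\delta,\lambda_2\delta,\ldots)$ should restrict to the pushforward of the constant sheaf along the partial Springer resolution associated with $\lambda$, and classical Springer theory then ensures these pushforwards contain all partition-indexed indecomposables as summands. At a $k^\circ\delta^\circ$-piece, the evenness theory is the Fourier--Sato transform $\Theta_p$ of the one in the previous case, and the same flag refinements restrict to the Fourier--Sato transforms of the corresponding partial Springer pushforwards, by compatibility of $\Theta_p$ with proper direct image.

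The hard part is carrying out the explicit identifications in the imaginary cases, most delicately for $\delta^\circ$: one must check that $i_\mkbeta^*\cL_{(\lambda_1\delta,\ldots)}$ restricted to $[\fkgl_{k^\circ}/GL_{k^\circ}]$ really agrees with the Fourier--Sato transform of the partial Springer pushforward on $[\cN_{\fkgl_{k^\circ}}/GL_{k^\circ}]$. The plan is to perform a base-change computation along $\widetilde{\bfF}^\m_\beta\to \Rep^\mkbeta_\vecn\to \Rep_\mkbeta^\reg$ and combine it with \cref{prop:Fourier-props} and parity theory for type-$A$ nilpotent cones in the spirit of~\cite[Prop.~2.37]{JMW_PS2014}; once this matching is in hand, ersatz-completeness follows at once from the reduction in the first paragraph.
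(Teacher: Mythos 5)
Your global strategy is exactly the paper's: realize each ersatz parity sheaf as an indecomposable summand of a flag sheaf $\cL_\Ibe$ with $\Ibe$ refining $\mkbeta$, reduce to one piece of $\mkbeta$ via the external-product structure of $\Ev_\mkbeta$, and treat the three kinds of pieces separately. The gap is in the piecewise step, where your explicit choices of refinement ignore the seminilpotency condition defining $\Fl^\m$. At a real-root piece $k\alpha'$ with $\alpha'=\alpha_i+j\delta$, $j\geq 1$, the trivial refinement $(k\alpha')$ does not work: for a length-one composition the condition $h_\n(V_1^1)\subset V_0^0=0$ forces $h_\n=0$, so $\cL_{(k\alpha')}$ is supported on $\{h_\n=0\}$ and its restriction to the stratum $\{M_{\alpha'}^{\oplus k}\}$ vanishes (any indecomposable of dimension $\alpha_i+j\delta$, $j\geq1$, has $h_\n\neq 0$). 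This is why the paper takes the longer types $(k\delta,\ldots,k\delta,k\alpha_1)$ and $(k\alpha_0,k\delta,\ldots,k\delta)$ of (\ref{eq:idem-1},~\ref{eq:idem-0}), on which $M_{\alpha'}^{\oplus k}$ carries a unique flag. The same issue is fatal for your $\delta^\circ$ case: on $\Rep^\reg_{k\delta^\circ}\simeq[\fkgl_k/GL_k]$ the arrow $h_\n$ is an isomorphism, so a seminilpotent flag of type $(\lambda^1\delta,\ldots,\lambda^r\delta)$ would need $V_1^1\subset\ker h_\n=0$; hence $i_\mkbeta^*\cL_{(\lambda^1\delta,\ldots)}$ is zero there, and the identification you plan to verify (that it agrees with the Fourier--Sato transform of a partial Springer pushforward) cannot hold. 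The paper instead uses the alternating types $\Ibe^\circ_\mu=(\mu^1\alpha_0,\mu^1\alpha_1,\ldots,\mu^r\alpha_0,\mu^r\alpha_1)$ of~\eqref{eq:idem-circ}, identifies their restriction with the partial Grothendieck--Springer alteration $GL_k\times_{P_\mu}\mathfrak{p}_\mu$ via~\cite[Lem.~8.7]{MakMin_KLR2023}, and only then invokes Fourier--Sato to exchange these with the flag sheaves $\cL_{\Ibe^\bullet_\mu}$ on the nilpotent cone.

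A second problem is your appeal to ``classical Springer theory'' at the $\delta^\bullet$ piece. That is a characteristic-zero (decomposition theorem) argument, and the whole point here is that $\bbk$ has arbitrary characteristic; in characteristic $p$ the partial Springer pushforwards need not contain all partition-indexed parity sheaves. What is actually needed, and what the paper uses, is elementary and characteristic-free: the partial Springer map $T^*(GL_k/P_\lambda)\to\cN_{\fkgl_k}$ is proper with image $\overline{\bbO_{\lambda^t}}$ and is an isomorphism over the dense orbit $\bbO_{\lambda^t}$, so the Krull--Schmidt summand of $\cL_{\Ibe^\bullet_\lambda}$ supported on $\overline{\bbO_{\lambda^t}}$ and restricting to the (shifted) constant sheaf on $\bbO_{\lambda^t}$ is the ersatz parity sheaf for that orbit; letting $\lambda$ range over $\nu^t$, $\nu\vdash k^\bullet$, produces all of them (note also the transpose, which your sketch omits). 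With these corrections -- the explicit refinements (\ref{eq:idem-1},~\ref{eq:idem-0},~\ref{eq:idem-bul},~\ref{eq:idem-circ}) and the dense-orbit argument in place of Springer theory -- your outline becomes the paper's proof.
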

\begin{proof}
	In view of \cref{prop:flag-is-even,cor:flag-support}, for each $\mkbeta\in \mkp(\vecn)$ and indecomposable $F\in\Ev_\mkbeta$ it suffices to find a composition $\Ibe\in\Comp(\vecn)$ refining $\mkbeta$, such that $\cL_\Ibe|_{\Rep_\vecn^\mkbeta} = F$.
	Let us write $\mkbeta = (i^1\beta^1,\ldots,i^r\beta^r)$, and $\Ibe$ is a concatenation of compositions $\Iga_k\in\Comp(i^k\Ibe^k)$.
	Recall the map $q:\Rep^\mkbeta_\vecn\to \Rep_\mkbeta^\reg$.
	Since any representation $V\in \Rep_\vecn^\mkbeta$ supports a unique flag of type $\mkbeta$, we have 
	\[
		\cL_\Ibe|_{\Rep_\vecn^\mkbeta} = q^*(\cL_{\Iga_1} \boxtimes\ldots \boxtimes \cL_{\Iga_r}).
	\]
	By \cref{rmk:comp-to-tensor} we are therefore reduced to the length $1$ case $\mkbeta = (k\alpha')$, $k>0$, $\alpha'\in \Root^+_\bullet$.

	Assume $\alpha'\in\Root^+_{\rm re}$, then $V = M_{\alpha'}^{\oplus k}$.
	Explicitly, $M_{\alpha'}$ are as follows:
	\[
		M_{\alpha_1+i\delta} = 
		\begin{tikzcd}[column sep = large]
			\bbC^{i+1}\ar[r,shift left,"(\id_{\bbC^i}\text{ }0)"]\ar[r,shift right,"(0\text{ }\id_{\bbC^i})"'] & \bbC^i
		\end{tikzcd},\qquad 
		M_{\alpha_0+i\delta} = 
		\begin{tikzcd}[column sep = large]
			\bbC^{i}\ar[r,shift left,"\begingroup
			\renewcommand*{\arraystretch}{0.6}\begin{pmatrix}\id_{\bbC^i}\\0\end{pmatrix}\endgroup"]\ar[r,shift right,"\begingroup
			\renewcommand*{\arraystretch}{0.6}\begin{pmatrix}0\\\id_{\bbC^i}\end{pmatrix}\endgroup"'] & \bbC^{i+1}
		\end{tikzcd}.
	\]
	One checks directly that $M_{\alpha_1+i\delta}^{\oplus k}$ has the unique flag of type
	\begin{equation}\label{eq:idem-1}
		\Ibe_{k(\alpha_1+i\delta)} \coloneqq (k\delta,k\delta,\ldots,k\delta,k\alpha_1),
	\end{equation}
	and $M_{\alpha_0+i\delta}^{\oplus k}$ has the unique flag of type 
	\begin{equation}\label{eq:idem-0}
		\Ibe_{k(\alpha_0+i\delta)} \coloneqq (k\alpha_0,k\delta,k\delta,\ldots,k\delta).
	\end{equation}
	Therefore $\cL_{\Ibe_{k\alpha'}}|_{\Rep_{k\alpha'}^\mkbeta} = \bbk$ for any $\alpha'\in \Root^+_{\rm re}$.

	Let $\alpha' = \delta^\bullet$; then $\Rep_{k\alpha'}^\mkbeta = [\cN_{\fkgl_k}/GL_k]$, and the evenness theory is piecewise constant on the nilpotent orbits.
	For a partition $\lambda\vdash k$, $\lambda = (\lambda^1\geq\ldots \geq \lambda^r)$, consider the flag type 
	\begin{equation}\label{eq:idem-bul}
		\Ibe^\bullet_\lambda \coloneqq (\lambda^1\delta,\ldots,\lambda^r\delta)
	\end{equation}
	and the nilpotent orbit $\bbO_\lambda$ with sizes of Jordan blocks given by $\lambda$.
	By \cref{prop:gen-images}, the restriction of $\Fl_{\Ibe^\bullet_\lambda}^\m$ to $\Rep_{k\alpha'}^\mkbeta$ is simply the partial Springer resolution $T^*(GL_k/P_\lambda)$.
	It is known that the image of projection $p: T^*(GL_k/P_\lambda)\to \cN_{\fkgl_k}$ lies in the closure $\overline{\bbO_{\lambda^t}}$, and $p$ restricts to an isomorphism over $\bbO_{\lambda^t}$.
	Therefore $\cL_{\Ibe^\bullet_{\lambda}}$ contains the ersatz parity sheaf corresponding to $\lambda^t$ as a direct summand.

	Finally, let $\alpha' = \delta^\circ$, so that $\Rep_{k\alpha'}^\mkbeta = [\fkgl_k/GL_k]$.
	For a partition $\mu\vdash k$, consider the flag type 
	\begin{equation}\label{eq:idem-circ}
		\Ibe^\circ_\mu \coloneqq (\mu^1\alpha_0,\mu^1\alpha_1,\ldots,\mu^r\alpha_0,\mu^r\alpha_1).
	\end{equation}
	It follows from~\cite[Lem.~8.7]{MakMin_KLR2023} that the restriction of $\Fl_{\Ibe^\circ_\mu}^\m$ to $\Rep_{k\alpha'}^\mkbeta$ is the partial Grothendieck-Springer alternation $GL_k\times_{P_\mu} \mathfrak{p}_\mu$.
	Under Fourier-Sato transform on $[\fkgl_k/GL_k]$, the flag sheaves $\cL_{\Ibe^\circ_{\mu}}$ go to the flag sheaves $\cL_{\Ibe^\bullet_{\mu}}$ on the nilpotent cone, so we may conclude by the previous case.
\end{proof}

\begin{cor}\label{cor:even-are-flag}
	The category of ersatz parity sheaves on $\Rep_\vecn$ with respect to $\Ev_\vecn^\mkp$ coincides with category $\FShv_\vecn$ of direct sums of shifted direct summands of flag sheaves $\cL_\Ibe$, $\Ibe\in\Comp(\vecn)$. 
\end{cor}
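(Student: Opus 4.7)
The plan is to establish two inclusions. For the easier direction, any shifted direct summand of a flag sheaf $\cL_\Ibe$ lies in $\Par_\vecn^\mkp$: by \cref{prop:flag-is-even} we have $\cL_\Ibe \in \Ev_\vecn^\mkp$, and $\Ev_\vecn^\mkp$ is stable under direct summands and even shifts by \cref{def:eveness}. Hence any direct sum of shifted summands of flag sheaves belongs to $\Par_\vecn^\mkp$.

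For the reverse direction, combining ersatz-completeness (\cref{prop:ersatz-comp}) with \cref{prop:gluing-fin-bal}, every indecomposable object of $\Par_\vecn^\mkp$ is, up to a shift, of the form $\parsh(\mkbeta, F)$ for some $\mkbeta \in \mkp(\vecn)$ and some indecomposable $F \in \Ev_\mkbeta$. It therefore suffices to realize each such $\parsh(\mkbeta, F)$ as a direct summand of some flag sheaf $\cL_\Ibe$.

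The construction in the proof of \cref{prop:ersatz-comp} already furnishes, for each pair $(\mkbeta,F)$, a composition $\Ibe \in \Comp(\vecn)$ refining $\mkbeta$ such that $F$ appears as an indecomposable direct summand of $\cL_\Ibe|_{\Rep_\vecn^\mkbeta}$. To promote this to a direct summand statement on all of $\Rep_\vecn$, decompose $\cL_\Ibe = \bigoplus_k P_k$ into indecomposables in $\Par_\vecn^\mkp$ using Krull-Schmidt (\cref{rmk:parity-Krull-Schmidt}). By \cref{cor:flag-support}, the support of $\cL_\Ibe$ is contained in $\overline{\Rep_\vecn^\mkbeta}$, so each $P_k$ is supported in $\overline{\Rep_\vecn^{\mkbeta_k}}$ for some $\mkbeta_k \leqslant_C \mkbeta$ by \cref{lem:maxisupport-indecomp}. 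Restricting along the open inclusion $j_\mkbeta\colon \Rep_\vecn^\mkbeta \hookrightarrow \overline{\Rep_\vecn^\mkbeta}$, those $P_k$ with $\mkbeta_k \lneq_C \mkbeta$ restrict to zero, while those with $\mkbeta_k = \mkbeta$ remain indecomposable by \cref{lem:Par-open-surj}. Since $F$ is an indecomposable summand of $j_\mkbeta^* \cL_\Ibe = \bigoplus_k j_\mkbeta^* P_k$, Krull-Schmidt forces some index $k$ with $j_\mkbeta^* P_k \simeq F$; then \cref{lem:par-unique} identifies the corresponding $P_k$ with $\parsh(\mkbeta, F)$, exhibiting it as a direct summand of $\cL_\Ibe$.

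No serious obstacle arises: the entire technical content has been assembled in the preceding propositions. The only care needed is to chain together the support bound (\cref{cor:flag-support}), the stratum-wise construction from the proof of \cref{prop:ersatz-comp}, and the uniqueness property of ersatz parity sheaves (\cref{lem:par-unique}) through a Krull-Schmidt decomposition of the flag sheaf.
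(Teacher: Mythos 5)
Your argument is correct and is exactly the intended one: the paper states this corollary without a separate proof, since it is the immediate combination of \cref{prop:flag-is-even}, \cref{cor:flag-support}, the construction in the proof of \cref{prop:ersatz-comp}, and the uniqueness of ersatz parity sheaves (\cref{lem:par-unique}). Your write-up just makes the Krull--Schmidt bookkeeping explicit, in the same way the paper itself argues in \cref{lem:par-unique}.
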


\begin{rmk}\label{rmk:simpler-idems}
	We can replace the idempotents in (\ref{eq:idem-1},~\ref{eq:idem-0}) by 
	\[
		\Ibe'_{k(\alpha_1+i\delta)} \coloneqq (ik\alpha_0,(i+1)k\alpha_1), \quad \Ibe'_{k(\alpha_0+i\delta)} \coloneqq ((i+1)k\alpha_0,ik\alpha_1).
	\]
\end{rmk}

\begin{rmk}\label{rmk:geom-Ext}
Hone--Williamson introduced the notion of geometric extension sheaves in~\cite{hone2025geometric}.
Given a smooth irreducible open $U\subset Y$ and a smooth proper map\footnote{Note that $\mathcal{L}$ is automatically surjective, since it is both open and closed.} $\mathcal{L}:V\to U$, the geometric extension $\mathscr{E}(Y,\mathcal{L})$ is the universal extension of $\mathcal{L}_*\bbk_V$ to $Y$, contained as a direct summand of $f_*\bbk_X$ for any proper $f:X\to Y$ which restricts to $\mathcal{L}$ over $U$.
This definition recovers IC-sheaves, as well as parity sheaves in many situations, with the additional virtue of being agnostic about the choice of stratification of $Y$.
On the other hand, the theory of geometric extensions has little control over the behaviour of $\mathscr{E}(Y,\mathcal{L})$ at the boundary $Y\setminus U$; in particular, there is currently no well-behaved notion of a \textit{category} of geometric extensions.

In view of \cref{cor:even-are-flag,prop:flag-is-even}, ersatz parity sheaves on $\Rep_\vecn$ are the direct summands of geometric extensions obtained from partial flag varieties.
However, we can only observe this \textit{a posteriori}, following the inductive argument in \cref{subs:evenness-flag}. 
One may enquire in which situations geometric extensions can be arranged into a polyhereditary evenness theory; we hope to return to this question in future work.
\end{rmk}

\subsection{Polyheredity structure on $A^\m(\vecn)$}
By \cref{prop:ersatz-comp}, we can apply \cref{thm:quasi-her-inher} to the evenness theory $\Ev_\vecn^\mkp$ on $\Rep_\vecn$.
Since the evenness theory $\Ev_\mkbeta$ on each stratum is equivalent to piecewise constant evenness theory, it is polyhereditary by \cref{rmk:classical-parity}.
Hence $\Ev_\vecn^\mkp$ is polyhereditary as well.
By \cref{cor:even-are-flag}, the direct sum of all flag sheaves $\cL_\vecn = \sum_{\Ibe\in\Comp(\vecn)} \cL_\beta$ is a full sheaf in $\Par_\vecn^\mkp$.
Therefore the algebra $\Ext^*(\cL_\vecn,\cL_\vecn)$ is polyhereditary by \cref{cor:polyher-case}.
We obtain:

\begin{thm}\label{thm:Schur-poly-qher}
	Let $\Gamma = 1 \rightrightarrows 0$ be the Kronecker quiver, and $\vecn \in Q^+_I$.
	The seminilpotent quiver Schur algebra $A_\Gamma^\m(\alpha)$ is polyhereditary.\qed
\end{thm}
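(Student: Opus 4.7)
My plan is to realize $A^\m_\Gamma(\vecn)$ as the $\Ext$-algebra of a full sheaf in a polyhereditary evenness theory on $\Rep_\vecn$, and then invoke \cref{cor:polyher-case}. The argument breaks into three steps: (i) establish polyheredity on each stratum $\Rep_\vecn^\mkbeta$; (ii) glue to obtain a polyhereditary evenness theory on $\Rep_\vecn$; (iii) identify the resulting $\Ext$-algebra with $A^\m_\Gamma(\vecn)$.

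For (i), I would note that each $\Rep_\mkbeta^\reg$ is, up to the Fourier-Sato transform $\Theta_p$, of the form $[(\mathcal{N}_{\fkgl_{k^\bullet}} \times \mathcal{N}_{\fkgl_{k^\circ}})/G]$, equipped with the piecewise-constant evenness theory for the nilpotent-orbit stratification. The stabilizers of nilpotent elements in a product of general linear groups are connected with polynomial equivariant cohomology, so \cref{cor:McNa-result} delivers polyheredity, and $\Theta_p$ together with the smooth pullback along the bijective-on-points map $\Rep_\vecn^\mkbeta \to \Rep_\mkbeta^\reg$ transports this structure onto $\Ev_\mkbeta$. For (ii), I would apply \cref{cor:polyher-case}(2): the stratification $\Rep_\vecn = \bigsqcup_\mkbeta \Rep_\vecn^\mkbeta$ satisfies the closure condition \eqref{eq:strat-cond} by \cref{prop:lex-refines-closure}, and ersatz-completeness is exactly \cref{prop:ersatz-comp}, so $\Ev_\vecn^\mkp$ is polyhereditary. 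For (iii), \cref{cor:even-are-flag} guarantees that $\cL_\vecn \coloneqq \bigoplus_{\Ibe \in \Comp(\vecn)} \cL_\Ibe$ is a full sheaf in $\Par_\vecn^\mkp$, whence \cref{cor:polyher-case}(1) yields polyheredity of $\Ext^*(\cL_\vecn, \cL_\vecn)$, and a final appeal to \cref{prop:Ext-description} identifies this graded algebra with $A^\m_\Gamma(\vecn)$.

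The substantive work has already been done upstream: the crucial input is ersatz-completeness (\cref{prop:ersatz-comp}), whose proof required the splitting of the Mackey filtration (\cref{prop:Mackey-splits}) and the explicit flag types \eqref{eq:idem-1}--\eqref{eq:idem-circ} realizing each indecomposable ersatz parity sheaf on each stratum. Conditional on these inputs, the present argument is a clean assembly; the only mild subtlety is checking that the Fourier-Sato transform and the smooth pullback in step (i) preserve polyheredity, which is immediate because they are equivalences of triangulated categories that carry the relevant $\Ext$-groups isomorphically and identify ersatz parity sheaves with ersatz parity sheaves.
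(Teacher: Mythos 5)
Your proposal is correct and follows essentially the same route as the paper: stratum-wise polyheredity via the piecewise constant theory (\cref{rmk:classical-parity}, \cref{cor:McNa-result}), gluing via ersatz-completeness (\cref{prop:ersatz-comp}) and \cref{cor:polyher-case}(2), then fullness of the flag sheaf (\cref{cor:even-are-flag}) combined with \cref{cor:polyher-case}(1) and \cref{prop:Ext-description}. The only difference is that you spell out explicitly the transport of polyheredity through the Fourier-Sato transform and the pullback to $\Rep_\vecn^\mkbeta$, which the paper leaves implicit in its definition of $\Ev_\mkbeta$.
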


Let us be more explicit about the sequence of idempotents in $A_\Gamma^\m(\alpha)$ yielding the polyhereditary structure.
Recall the notations in~\eqref{eq:mkbeta-vs-kbeta}, and consider the set 
\begin{equation}\label{eq:indexing-set}
	\Lambda_\vecn = \left\{ (\mkbeta,\lambda,\mu) : \mkbeta\in \mkp(\vecn), \lambda\vdash k^\bullet, \mu\vdash k^\circ \right\}.
\end{equation}
We equip the triples in $\Lambda_\vecn$ with the lexicographic order, where each coordinate is arranged by reverse lexicographic order.
In other words,
\[
	(\mkbeta_1,\lambda_1,\mu_1) \leqslant (\mkbeta_2,\lambda_2,\mu_2) \Leftrightarrow 
	\left[
	\begin{array}{l}
	\mkbeta_1>_L \mkbeta_2; \\
    \mkbeta_1 = \mkbeta_2, \lambda_1 > \lambda_2; \\
	\mkbeta_1 = \mkbeta_2, \lambda_1 = \lambda_2, \mu_1 > \mu_2.
	\end{array}
	\right.
\]
To each element $(\mkbeta,\lambda,\mu)\in \Lambda_\vecn$, we associate a composition $\gamma(\mkbeta,\lambda,\mu)\in \Comp(\vecn)$, defined via the following procedure:
\begin{enumerate}
	\item Write $\mkbeta$ as $(k^1\alpha^1,\ldots,k^r\alpha^r,k^\bullet \delta^\bullet,k^\circ \delta^\circ, k^{r+1}\alpha^{r+1},\ldots,k^s\alpha^s)$, $\alpha_i\in \Root^+_{\rm re}$;
	\item To each $\alpha^i\in \Root^+_{\rm re}$ associate the composition $\Ibe_{k^i\alpha^i}$ as defined in (\ref{eq:idem-1},~\ref{eq:idem-0});
	\item For $\delta^\bullet$ resp. $\delta^\circ$, take the composition $\Ibe^\bullet_{\lambda^t}$, resp. $\Ibe^\circ_{\mu^t}$ as defined in (\ref{eq:idem-bul},~\ref{eq:idem-circ});
	\item Concatenate compositions in (2), (3) in the order given by (1). Denote the result by $\gamma(\mkbeta,\lambda,\mu)\in \Comp(\vecn)$. 
\end{enumerate}

Combining \cref{thm:quasi-her-inher,prop:lex-refines-closure,prop:ersatz-comp}, we obtain

\begin{thm}\label{thm:Schur-polyher-order}
	The polyhereditary structure on $A_\Gamma^\m(\alpha)$ is given by the sequence of idempotents $1_{\gamma(\mkbeta,\lambda,\mu)}$, $(\mkbeta,\lambda,\mu)\in\Lambda_\vecn$ in reverse lexicographic order.\qed
\end{thm}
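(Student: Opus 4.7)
The plan is to transport the polyhereditary structure from the sheaf side to the algebra side using the Ext-algebra equivalence in the proof of \cref{lem:Ev-to-alg-qher}. Under that equivalence, the polyheredity chain of \cref{thm:Schur-poly-qher} is obtained by peeling off the ersatz parity sheaves $\parsh(\xi)$, $\xi \in \Lambda_\vecn$, one at a time, with each level-$\xi$ slice cut out by any idempotent projecting onto (a multiple of) $\parsh(\xi)$ modulo the ideal $J_{>\xi}$ generated by ersatz parity sheaves strictly above $\xi$ in $\Lambda_\vecn$. The statement to prove then reduces to the following claim in $\Par^\mkp_\vecn$: for every $\xi = (\mkbeta, \lambda, \mu)\in\Lambda_\vecn$, the flag sheaf $\cL_{\gamma(\xi)}$ contains $\parsh(\xi)$ as a direct summand with multiplicity at least one, while every other indecomposable summand is of the form $\parsh(\xi')$ with $\xi' > \xi$ in $\Lambda_\vecn$. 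Granted this, the image of $1_{\gamma(\xi)}$ in $A^\m(\alpha)/J_{>\xi}$ is a nonzero idempotent projecting onto a multiple of $\parsh(\xi)$, and its two-sided ideal is exactly the level-$\xi$ slice $J_{\geqslant \xi}/J_{>\xi}$, matching the polyheredity data furnished by \cref{lem:Ev-to-alg-qher} with the concrete sequence $\{1_{\gamma(\xi)}\}_{\xi \in \Lambda_\vecn}$.

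To establish the claim I would combine three inputs. First, \cref{cor:flag-support} places the support of $\cL_{\gamma(\xi)}$ inside $\overline{\Rep^\mkbeta_\vecn}$, so \cref{lem:maxisupport-indecomp} forces every indecomposable summand $\parsh(\xi')$ to satisfy $\mkbeta' \leqslant_C \mkbeta$; by \cref{prop:lex-refines-closure} this sharpens to $\mkbeta' \leqslant_L \mkbeta$, which by the definition of the order on $\Lambda_\vecn$ translates into $\xi' \geqslant \xi$. Only the boundary case $\mkbeta' = \mkbeta$ requires a finer analysis. In that case I would restrict $\cL_{\gamma(\xi)}$ to the open stratum $\Rep^\mkbeta_\vecn$ and invoke \cref{rmk:comp-to-tensor} to express the restriction as an external product over the components of $\mkbeta$. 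On the real-root components, the choices~\eqref{eq:idem-1},~\eqref{eq:idem-0} produce a single simple parity sheaf and do not contribute further summands. On the $\delta^\bullet$-component, the restriction is the partial Springer resolution $T^*(GL_{k^\bullet}/P_\lambda) \to \cN_{\fkgl_{k^\bullet}}$, whose pushforward of the constant sheaf decomposes into the parity sheaf of $\bbO_{\lambda^t}$ with multiplicity one plus parity sheaves on strictly smaller orbits $\bbO_\nu$ with $\nu <_{\rm dom} \lambda^t$, as already used in the proof of \cref{prop:ersatz-comp}. An analogous Fourier--Sato argument reduces the $\delta^\circ$-component to the nilpotent case.

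The main obstacle is purely combinatorial bookkeeping: one must verify that the reverse lexicographic order on partitions implicit in~\eqref{eq:indexing-set} is aligned so that ``strictly smaller orbit on the nilpotent cone'' always becomes ``strictly larger in $\Lambda_\vecn$'' after the transpose $\lambda \leftrightarrow \lambda^t$, and likewise for $\mu$. Once the orders are consistently matched for both the $\lambda$ and $\mu$ coordinates, the inductive gluing of \cref{thm:quasi-her-inher} amounts to nothing more than applying the local decomposition above at each successive level of $\Lambda_\vecn$, yielding the claimed identification of the polyhereditary structure with the sequence $\{1_{\gamma(\mkbeta,\lambda,\mu)}\}$.
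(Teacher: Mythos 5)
Your proposal is correct and follows essentially the same route as the paper, which proves this theorem simply by combining \cref{thm:quasi-her-inher}, \cref{prop:lex-refines-closure} and \cref{prop:ersatz-comp}: your argument just makes explicit the step the paper leaves implicit, namely that each $\cL_{\gamma(\xi)}$ contains $\parsh(\xi)$ as a summand while all other summands (boundary strata via \cref{cor:flag-support} and \cref{lem:maxisupport-indecomp}, smaller orbits via the partial Springer/Fourier--Sato decomposition already used in the proof of \cref{prop:ersatz-comp}) lie on the side of the order that has already been peeled off. The compatibility of dominance order with the order on $\Lambda_\vecn$ (after the transpose built into $\gamma$) that you flag is exactly the bookkeeping the paper's citation of \cref{prop:lex-refines-closure} and the construction of $\gamma(\mkbeta,\lambda,\mu)$ are meant to supply, so there is no gap.
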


\subsection{Another polyhereditary structure}\label{ssec:another-polyher}
Note that we have the following simple corollary of the discussion in \cref{rmk:classical-parity}. 
\begin{prop}
	Let $\Gamma$ be a cyclic quiver, and $\vecn\in Q_I^+$.
	The nilpotent Schur algebra $A^\n_\Gamma(\vecn)$ is polyhereditary.
\end{prop}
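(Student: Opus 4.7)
My plan is to reduce the claim to McNamara's result \cref{cor:McNa-result}. By \cref{prop:Ext-description}, we have $A^\n_\Gamma(\vecn)\simeq\Ext^*(\cL,\cL)$, where $\cL=\pi_*\ona{IC}_{\widetilde{\bfF}^\n_\vecn}\simeq\bigoplus_{\Ibe\in\Comp(\vecn)}\cL_\Ibe$ and $\pi$ is the projection to $E_\vecn$. Since strict preservation of a flag forces the representation to be nilpotent, $\pi$ factors through the closed substack $\Rep_\vecn^{\mathrm{nil}}\coloneqq[E_\vecn^{\mathrm{nil}}/G_\vecn]$ of nilpotent representations, so I will regard $\cL$ as an object of $D^b(\Rep_\vecn^{\mathrm{nil}})$.

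Next, I will invoke the classical fact going back to Lusztig: for $\Gamma$ cyclic, $E_\vecn^{\mathrm{nil}}$ has only finitely many $G_\vecn$-orbits $\bbO_\lambda$, parametrized by multisegments of weight $\vecn$ in $I$. The stabilizer of a point in $\bbO_\lambda$ is the automorphism group of the corresponding nilpotent representation, which is an extension of a Levi subgroup of $G_\vecn$ by its unipotent radical; in particular, it is connected and its classifying stack has polynomial cohomology concentrated in even degrees. Hence the hypotheses of \cref{rmk:classical-parity} hold, and the piecewise constant evenness theory $\Ev^{\mathrm{pc}}_\Lambda$ is defined on $\Rep_\vecn^{\mathrm{nil}}=\bigsqcup_\lambda[\bbO_\lambda/G_\vecn]$.

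The main step is then to verify that all parity sheaves on $\Rep_\vecn^{\mathrm{nil}}$ exist and that each occurs (up to shift) as a direct summand of some $\cL_\Ibe$; in other words, $\cL$ is a full sheaf. For each multisegment $\lambda$ I will pick a composition $\Ibe_\lambda\in\Comp(\vecn)$ (e.g.\ given by the residue sequence of $\lambda$) so that the map $\widetilde{\bfF}^\n_{\Ibe_\lambda}\to\overline{\bbO_\lambda}$ is proper and birational. Its fibers are nilpotent Lusztig fibers on a cyclic quiver, which are paved by affines and hence have cohomology concentrated in even degrees; thus $\cL_{\Ibe_\lambda}$ is parity in the sense of \cite[Def.~2.4]{JMW_PS2014}, and by \cite[Cor.~2.35]{JMW_PS2014} the parity sheaf attached to $\bbO_\lambda$ occurs as a direct summand of $\cL_{\Ibe_\lambda}$. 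Applying \cref{cor:McNa-result} to the full sheaf $\cL$ then gives polyheredity of $\Ext^*(\cL,\cL)=A^\n_\Gamma(\vecn)$.

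The main delicate ingredient is the affine-paving (hence evenness) of the nilpotent Lusztig fibers over an arbitrary field $\bbk$, together with the combinatorics required to ensure that the resolutions $\widetilde{\bfF}^\n_{\Ibe_\lambda}\to\overline{\bbO_\lambda}$ reach every orbit. Both are standard in the theory underlying Lusztig's construction of canonical bases for cyclic quivers, and they are the only geometric inputs needed beyond the machinery developed in \cref{sec:ersatz-parity}.
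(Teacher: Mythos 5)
Your argument is correct and takes essentially the same route as the paper: restrict to the nilpotent locus with its finitely many $G_\vecn$-orbits and connected stabilizers having even polynomial cohomology, use evenness of nilpotent Lusztig fibers to see that the flag sheaves are parity complexes for the piecewise constant evenness theory, and conclude via \cref{cor:McNa-result}. The only cosmetic difference is how fullness (existence of all parity sheaves as summands of flag sheaves) is certified: the paper quotes that each nilpotent representation admits exactly one flag of a suitable type \cite[Prop.~2.7]{SW_QSAF2014}, whereas you propose equivariant even proper birational resolutions of orbit closures and invoke \cite[Cor.~2.35]{JMW_PS2014} — an essentially equivalent mechanism.
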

\begin{proof}
	All nilpotent flag sheaves are supported on the locus $\Rep_\vecn^{\mathrm{nil}}\subset \Rep_\vecn$ of nilpotent representations.
	It is well known that the cyclic quiver has finitely many nilpotent representations of fixed dimension vector $\vecn$, and the stabilizer of each such representation is homotopic to a product of general linear groups.
	Every flag sheaf has even fibers by~\cite{Mak_CBKA2015,zhou2022affine}.
	Moreover, for each nilpotent representation $V$ there exists $\Ibe\in \Comp(\vecn)$ such that $V$ admits exactly one flag of type $\Ibe$~\cite[Prop.~2.7]{SW_QSAF2014}.

	Summarizing, the piecewise constant evenness theory $\Ev^{\mathrm{pc}}$ on $\Rep_\vecn^{\mathrm{nil}}$ is ersatz-complete, therefore polyhereditary, and the sum of flag sheaves is full.
	We conclude by \cref{cor:McNa-result}.
\end{proof}

In particular, we get another polyhereditary structure on $A^\n_{1\leftrightarrows 0}(\alpha)\simeq A^\m_{1\rightrightarrows 0}(\alpha)$.
Note, however, that the two polyheredity orders are not compatible already for $\alpha = \delta$ by \cref{expl:compare-hw-orders}.
Only the polyheredity order of \cref{thm:Schur-polyher-order} is compatible with the categorification of Beck-Damiani basis of $U^-_q(\widehat{\fksl}_2)$ in~\cite{McN_RKAI2017}.

\medskip
\section{Semicuspidal quotients}\label{sec:semicusp-quot}
In this section, we unwind \cref{thm:Schur-polyher-order} and relate our results to some existing constructions.
In order to do so, we consider a coarser properly stratified structure on $A^\m(\xi)$, $\xi\in Q^+_I$.

\subsection{Semicuspidal quotients}\label{subs:def-semicusp}
Let us regroup the idempotents in \cref{thm:Schur-polyher-order}. For any $\kbeta\in \kp(\xi)$, consider
\[
	1_{\gamma(\kbeta)} \coloneqq \sum_{k,\lambda,\mu} 1_{\gamma((\kbeta,k),\lambda,\mu)}.
\]
On the sheaf side, let $\parsh_{\kbeta}$ be the direct sum of all ersatz parity sheaves in $\Ev(\Rep_\xi^\kbeta)$.
\cref{lem:Ev-to-alg-qher} implies that the (reverse) lexicographic order on idempotents $1_{\gamma(\kbeta)}$ induces a $\mathscr{C}$-properly stratified structure on $A^\m(\vecn)$, where $\mathscr{C}$ is the collection of algebras $\Ext^*(\parsh_\kbeta,\parsh_\kbeta)$ for all $\xi$ and $\mkbeta\in\mkp(\xi)$.
Since the evenness theory on $\Rep^\kbeta_\xi$ is a product of evenness theories, in order to understand $\mathscr{C}$ it suffices to consider $\xi = n\alpha$, $\kbeta = (n\alpha)$ for some $\alpha\in \Root^+$, $n>0$.

Let us denote the sum of all (seminilpotent) flag sheaves on $\Rep_{n\alpha}$ by $\cL_{n\alpha}$, and the sum of all ersatz parity sheaves supported away from $\Rep^\reg_{n\alpha}$ by $\parsh'$.
By \cref{lem:Par-open-surj}, we have 
\begin{equation}\label{eq:Ext-is-semicusp}
\begin{aligned}
	\Ext^*(\parsh_\kbeta,\parsh_\kbeta) &= \Ext^*(\cL_{n\alpha},\cL_{n\alpha})/\Ext^*(\cL_{n\alpha},\parsh')\circ \Ext^*(\parsh',\cL_{n\alpha}) \\
	&= \Ext^*(\cL_{n\alpha},\cL_{n\alpha})/\Ext^*(\cL_{n\alpha},\cL')\circ \Ext^*(\cL',\cL_{n\alpha}),
\end{aligned}
\end{equation}
where $\cL'$ is the direct sum of all flag sheaves on $\Rep_{n\alpha}$ supported away from $\Rep^\reg_{n\alpha}$.
Note that the second equality holds by ersatz-completeness.

\begin{defn}
	A composition $\Ibe=(\Ibe^1,\ldots,\Ibe^k)\in\Comp(\xi)$ is called \emph{noncuspidal} if we have $\theta(\sum_{t=1}^r\Ibe^t)>\theta(\sum_{t=r+1}^k\Ibe^t)$ for some $1\leqslant r<k$.
	We say that the idempotent $1_\Ibe\in A(\xi)$ is noncuspidal if $\Ibe$ is, and denote the sum of all noncuspidal idempotents by $1_{\nc}$.
\end{defn}

By \cref{lem:closure-vs-sub} a flag sheaf $\cL_\Ibe$, $\Ibe\in \Comp(\xi)$ is supported away from $\Rep^\reg_{n\alpha}$ for $\Ibe$ noncuspidal.
Moreover, by construction in \cref{prop:ersatz-comp} the collection of such flag sheaves contains all ersatz parity sheaves supported away from $\Rep^\reg_{n\alpha}$ as direct summands.
Thus
\begin{align*}
	\Ext^*(\cL_{n\vecn},\cL_{n\vecn})/\Ext^*(\cL_{n\vecn},\cL')\circ \Ext^*(\cL',\cL_{n\vecn})
	= A(n\alpha)/A(n\alpha)1_{\nc}A(n\alpha).
\end{align*}

\begin{defn}\label{defn:semicusp-alg}
	We call $\CA(n\vecn) \coloneqq A(n\vecn)/A(n\vecn)1_{\nc}A(n\vecn)$ the \textit{semicuspidal algebra}.
\end{defn}

When $\alpha\in\Root^+_{\rm re}$, the evenness theory on $\Rep^{(n\alpha)}_{n\alpha}$ is the constant one, so that $\CA(n\alpha)$ is Morita-equivalent to a polynomial algebra.
When $\alpha = \delta$, the algebra is more complicated, but the results of \cref{sec:rep-cS} supply us with a projective generator.
Namely, let $\cI = \{\tau,\varepsilon\}$, and recall the set of $\cI$-coloured compositions $\cI^{(n)}$ from \cref{def:comps}.
For each $n>0$, consider the map $\cI^{(n)}\to \Comp(n\delta)$, $\cla\mapsto \beta_\cla$ uniquely determined by:
\begin{itemize}
\item $(n\varepsilon)\mapsto (n\delta)$, $(n\tau)\mapsto (n\alpha_0,n\alpha_1)$,
\item The map $\coprod_j\cI^{(n_j)}\to \coprod_j\Comp(n_j\delta)$ commutes with concatenation.
\end{itemize}
Note that for a composition of pure colour $\varepsilon$, resp. $\tau$ we have $\beta_\cla=\beta_\lambda^\bullet$, resp. $\beta_\cla=\beta_\lambda^\circ$, as defined in (\ref{eq:idem-bul},~\ref{eq:idem-circ}).
We abbreviate $e_\cla \coloneqq 1_{\beta_\cla}\in A(n\delta)$.

\begin{prop}\label{prop:proj-gen}
	Let 
	\[
		e=\sum_{\cla\in\cI^{(n)}}e_\cla, \qquad e_0 = \sum_{\cla\in\cI^{n}}e_\cla.
	\]
	The algebra $\CA(n\delta)$ is Morita-equivalent to $e\CA(n\delta)e$.
	Moreover, if $\bbk$ is of characteristic zero, then $\CA(n\delta)$ is Morita-equivalent to $e_0\CA(n\delta)e_0$.
\end{prop}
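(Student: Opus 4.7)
Via~\eqref{eq:Ext-is-semicusp}, I identify $\CA(n\delta)$ with $\Ext^*(\cL_{n\delta}|_U, \cL_{n\delta}|_U)$, where $U := \Rep^\reg_{n\delta}$. Its indecomposable projective modules correspond to the indecomposable summands of $\cL_{n\delta}|_U$, which by ersatz-completeness (\cref{prop:ersatz-comp}) are exactly the ersatz parity sheaves $\parsh_\mkbeta(\lambda,\mu)$ with $\mkbeta = (k^\bullet\delta^\bullet, k^\circ\delta^\circ)$, $\lambda\vdash k^\bullet$, $\mu\vdash k^\circ$, $k^\bullet + k^\circ = n$. Morita equivalence with $e\CA(n\delta)e$ (resp.\ $e_0\CA(n\delta)e_0$) is equivalent to $e$ (resp.\ $e_0$) being a \emph{full} idempotent of $\CA(n\delta)$. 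The plan is then to exhibit, for each triple $(\mkbeta,\lambda,\mu)$, a composition $\cla\in\cI^{(n)}$ (resp.\ $\cla\in\cI^n$) such that $\parsh_\mkbeta(\lambda,\mu)$ occurs as a direct summand of $\cL_{\beta_\cla}|_U$.

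For the first statement, the required $\cla$ is precisely the one produced by the proof of \cref{prop:ersatz-comp}. Given $(\mkbeta,\lambda,\mu)$, take
\[
	\cla = (\lambda^1\varepsilon,\ldots,\lambda^r\varepsilon,\mu^1\tau,\ldots,\mu^s\tau)\in\cI^{(n)}
\]
(possibly after replacing $\lambda,\mu$ by their transposes, depending on the labeling convention used in \cref{prop:ersatz-comp}; transposition is a bijection on partitions). By construction $\beta_\cla = \Ibe^\bullet_\lambda * \Ibe^\circ_\mu$, and the argument in \cref{prop:ersatz-comp} together with the product structure of the evenness theory on $\Rep^\mkbeta_{n\delta}$ (\cref{rmk:comp-to-tensor}) produces $\parsh_\mkbeta(\lambda,\mu)|_{\Rep^\mkbeta_{n\delta}}$ as a direct summand of $\cL_{\beta_\cla}|_{\Rep^\mkbeta_{n\delta}}$. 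This summand lifts to a direct summand $\parsh_\mkbeta(\lambda,\mu)\hookrightarrow\cL_{\beta_\cla}|_U$ by \cref{lem:par-unique} applied to the stratification~\eqref{eq:repreg-str}, so $e$ is full.

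For the characteristic-zero statement, the key observation is that the single composition
\[
	\cla_\mkbeta := (\underbrace{1\varepsilon,\ldots,1\varepsilon}_{k^\bullet},\underbrace{1\tau,\ldots,1\tau}_{k^\circ})\in\cI^n
\]
already realises every $\parsh_\mkbeta(\lambda,\mu)$. Indeed, via the Fourier--Sato identification recorded in the proof of \cref{prop:ersatz-comp}, the restriction of $\cL_{\beta_{\cla_\mkbeta}}$ to $\Rep^\mkbeta_{n\delta}$ becomes the pushforward from the product of full Springer resolutions $T^*(GL_{k^\bullet}/B)\times T^*(GL_{k^\circ}/B)$. In characteristic zero the classical Springer decomposition
\[
	p_*\bbk_{T^*(GL_k/B)}\cong\bigoplus_{\nu\vdash k} V_\nu\otimes \IC(\overline{\bbO_\nu})
\]
splits off the $\IC$-sheaf of every nilpotent orbit, so every $\parsh_\mkbeta(\lambda,\mu)|_{\Rep^\mkbeta_{n\delta}}$ appears as a direct summand of $\cL_{\beta_{\cla_\mkbeta}}|_{\Rep^\mkbeta_{n\delta}}$; lifting via \cref{lem:par-unique} as before gives fullness of $e_0$. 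The only step genuinely sensitive to the characteristic is the Springer decomposition: in positive characteristic the Springer sheaf need not contain every IC-sheaf of a nilpotent orbit, which is precisely why thicker $\cla\in\cI^{(n)}\setminus\cI^n$ are needed in the general case.
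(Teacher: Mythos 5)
Your proof is correct and takes essentially the same route as the paper: the first claim is exactly the paper's appeal to the proof of \cref{prop:ersatz-comp}, which realizes every ersatz parity sheaf on $\Rep^\reg_{n\delta}$ inside a flag sheaf of type $(\lambda^1\varepsilon,\ldots,\lambda^r\varepsilon,\mu^1\tau,\ldots,\mu^s\tau)$, and the characteristic-zero claim is the same use of Springer theory (your statement that the full Springer sheaf contains the intersection cohomology sheaf of every nilpotent orbit is the paper's statement that partial Springer pushforwards have the same direct summands as the Springer sheaf). The additional details you spell out — reducing Morita equivalence to fullness of the idempotent, and lifting a summand from the open stratum via Krull--Schmidt together with \cref{lem:Par-open-surj} and \cref{lem:par-unique} — are precisely what the paper leaves implicit.
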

\begin{proof}
	By the proof of \cref{prop:ersatz-comp}, every ersatz parity sheaf on $\Rep^\reg_{n\delta}$ can be obtained from a flag sheaf of type $\beta_\cla$, where 
	\begin{equation}\label{eq:small-idems}
		\cla = (\lambda^1\varepsilon,\ldots,\lambda^k\varepsilon,\lambda^{k+1}\tau,\ldots,\lambda^{k+l}\tau).
	\end{equation}
	This proves the first claim.

	Recall the stratification $\Rep^\reg_{n\delta} = \bigsqcup_{k=0}^n \Rep^\reg_{k\delta^\bullet}\times \Rep^\reg_{(n-k)\delta^\circ}$.
	We know by Springer theory that when $\bbk$ is of characteristic zero, pushforwards of partial Springer resolutions have the same direct summands as the Springer sheaf $p_*\bbk$, $p:T^*(GL_k/B)\to \cN_{\mathfrak{gl}_k}$.
	This means that we only need the flag sheaves of type $\beta_\cla$ where all $\lambda^i$'s are equal to $1$.
	This proves the second claim.
\end{proof}

\begin{rmk}\label{rmk:flag-sheaves-semicusp}
	Denote restriction of flag sheaf of type $\Ibe_\cla$ to $\Rep^\reg_{n\delta}$ by $\cL^\reg_\cla$, and $\cL^\reg \coloneqq \bigoplus_\cla \cL^\reg_\cla$.
	Then by the discussion above that $\CA(n\delta) \simeq \Ext^*(\cL^\reg,\cL^\reg)$.
\end{rmk}

The proof of \cref{prop:proj-gen} shows that $e$ is somewhat redundant.
While we could choose a smaller idempotent, this one produces more convenient diagrammatics.
On the other hand, we can use the smaller set of idempotents to describe the polyhereditary structure on $\CA(n\delta)$ inherited from \cref{thm:Schur-polyher-order}. 
Denote $\Lambda'_{n\delta}\coloneqq \{ (\lambda,\mu) \}\subset \Lambda_{n\delta}$, where $(\lambda,\mu)\coloneqq ((k^\bullet\delta^\bullet,k^\circ\delta^\circ),\lambda,\mu)$ in the notations of~\eqref{eq:indexing-set}.
This is precisely the subset of images of $\cla$'s appearing in~\eqref{eq:small-idems}.

Restricting polyhereditary evenness theory from $\Rep_{n\delta}$ to $\Rep^\reg_{n\delta}$, \cref{thm:Schur-polyher-order} implies
\begin{thm}\label{thm:curve-Schur-poly}
	The semicuspidal algebra $\CA(n\delta)$ is polyhereditary.
	The polyhereditary structure is given by the sequence of idempotents $1_{\lambda,\mu}$, $(\lambda,\mu)\in\Lambda'_{n\delta}$ in the lexicographic order.\qed
\end{thm}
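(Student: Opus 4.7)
The plan is to derive the theorem from \cref{thm:Schur-polyher-order} by restricting the polyhereditary evenness theory $\Ev^\mkp_{n\delta}$ on $\Rep_{n\delta}$ along the open inclusion $j\colon \Rep^\reg_{n\delta} \hookrightarrow \Rep_{n\delta}$, and then invoking \cref{cor:polyher-case}. The identification $\CA(n\delta) \simeq \Ext^*(\cL^\reg, \cL^\reg)$ from \cref{rmk:flag-sheaves-semicusp} (combined with~\eqref{eq:Ext-is-semicusp} and \cref{defn:semicusp-alg}) already packages the semicuspidal quotient as an $\Ext$-algebra over the open substack, so it suffices to exhibit on $\Rep^\reg_{n\delta}$ a polyhereditary evenness theory for which $\cL^\reg$ is a full sheaf.

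I would next verify ersatz-completeness of $\Ev^\reg_{n\delta}\coloneqq j^*\Ev^\mkp_{n\delta}$ with respect to the stratification~\eqref{eq:repreg-str}. Each stratum $\Rep^\reg_{k\delta^\bullet}\times \Rep^\reg_{(n-k)\delta^\circ}$ carries the evenness theory $\Ev_{(k\delta^\bullet, (n-k)\delta^\circ)}$ constructed in \cref{sec:rep-cS}, which after Fourier--Sato transform along the $\mathfrak{gl}_{n-k}$-factor becomes the piecewise constant theory on a product of two nilpotent cones modulo $GL_k\times GL_{n-k}$ (finitely many orbits, connected stabilizers, polynomial equivariant cohomology), hence is polyhereditary by \cref{rmk:classical-parity}. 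The construction in the proof of \cref{prop:ersatz-comp}, restricted to $\mkbeta$ of the form $(k^\bullet\delta^\bullet, k^\circ\delta^\circ)$, exhibits every indecomposable object of $\Ev_\mkbeta$ as a shifted direct summand of some flag sheaf $\cL_{\beta_\cla}$ with $\cla$ as in~\eqref{eq:small-idems}, so $\cL^\reg$ is full. By \cref{lem:Par-open-surj}, $j^*$ sends indecomposable parity sheaves on $\Rep_{n\delta}$ either to indecomposable parity sheaves on $\Rep^\reg_{n\delta}$ or to zero, so ersatz-completeness descends from $\Ev^\mkp_{n\delta}$ to $\Ev^\reg_{n\delta}$. \cref{cor:polyher-case}(2) then yields polyheredity of $\Ev^\reg_{n\delta}$, and part (1) applied to the full sheaf $\cL^\reg$ proves that $\CA(n\delta)$ is polyhereditary.

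To read off the polyhereditary order I would observe that $j^*$ kills precisely those ersatz parity sheaves $\parsh(\mkbeta,\lambda,\mu)$ whose underlying Kostant partition differs from $(n\delta)$: by \cref{lem:maxisupport-indecomp} these are supported in $\Rep_{n\delta}\setminus \Rep^\reg_{n\delta}$. The surviving triples are exactly those with $\mkbeta = (k^\bullet\delta^\bullet, k^\circ\delta^\circ)$ (allowing $k^\bullet$ or $k^\circ$ to vanish), so the indexing set $\Lambda_{n\delta}$ collapses to $\Lambda'_{n\delta}$, and the lexicographic order described before \cref{thm:Schur-polyher-order} restricts to the lexicographic order on pairs $(\lambda,\mu)$ claimed in the statement. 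The only point that requires care in the whole argument is matching the sheaf-level restriction $j^*$ with the algebraic quotient $A^\m(n\delta)/A^\m(n\delta)1_{\nc}A^\m(n\delta)$ defining $\CA(n\delta)$, but this compatibility is exactly what~\eqref{eq:Ext-is-semicusp} records.
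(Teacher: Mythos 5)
Your proposal is correct and follows essentially the same route as the paper: the paper also obtains the result by restricting the polyhereditary evenness theory $\Ev^\mkp_{n\delta}$ along $\Rep^\reg_{n\delta}\hookrightarrow\Rep_{n\delta}$, identifying $\CA(n\delta)$ with $\Ext^*(\cL^\reg,\cL^\reg)$ via~\eqref{eq:Ext-is-semicusp} and \cref{rmk:flag-sheaves-semicusp}, and observing that the order of \cref{thm:Schur-polyher-order} collapses to the lexicographic order on $\Lambda'_{n\delta}$. You merely spell out the intermediate checks (ersatz-completeness of the restricted theory via \cref{lem:Par-open-surj}, polyheredity of the strata) that the paper leaves implicit.
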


\begin{expl}
	Let $\xi=2\delta$. \cref{thm:Schur-polyher-order} yields the following sequence of idempotents:
	\begin{align*}
		1_{(2\alpha_1,2\alpha_0)}&<1_{(\alpha_1,\delta,\alpha_0)}<1_{(\alpha_1,\alpha_0,\alpha_1,\alpha_0)}<1_{(\alpha_1,\alpha_0,\delta)}<1_{(\delta,\alpha_1,\alpha_0)}\\
		&<1_{(2\delta)}<1_{(\delta,\delta)}<1_{(\delta,\alpha_0,\alpha_1)}<1_{(2\alpha_0,2\alpha_1)}<1_{(\alpha_0,\alpha_1,\alpha_0,\alpha_1)},
	\end{align*}
	where the second line consists of idempotents in $\Lambda'_{2\delta}$, and can be equivalently rewritten as
	\[
		e_{(2\varepsilon)}< e_{(\varepsilon,\varepsilon)} < e_{(\varepsilon,\tau)} < e_{(2\tau)} < e_{(\tau,\tau)}.
	\]
\end{expl}

\subsection{Diagrammatics of $e\CA(n\delta)e$}\label{subs:thickcals-cusp-QS}
In view of \cref{prop:proj-gen}, let us introduce new Schur diagrammatics for images of certain elements of $eA(n\delta)e$ in $e\CA(n\delta)e$. 
First, denote 
\[
\tikz[thick,xscale=.2,yscale=.2,font=\footnotesize]{
	\draw (0,0) -- (0,5);
	\ntxt{0}{6}{$n\tau$}
	
	\ntxt{2}{2.5}{$\coloneqq$}
	
	\draw (4.5,0) -- (4.5,5);
	\ntxt{4.5}{6}{$n\alpha_0$}
	
	\draw (7.5,0) -- (7.5,5);
	\ntxt{7.5}{6}{$n\alpha_1$}
	
	\ntxt{8.5}{2.5}{,}
	}
\qquad\qquad
\tikz[thick,xscale=.2,yscale=.2,font=\footnotesize]{
	\draw [color=cyan!90!white] (0,0) --(0,5);
	\ntxt{0}{6}{$n\varepsilon$}
	
	\ntxt{2}{2.5}{$\coloneqq$}
	
	\draw (4.5,0) -- (4.5,5);
	\ntxt{4.5}{6}{$n\delta$}
	
	\ntxt{6}{2.5}{.}
}
\]
For $\cla=(\lambda_1c_1,\ldots,\lambda_rc_r)\in \cI^{(n)}$, we draw the idempotent $e_\cla$ as $r$ parallel vertical lines with labels $\lambda_1c_1,\ldots,\lambda_rc_r$.
We allow our strands to change colour via splits and merges:
\[
	\tikz[thick,xscale=.2,yscale=.2,font=\footnotesize]{
    \draw (0,0) -- (0,2);
	\draw [color=cyan!90!white] (0,2) -- (0,4);
    \ntxt{0}{2}{$-$}
    \ntxt{0}{-1}{$a\tau$}
    \ntxt{0}{5}{$a\varepsilon$}
    
    \ntxt{3}{2}{$\coloneqq$}
    
    \dmerge{6}{0}{10}{4}
	\ntxt{6}{-1}{$a\alpha_0$}	
	\ntxt{10}{-1}{$a\alpha_1$}	
	\ntxt{8}{5}{$a\delta$}	

    \ntxt{12}{1}{,}	
	}\qquad\qquad
	\tikz[thick,xscale=.2,yscale=.2,font=\footnotesize]{
	\draw [color=cyan!90!white] (0,0) -- (0,2);
	\draw (0,2) -- (0,4);
    \ntxt{0}{2}{$-$}
    \ntxt{0}{-1}{$a\varepsilon$}
    \ntxt{0}{5}{$a\tau$}
    
    \ntxt{3}{2}{$\coloneqq$}
    
    \dsplit{6}{0}{10}{4}
	\ntxt{6}{5}{$a\alpha_0$}	
	\ntxt{10}{5}{$a\alpha_1$}	
	\ntxt{8}{-1}{$a\delta$}	
	
	\ntxt{12}{1}{.}	
	}
\]

For strands of colour $\tau$, we write
$$
\tikz[thick,xscale=.2,yscale=.2,font=\footnotesize]{
	\draw (3,0) -- (3,1);
	\ntxt{3}{-1.5}{$(a+b)\tau$}
	\dsplit{0}{1}{6}{5.5}
	\ntxt{0}{6.5}{$a\tau$}
	\ntxt{6}{6.5}{$b\tau$}
	
	\ntxt{8}{2}{$\coloneqq$}

	\draw (10.5,2) -- (10.5,5.5);
	\crosin{13.5}{2}{16.5}{5.5}
	\draw (19.5,2) -- (19.5,5.5);

	\dsplit{10.5}{0}{13.5}{2}
	\ntxt{10.5}{6.5}{$a\alpha_0$}	
	\ntxt{13.5}{6.5}{$a\alpha_1$}	
	\ntxt{11.5}{-1.5}{$(a+b)\alpha_0$}	
	
	\dsplit{16.5}{0}{19.5}{2}
	\ntxt{16.5}{6.5}{$b\alpha_0$}	
	\ntxt{19.5}{6.5}{$b\alpha_1$}	
    \ntxt{18.5}{-1.5}{$(a+b)\alpha_1$}	

	\ntxt{21}{2}{,}
	}\qquad
	\tikz[thick,xscale=.2,yscale=.2,font=\footnotesize]{
	\draw (3,4.5) -- (3,5.5);
	\ntxt{3}{6.5}{$(a+b)\tau$}
	\dmerge{0}{0}{6}{4.5}
	\ntxt{0}{-1.5}{$a\tau$}
	\ntxt{6}{-1.5}{$b\tau$}
	
	\ntxt{8}{2}{$\coloneqq$}
	
	\draw (10.5,0) -- (10.5,3.5);
	\crosin{13.5}{0}{16.5}{3.5}
	\draw (19.5,0) -- (19.5,3.5);
	
	\dmerge{10.5}{3.5}{13.5}{5.5}
	\ntxt{10.5}{-1.5}{$a\alpha_0$}	
	\ntxt{13.5}{-1.5}{$a\alpha_1$}	
	\ntxt{11.5}{6.5}{$(a+b)\alpha_0$}	
	
	\dmerge{16.5}{3.5}{19.5}{5.5}
	\ntxt{16.5}{-1.5}{$b\alpha_0$}	
	\ntxt{19.5}{-1.5}{$b\alpha_1$}	
	\ntxt{18.5}{6.5}{$(a+b)\alpha_1$}	
	
	\ntxt{21}{2}{.}
}
$$
For strands of colour $\varepsilon$, splits and merges are defined as images of $S_{(a\delta,b\delta),(a+b)\delta}$, $M_{(a+b)\delta,(a\delta,b\delta)}$ from \cref{def:quiver-Schur-SM} respectively.

\begin{lem}
	For each $c\in\cI$ splits and merges of colour $c$ are associative, i.e.~\eqref{eq:pic-asso} holds. 
\end{lem}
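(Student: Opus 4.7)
My plan is to handle the two colours $c=\varepsilon$ and $c=\tau$ separately. The case $c=\varepsilon$ is immediate: by definition, the $\varepsilon$-splits and merges are the splits $S_{(a\delta,b\delta),(a+b)\delta}$ and merges $M_{(a+b)\delta,(a\delta,b\delta)}$ of $\delta$-thick strands inside $A(\alpha)$, so associativity follows at once from the identities $S_{\eta,\Iga}S_{\Iga,\Ibe} = S_{\eta,\Ibe}$ and $M_{\Ibe,\Iga}M_{\Iga,\eta} = M_{\Ibe,\eta}$ recorded just after~\cref{def:quiver-Schur-SM}.

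For $c=\tau$ I will focus on the split case (the merge case being entirely parallel). Set $\vecn=(a+b+c)\delta$ and write
\[
\Iga = ((a+b+c)\alpha_0,\,(a+b+c)\alpha_1),\qquad \Ibe = (a\alpha_0,a\alpha_1,b\alpha_0,b\alpha_1,c\alpha_0,c\alpha_1).
\]
After unfolding the definition of the $\tau$-splits in terms of $\alpha_0$- and $\alpha_1$-splits and naive crossings, both sides of the associativity lie in $1_\Ibe A^\m(\vecn)1_\Iga$. The key structural observation is that $\fkS_\Iga = \fkS_{a+b+c}\times\fkS_{a+b+c}$ coincides with the ambient group $\fkS_\vecn$, so the double coset $\fkS_\Ibe\backslash\fkS_\vecn/\fkS_\Iga$ has the single element $w=1$. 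By \cref{prop-basis-QS-nocol}, the space $1_\Ibe A^\m(\vecn)1_\Iga$ is therefore free of rank one as a left $\Pol_\Ibe$-module, generated by $\psi_1^1$. Both sides of the associativity can then be written as $P_{\mathrm{LHS}}\cdot\psi_1^1$ and $P_{\mathrm{RHS}}\cdot\psi_1^1$ respectively, and the problem reduces to checking $P_{\mathrm{LHS}}=P_{\mathrm{RHS}}$.

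To compute these polynomials I would pass to the polynomial representation $\APol^\m$ via~\cref{prop:poly-rep-quiver-Schur}. Two elementary facts do the work: (i) a pure-colour split (say $(a+b)\alpha_0\to(a\alpha_0,b\alpha_0)$) acts as identity on $\APol^\m$ in $A^\m$, because both kernels $K_1,K_2$ in the formula are empty products; and (ii) the naive crossing of a pure $\alpha_0$-strand with a pure $\alpha_1$-strand also acts as identity, since a direct check shows that $K_2=1$ in the merge formula and $K_1=1$ in the subsequent split (with the $\alpha_1$-strand placed first), while the Demazure operators appear on strings with no variables. Every building block of the unfolded diagrams for LHS, RHS, and $\psi_1^1 = D_1$ is of type (i) or (ii), so all three act as the natural inclusion $\Pol_\Iga\hookrightarrow\Pol_\Ibe$ on $\APol^\m$. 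This forces $P_{\mathrm{LHS}} = P_{\mathrm{RHS}} = 1$ and yields the desired associativity.

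The main obstacle in this plan is the convention-sensitive verification of fact (ii): the triviality of a mixed-colour naive crossing really does depend on the seminilpotent normalization of~\cref{prop:poly-rep-quiver-Schur} and on the order in which the colours appear in the subsequent split, so one must track signs and orderings carefully. For the merge variant the same rank-one argument applies, but there the input data is genuinely nontrivial: same-colour merges produce Demazure operators, and checking $P_{\mathrm{LHS}} = P_{\mathrm{RHS}}$ reduces to the identity $\partial_{a+b,c}\partial_{a,b}=\partial_{a,b+c}\partial_{b,c}$, which holds because both compositions are reduced decompositions of the same triple shuffle permutation.
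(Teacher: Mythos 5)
Your proposal is correct, but it takes a genuinely different route from the paper. The paper argues diagrammatically inside $eA(n\delta)e$: it first establishes local sliding relations (a thick single-colour split slides past a crossing with a strand of the other colour), checking them on the polynomial representation via \cref{prop:poly-rep-quiver-Schur} (the mixed crossings act by multiplication by symmetric polynomials, hence commute with single-colour splits and merges), after which both sides of~\eqref{eq:pic-asso} are visibly equal to one and the same diagram, and the relation descends to $e\CA(n\delta)e$. You instead note that for $c=\tau$ both sides of the split identity live in $1_\Ibe A^\m(\vecn)1_\Iga$ with $\Iga=((a{+}b{+}c)\alpha_0,(a{+}b{+}c)\alpha_1)$, where $\fkS_\Iga=\fkS_\vecn$, so by \cref{prop-basis-QS-nocol} this space is free of rank one over $\Pol_\Ibe$, and equality reduces to comparing a single coefficient polynomial read off from the polynomial representation; this buys you a global, coordinate-style argument in place of the paper's local diagrammatic one, and in the split case your verification is complete and your direction-sensitive computation of the mixed naive crossing (identity in the orientation actually occurring in the $\tau$-split) is right. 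Two points deserve tightening. First, in the merge case the mixed crossings are not harmless: they contribute kernel factors $\prod(u_i-v_j)^2$, which must be pulled inside the inner Demazure operators (legitimate, since these kernels are invariant under the relevant symmetric groups); only after that do both sides become $\partial^u_w\partial^v_w$ of the same kernel times $P$ with $w$ the triple shuffle, so your reduction to $\partial_{a+b,c}\partial_{a,b}=\partial_{a,b+c}\partial_{b,c}$ is the right punchline but not the whole bookkeeping. Second, transferring an identity of operators on $\APol^\m_\vecn$ back to an identity in the algebra requires injectivity of the action on the relevant Hom-space: on the split side this is immediate (elements act by multiplication by their coefficient polynomial), on the merge side it needs a short extra argument or an appeal to faithfulness of the polynomial representation of $A^\m(\vecn)$ --- a point the paper's own proof also leaves implicit when it checks the sliding relations on $\APol^\m_\vecn$.
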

\begin{proof}
	In this proof, we will use black for thick strands of colour $\alpha_0$ and red for $\alpha_1$ to remove the labels.
	For $c=\varepsilon$, the claim follows directly from associativity of splits/merges in $A(n\delta)$.
	For $c=\tau$, consider the following equations in $A(n\delta)$:
	$$
	\tikz[thick,xscale=.2,yscale=.2,font=\footnotesize]{
		\draw (4,-1) -- (4,2);
		\dsplit{2}{2}{6}{5}
		\draw [color=red!60!white] (7,-1) -- (-1,5);
		
		\ntxt{10}{2}{$=$}
		
		\dsplit{15}{-1}{19}{5}
		\draw [color=red!60!white] (20,-1) -- (12,5);
		
		\ntxt{21}{0}{,}
	}\quad 
	\tikz[thick,xscale=.2,yscale=.2,font=\footnotesize]{
		{\color{red!60!white}\draw (4,-1) -- (4,2);
		\dsplit{2}{2}{6}{5}}
		\draw (7,-1) -- (-1,5);
		
		\ntxt{10}{2}{$=$}
		
		{\color{red!60!white}\dsplit{15}{-1}{19}{5}}
		\draw (20,-1) -- (12,5);
		
		\ntxt{21}{0}{,}
	}
	$$
	together with their reflections with respect to vertical and horizontal axes.
	It is straightforward to check these relations on the polynomial representation of $A(n\delta)$.
	Namely, the formulae in \cref{prop:poly-rep-quiver-Schur} show that the crossings act by multiplication with a symmetric polynomial, which commutes past single-colour splits and merges.
	Thus we can check the associativity relations already in $eA(n\delta)e$:
	\[
	\tikz[thick,xscale=.2,yscale=.2,font=\small]{
	\dsplit{0}{3}{4}{5}
	\draw (8,3) -- (8,5);
	\dsplit{2}{1}{8}{3}
	\draw (5,0) -- (5,1);
	{\color{red!60!white}\dsplit{2}{3}{6}{5}
	\draw (10,3) -- (10,5);
	\dsplit{4}{1}{10}{3}
	\draw (7,0) -- (7,1);}
	
	\ntxt{12}{2}{$=$}

	\dsplit{14}{1}{22}{5}
	\draw (18,0) -- (18,5);
	{\color{red!60!white}\dsplit{16}{1}{24}{5}
	\draw (20,0) -- (20,5);}

	\ntxt{26}{2}{$=$}
	
	\dsplit{32}{3}{36}{5}
	\draw (28,3) -- (28,5);
	\dsplit{28}{1}{34}{3}
	\draw (31,0) -- (31,1);
	{\color{red!60!white}\dsplit{34}{3}{38}{5}
	\draw (30,3) -- (30,5);
	\dsplit{30}{1}{36}{3}
	\draw (33,0) -- (33,1);}
	}
	\]
	In particular, the same relations hold in $e\CA(n\delta)e$.
\end{proof}

For two strands of the same colour, we define the crossing as merge followed by split, as in~\eqref{eq:dumb-cross}. 
However, since we do not allow idempotents of mixed colour in $\cI$-coloured diagrammatics, we have to define the crossing of two different colours separately:
$$
\tikz[thick,xscale=.2,yscale=.2,font=\footnotesize]{
    \draw (0,0) -- (5,5);
    \draw [color=cyan!90!white] (5,0) -- (0,5);
    \ntxt{0}{-1}{$a\tau$}
    \ntxt{5}{-1}{$b\varepsilon$}
    \ntxt{0}{6}{$b\varepsilon$}
    \ntxt{5}{6}{$a\tau$}
    
    \ntxt{7.5}{2.5}{$\coloneqq$}

    \draw (10,0) -- (15,5);
    \draw (12,0) -- (17,5);    
    \draw (10,5) -- (17,0);
    \ntxt{9}{-1}{$a\alpha_0$}
    \ntxt{14}{6}{$a\alpha_0$}
    \ntxt{13}{-1}{$a\alpha_1$}
    \ntxt{18}{6}{$a\alpha_1$}
    \ntxt{17}{-1}{$b\delta$}
    \ntxt{10}{6}{$b\delta$}
	\ntxt{20}{2}{,}
}
\qquad\qquad
\tikz[thick,xscale=.2,yscale=.2,font=\footnotesize]{
    \draw (0,5) -- (5,0);
    \draw [color=cyan!90!white] (5,5) -- (0,0);
    \ntxt{0}{6}{$a\tau$}
    \ntxt{5}{6}{$b\varepsilon$}
    \ntxt{0}{-1}{$b\varepsilon$}
    \ntxt{5}{-1}{$a\tau$}
    
    \ntxt{7.5}{2.5}{$\coloneqq$}

    \draw (10,5) -- (15,0);
    \draw (12,5) -- (17,0);    
    \draw (10,0) -- (17,5);
    \ntxt{9}{6}{$a\alpha_0$}
    \ntxt{14}{-1}{$a\alpha_0$}
    \ntxt{13}{6}{$a\alpha_1$}
    \ntxt{18}{-1}{$a\alpha_1$}
    \ntxt{17}{6}{$b\delta$}
    \ntxt{10}{-1}{$b\delta$}
	\ntxt{20}{2}{.}
}
$$

Finally, we define coupons labelled by polynomials $P\in \Pol_{a\delta}^{\fkS_a\times \fkS_a}$.
For strands of colour $\varepsilon$, we define them simply as images of coupons in $eA(n\delta)e$.
For colour $\tau$, given a polynomial of the form $P=QR$, $Q\in \Bbbk[u_1,\ldots,u_a]^{\fkS_a}$, $R\in \Bbbk[v_1,\ldots,v_a]^{\fkS_a}$, we set
$$
\tikz[thick,xscale=.2,yscale=.2,font=\footnotesize]{
    \draw (0,0) -- (0,1.5);
    \opbox{-2}{1.5}{2}{3.5}{$P$}
    \draw (0,3.5) -- (0,5);
    \ntxt{0}{-1.5}{$a\tau$}
    
    \ntxt{4}{2.5}{$\coloneqq$}
 
    \draw (8,0) -- (8,1.5);
    \opbox{6}{1.5}{10}{3.5}{$Q$}
    \draw (8,3.5) -- (8,5);
    \ntxt{8}{-1.5}{$a\alpha_0$}

    \draw (13,0) -- (13,1.5);
    \opbox{11}{1.5}{15}{3.5}{$R$}
    \draw (13,3.5) -- (13,5);
    \ntxt{13}{-1.5}{$a\alpha_1$}
}
$$
For other polynomials, we extend this definition by linearity.

Let $\cla,\cmu\in \cI^{(n)}$, with underlying uncoloured partitions $\lambda,\mu\in\Comp(n)$. 
Consider the basis $\{\psi^P_{w}\}$ in \cref{prop-basis-QS-nocol} for $e_{\cla} A(\vecn)e_{\cmu}$.
We have an obvious bijection 
$$
\fkS_{\beta_\cla}\backslash\fkS_{n\delta}/\fkS_{\beta_\cmu}\simeq \fkS_{\lambda}\backslash\fkS_{n}/\fkS_\mu\times \fkS_{\lambda}\backslash\fkS_{n}/\fkS_\mu, \qquad w\to (w_0,w_1);
$$
here $w_i$ corresponds to the colour $\alpha_i\in I$.
Let us denote $\psi^P_{w_0,w_1}\coloneqq\psi^P_{w}$.

\begin{prop}\label{cor:span-set-eCAe}
Let $\cla,\cmu\in \cI^{(n)}$, and define $\widetilde{B}_{\cmu'}$ as in \cref{subs:basis-curve-Schur-2c}.
Then the set 
\[
	\{\psi^P_{x,x}: x\in \fkS_{\lambda}\backslash \fkS_{n}/\fkS_{\mu}, P\in B_{\cmu'}\}
\] is a basis of the $\Bbbk$-module $e_\cla \CA(n\delta) e_\cmu$.
\end{prop}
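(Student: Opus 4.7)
The plan is to establish the basis by combining a geometric dimension count with the observation that the candidate diagonal elements already belong to the known basis of $e_\cla A(n\delta) e_\cmu$ from \cref{prop-basis-QS-nocol}.

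The main geometric input is the identification
\[
	e_\cla \CA(n\delta) e_\cmu \simeq \Ext^*(\cL^\reg_\cla, \cL^\reg_\cmu)
\]
from \cref{rmk:flag-sheaves-semicusp}, which reduces the computation to an Ext space of flag sheaves on the regular locus $\Rep^\reg_{n\delta}$. I will compute this via a Mackey-style decomposition analogous to \cref{prop:Mackey-splits}, but performed over $\Rep^\reg_{n\delta}$. Points of $\Rep^\reg_{n\delta}$ correspond to torsion sheaves on $\bbP^1$ and decompose canonically over points of $\bbP^1$ into local pieces; any pair of compatible flags of types $\beta_\cla$ and $\beta_\cmu$ must respect this local decomposition, which forces the relative permutation to act identically on the $\alpha_0$- and $\alpha_1$-components, i.e.\ to satisfy $w_0 = w_1$. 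Each diagonal double coset $x \in \fkS_\lambda \backslash \fkS_n / \fkS_\mu$ then contributes a free module over the ring of equivariant parameters of graded rank $|B_{\cmu'}|$ for the $\cI$-coloured refinement determined by $x$; summing over $x$ yields the predicted total graded dimension.

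Since each candidate element $\psi^P_{x,x}$ (with $x$ diagonal and $P \in B_{\cmu'}$) already belongs to the basis of $e_\cla A(n\delta) e_\cmu$, its image in the quotient is well-defined. By the dimension count above, these images form a basis of $e_\cla \CA(n\delta) e_\cmu$. This implies in particular that the off-diagonal elements $\psi^P_{w_0, w_1}$ with $w_0 \ne w_1$, as well as polynomial coupons outside the coloured subspace $\Pol_{\cmu'}$, must be expressible as linear combinations of the diagonal basis modulo the noncuspidal ideal — they need not vanish outright, contrary to what one might naively guess from a Schur-diagrammatic inspection (the intermediate composition $\beta_\cla'$ occurring in $D_w$ is not always noncuspidal when $w_0 \ne w_1$).

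The main obstacle is executing the Mackey-style computation on $\Rep^\reg_{n\delta}$, in particular verifying the vanishing of off-diagonal strata and identifying the polynomial contribution of each diagonal stratum with $B_{\cmu'}$ rather than with the larger basis $B_{\beta_{\cmu'}}$. Both reduce to equivariant cohomology computations on the Lusztig fibers over $\Rep^\reg_{k\delta^\bullet}$ and $\Rep^\reg_{k\delta^\circ}$, parallel to the case analysis in the proof of \cref{prop:ersatz-comp}: for instance the $\fkS_k$-symmetry imposed by a $\delta$-block in the coloured composition comes from diagonal Weyl-group invariance in the stabilizer $GL_k$, which collapses the larger non-coloured polynomial space to $\Pol_{\cmu'}$.
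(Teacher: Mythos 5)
Your geometric picture is essentially correct and coincides with what the paper encodes in \cref{prop:filtr-on-Steinberg}: over $\Rep^\reg_{n\delta}$ a pair of compatible quiver flags comes from a single pair of flags of torsion subsheaves, the intersection pattern in the colour-$0$ and colour-$1$ parts is governed by one and the same matrix of lengths $\ell(\cE^a\cap\cF^b)$, so only diagonal relative positions $w_0=w_1$ occur, and the stratum attached to $x\in\fkS_\lambda\backslash\fkS_n/\fkS_\mu$ contributes $H^*(\cT_{\cmu'})$, whose graded dimension is that of $B_{\cmu'}$ (the splitting of the filtration uses evenness/freeness of these cohomologies, i.e. the $\bbP^1$ input of \cref{cor:surj-taut-classes,cor:image-phi}). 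So the dimension count you aim for is available.

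The genuine gap is the final step. From the facts that the graded dimension of $e_\cla\CA(n\delta)e_\cmu$ equals the graded cardinality of your candidate set and that the $\psi^P_{x,x}$ belong to a basis of $e_\cla A(n\delta)e_\cmu$ (\cref{prop-basis-QS-nocol}), you cannot conclude that their images form a basis of the quotient: linear independence is not preserved under quotients, and you establish neither spanning nor independence downstairs. (In $\bbk^3$ with basis $e_1,e_2,e_3$ and ideal spanned by $e_1$ and $e_2-e_3$, the quotient is one-dimensional and $e_1$ is a basis vector upstairs, yet its image vanishes.) Your remark that the off-diagonal $\psi^P_{w_0,w_1}$ ``must be expressible'' in the diagonal ones modulo the noncuspidal ideal is a consequence of the claim, not an input, so the argument is circular at exactly this point. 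What is missing is a compatibility between the diagrammatic elements and the length filtration: one must know that, under the identification $e\CA(n\delta)e\simeq\cS^\cI_n$ of \cref{prop:regular-truncate}, splits, merges, crossings and colour changes go to their geometric counterparts (\cref{prop:gen-images}), and then that $\psi^P_{x,x}$ has a nonzero, controllable leading term in the $x$-th graded piece. The paper supplies this in \cref{thm:curve-Schur-basis}: composing with colour changes (the map $\Psi$) lands in the uncoloured curve Schur algebra, the relations of \cref{prop:color-past-split,lem:two-colour-changes,lem:color-slide} show the correction factors are monic non-zero-divisors, independence follows from \cref{prop:old-basis-curve}, and the graded dimensions are matched via \cref{prop:filtr-on-Steinberg}. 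Note also that it is the surjection $\Pol_{\Ibe_\cla}\twoheadrightarrow H^*(\cT_\cla)$ of \cref{cor:surj-taut-classes} that makes coupons labelled by the lifts $\widetilde{B}_{\cmu'}$ account for the whole stratum contribution; your closing remark about $\fkS_k$-invariance gestures at this but would still have to be proved before the count can be run.
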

We postpone the proof of this claim until \cref{subs:basis-curve-Schur-2c}.

\begin{expl}\label{ex:diag-new-vs-old}
	Assume $\cla=(2\tau,2\varepsilon,2\tau)$, $\cmu=(3\tau,3\varepsilon)$, and let $x = \bigl(\begin{smallmatrix}
		1 & 2 & 3 & 4 & 5 & 6 \\
		1 & 3 & 5 & 2 & 4 & 6
	  \end{smallmatrix}\bigr)\in \fkS_6$.
	Here is how $\psi_{x,x}^P$ is drawn in quiver Schur diagrammatics and the new diagrammatics:
	\[
	\tikz[thick,xscale=0.4,yscale=0.6,font=\footnotesize]{
		
    \dsplit{1}{0}{3}{1}
    \draw (2,0) -- (2,1);
    \dsplit{4}{0}{6}{1}
    \draw (5,0) -- (5,1);
    \dsplit{8}{0}{12}{1}
    \draw (10,0) -- (10,1);
    \draw (0,1)-- (14,1)[dashed,color=gray];
    
    \draw (1,1) -- (1,2);
    \draw (2,1) -- (2,2);
    \draw (3,1) -- (3,2);
    \draw (4,1) -- (4,2);
    \draw (5,1) -- (5,2);
    \draw (6,1) -- (6,2);
    \dsplit{7}{1}{9}{2}
    \draw (10,1) -- (10,2);
    \dsplit{11}{1}{13}{2}
    \draw (0,2)-- (14,2)[dashed,color=gray];
    
    \draw (1,2) -- (1,2.2);
    \draw (2,2) -- (2,2.2);
    \draw (3,2) -- (3,2.2);
    \draw (4,2) -- (4,2.2);
    \draw (5,2) -- (5,2.2);
    \draw (6,2) -- (6,2.2);
    \draw (7,2) -- (7,2.2);
    \draw (9,2) -- (9,2.2);
    \draw (10,2) -- (10,2.2);
    \draw (11,2) -- (11,2.2);
    \draw (13,2) -- (13,2.2);
    \draw (1,2.8) -- (1,3);
    \draw (2,2.8) -- (2,3);
    \draw (3,2.8) -- (3,3);
    \draw (4,2.8) -- (4,3);
    \draw (5,2.8) -- (5,3);
    \draw (6,2.8) -- (6,3);
    \draw (7,2.8) -- (7,3);
    \draw (9,2.8) -- (9,3);
    \draw (10,2.8) -- (10,3);
    \draw (11,2.8) -- (11,3);
    \draw (13,2.8) -- (13,3);
    \opbox{0.7}{2.2}{13.3}{2.8}{$P$}
    \draw (0,3)-- (14,3)[dashed,color=gray];

    \draw (1,3) -- (1,4);
    \draw (2,3) .. controls (2,3.4) and (3,3.6) .. (3,4);
    \draw (3,3) .. controls (3,3.4) and (5,3.6) .. (5,4);
    \draw (4,3) .. controls (4,3.4) and (2,3.6) .. (2,4);
    \draw (5,3) .. controls (5,3.4) and (4,3.6) .. (4,4);
    \draw (6,3) -- (6,4);
    \draw (7,3) -- (7,4);
    \draw (9,3) -- (9,4);
    \draw (10,3) -- (10,4);
    \draw (11,3) -- (11,4);
    \draw (13,3) -- (13,4);
    \draw (0,4)-- (14,4)[dashed,color=gray];

    \draw (1,4) -- (1,5);
    \draw (2,4) -- (2,5);
    \draw (3,4) .. controls (3,4.4) and (5,4.6) .. (5,5);
    \draw (4,4) .. controls (4,4.4) and (7,4.6) .. (7,5);
    \draw (5,4) .. controls (5,4.4) and (10,4.6) .. (10,5);
    \draw (6,4) .. controls (6,4.4) and (11,4.6) .. (11,5);
    \draw (7,4) .. controls (7,4.4) and (3,4.6) .. (3,5);
    \draw (9,4) .. controls (9,4.4) and (4,4.6) .. (4,5);
    \draw (10,4) .. controls (10,4.4) and (8,4.6) .. (8,5);
    \draw (11,4) .. controls (11,4.4) and (12,4.6) .. (12,5);
    \draw (13,4) -- (13,5);
    \draw (0,5)-- (14,5)[dashed,color=gray];

    \draw (1,5) -- (1,6);
    \crosin{2}{5}{3}{6}
    \draw (4,5) -- (4,6);
    \draw (5,5) -- (5,6);
    \draw (7,5) -- (7,6);
    \draw (8,5) -- (8,6);
    \draw (10,5) -- (10,6);
    \crosin{11}{5}{12}{6}
    \draw (13,5) -- (13,6);
    \draw (0,6)-- (14,6)[dashed,color=gray];

    \draw (1,6) -- (1,7);
    \draw (2,6) -- (2,7);
    \draw (3,6) -- (3,7);
    \draw (4,6) -- (4,7);
    \dmerge{5}{6}{7}{7}
    \draw (8,6) -- (8,7);
    \draw (10,6) -- (10,7);
    \draw (11,6) -- (11,7);
    \draw (12,6) -- (12,7);
    \draw (13,6) -- (13,7);
    \draw (0,7)-- (14,7)[dashed,color=gray];

    \dmerge{1}{7}{2}{8}
    \dmerge{3}{7}{4}{8}
    \dmerge{6}{7}{8}{8}
    \dmerge{10}{7}{11}{8}
    \dmerge{12}{7}{13}{8}

    \ntxt{2}{-0.3}{$3\alpha_0$}
    \ntxt{5}{-0.3}{$3\alpha_1$}
    \ntxt{10}{-0.3}{$3\delta$}
    
    \ntxt{8.3}{1}{$\delta$}
    \ntxt{10.3}{1}{$\delta$}
    \ntxt{12.3}{1}{$\delta$}
    
    \ntxt{1.5}{8.3}{$2\alpha_0$}
    \ntxt{3.5}{8.3}{$2\alpha_1$}
    \ntxt{7}{8.3}{$2\delta$}
    \ntxt{10.5}{8.3}{$2\alpha_0$}
    \ntxt{12.5}{8.3}{$2\alpha_1$}

    \ntxt{6.3}{7}{$\delta$}
    \ntxt{8.3}{7}{$\delta$}
    
    \ntxt{8.3}{1}{$\delta$}
    \ntxt{10.3}{1}{$\delta$}
    \ntxt{12.3}{1}{$\delta$}

    \ntxt{15}{4}{$\rightsquigarrow$}

    \dsplit{17}{0}{19}{1}
    \draw (18,0) -- (18,1);
    {\color{cyan!90!white}\dsplit{20}{0}{22}{1}
    \draw (21,0) -- (21,1);}
    \draw (16,1)-- (23,1)[dashed,color=gray];

    \draw (17,1) -- (17,2);
    \draw (18,1) -- (18,2);
    \draw (19,1) -- (19,2);
    \draw [color=cyan!90!white] (20,1) -- (20,1.5);
	\draw (20,1.5) -- (20,2);
    \draw [color=cyan!90!white] (21,1) -- (21,2);
    \draw [color=cyan!90!white] (22,1) -- (22,1.5);
	\draw (22,1.5) -- (22,2);
    \ntxt{20}{1.5}{$-$}
    \ntxt{22}{1.5}{$-$}
    \draw (16,2)-- (23,2)[dashed,color=gray];

    \draw (17,2) -- (17,2.2);
    \draw (18,2) -- (18,2.2);
    \draw (19,2) -- (19,2.2);
    \draw (20,2) -- (20,2.2);
    \draw [color=cyan!90!white] (21,2) -- (21,2.2);
    \draw (22,2) -- (22,2.2);
    \draw (17,2.8) -- (17,4);
    \draw (18,2.8) -- (18,4);
    \draw (19,2.8) -- (19,4);
    \draw (20,2.8) -- (20,4);
    \draw [color=cyan!90!white] (21,2.8) -- (21,4);
    \draw (22,2.8) -- (22,4);
    \opbox{16.7}{2.2}{22.3}{2.8}{$P$}
    \draw (16,4)-- (23,4)[dashed,color=gray];

    \draw (17,4) -- (17,5);
    \draw (18,4) .. controls (18,4.4) and (19,4.6) .. (19,5);
    \draw (19,4) .. controls (19,4.4) and (21,4.6) .. (21,5);
    \draw (20,4) .. controls (20,4.4) and (18,4.6) .. (18,5);
    \draw [color=cyan!90!white] (21,4) .. controls (21,4.4) and (20,4.6) .. (20,5);
    \draw (22,4) -- (22,5);
    \draw (16,5)-- (23,5)[dashed,color=gray];

    \draw (17,5) -- (17,7);
    \draw (18,5) -- (18,7);
    \draw (19,5) -- (19,6.5);
	\draw [color=cyan!90!white] (19,6.5) -- (19,7);
    \draw [color=cyan!90!white] (20,5) -- (20,7);
    \draw (21,5) -- (21,7);
    \draw (22,5) -- (22,7);
    \ntxt{19}{6.5}{$-$}
    \draw (16,7)-- (23,7)[dashed,color=gray];

    \dmerge{17}{7}{18}{8}
    {\color{cyan!90!white}\dmerge{19}{7}{20}{8}}
    \dmerge{21}{7}{22}{8}


    \ntxt{18}{-0.3}{$3\tau$}
    \ntxt{21}{-0.3}{$3\varepsilon$}

    \ntxt{17.5}{8.3}{$2\tau$}
    \ntxt{19.5}{8.3}{$2\varepsilon$}
    \ntxt{21.5}{8.3}{$2\tau$}

	}
	\]
\end{expl}

\subsection{Restriction to KLR algebra}\label{subs:KLR-prop-str}
Let $\vecn = n_0\alpha_0 + n_1\alpha_1$, and consider the KLR algebra $A_0(\vecn)$ as in \cref{rmk:KLR-not-Schur}.
A stratification of this algebra was obtained in~\cite{KM_SKAA2017} under the assumption $\ona{char}\bbk > \min(n_0,n_1)$.
The theory we developed in \cref{sec:rep-cS} allows us to remove this condition.

Let us define a evenness sub-theory of $\Ev^\mkp_\vecn$.
Since $\Ev^\mkp_\vecn$ is ersatz-complete, this amounts to choosing a subset of ersatz parity sheaves.
For any $\mkbeta\in\mkp(\vecn)$ and an indecomposable $F\in \Ev(\Rep_\vecn^\mkbeta)$, we denote the corresponding ersatz parity sheaf by $\parsh(\mkbeta,F)$.
Recalling the identification~\eqref{eq:mkbeta-vs-kbeta}, we write
\[
	\Ev^\kp_\vecn \coloneqq \{ \parsh((\kbeta,0),F) : \kbeta\in\kp(\vecn), F\in \Ev(\Rep_\vecn^{(\kbeta,0)}) \}.
\]

\begin{prop}
	The category $\Ev^\kp_\vecn$ is the category $\FShv^0_\vecn$ of all direct summands of flag sheaves $\cL_\Ibe$, $\beta\in\Comp_0(\vecn)$. 
\end{prop}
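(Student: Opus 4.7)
The plan is to prove the two inclusions separately, viewing both categories as Karoubi-closed subcategories of $\Ev^\mkp_\vecn$ via \cref{cor:even-are-flag}.

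For the inclusion $\supseteq$, the approach is to exhibit a $\bbG_a$-action on $\Rep_\vecn$ under which each $\cL_\Ibe$ with $\Ibe\in\Comp_0(\vecn)$ is equivariant, and which generically transports $\Rep_\vecn^{(\kbeta,k^\bullet)}$ into $\Rep_\vecn^{(\kbeta,0)}$ whenever $k^\bullet>0$. Concretely, the action is
\[
(h_\s,h_\n)\mapsto(h_\s,\,h_\n+t\cdot h_\s),\qquad t\in\bbG_a;
\]
it commutes with $G_\vecn$, descends to $\Rep_\vecn$, and sends each Jordan block $M_\infty^{(\ell)}$ to $M_t^{(\ell)}$ while leaving real-root summands iso-class-fixed, so every $\delta^\bullet$-summand becomes $\delta^\circ$ for generic $t$. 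The equivariance of $\cL_\Ibe$ reduces to showing that $\widetilde{\bfF}^\m_\Ibe$ is preserved, which amounts to $h_\s^V(V^j_1)\subset V^{j-1}_0$. This is the main obstacle: it fails for general $\Ibe$, but holds for $\Ibe\in\Comp_0$ by a case analysis on $\Ibe^j$---if $\Ibe^j=\alpha_0$ then $V^j_1=V^{j-1}_1$ forces $h_\s^V(V^j_1)=h_\s^V(V^{j-1}_1)\subset V^{j-1}_0$, whereas if $\Ibe^j=\alpha_1$ then $V^j_0=V^{j-1}_0$ forces $h_\s^V(V^j_1)\subset V^j_0=V^{j-1}_0$. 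With equivariance secured, any indecomposable summand of $\cL_\Ibe$ is $\bbG_a$-equivariant, hence constant along any non-trivial orbit $\cO\simeq\bbA^1$. A hypothetical summand with top stratum $(\kbeta,k^\bullet)$, $k^\bullet>0$, would then be non-zero at some $V\in\cO\cap\Rep_\vecn^{(\kbeta,k^\bullet)}$ and generically vanishing on $\cO$ (since a generic $V_t\in\cO$ lies in the strictly larger stratum $(\kbeta,0)$), contradicting constancy.

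For the inclusion $\subseteq$, I adapt the idempotent construction from the proof of \cref{prop:ersatz-comp} to use only $\Comp_0$-flags. For each real-root summand $k\alpha'$ of $\kbeta$, I refine $\Ibe_{k\alpha'}$ to a full flag: the refined Lusztig fiber over $M_{\alpha'}^{\oplus k}$ is a smooth projective $GL_k$-variety with even cohomology (\cref{prop:Lus-even}), so its equivariant cohomology is free over $H^*_{GL_k}(\pt)$ and the unique parity sheaf on $[\pt/GL_k]$ appears as a summand. For the $\delta^\circ$-summand $k\delta^\circ$, I take $\Ibe^\circ_{(1^k)}=(\alpha_0,\alpha_1,\ldots,\alpha_0,\alpha_1)\in\Comp_0(k\delta)$; by the proof of \cref{lem:base-of-induction} its restriction to $[\fkgl_k/GL_k]$ corresponds under $\Theta_p$ to the full Springer sheaf $\mu_*\bbk$ on $[\cN_{\fkgl_k}/GL_k]$. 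Every partial Springer $\mu_{P_\lambda,*}\bbk$ appears as a summand of $\mu_*\bbk$ since $\mu$ factors through $T^*(GL_k/P_\lambda)$ via a Zariski-locally-trivial fibration with cellular fibre $P_\lambda/B$; consequently every parity sheaf $\parsh(\bbO_\mu)$ on the nilpotent cone arises. Concatenating these local pieces via \cref{rmk:comp-to-tensor} and lifting by \cref{lem:par-unique} produces, for each $\parsh((\kbeta,0),F)$, an $\Ibe\in\Comp_0(\vecn)$ with this ersatz parity sheaf as a summand of $\cL_\Ibe$.

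The combinatorial dichotomy producing $\bbG_a$-equivariance is the crux; the remainder is essentially bookkeeping with the constructions already present in this section.
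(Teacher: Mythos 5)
Your first inclusion is, in substance, the paper's own argument: the paper uses the $GL_2$-action mixing the two arrows, you use the one-parameter unipotent subgroup $(h_\s,h_\n)\mapsto(h_\s,h_\n+t\,h_\s)$, and your case analysis showing that $\widetilde{\bfF}^\m_\Ibe$ is preserved for $\Ibe\in\Comp_0(\vecn)$ is exactly the verification underlying the paper's claim that the KLR flag sheaves are equivariant for this symmetry. The one soft spot is the bare assertion that indecomposable summands of the equivariant sheaf are themselves $\bbG_a$-equivariant; the quickest way to close it is Krull--Schmidt: if a summand $\cE$ had support not stable under $\bbG_a$, the stabiliser of the support would be a proper closed subgroup of $\bbG_a$, hence trivial, and the translates $g_t^*\cE$ would give infinitely many pairwise non-isomorphic indecomposable summands of $\cL_\Ibe$. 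This is at the same level of detail as the paper's own ``therefore'', so it is not a real objection.

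The genuine gap is in your inclusion $\subseteq$, at the $\delta^\circ$-factor. The Springer map $T^*(GL_k/B)\to\cN_{\fkgl_k}$ does \emph{not} factor through $T^*(GL_k/P_\lambda)$: a nilpotent strictly preserving a full flag need not map $V^j$ into $V^{j-1}$ for a coarser flag (a regular nilpotent already fails this), so the claimed Zariski-locally-trivial fibration with fibre $P_\lambda/B$ does not exist, and the conclusion you draw from it --- that every partial Springer pushforward, hence every parity sheaf $\parsh(\bbO_\mu)$ on $[\cN_{\fkgl_k}/GL_k]$, occurs in the full Springer sheaf --- is false in positive characteristic. Already for $k=2$ and $\ona{char}\,\bbk=2$, the Springer sheaf $\mu_*\bbk[2]$ is indecomposable (the intersection form on the exceptional $\bbP^1$ is $-2\equiv 0$), so the skyscraper $\parsh(\bbO_{(1,1)})$ is not a summand; correspondingly, the associated ersatz parity sheaf on $\Rep_{2\delta}$ is a summand of $\cL_{(\alpha_0,\alpha_0,\alpha_1,\alpha_1)}$ but not of your single alternating type $\cL_{(\alpha_0,\alpha_1,\alpha_0,\alpha_1)}$. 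Since the proposition is needed in arbitrary characteristic (it feeds into \cref{cor:KLR-strat-order}), the characteristic-zero Springer-theoretic intuition --- which is exactly what \cref{prop:proj-gen} invokes, with an explicit char-$0$ caveat --- cannot be used. The repair is the paper's route: for every $\mu\vdash k^\circ$ take $\Ibe^\circ_\mu$ from \eqref{eq:idem-circ} and refine each thick step $\mu^i\alpha_j$ into $(\alpha_j,\ldots,\alpha_j)$; the nilHecke argument shows the refined KLR flag sheaf is a direct sum of shifts of $\cL_{\Ibe^\circ_\mu}$, whose restriction to the stratum Fourier-transforms to the partial Springer sheaf for $P_\mu$ and hence contains $\parsh(\bbO_{\mu^t})$. (Your treatment of the real-root factors is fine: only evenness of the Lusztig fibres is needed there, not the unproved smoothness claim.)
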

\begin{proof}
	Consider the natural action of $GL_2$ on $\Rep \Gamma$ by linear combinations of arrows.
	For any $\Iga\in\Comp(\vecn)$, the flag variety $\Fl^\m_\Iga$ has an induced $GL_2$-action, and so $\cL_\Iga$ acquires a natural $GL_2$-equivariant structure if and only if $\Iga\in \Comp_0(\vecn)$.
	On the other hand, the support of an ersatz parity sheaf $\parsh((\kbeta,k),F)$ is not preserved by $GL_2$ for $k>0$.
	Therefore, $\cL_\Iga$ must belong to $\Ev^\kp_\vecn$.

	For the opposite inclusion, the proof of \cref{prop:ersatz-comp}, \cref{rmk:simpler-idems} and the usual nilHecke argument to replace sequences $k\alpha_i$ with $(\alpha_i,\ldots,\alpha_i)$ show that every ersatz parity sheaf $\parsh((\kbeta,0),F)$ occurs as a direct summand of a KLR flag sheaf.
\end{proof}

In particular, we see that $\Ev^\kp_\vecn$ can be seen as an induced evenness theory from \cref{cor:gluing-evenness}, for the decomposition $\Rep_\vecn = \bigsqcup_{\kbeta\in\kp(\vecn)} \Rep^\kbeta_\vecn$.
It is also ersatz-complete.

Analogously to \cref{defn:semicusp-alg}, we can define the semicuspidal quotient $\CA_0(n\delta)$ of $A_0(n\delta)$.
Consider the class of algebras $\mathscr{C}_0$ containing polynomial algebras $\mathscr{P}$, the algebras $\CA_0(n\delta)$ for all $n>0$, as well as their tensor products.
Let 
\[
	\Lambda^0_\vecn = \left\{ (\kbeta,\mu) : \kbeta\in \kp(\vecn), \mu\vdash k^\delta \right\},
\]
where $k^\delta$ is the coefficient of $\delta$ in $\kbeta$.
To each element $(\kbeta,\mu)\in \Lambda^0_\vecn$ we associate a composition $\gamma(\kbeta,\mu)\in \Comp_0(\vecn)$ by concatenating compositions from \cref{rmk:simpler-idems} for real roots, and \eqref{eq:idem-circ} for $\delta$.
Applying \cref{thm:quasi-her-inher}, we obtain an analogue of \cref{thm:Schur-polyher-order}, which removes the characteristic assumption of~\cite[Th.~3]{KM_SKAA2017} for the Kronecker quiver.

\begin{cor}\label{cor:KLR-strat-order}
	The KLR algebra $A_0(\vecn)$ admits a $\mathscr{C}_0$-properly stratified structure, given by the sequence of idempotents $1_{\gamma(\kbeta,\mu)}$, $(\kbeta,\mu)\in\Lambda^0_\vecn$ in the lexicographic order.\qed
\end{cor}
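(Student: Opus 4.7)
The plan is to mimic the proof of \cref{thm:Schur-polyher-order}, but replacing $\Ev^\mkp_\vecn$ with its sub-theory $\Ev^\kp_\vecn$, and keeping track of what the top pieces on the $\delta$-strata look like. By the proposition preceding the corollary, $\Ev^\kp_\vecn$ is an ersatz-complete evenness theory on $\Rep_\vecn$, obtained by gluing (via \cref{cor:gluing-evenness}) evenness theories on the strata $\Rep^\kbeta_\vecn$ indexed by $\kbeta\in\kp(\vecn)$. So it suffices to verify that the theory on each $\Rep^\kbeta_\vecn$ is $\scrC_0$-properly stratified, and then apply \cref{thm:quasi-her-inher,lem:Ev-to-alg-qher} to the full sheaf $\cL^0_\vecn \coloneqq \bigoplus_{\Ibe\in\Comp_0(\vecn)}\cL_\Ibe$, which satisfies $\Ext^*(\cL^0_\vecn,\cL^0_\vecn) = A_0(\vecn)$ by \cref{prop:Ext-description} and \cref{rmk:KLR-not-Schur}.

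For the stratum analysis, write $\kbeta = (i^1\beta^1,\ldots,i^r\beta^r)$ with at most one $\beta^j$ equal to $\delta$ (Kostant partitions are strictly decreasing on $\Root^+_{\rm re}\cup\{\delta\}$). The map $q\colon \Rep^\kbeta_\vecn\to \Rep^\reg_\kbeta = \prod_j \Rep^\reg_{i^j\beta^j}$ is a gerbe with unipotent fibers, so by \cref{lem:prod-of-Ev} the induced evenness theory decomposes as an exterior tensor product across the factors. For each real root factor, $\Rep^\reg_{i^j\beta^j}\simeq [\pt/GL_{i^j}]$ carries the piecewise constant evenness theory, whose Ext-algebra of the full sheaf is a polynomial algebra (in fact, $\scrP$-polyhereditary), and the KLR flag sheaves $\cL_\Ibe$ with $\Ibe\in\Comp_0(\vecn)$ restrict to a full collection by the nilHecke trick together with \cref{rmk:simpler-idems}. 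For a possible $\delta$-factor $\Rep^\reg_{k\delta}$, the restrictions of the $\cL_\Ibe$'s with $\Ibe\in\Comp_0(\vecn)$ are precisely the KLR flag sheaves $\cL^\reg_\cla$ with $\cla$ of pure colour $\tau$ (cf.~\eqref{eq:small-idems} and \cref{rmk:flag-sheaves-semicusp}); by the construction of $\CA_0(k\delta)$ analogous to \cref{subs:def-semicusp}, their Ext-algebra is exactly $\CA_0(k\delta)\in\scrC_0$. Tensoring these two kinds of stratified structures and using closure of $\scrC_0$ under tensor products gives a $\scrC_0$-properly stratified structure on $\Ev_\kbeta$ (with a single stratum, so all conditions of \cref{defn-quasi-hered-Par} collapse to condition~\ref{eq:pspar1}).

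With this in place, \cref{thm:quasi-her-inher} yields a $\scrC_0$-proper stratification on $\Ev^\kp_\vecn$, and \cref{lem:Ev-to-alg-qher} transports it to $A_0(\vecn)$. It remains to read off the stratifying order. The indecomposables in $\Ev^\kp_\vecn$ are indexed by pairs $(\kbeta,F)$ with $F$ indecomposable in $\Ev_\kbeta$. In view of the tensor decomposition above and the Springer-type analysis in the proof of \cref{prop:ersatz-comp}, the indecomposables supported on a $\delta$-factor $\Rep^\reg_{k\delta}$ are indexed by partitions $\mu\vdash k$ via the Fourier transform of the partial Springer resolutions for $\Ibe^\circ_\mu$ (cf.~\eqref{eq:idem-circ}). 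This produces the indexing set $\Lambda^0_\vecn$, and the ersatz parity sheaf attached to $(\kbeta,\mu)$ arises as a direct summand of $\cL_{\gamma(\kbeta,\mu)}$ by construction. Finally, the reverse lexicographic order on $\Lambda^0_\vecn$ refines the closure order on the $\kbeta$-coordinate (\cref{prop:lex-refines-closure}), and inside each fixed $\kbeta$ it refines the reverse lex order on partitions $\mu\vdash k^\delta$, which in turn refines the closure order on nilpotent orbits; so it is an admissible refinement of the order delivered by \cref{thm:quasi-her-inher}.

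The main obstacle in this plan is the clean identification of the ``top piece'' on a $\delta$-stratum with the algebra $\CA_0(k\delta)$: one needs to check that restricting the full sheaf $\cL^0_\vecn$ to $\Rep^\reg_{k\delta}$ recovers a projective generator of $\CA_0(k\delta)$, which is the KLR counterpart of \cref{prop:proj-gen} and requires a parallel diagrammatic basis argument to that of \cref{cor:span-set-eCAe}. Everything else is bookkeeping with the general machinery of \cref{sec:ersatz-parity}.
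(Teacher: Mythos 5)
Your skeleton is exactly the paper's: the corollary is obtained by feeding the proposition preceding it (ersatz-completeness of $\Ev^\kp_\vecn$ for the coarse stratification by $\kp(\vecn)$, with strata theories generated by restrictions of KLR flag sheaves) into \cref{thm:quasi-her-inher}, and then transporting to $A_0(\vecn)=\Ext^*(\cL^0_\vecn,\cL^0_\vecn)$ via \cref{lem:Ev-to-alg-qher}, with a single $\Xi$-class per stratum so that only condition~(1) of \cref{defn-quasi-hered-Par} needs checking.

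However, the step you single out as ``the main obstacle'' is not one, and the remedy you propose (a KLR analogue of \cref{prop:proj-gen} together with a basis argument parallel to \cref{cor:span-set-eCAe}) is not needed. The evenness theory on the open stratum $\Rep^{(k\delta)}_{k\delta}=\Rep^\reg_{k\delta}$ is \emph{by definition} the Karoubi closure of the restrictions $j^*\cL_\Iga$, $\Iga\in\Comp_0(k\delta)$, so $j^*\cL^0_{k\delta}$ is automatically a full object there and the block algebra is Morita-equivalent to $\Ext^*(j^*\cL^0_{k\delta},j^*\cL^0_{k\delta})$. By \cref{lem:Par-open-surj} this Ext-algebra is the quotient of $A_0(k\delta)$ by the span of morphisms factoring through parity objects supported on the boundary, and that ideal is exactly $A_0(k\delta)1_{\nc}A_0(k\delta)$: noncuspidal KLR flag sheaves are supported on the boundary by \cref{lem:closure-vs-sub}, and conversely every indecomposable of the glued theory supported on the boundary is a summand of a noncuspidal KLR flag sheaf, since the thin refinements of $\gamma(\kbeta,\mu)$ for $\kbeta\neq(k\delta)$ are noncuspidal (same bookkeeping as in~\eqref{eq:Ext-is-semicusp}). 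Since $\mathscr{C}_0$ contains $\CA_0(k\delta)$ by fiat, nothing else is required; \cref{prop:proj-gen} and \cref{cor:span-set-eCAe} only enter later, when one wants to \emph{compute} the semicuspidal algebra, not to establish the stratification. Two smaller corrections: the restrictions of the KLR flag sheaves to $\Rep^\reg_{k\delta}$ are the $j^*\cL_\Iga$ for \emph{all} $\Iga\in\Comp_0(k\delta)$, not ``precisely the $\cL^\reg_\cla$ of pure colour $\tau$''; and your closing remark about further refining the order by nilpotent-orbit closures inside a fixed $\kbeta$ should be dropped --- the $\delta$-block must be kept as a single layer (its per-$\mu$ subquotients need not lie in $\mathscr{C}_0$, as $\CA_0(k\delta)$ is not polyhereditary), which is what your own $|\Xi_\lambda|=1$ reduction already does; the idempotents $1_{\gamma(\kbeta,\mu)}$ sharing the same $\kbeta$ only serve \emph{jointly} to provide a projective generator of that layer.
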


Since convolution algebras are local in the base, the open embedding $\cT_n(\bbP^1)\subset \Rep_{n\delta}$ gives rise to a restriction map $A_0(n\delta)\to A_0(n\delta)/A_0(n\delta)1_{\nc}A_0(n\delta)$.
By \cref{lem:Par-open-surj}, this is a surjection.
In particular, after idempotent truncation with respect to $1_{(n\delta)}$ we obtain:
\begin{cor}\label{cor:surj-taut-classes}
	The restriction map $H^*(\Rep_{n\delta})\to H^*(\cT_n)$ is surjective.\qed
\end{cor}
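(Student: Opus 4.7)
Via the isomorphism $\cT_n \simeq \Rep^\reg_{n\delta}$ of~\eqref{eq:tor-iso-reg}, the map in question is the pullback along the open embedding $j \colon \Rep^\reg_{n\delta} \hookrightarrow \Rep_{n\delta}$. The plan is to exhibit this as a special case of \cref{lem:Par-open-surj} applied to the constant sheaf. First I would note that $\Rep^\reg_{n\delta}$ is the union of the strata $\Rep^\mkbeta_{n\delta}$ for $\mkbeta$ of the form $(k\delta^\bullet,(n-k)\delta^\circ)$, which are maximal in the closure order on $\mkp(n\delta)$; so $j$ is indeed the inclusion of an open union of strata of the stratification from~\eqref{eq:stratas}, and \cref{lem:Par-open-surj} is applicable once we find a suitable parity complex.

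The relevant complex is $\cL_{(n\delta)}$, the flag sheaf for the trivial flag of type $(n\delta)$. Since $\Fl^\m_{(n\delta)} = \Rep_{n\delta}$, this is simply the constant sheaf $\bbk = \bbk_{\Rep_{n\delta}}$, and by \cref{prop:flag-is-even} it belongs to the evenness theory $\Ev^\mkp_{n\delta}$, hence to $\Par_\Lambda(\Rep_{n\delta})$. Applying \cref{lem:Par-open-surj} with $\cF = \cG = \bbk$ yields surjectivity of
\[
\Ext^*(\bbk_{\Rep_{n\delta}}, \bbk_{\Rep_{n\delta}}) \twoheadrightarrow \Ext^*(j^*\bbk, j^*\bbk) = \Ext^*(\bbk_{\cT_n}, \bbk_{\cT_n}).
\]
The left-hand side is the equivariant cohomology $H^*(\Rep_{n\delta})$ and the right-hand side is $H^*(\cT_n)$; the induced map is the pullback $j^*$, as desired. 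Equivalently, this is the $1_{(n\delta)}$-truncation of the surjection $A^\m(n\delta) \twoheadrightarrow \CA(n\delta)$ supplied by the preceding discussion.

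There is essentially no obstacle here once the machinery is in place: the content is packaged into \cref{prop:flag-is-even} (the trivial flag sheaf is parity) and \cref{lem:Par-open-surj} (open restriction is surjective on Ext). The only point that warrants a sanity check is that $\Rep^\reg_{n\delta}$ genuinely is an \emph{open} union of strata in the marked Kostant stratification — this is clear because the inequality $\mkgamma \ge_C \mkbeta$ with $\mkbeta = (k\delta^\bullet,(n-k)\delta^\circ)$ forces $\mkgamma$ to be of the same shape, by \cref{prop:lex-refines-closure} and the classification of positive roots.
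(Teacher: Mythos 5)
Your strategy is the same as the paper's: restrict along the open union of strata $\Rep^\reg_{n\delta}\subset\Rep_{n\delta}$ and apply \cref{lem:Par-open-surj} to a flag sheaf that realizes the constant sheaf, which is exactly the "surjection onto the semicuspidal quotient plus idempotent truncation" argument in the text. However, there is one concrete error in your identification: in the seminilpotent convention $\Fl^\m_{(n\delta)}\neq\Rep_{n\delta}$. For the length-one flag $0=V^0\subset V^1=V$ the condition defining $\widetilde{\bfF}^\m$ reads $h_\n(V^1_1)\subset V^0_0=0$, so $\Fl^\m_{(n\delta)}$ is the \emph{closed} substack $\{h_\n=0\}$, and $\cL_{(n\delta)}$ is the (pushed-forward) constant sheaf on that locus, not on $\Rep_{n\delta}$. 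Taken literally, your choice $\calF=\calG=\cL_{(n\delta)}$ would prove surjectivity of $H^*(\Rep_{n\delta})$ onto $\Ext^*(j^*\cL_{(n\delta)},j^*\cL_{(n\delta)})$, and since $\{h_\n=0\}\cap\Rep^\reg_{n\delta}$ is the single point $\{\cO_\infty^{\oplus n}\}\simeq[\pt/GL_n]$, that target is $H^*_{GL_n}(\pt)$ rather than $H^*(\cT_n)$ — not the statement. The repair is immediate: the constant sheaf $\bbk_{\Rep_{n\delta}}$ is the flag sheaf of type $(n\alpha_0,n\alpha_1)$ (i.e. $\beta_{(n\tau)}$), because the unique flag of that type is $0\subset V_0\subset V$ and both seminilpotent conditions are vacuous, so $p_{(n\alpha_0,n\alpha_1)}$ is an isomorphism. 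With this relabelling, \cref{prop:flag-is-even} still gives $\bbk_{\Rep_{n\delta}}\in\Par_\Lambda(\Rep_{n\delta})$ and the rest of your argument (and the identification of the truncated map with $j^*$ on $H^*$) goes through verbatim; the relevant idempotent is $e_{(n\tau)}=1_{(n\alpha_0,n\alpha_1)}$.

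A smaller point: your closing sanity check is also off. The regular strata are not maximal in the closure order — for instance $\Rep^{(n\delta^\bullet)}_{n\delta}\subset\overline{\Rep^{(n\delta^\circ)}_{n\delta}}$ — and \cref{prop:lex-refines-closure} only says closure-comparability implies lexicographic comparability, so it cannot by itself show that the regular locus is upward-closed. None of this is needed: $\Rep^\reg_{n\delta}$ is open by definition and is manifestly the union of the strata $\Rep^{(k\delta^\bullet,(n-k)\delta^\circ)}_{n\delta}$, which is all that \cref{lem:Par-open-surj} requires.
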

This is in sharp contradiction with \cite[§8.9]{MakMin_KLR2023}; see \cref{subs:erratum} for discussion and erratum.

\subsection{Other affine quivers}
As mentioned above, a proper stratification for KLR algebras of affine type was constructed in~\cite{KM_SKAA2017} under the assumption that $\ona{char}\bbk$ is not too small.
We expect this result can be improved. 
\begin{conj}
	Let $Q$ be an affine quiver without oriented cycle, $\vecn$ dimension vector, and $A_0(\vecn)$ the corresponding KLR algebra. 
	The preorder of~\cite[Lm.~3.2]{KM_SKAA2017} is properly stratifying in arbitrary characteristic.
\end{conj}
Note that in type $A$, we have a stratification on $A_0(\vecn)$ by \cref{ssec:another-polyher}, but it is not compatible with the required preorder.

For the Kronecker quiver, this conjecture holds by \cref{cor:KLR-strat-order}.
For other quivers, our strategy of constructing a properly stratified preorder on the category $\FShv_\vecn^0$ is complicated by the following fact: in general, such preorder cannot be obtained by an application of \cref{thm:quasi-her-inher}.

\begin{expl}
 Let $Q$ be a cyclic quiver on 3 vertices without an oriented cycle:
 \[
 \begin{tikzcd}
 &[-10pt] 2\ar[dr,"h_1"] &[-10pt] \\[-10pt]
 1\ar[ur,"h_0"]\ar[rr,"h_2"] & & 0
 \end{tikzcd}
 \]
 Fix the dimension vector $\delta \coloneqq \alpha_0+\alpha_1+\alpha_2$, and $\ona{char}\bbk=0$.
 Let $\Ibe_{1}=(\alpha_0,\alpha_2,\alpha_1)$ and $\Ibe_{2}=(\alpha_0,\alpha_1,\alpha_2)$.
 The largest ideal $J\subset A_0(\delta)$ in the proper stratification of $A_0(\delta)$ constructed in~\cite{KM_SKAA2017} is generated by the idempotents $1_\beta$, $\beta\in \Comp_0(\delta)\setminus\{\beta_1,\beta_2\}$.
 The quotient $A_0(\delta)/J$ is $Z_{A_2}[x]$, where $Z_{A_2}$ is the zigzag algebra of type $A_2$:
 \begin{equation}\label{eq:zigzag-A2-def}
 Z_{A_2} \coloneqq \bbk Q/(yzy,zyz),\qquad Q = \begin{tikzcd}\tau\ar[r,shift left=.5ex,"y"] & \varepsilon\ar[l,shift left=.5ex,"z"]\end{tikzcd}.
 \end{equation}
 
 Consider the KLR sheaf $\cL=\bigoplus_{\beta\in \Comp_0(\delta)}\cL_\beta$, so that $A_0(\delta)=\Ext^*(\cL,\cL)$. 
 Suppose that the ideal $J\subset A_0(\delta)$ comes from a properly stratified evenness theory. 
 More precisely, we assume that there is an evenness theory on $\Rep_\delta$ obtained by gluing two evenness theories on an open $j:U\hookrightarrow \Rep_\delta$ and its complement $\Rep_\delta\backslash U$, such that the restriction morphism $\Ext^*(\cL,\cL)\to \Ext^*(j^*\cL,j^*\cL)$ coincides with $A_0(\delta)\to A_0(\delta)/J$.
 Then the supports of flag sheaves $\cL_{\Ibe}$, $\Ibe\not\in \{\Ibe_1,\Ibe_2\}$ must be disjoint from $U$.
 Computing these supports, we obtain that 
 \begin{align*}
 U \subset U'\coloneqq \mathsf{Rep}_\delta \setminus {[\{h_1=0\text{ or }h_0=h_2=0\}/\bbG_m^3]} \simeq \bbP^1 \times B\bbG_m.
 \end{align*}

 Denote by $j'$ the inclusion $U'\hookrightarrow\Rep_\delta$. 
 We have a chain of algebra homomorphisms $\Ext^*(\cL,\cL)\to \Ext^*(j'^*\cL,j'^*\cL)\to \Ext^*(j^*\cL,j^*\cL)$ and the identification $\Ext^*(j^*\cL,j^*\cL)= Z_{A_2}[x]$ is such that the two idempotents of the zigzag algebra correspond to the decomposition $j^*\cL=j^*\cL_{\beta_1}\oplus j^*\cL_{\beta_2}$. 
 Since $yz\ne 0$ in $Z_{A_2}[x]$, we must exhibit a non-zero composition of extensions $j^*\cL_{\beta_2}\to j^*\cL_{\beta_1}\to j^*\cL_{\beta_2}$. 
 On the other hand, it is easy to see that $j'^*\cL_{\Ibe_{1}}$, resp. $j'^*\cL_{\Ibe_{2}}$ is the constant sheaf on $\bbP^1$, resp. a point.
 One deduces by a direct computation that every composition of extensions $j'^*\cL_{\beta_2}\to j'^*\cL_{\beta_1}\to j'^*\cL_{\beta_2}$ is zero. 
 We arrived at a contradiction.
\end{expl}

In the example above, the algebra $Z_{A_2}[x]$ can still be recovered geometrically, if one instead restricts $\cL_{\Ibe_{1}}\oplus \cL_{\Ibe_{2}}$ \textit{microlocally} to the complement of singular supports of the non-cuspidal sheaves.
Since gluing of evenness theories can be set up in greater generality (see \cref{rmk:exotic-sheaves}), we expect that our methods will still apply in an appropriate setup of microlocal sheaves with modular coefficients.

\medskip
\section{Schur algebras on ``dotted'' curves}\label{sec:schur-dotted}
In this section, we express the (truncated) semicuspidal algebra $e\CA(n\delta)e$ in terms of another convolution algebra, and in doing so prove \cref{cor:span-set-eCAe}.

\subsection{Curve Schur algebras}
\label{subs:curve-Schur}
Let us quickly recall the geometric setup of~\cite{MakMin_KLR2023}, and refer the reader to \emph{op.cit.} for more details.

Let $C$ be a smooth projective curve, and $\bbk$ a field.
For any $n>0$, let $\cT_n = \cT_n(C)$ be the moduli stack of torsion sheaves on $C$ of length $n$.
For any composition $\lambda\in\Comp(n)$ consider the associated moduli of flags of torsion sheaves:
\begin{align*}
	\cF_{\lambda} = \{ 0 = \cE_0\subset \cE_1\subset \ldots \subset \cE_{\ell(\lambda)} : \cE_j/\cE_{j-1}\in \cT_{\lambda^j} \}, \qquad \cF_n\coloneqq \bigsqcup_{\lambda\in \Comp(n)}\cF_{\lambda}.
\end{align*}
There are two natural maps out of $\cF_{\lambda}$:
\begin{align*}
	p:\cF_{\lambda}&\to \cT_n,  &  p(\cE_1\subset \ldots \subset \cE_k)&= \cE_k, \\
	q:\cF_{\lambda}&\to \cT_\lambda,  &  q(\cE_1\subset \ldots \subset \cE_k)&= (\cE_1,\ldots,\cE_k/\cE_{k-1}),
\end{align*}
where $\cT_\lambda$ stands for the product $\prod_{j=1}^k \cT_{\lambda^j}$.
Both $\cT_n$ and $\cF_\lambda$ are smooth stacks.
Using the maps $p$, we can form \textit{curve Steinberg stacks}:
\[
	\cZ_n = \bigsqcup_{\lambda,\mu\in \Comp(n)} \cZ_{\lambda,\mu},\qquad \cZ_{\lambda,\mu} \coloneqq \cF_{\lambda}\times_{\cT_n} \cF_{\mu}.
\]
As explained in~\cite[Sec.~2]{MakMin_KLR2023}, all of the stacks above are global quotients by $GL_n$:
\begin{gather*}
	\cT_n \simeq [X_n/GL_n],\quad \cF_\lambda\simeq [Y_\lambda/GL_n], \quad \cZ_{\lambda,\mu}\simeq [Z_{\lambda,\mu}/GL_n];\\
	X_n \coloneqq \{ s:\bbC^n\otimes \cO\twoheadrightarrow \cE : H^0(s)\text{ is an iso} \},\\
	Y_\lambda \coloneqq \{ (V^\bullet, s) : H^0(s|_{V^i})\text{ is an iso for all }i \}\subset \bfF_\lambda \times X_n,\quad Z_{\lambda,\mu} \coloneqq Y_\lambda\times_{X_n} Y_\mu.
\end{gather*}
In particular $H_*(\cZ_n) \simeq H^{GL_n}_*(Z_n)$, so the formalism of \cref{subs:conv-algs} applies and we get a convolution product on $\cS_n\coloneqq H_*(\cZ_n)$.
We call $\cS_n$ the \textit{curve Schur algebra}.

The algebra $\cS_n$ acts naturally on $H^*(\cF_n)$; we call this $\cS_n$-module the \textit{polynomial representation}.
We have the following isomorphisms of cohomology rings:
\begin{align*}
	H^*(\cF_{\lambda},\bbk)\simeq \bigotimes\nolimits_j H^*(\cT_{\lambda^j},\bbk),\qquad H^*(\cT_1) \simeq H^*(C,\bbk)[x].
\end{align*}
Let $\lambda = 1^n= (1,1,\ldots,1)$.
If $\bbQ\subset \bbk$, pullback along $\cF_{1^n}\to \cT_n$ induces an isomorphism
\begin{align}\label{eq:tor-vs-poly}
	H^*(\cT_n,\bbk) \xrightarrow{\sim} \left( H^*(C^n,\bbk)[x_1,\ldots,x_n] \right)^{\fkS_n}.
\end{align}
If $C = \bbP^1$, the map~\eqref{eq:tor-vs-poly} is a monomorphism over $\bbZ$, and its image can be explicitly computed; see \cref{subs:erratum} for details.
As a consequence, $H^*(\cT_n,\bbk)$ is a free $\bbk$-module for any $\bbk$ by universal coefficients.
In view of this, \textit{from now on we always assume the following:}
\begin{equation}\label{eq:assumption-on-curve}
	\text{Either $C = \bbP^1$ or $\bbQ\subset \bbk$.}
\end{equation}
The reader is invited to assume $C = \bbP^1$, since this is the case that bears relevance to the preceding sections.

As with quiver Schur algebras, given $\lambda'\vDash \lambda$ we can define split and merge operators $S_\lambda^{\lambda'}$, resp. $M^\lambda_{\lambda'}$ as the fundamental class of $Y_{\lambda'} = Y_{\lambda'}\times_{Y_{\lambda}}Y_{\lambda}$ inside $Z_{\lambda',\lambda} = Y_{\lambda'}\times_{X_n}Y_{\lambda}$, resp. inside $Z_{\lambda,\lambda'}$.
Further, let $\lambda = (\lambda^1,\ldots, \lambda^k)$, and let $\lambda' = (\lambda^1,\ldots,\lambda^i_{(1)},\lambda^i_{(2)},\ldots, \lambda^k)$, $\lambda'' = (\lambda^1,\ldots,\lambda^i_{(2)},\lambda^i_{(1)}\ldots, \lambda^k)$ be obtained from $\lambda$ by splitting $\lambda^i = \lambda^i_{(1)}+\lambda^i_{(2)}$ into $\lambda^i_{(1)}$, $\lambda^i_{(2)}$ in two different ways.
We define the \textit{elementary crossing} as 
\[
	R_{\lambda'}^{\lambda''} \coloneqq S^{\lambda''}_\lambda M_{\lambda'}^\lambda.
\]
Together, these operators give rise to the elements $\psi_w^P$ defined via the same diagrammatics as in~\cref{subs:qschur-basis}. 
The following statement was proved in~\cite[Sec.~4]{MakMin_KLR2023} using equivariant localization.
\begin{prop}\label{prop:old-basis-curve}
	For any $\lambda$, fix a $\bbk$-basis $B_{\lambda}$ of $H^*(\cF_{\lambda},\bbk)$.
	The set $\{\psi_w^P : w\in \fkS_{\lambda}\backslash \fkS_{n}/\fkS_{\mu}, P\in B_{\lambda'}\}$ is a basis of $\cS_{\lambda,\mu}$.
\end{prop}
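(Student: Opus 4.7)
The plan is to stratify the curve Steinberg stack $\cZ_{\lambda,\mu}$ according to relative position of flags, much like in the classical Bruhat decomposition. For each double coset $w\in \fkS_\lambda\backslash\fkS_n/\fkS_\mu$, consider the locally closed substack $\cZ^w_{\lambda,\mu}\subset \cZ_{\lambda,\mu}$ consisting of pairs of flags $(\cE_\bullet,\cE'_\bullet)$ on a common torsion sheaf whose joint refinement has type $\lambda'$ as in~\eqref{eq:betapr-gammapr}. These substacks give a stratification of $\cZ_{\lambda,\mu}$, whose closure order refines the Bruhat order on double cosets. First I would check that the forgetful map sending $(\cE_\bullet,\cE'_\bullet)$ to the common refinement realizes $\cZ^w_{\lambda,\mu}$ as an affine fibration over $\cF_{\lambda'(w)}$: once the finer flag is fixed, each of the two coarser flags is determined by an iterated choice of intermediate subsheaf inside a torsion extension, parameterized by an affine space (this is the torsion-sheaf analogue of Schubert cell description for Grassmannians).

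Next I would invoke the freeness of $H^*(\cF_{\lambda'},\bbk)$ under assumption~\eqref{eq:assumption-on-curve}, together with the parity vanishing of $H^*(\cT_n,\bbk)$, to deduce that the spectral sequence associated to the stratification degenerates. This gives a (non-canonical) direct sum decomposition
\[
\cS_{\lambda,\mu} = H_*(\cZ_{\lambda,\mu}) \simeq \bigoplus_{w\in \fkS_\lambda\backslash\fkS_n/\fkS_\mu} H_*(\cZ^w_{\lambda,\mu}) \simeq \bigoplus_w H^*(\cF_{\lambda'(w)},\bbk)
\]
up to homological shift, and in particular $\cS_{\lambda,\mu}$ is free of rank $\sum_w |B_{\lambda'(w)}|$.

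Finally, I would match the diagrammatic elements $\psi_w^P$ with this decomposition. By construction $\psi_w^P$ is the convolution $S_{\lambda,\lambda'}\cdot P\cdot \sigma_w\cdot M_{\lambda',\mu}$, where the outer factors push/pull along the maps $\cF_{\lambda'(w)}\to \cF_\lambda$, $\cF_{\lambda'(w)}\to \cF_\mu$, and the crossing word $\sigma_w$ realizes $w$ as a concatenation of elementary crossings. A support argument shows $\psi_w^P$ lies in the subspace attached to strata $\leqslant w$, and computing the leading term on $\cZ^w_{\lambda,\mu}$ via the projection formula identifies it with $P\in H^*(\cF_{\lambda'(w)})$ up to an invertible equivariant scalar (a product of tangent weights, nonzero by the genericity of the stratum). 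Hence the $\{\psi_w^P\}$ are triangular with respect to the closure order, with invertible diagonal entries, proving the basis property.

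The main obstacle will be pinning down this triangularity rigorously: one must show that naive crossings $\sigma_w$ reduce to the expected Bruhat-cell class modulo terms supported on smaller strata, and that the leading coefficient is controlled. The paper~\cite{MakMin_KLR2023} handles this via equivariant localization to a maximal torus $T\subset GL_n$, where $T$-fixed loci of $\cZ_{\lambda,\mu}$ are labelled by pairs of standard fillings and $\psi_w^P$ admits an explicit localized expression making both the triangularity and the nonvanishing of the leading term transparent. I would follow that route, as the geometric cellular approach above, while conceptually clean, requires substantial bookkeeping to track the affine fiber dimensions against the homological shifts.
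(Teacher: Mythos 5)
The paper does not reprove this statement: it is imported from~\cite[Sec.~4]{MakMin_KLR2023}, where it is established by equivariant localization, and since you end by saying you would ``follow that route'', your proposal ultimately coincides with the paper's. The alternative you sketch first — stratifying $\cZ_{\lambda,\mu}$ by relative position $w$, realizing each stratum as an affine fibration so that its (equivariant Borel--Moore) homology is $H^*(\cF_{\lambda'(w)})$, and splitting the long exact sequences — is essentially the length-filtration argument the paper itself uses later for the coloured extension (\cref{prop:filtr-on-Steinberg} and the proof of \cref{thm:curve-Schur-basis}), and it correctly delivers the graded dimension count; what it buys is independence from torus fixed-point combinatorics, while localization buys an explicit handle on leading terms. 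Two caveats on your sketch: first, the splitting cannot be justified by ``parity vanishing of $H^*(\cT_n,\bbk)$'' for a general curve, since $H^{\rm odd}(\cT_n)\neq 0$ when $g(C)>0$; under the standing assumption~\eqref{eq:assumption-on-curve} one argues instead via purity of weights (the paper's phrasing ``pure or even''), with evenness only available for $C=\bbP^1$. Second, as you yourself acknowledge, the filtration alone does not finish the proof: the genuine content is the triangularity of the $\psi_w^P$ with respect to the closure order together with the invertibility (indeed, monic non-zero-divisor nature) of the leading coefficients, and that step is exactly what the localization computation in~\cite{MakMin_KLR2023} supplies; your proposal names it as the obstacle but does not carry it out, so on its own the cellular half of the argument is incomplete.
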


\subsection{Extended Schur algebra and diagrammatics}
\label{subs:col-curve-Schur}
Let us slighty enlarge the algebra $\cS_n$.
Fix a point $c\in C$; by abuse of notation we denote the class $[c]\in H^2(C,\bbk)$ by $c$ too.
Consider the closed substack $\cT_{n\varepsilon}=\{\cO_c\otimes \bbC^n\}\subset \cT_{n}$; we have $\cT_{n\varepsilon} \simeq [\pt/GL_n]$.
Let us further denote $\cT_{n\tau} = \cT_n$.

Recall the set $\cI = \{\tau,\varepsilon\}$.
For any $\cI$-coloured partition $\cla\in \cI^{(n)}$ with underlying uncoloured partition $\lambda$, consider the following flag stacks:
\begin{align*}
	\cF_{\cla} = \{ 0 = \cE^0\subset \cE^1\subset \ldots \subset \cE^k : \cE^j/\cE^{j-1}\in \cT_{\cla^j} \}.
\end{align*}

We will sometimes identify uncoloured partitions $(\mu^1,\ldots,\mu^k)$ with the partitions $(\mu^1\tau,\ldots,\mu^k\tau)$ coloured purely with $\tau$.
Denoting $\cT_{\cla} = \prod_j \cT_{\cla^j}$, we have an obvious isomorphism $\cF_{\cla}\simeq \cF_\lambda \times_{\cT_\lambda} \cT_{\cla}$.
We have explicit presentations as quotient stacks:
\[
	\cF_{\cla} \simeq [Y_\cla/GL_n],\quad Y_\cla \coloneqq \{ (V^\bullet, s) : H^0(s|_{V^i})\text{ is an iso}, s(V^i)/s(V^{i-1})\in \cT_{\cla^i}\text{ }\forall i \}\subset \bfF_\lambda \times X_n.
\]

\begin{lem}\label{lem:Y-is-smooth}
	$Y_\cla$ is a smooth variety for any $\cla$; in particular, $\cF_{\cla}$ is a smooth stack. 
\end{lem}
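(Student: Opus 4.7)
The plan is to exhibit $\cF_\cla$ as a base change of the known-to-be-smooth $\cF_\lambda$ along a smooth morphism; this will immediately imply $Y_\cla$ is smooth, since $Y_\cla \to \cF_\cla$ is a $GL_n$-torsor. The relevant morphism is the ``associated graded'' map
\[
  \bar\rho : \cF_\lambda \to \prod_i \cT_{\lambda^i}, \qquad \cE^\bullet \mapsto (\cE^i/\cE^{i-1})_i,
\]
into which $\cF_\cla$ fits as a fiber product:
\[
  \begin{tikzcd}
    \cF_\cla \ar[r] \ar[d] & \prod_i \cT_{\cla^i} \ar[d, hook] \\
    \cF_\lambda \ar[r, "\bar\rho"] & \prod_i \cT_{\lambda^i}.
  \end{tikzcd}
\]

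It therefore suffices to establish two things: (a) $\prod_i \cT_{\cla^i}$ is a smooth stack, and (b) $\bar\rho$ is a smooth morphism. Granting both, the upper horizontal arrow is smooth with smooth target, so $\cF_\cla$ is smooth as required. Point (a) is straightforward: each factor is either $\cT_{\lambda^i}$ (smooth by~\cite{MakMin_KLR2023}) or $\cT_{\lambda^i\varepsilon} = [\pt/GL_{\lambda^i}]$, the manifestly smooth classifying stack of $GL_{\lambda^i}$.

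The hard part is (b), which I would tackle via deformation theory. The obstruction to lifting a deformation of the tuple of graded pieces $(\cE^i/\cE^{i-1})_i$ to a compatible deformation of the whole filtered object $\cE^\bullet$ is expressed through $\Ext^2_C$-groups between torsion sheaves on the curve $C$. Since $C$ is smooth and one-dimensional, every torsion coherent sheaf on $C$ has homological dimension at most $1$, so all such $\Ext^2$'s vanish; after routine tangent-complex bookkeeping this yields smoothness of $\bar\rho$. Any technical difficulty in the proof is concentrated in this deformation-theoretic verification.
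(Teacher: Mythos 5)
Your argument is correct, and it reaches the same underlying geometric fact as the paper by a somewhat different route. The paper also starts from the identification $\cF_{\cla}\simeq \cF_\lambda\times_{\cT_\lambda}\cT_{\cla}$ in spirit, but it works at the level of framed varieties: it exhibits $Y_\cla$ as a fibration over the smooth flag variety $\bfF_\lambda$ with fiber $Q_\cla\subset X_n$, and then invokes \cite[Lem.~3.3]{MakMin_KLR2023}, which says that $Q_\lambda\to X_\lambda$ is an (explicit) affine bundle; $Q_\cla$ is just the restriction of that bundle to the smooth closed subvariety $X_\cla\subset X_\lambda$, whence smoothness. Your base-change argument replaces this citation by the stacky statement that the associated-graded map $\bar\rho\colon\cF_\lambda\to\cT_\lambda$ is smooth, which you justify by the vanishing of $\Ext^2_C$ between torsion sheaves on the smooth curve $C$ — this is indeed the standard deformation-theoretic reason, and it is the intrinsic counterpart of the affine-bundle statement the paper quotes. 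What your version buys is conceptual clarity and independence from the explicit framed model $X_n$, valid for any smooth curve; what it costs is that the whole weight of the proof sits in the ``routine tangent-complex bookkeeping'' establishing smoothness of $\bar\rho$ (infinitesimal lifting plus local finite presentation, iterated over the $k$-step filtration), which is precisely the work the paper avoids redoing because the affine-bundle form of it was already proved in the earlier paper. So your proposal is sound, but if written out in full it should either carry out that deformation argument carefully or, more economically, observe that it is equivalent to the already available \cite[Lem.~3.3]{MakMin_KLR2023}.
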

\begin{proof}
	Let $v_i = \lambda^1+\ldots+\lambda^i$.
	It is clear that $Y_\cla$ is a fibration over $\bfF_\lambda$ with fiber isomorphic to
	\[
	Q_\cla = \{ s : H^0(s|_{\bbC^{v_i}})\text{ is an iso}, s(\bbC^{v_i})/s(\bbC^{v_{i-1}})\in \cT_{\cla^i}\text{ for all }i \}\subset X_n.
	\]
	In~\cite[Lem.~3.3]{MakMin_KLR2023} we showed that $Q_\lambda$ is an affine bundle over $X_\lambda$.
	The restriction of this bundle to $X_\cla$ is precisely $Q_\cla$, and thus the latter is smooth.
\end{proof}

As before, define the Steinberg varieties
\[
	\cZ^\cI_n = \bigsqcup_{\lambda,\mu\in \cI^{(n)}} \cZ^\cI_{\cla,\cmu},\qquad \cZ^\cI_{\cla,\cmu} = \cF_{\cla}\times_{\cT_n} \cF_{\cmu}.
\]

\begin{defn}
	The \textit{extended Schur algebra} of $C$ is the convolution algebra $\cS^\cI_n\coloneqq H_*(\cZ^\cI_n,\bbk)$.
\end{defn}

As before, $\cS^\cI_n$ naturally acts on the \textit{polynomial representation} 
\begin{equation}\label{eq:poly-rep-def}
	\bfP_n \coloneqq \bigoplus_{\cla \in \cI^{(n)}} H^*(\cF_\cla,\bbk).
\end{equation}

Let $\cla'\vDash\cla$, and consider the underlying uncoloured compositions $\lambda'\vDash \lambda$.
We have natural proper maps $\cF_{\lambda'}\to \cF_\lambda$, $\cF_{\cla}\hookrightarrow \cF_\lambda$, and so we can define $\cF^{\lambda'}_{\cla} \coloneqq \cF_{\cla}\times_{\cF_\lambda} \cF_{\lambda'}$.
This is a closed subvariety of $\cF_{\lambda'}$; moreover, it is fully contained in $\cF_{\cla'}\subset \cF_{\lambda'}$.
In particular, we have closed embeddings $\iota: \cF^{\lambda'}_{\cla} \to \cZ^\cI_{\cla,\cla'}$, $\iota': \cF^{\lambda'}_{\cla} \to \cZ^\cI_{\cla',\cla}$.

\begin{defn}
	We call $S_{\cla}^{\cla'} \coloneqq \iota'_*[\cF^{\lambda'}_{\cla}]\in \cS^\cI_{\cla',\cla}$ a \textit{split}, and $M_{\cla'}^{\cla} \coloneqq \iota_*[\cF^{\lambda'}_{\cla}]\in \cS^\cI_{\cla,\cla'}$ a \textit{merge}.
\end{defn}

\begin{rmk}
	When $\cla$ is entirely of colour $\tau$, this definition coincides with splits and merges in $\cS_n$.
	In general, the map $\cF^{\lambda'}_{\cla}\to \cF_{\cla'}$ is a closed embedding, and not an isomorphism.
\end{rmk}

One can check analogously to~\cite[Lem.~4.6]{MakMin_KLR2023} that for $\cla''\vDash\cla'\vDash\cla$, we have $S_{\cla'}^{\cla''}S_{\cla}^{\cla'} = S_{\cla}^{\cla''}$, $M_{\cla'}^{\cla}M_{\cla''}^{\cla'} = M_{\cla''}^{\cla}$. 
This allows us to denote $S_{\cla}^{\cla'}$ and $M_{\cla'}^{\cla}$ by split and merge in $\cI$-coloured diagrammatics.
Recalling the notations of~\cref{subs:curve-Schur}, consider $\cla,\cla',\cla''\in\cI^{(n)}$ with underlying compositions $\lambda$, $\lambda'$, $\lambda''$, where $\lambda^i$, $\lambda^i_{(1)}$, $\lambda^i_{(2)}$ are all of colour $\varepsilon$.
Then we can again define elementary crossing of colour $\varepsilon$ as 
\[
	R_{\cla'}^{\cla''}\coloneqq S^{\cla''}_\cla M_{\cla'}^\cla.
\]
Since we do not consider splits and merges of mixed colour, the crossings of strands of different colours have to be defined separately.
Let 
\[
	\cla = (\lambda^1c^1,\ldots,a\tau, b\varepsilon,\ldots, \lambda^kc^k),\qquad \cmu = (\lambda^1c^1,\ldots,b\varepsilon, a\tau,\ldots, \lambda^kc^k).
\]
\begin{defn}\label{def:mc-crossing}
	The \textit{multicoloured elementary crossing} is given by $R_\cmu^\cla \coloneqq [Z_{\cla,\cmu}]\in \cS_{\cla,\cmu}^\cI$.
\end{defn}

Let $\cla_1$, $\cla_2$ be two $\cI$-compositions with the same underlying composition $\lambda = (\lambda^1,\ldots,\lambda^k)$.
Define $\cla_{12} = (\lambda^1i^1,\ldots,\lambda^ki^k)$, where $i^j = \tau$ if $j$-th colour in both $\cla_1$ and $\cla_2$ is $\tau$, and $\varepsilon$ otherwise.
Closed embeddings $\cT_{m\varepsilon}\subset \cT_{m\tau}$ yield a closed embedding $\iota:\cF_{\cla_{12}}\subset \cF_{\cla_1}\times_{\cT_n}\cF_{\cla_2} = \cZ^{\cI}_{\cla_1,\cla_2}$.

\begin{defn}
	We call $C_{\cla_2}^{\cla_1} \coloneqq \iota_*[\cF_{\cla_{12}}]\in \cS^\cI_{\cla_1,\cla_2}$ the \textit{colour change}.
\end{defn}

Diagrammatically, we draw strands of colour $\tau$ in black, strands of colour $\varepsilon$ in cyan, and colour change as a horizontal dash. 
Finally, for any $\cla\in\cI^{(n)}$ and $P\in H^*(\cF_\cla)$ we have the element $P\in \cS_{\cla,\cla}^\cI$ corresponding to the image of $P$ under the diagonal map $\cF_\cla\to \cZ^\cI_{\cla,\cla} = \cF_\cla\times_{\cT_n}\cF_\cla$.
In particular, for $\cla = n\tau$ we have $P\in H^*(\cT_n)$, and for $\cla = n\varepsilon$ we have $P\in H^*(\cT_{n\varepsilon}) = H^*_{GL_n}(\pt) = \bbk[x_1,\ldots,x_n]^{\fkS_n}$.
Diagrammatically, we draw these operators as coupons.

\subsection{Comparison with the semicuspidal algebra}\label{ssec:comp-semicups}
In this subsection, let us fix $C=\bbP^1$, and $c = \infty\in \bbP^1$.
By the argument in \cref{subs:def-semicusp}, the semicuspidal algebra $\CA(n\delta)$ is precisely the restriction of $A^\m(n\delta)$ to $\Rep^\reg_{n\delta} \simeq \cT_n(\bbP^1)$.
Since $\cS_n^\cI$ is also a convolution algebra over $\cT_n(\bbP^1)$, it makes sense to compare them.

\begin{prop}\label{prop:regular-truncate}
	Let $\cla\in\cI^{(n)}$, and $\Ibe_\cla\in \Comp(n\delta)$ the corresponding composition.
	Denote $\Fl^\reg_{\cla} = \Fl_{\Ibe_\cla}\times_{\Rep^\reg_{n\delta}}\Rep_{n\delta}$.
	Then we have an isomorphism $\Fl^\reg_{\Ibe_\cla}\simeq \cF_\cla$.
	In particular, restriction to $\Rep^\reg_{n\delta}$ induces an isomorphism $e\CA(n\delta)e\xra{\sim} \cS^\cI_n$.
\end{prop}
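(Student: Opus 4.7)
The plan is to construct an explicit stack isomorphism $\Fl^\reg_\cla\simeq \cF_\cla$ over $\Rep^\reg_{n\delta}\simeq \cT_n(\bbP^1)$, and then deduce the algebra isomorphism from the Ext-algebra description of convolution algebras (\cref{prop:Ext-description,rmk:flag-sheaves-semicusp}).

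Given $\cla = (\lambda^1 i^1,\ldots,\lambda^k i^k)\in \cI^{(n)}$, the composition $\Ibe_\cla$ expands each $\tau$-part $\lambda^j\tau$ into two substeps $(\lambda^j\alpha_0,\lambda^j\alpha_1)$, while each $\varepsilon$-part $\lambda^j\varepsilon$ contributes a single step $(\lambda^j\delta)$. From a seminilpotent flag $V^\bullet$ of type $\Ibe_\cla$ in $V$, one obtains the coarsened flag $\overline V^{(0)}\subset \overline V^{(1)}\subset\cdots\subset \overline V^{(k)} = V$ by forgetting the intermediate $\alpha_0$-piece inside each $\tau$-substep, and the key claim to verify is that the seminilpotent conditions defining $\Fl^\m_{\Ibe_\cla}$ are equivalent to the pair: (i) each $\overline V^{(j)}$ is a subrepresentation of $V$, and (ii) if $i^j = \varepsilon$, then $h_\n$ vanishes on the subquotient $\overline V^{(j)}/\overline V^{(j-1)}$. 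Conversely, for a $\tau$-step the intermediate $\overline V^{(j-1/2)}$ is uniquely reconstructed by $(\overline V^{(j-1/2)})_0 = \overline V^{(j)}_0$, $(\overline V^{(j-1/2)})_1 = \overline V^{(j-1)}_1$, a construction that is functorial in flat families; this yields a stack isomorphism between $\Fl^\m_{\Ibe_\cla}$ (over $V$) and the stack of coarse flags of subreps satisfying (i)--(ii).

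Restricting to the regular locus and invoking~\eqref{eq:tor-iso-reg}, subrepresentations of $V\in\Rep^\reg_{n\delta}$ correspond to torsion subsheaves of the corresponding torsion sheaf $\cE$, with subquotients again torsion (since torsion sheaves form an abelian subcategory). The coarse flag translates into a sheaf flag $0 = \cE^0\subset\cE^1\subset\cdots\subset \cE^k = \cE$ with $\cE^j/\cE^{j-1}$ of length $\lambda^j$. For $i^j = \tau$ there is no further constraint, so $\cE^j/\cE^{j-1}\in \cT_{\lambda^j} = \cT_{\lambda^j\tau}$; for $i^j = \varepsilon$, the vanishing of $h_\n$ on the regular quotient forces $h_\s$ there to be an iso (else an $M_{\alpha_1}$-summand would appear, cf.~\cref{lem:Hom-Ext-ordered}), whence $\cE^j/\cE^{j-1}\simeq \cO_\infty^{\lambda^j}\in \cT_{\lambda^j\varepsilon}$. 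This yields the stack isomorphism $\Fl^\reg_\cla\simeq \cF_\cla$. The algebra isomorphism is then immediate: by \cref{rmk:flag-sheaves-semicusp}, $e\CA(n\delta)e = \Ext^*(\cL^\reg,\cL^\reg)$, and the flag-stack isomorphism identifies $\cL^\reg_\cla = (p_\cla)_*\bbk$ with the corresponding curve flag sheaf on $\cT_n$; applying \cref{prop:Ext-description} gives $e\CA(n\delta)e\simeq \cS^\cI_n$.

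The main obstacle is the bookkeeping in the central equivalence between seminilpotent flags and the pair (i)--(ii): one must check that at each $\tau$-substep the condition at the ``second half'' (the $\alpha_1$-piece) alone produces the subrepresentation property of $\overline V^{(j)}$, while the condition at the ``first half'' (the $\alpha_0$-piece) becomes redundant once $\overline V^{(j-1)}$ is inductively known to be a subrepresentation, and dually that the single $\varepsilon$-step condition simultaneously yields both subrep-ness of $\overline V^{(j)}$ and the vanishing of $h_\n$ on $\overline V^{(j)}/\overline V^{(j-1)}$. Once this inductive translation is settled, the passage to torsion sheaves and the algebra identification are formal.
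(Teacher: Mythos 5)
Your proposal is correct and follows essentially the same route as the paper: identify $\Fl^\reg_{\Ibe_\cla}$ with the space of coarse flags of subrepresentations (the intermediate $\alpha_0$-subspace in each $\tau$-fragment being uniquely reconstructible), observe that the only extra condition at an $\varepsilon$-step is the vanishing of $h_\n$ on the corresponding subquotient, translate this via~\eqref{eq:tor-iso-reg} into the subquotient being $\cO_\infty^{\oplus\lambda^j}$, and then get the algebra isomorphism from \cref{rmk:flag-sheaves-semicusp} and \cref{prop:Ext-description}. The only difference is that the paper outsources the fully-$\tau$ case (coarse flags of subrepresentations over the regular locus $=$ flags of torsion subsheaves) to \cite[Lem.~8.19]{MakMin_KLR2023}, whereas you redo this bookkeeping by hand; your inductive analysis of which seminilpotent conditions are redundant is accurate. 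One imprecision to fix: it is not true that arbitrary subrepresentations of $V\in\Rep^\reg_{n\delta}$ correspond to torsion subsheaves of $\cE$ (e.g.\ the simple at the sink $0$ is always a subrepresentation but is not regular, hence corresponds to no subsheaf). What you need, and what suffices, is that the coarse flag steps have dimension vectors that are multiples of $\delta$, and any such subrepresentation of a regular representation is automatically regular: otherwise its Harder--Narasimhan filtration would produce a preinjective submodule of $V$, contradicting $\Hom(N,M)=0$ from \cref{lem:Hom-Ext-ordered}. With that one-line supplement (which is the content of the citation the paper uses), your argument is complete.
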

\begin{proof}
	When $\cla=\lambda$ is fully of colour $\tau$, this is the content of~\cite[Lem.~8.19]{MakMin_KLR2023}.
	In general, $\Fl^\reg_{\cla}$ is closed in $\Fl^\reg_{\lambda}$; the defining condition is that for each fragment of the flag $\ldots\subset V^{i-1}\subset V^i\subset V^{i+1}\subset \ldots$ with $\dim V^i/V^{i-1} = k\alpha_0$, $\dim V^{i+1}/V^{i} = k\alpha_1$, the restriction of $h_\n$ to $V^{i+1}/V^{i-1}$ vanishes.
	Under the equivalence~\eqref{eq:tor-iso-reg}, this translates into the requirement that each graded piece of colour $\varepsilon$ has to be isomorphic to $\cO_c^{\oplus k}$. This is precisely the defining condition of $\cF_\cla$ in $\cF_\lambda$, hence the first claim.
	The second claim follows from \cref{rmk:flag-sheaves-semicusp}.
\end{proof}

\begin{cor}\label{thm:End(P+)}
	The semicuspidal algebra $\CA(n\delta)$ is Morita-equivalent to $\cS^\cI_n(\bbP^1)$ over an arbitrary field $\bbk$.\qed
\end{cor}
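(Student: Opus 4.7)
The proof is essentially a direct combination of two results already established in the excerpt, so the proposal is short. The plan is to chain together \cref{prop:proj-gen} and \cref{prop:regular-truncate}.

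First I would invoke \cref{prop:proj-gen}, which gives that the idempotent $e = \sum_{\cla \in \cI^{(n)}} e_\cla \in \CA(n\delta)$ is full. This produces a Morita equivalence $\CA(n\delta) \sim_{\mathrm{Mor}} e\CA(n\delta)e$ valid over an arbitrary field $\bbk$ (the characteristic-zero refinement to $e_0$ in \cref{prop:proj-gen} is not needed here).

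Next I would apply \cref{prop:regular-truncate}, specialized to $C = \bbP^1$ and $c = \infty$, which provides an algebra isomorphism $e\CA(n\delta)e \xra{\sim} \cS^\cI_n(\bbP^1)$ obtained by restricting flag sheaves from $\Rep_{n\delta}$ to $\Rep^\reg_{n\delta} \simeq \cT_n(\bbP^1)$ via \cref{rmk:flag-sheaves-semicusp}. Composing the Morita equivalence with this isomorphism yields the desired conclusion.

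The only subtlety worth double-checking — and hence the one candidate for a genuine obstacle — is that \cref{prop:proj-gen} is stated over an arbitrary $\bbk$ for the projective generator $e$ (only the stronger statement for $e_0$ requires characteristic zero), and that the isomorphism in \cref{prop:regular-truncate} is also characteristic-independent (it relies on the assumption~\eqref{eq:assumption-on-curve}, which is satisfied for $C = \bbP^1$). Both of these hold, so no additional argument is needed and the corollary follows immediately.
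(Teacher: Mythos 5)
Your proposal is correct and matches the paper's (implicit, hence the \qed) argument exactly: the corollary follows by composing the Morita equivalence $\CA(n\delta)\sim e\CA(n\delta)e$ from \cref{prop:proj-gen} with the isomorphism $e\CA(n\delta)e\simeq \cS^\cI_n$ of \cref{prop:regular-truncate}. Your check that only the characteristic-independent $e$-statement (not the $e_0$-statement) is needed is also the right point to verify.
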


Denote the isomorphism of \cref{prop:regular-truncate} by $\Phi:eCA(n\delta)e\to \cS^\cI_n$.
Let us study what $\Phi$ does to the diagrammatic generators of $eCA(n\delta)e$.

\begin{prop}\label{prop:gen-images}
	Under $\Phi$, splits go to splits, merges go to merges, elementary crossings to elementary crossings, and colour changes to colour changes.
\end{prop}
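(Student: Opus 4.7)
The plan is to proceed by direct geometric identification, exploiting the fact that $\Phi$ is induced by $!$-restriction to the open substack $\Rep^\reg_{n\delta} \subset \Rep_{n\delta}$. Consequently, $\Phi$ sends the fundamental class of any closed subvariety $Y \subset \bfZ^\m_{\Ibe_\cla, \Ibe_\cmu}$ to the fundamental class of $Y\cap \cZ^\cI_{\cla,\cmu}$ under the identifications of \cref{prop:regular-truncate}. So the work is to show that for each type of generator on the $e\CA(n\delta)e$ side, the relevant subvariety restricts to the corresponding one on the $\cS^\cI_n$ side.

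First, I will handle pure-colour splits and merges. For colour $\tau$, the statement reduces to~\cite[Lem.~8.19]{MakMin_KLR2023} applied independently to each $\alpha_i$-component, since for purely-$\tau$ coloured $\cla$ the incidence variety $\widetilde{\bfF}^\m_{\Iga',\Iga}$ splits as a product of flag incidences in the individual colours, matching the factorization $\cF^{\lambda'}_\cla \simeq \cF^{\lambda'}_\lambda$ under \cref{prop:regular-truncate}. For colour $\varepsilon$, both sides are tautologically the fundamental class of the standard flag of $\infty$-supported torsion sheaves, and the match is immediate from the construction of $\Phi$. Elementary crossings of pure colour are defined as merges composed with splits on both sides, so the statement for them follows from that for splits and merges.

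Next, for colour changes, I will observe that under the equivalence~\eqref{eq:tor-iso-reg}, a representation in $\Rep^\reg_{a\delta}$ admits a seminilpotent flag of type $(a\alpha_0, a\alpha_1)$ if and only if its associated torsion sheaf on $\bbP^1$ is supported at $\infty$; this is exactly the defining condition of $\cT_{a\varepsilon} \subset \cT_a$. Hence the relevant quiver-Schur split or merge of \cref{subs:thickcals-cusp-QS} restricts precisely to $\iota_*[\cF_{\cla_{12}}]$, and the identification follows by applying this case-by-case at each colour-changing strand, using \cref{lem:prod-of-Ev} and multiplicativity of the restriction.

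The main obstacle will be the multi-coloured crossings $R_\cmu^\cla$ of \cref{def:mc-crossing}, whose definitions are genuinely different on the two sides: the bare fundamental class $[Z_{\cla,\cmu}]$ on the $\cS^\cI_n$ side, and a specific composition of three-strand crossings in the underlying quiver-Schur algebra on the $e\CA(n\delta)e$ side. I will verify their equality by computing the action on the polynomial representation $\bfP_n$ (where $\cS_n^\cI$ acts faithfully, as in~\cite{MakMin_KLR2023}): on the $e\CA(n\delta)e$ side one uses the explicit formulae of \cref{prop:poly-rep-quiver-Schur} to compute directly, while on the $\cS^\cI_n$ side one uses equivariant localization at torus-fixed loci in $\cF_\cla$, in parallel to~\cite[Sec.~4]{MakMin_KLR2023}. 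Once the actions agree on $\bfP_n$, the identification of the crossings follows, completing the proof.
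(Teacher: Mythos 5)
Your treatment of the pure-colour generators follows the paper's own route: splits and merges of colour $\tau$ are handled by citing the relevant lemma of~\cite{MakMin_KLR2023}, colour-$\varepsilon$ splits/merges and the colour changes come directly out of \cref{prop:regular-truncate}, and one-colour crossings are definitional. That part is fine (up to the citation detail that the split/merge comparison is~\cite[Lem.~8.7]{MakMin_KLR2023}, while Lem.~8.19 is what underlies \cref{prop:regular-truncate} itself).

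The multicoloured crossing is where your argument has a genuine gap. You propose to identify $\Phi$ of the quiver-Schur composite with $[Z_{\cla,\cmu}]$ by comparing actions on the polynomial representation $\bfP_n$, invoking faithfulness and an equivariant-localization computation of the curve-side action ``in parallel to~\cite[Sec.~4]{MakMin_KLR2023}''. Two problems. First, the faithfulness you invoke is for $\cS_n$, not for $\cS^\cI_n$; faithfulness of $\bfP_n$ over $\cS^\cI_n$ is only obtained in this paper as a corollary of the coloured basis theorem (\cref{thm:curve-Schur-basis}). That theorem does not depend on \cref{prop:gen-images}, so your route is not circular, but you are leaning on a statement you have not supplied and that requires the whole filtration/basis machinery of \cref{subs:basis-curve-Schur-2c}. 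Second, and more seriously, the curve-side action of multicoloured crossings on $\bfP_n$ is not something one reads off by routine localization: even in the thin rank-two case the computation (\cref{prop:thin-mc-cros-formula}) requires identifying the correct irreducible component of the Steinberg variety ($[Z'] = [T_2']$, a blow-up phenomenon) and an excess-intersection argument, and for thick multicoloured crossings the paper explicitly declines to give formulas, only expecting that a dimension count reduces them to thin ones. So the step ``compute both actions and compare'' is precisely the hard, unwritten part of your plan, not a routine verification.

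The paper avoids this entirely: since restriction to the open locus $\Rep^\reg_{n\delta}$ is an algebra homomorphism (convolution is local over the base), it computes the restricted convolution product directly. Writing $\kappa = (a\alpha_0,a\alpha_1,b\delta)$, $\lambda = (a\alpha_0,b\delta,a\alpha_1)$, $\mu = (b\delta,a\alpha_0,a\alpha_1)$, one checks that over the regular locus $Z^\circ_{\lambda,\mu} = Z^\circ_{\kappa,\mu} = Z^\circ_{\kappa,\lambda,\mu}$ and $\widetilde{\bfF}^\reg_\lambda = Z^\circ_{\kappa,\lambda}$, so the Gysin pullback in the convolution diagram is along a graph morphism and sends $[\widetilde{\bfF}^\reg_\lambda]\otimes x$ to $x$; hence $[Z_{\kappa,\lambda}^\circ]*[Z_{\lambda,\mu}^\circ] = [Z^\circ_{\kappa,\mu}]$, which is by definition the multicoloured crossing in $\cS^\cI_n$. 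If you want to keep your representation-theoretic strategy, you would need to first prove the coloured basis/faithfulness results independently and then actually carry out the thick-crossing localization computation on both sides; the direct geometric identification is substantially shorter and is the argument the paper uses.
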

\begin{proof}
	For splits and merges of colour $\tau$, this follows from~\cite[Lem.~8.7]{MakMin_KLR2023}.
	For splits and merges of colour $\varepsilon$, the claim follows immediately from~\cref{prop:regular-truncate}. 
	The proof of~\cref{prop:regular-truncate} also shows that colour changes restrict to colour changes. 
	Since the elementary crossing of one colour is defined as composition of split and merge, the claim holds for them as well.
	
	It remains to show that multicoloured crossings are preserved under $\Phi$.
	Let us compute the restriction of their defining correspondence to the regular representations.
	To save space, we will only consider the crossing from $(a\tau,b\varepsilon)$ to $(b\varepsilon,a\tau)$, $n = a+b$, the swapped case being completely analogous.
	Recall the convolution diagram for quiver Schur algebra:
	\[
		\begin{tikzcd}
			Z_{\kappa,\lambda}\times Z_{\lambda,\mu}\ar[d] & Z_{\kappa,\lambda,\mu}\ar[l]\ar[r]\ar[d] & Z_{\kappa,\mu}\ar[d]\\
			\widetilde{\bfF}_\kappa\times \widetilde{\bfF}_\lambda\times \widetilde{\bfF}_\lambda\times \widetilde{\bfF}_\mu & \widetilde{\bfF}_\kappa\times \widetilde{\bfF}_\lambda\times \widetilde{\bfF}_\mu\ar[l]\ar[r] & \widetilde{\bfF}_\kappa\times \widetilde{\bfF}_\mu
		\end{tikzcd}
	\]
	where we denote $\kappa = (a\alpha_0,a\alpha_1,b\delta)$, $\lambda = (a\alpha_0,b\delta,a\alpha_1)$, $\mu = (b\delta,a\alpha_0,a\alpha_1)$.
	Write $\Rep^\reg_{n\delta} = [E_{n\delta}^\reg/G_{n\delta}]$, and $\widetilde{\bfF}_\lambda^\reg = \widetilde{\bfF}_\lambda \cap (E_{n\delta}^\reg \times \bfF_\lambda)$.
	We want to show that after restricting to the regular locus $\widetilde{\bfF}_\kappa^\reg\times \widetilde{\bfF}_\mu^\reg$, the following equality holds:
	\[
		[Z_{\kappa,\lambda}]*[Z_{\lambda,\mu}] = [Z_{\kappa,\mu}].
	\]
	We will denote this restriction by a circle in superscript.
	First of all, one checks from the definitions that 
	\[
		Z_{\kappa,\lambda}^\circ = \left\{ \begin{tikzcd} V\subset\bbC^{a+b}\ar[r,shift right=.5ex,"h_\n"']\ar[r,shift left=.5ex,"h_\s"] & \bbC^{a+b}\supset W \end{tikzcd} \mid \dim V = b, \dim W = a, h_\n(V)\subset W \right\},
	\]
	and $Z_{\lambda,\mu}^\circ = Z_{\kappa,\mu}^\circ = Z_{\kappa,\lambda,\mu}^\circ = \{x(V) = 0\} \subset Z_{\kappa,\lambda}^\circ$.
	Furthermore, $\widetilde{\bfF}_\lambda^\reg = Z_{\kappa,\lambda}^\circ$.
	Putting all of this together, the product $[Z_{\kappa,\lambda}^\circ]*[Z_{\lambda,\mu}^\circ]$ is obtained by Gysin pullback of fundamental class along the following diagram:
	\[
		\begin{tikzcd}
			\widetilde{\bfF}^\reg_\lambda\times Z_{\lambda,\mu}^\circ\ar[d] & Z_{\lambda,\mu}^\circ\ar[l,"g"']\ar[d] \\
			  \widetilde{\bfF}^\reg_\lambda\times \widetilde{\bfF}^\reg_\lambda & \widetilde{\bfF}^\reg_\lambda\ar[l,"\Delta"']  
		\end{tikzcd}
	\]
	where $g$ is the graph of inclusion $Z_{\lambda,\mu}^\circ\hookrightarrow\widetilde{\bfF}_\lambda^\reg$.
	However, it is known that Gysin pullback along graph morphism sends $[\widetilde{\bfF}_\lambda^\reg]\otimes x$ to $x$, so we may conclude.
\end{proof}

\subsection{Diagrammatic relations}
Let us compute some relations in $\cS^\cI_n$.
\begin{prop}\label{prop:color-past-split}
	Let $\lambda = (\lambda^1,\ldots,\lambda^k)\in \Comp(n)$.
	The following local diagrammatic relations hold in $\cS^\cI_n$, together with their upside down versions:
	$$
	\tikz[thick,xscale=.2,yscale=.2,font=\footnotesize]{
	\draw (-2,0) -- (-2,1);
	\draw (-2,4) -- (-2,6);
	\draw [color=cyan!90!white] (-2,6) -- (-2,8);
	\ntxt{-2}{6}{$-$}
	\opbox{-3}{1}{-1}{4}{$P$}
	\ntxt{-2}{-1}{$n\tau$}
	\ntxt{-2}{9}{$n\varepsilon$}
	\ntxt{1}{4}{$=$}

	\draw (4,0) -- (4,2);
	\draw [color=cyan!90!white] (4,2) -- (4,4);
	\draw [color=cyan!90!white] (4,7) -- (4,8);
	\ntxt{4}{2}{$-$}
	\ntxt{4}{-1}{$n\tau$}
	\ntxt{4}{9}{$n\varepsilon$}
	\opbox{3}{4}{5}{7}{$P$}
	\ntxt{6}{4}{,}
	}\quad
	\tikz[thick,xscale=.2,yscale=.2,font=\footnotesize]{
	\draw (3,0) -- (3,1.5);
	\draw [color=cyan!90!white] (3,1.5) -- (3,3);
	{\color{cyan!90!white}\dsplit{0}{3}{6}{8}
	\dsplit{1.5}{3}{4.5}{8}}
	\ntxt{3}{1.5}{$-$}
	\ntxt{3}{6.5}{$\cdots$}
	\ntxt{3}{-1}{$n\tau$}
	\ntxt{0}{9}{$\lambda_1\varepsilon$}
	\ntxt{6}{9}{$\lambda_k\varepsilon$}

	\ntxt{7.5}{4}{$=$}

	\draw (13,0) -- (13,1);
	\dsplit{10}{1}{16}{4}
	\dsplit{11.5}{1}{14.5}{4}
	\ntxt{13}{6.5}{$\cdots$}
	\ntxt{13}{-1}{$n\tau$}
	\ntxt{10}{9}{$\lambda_1\varepsilon$}
	\ntxt{16}{9}{$\lambda_k\varepsilon$}
	\opbox{9.7}{4}{16.3}{6}{$Q_\lambda$}
	\draw (10,6) -- (10,7);
	\draw (11.5,6) -- (11.5,7);
	\draw (14.5,6) -- (14.5,7);
	\draw (16,6) -- (16,7);
	\draw [color=cyan!90!white] (10,7) -- (10,8);
	\draw [color=cyan!90!white] (11.5,7) -- (11.5,8);
	\draw [color=cyan!90!white] (14.5,7) -- (14.5,8);
	\draw [color=cyan!90!white] (16,7) -- (16,8);
	\ntxt{10}{7}{$-$}
	\ntxt{11.5}{7}{$-$}
	\ntxt{14.5}{7}{$-$}
	\ntxt{16}{7}{$-$}
	\ntxt{17}{4}{,}
	}\quad
	\tikz[thick,xscale=.2,yscale=.2,font=\footnotesize]{
	\draw [color=cyan!90!white] (3,0) -- (3,1.5);
	\draw (3,1.5) -- (3,3);
	\dsplit{0}{3}{6}{8}
	\dsplit{1.5}{3}{4.5}{8}
	\ntxt{3}{1.5}{$-$}
	\ntxt{3}{6.5}{$\cdots$}
	\ntxt{3}{-1}{$n\varepsilon$}
	\ntxt{0}{9}{$\lambda_1\tau$}
	\ntxt{6}{9}{$\lambda_k\tau$}

	\ntxt{7.5}{4}{$=$}

	{\color{cyan!90!white}\draw (13,0) -- (13,1);
	\dsplit{10}{1}{16}{4}
	\dsplit{11.5}{1}{14.5}{4}}
	\ntxt{13}{6.5}{$\cdots$}
	\ntxt{13}{-1}{$n\varepsilon$}
	\ntxt{10}{9}{$\lambda_1\tau$}
	\ntxt{16}{9}{$\lambda_k\tau$}
	\opbox{9.7}{4}{16.3}{6}{$(-1)^{|N_\lambda|}Q_\lambda$}
	\draw [color=cyan!90!white] (10,6) -- (10,7);
	\draw [color=cyan!90!white] (11.5,6) -- (11.5,7);
	\draw [color=cyan!90!white] (14.5,6) -- (14.5,7);
	\draw [color=cyan!90!white] (16,6) -- (16,7);
	\draw (10,7) -- (10,8);
	\draw (11.5,7) -- (11.5,8);
	\draw (14.5,7) -- (14.5,8);
	\draw (16,7) -- (16,8);
	\ntxt{10}{7}{$-$}
	\ntxt{11.5}{7}{$-$}
	\ntxt{14.5}{7}{$-$}
	\ntxt{16}{7}{$-$}
	\ntxt{17}{4}{.}
	}
	$$
	Here, $P\in\bbk[x_1,\ldots,x_n]^{\fkS_n}$ is a polynomial, and $Q_{\lambda} \coloneqq \prod_{(i,j)\in N_\lambda}(x_i-x_j)$ is the $GL_\lambda$-equivariant Euler class of $\mathfrak{n}_\lambda = \ona{Lie}(\ona{Rad}(GL_\lambda\cdot B))$.
\end{prop}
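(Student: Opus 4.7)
The plan is to prove all three relations geometrically, interpreting each as an identity of fundamental classes in the convolution algebra $\cS^\cI_n = H_*(\cZ^\cI_n,\bbk)$, in the spirit of the proof of \cref{prop:gen-images}. Unwinding the definitions of splits, colour changes and coupons, each side of each relation is a fundamental class of a closed substack of some Steinberg stack $\cZ^\cI_{\cla,\cmu}$, possibly capped with a polynomial, so that the two sides can be compared via an excess intersection computation.

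Relation (1) is the simplest: the colour change $C^{n\tau}_{n\varepsilon}$ is $\iota_*[\cF_{n\varepsilon}]$ for the diagonal embedding $\iota\colon \cF_{n\varepsilon}\hookrightarrow \cZ^\cI_{n\tau,n\varepsilon}$, and multiplication by the coupon $P$ from either side corresponds to cup product with $P$ pulled back along the appropriate projection of $\cZ^\cI_{n\tau,n\varepsilon}$. Since $\cF_{n\varepsilon}\simeq[\pt/GL_n]$ and the Chern roots $x_i$ of the tautological bundle pull back identically whether we view $\cF_{n\varepsilon}$ inside $\cF_{n\tau}$ or on its own, both sides equal $\iota_*(P\cdot[\cF_{n\varepsilon}])$.

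For relations (2) and (3), the key geometric input is the commuting square
\[
\begin{tikzcd}
\cF_{\lambda\varepsilon}\ar[r,hook,"j_\varepsilon"]\ar[d,hook,"c_\lambda"'] & \cF_{n\varepsilon}\ar[d,hook,"c"] \\
\cF_{\lambda\tau}\ar[r,"j_\tau"'] & \cF_{n\tau}
\end{tikzcd}
\]
in which $c,c_\lambda$ are the closed embeddings provided by the colour changes and $j_\tau,j_\varepsilon$ are the natural split maps. This square is set-theoretically cartesian, but the expected codimension of $\cF_{\lambda\varepsilon}\hookrightarrow \cF_{n\tau}$ via either path exceeds the actual codimension by the rank of the $GL_\lambda$-representation $\mathfrak{n}_\lambda$, whose equivariant Euler class is precisely $Q_\lambda$. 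Applying the excess intersection formula to the Gysin pullbacks defining the relevant convolutions, one side of relation (2) becomes $\iota_*[\cF_{\lambda\varepsilon}]$ inside $\cZ^\cI_{\lambda\varepsilon,n\tau}$, while the other side becomes $\iota_*(Q_\lambda\cdot[\cF_{\lambda\varepsilon}])$; rearranging yields relation (2). Relation (3) is proved by the same argument with source and target interchanged: now the excess normal bundle is dualized, so its Euler class acquires a sign $(-1)^{|N_\lambda|}$ from $e(\xi^\vee) = (-1)^{\ona{rk}\xi}e(\xi)$. Alternatively this sign can be tracked through the Fourier--Sato transform as in \cref{prop:Fourier-props}(3). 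The upside-down versions are obtained by an identical argument with merges in place of splits, or by Verdier duality.

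The main obstacle is the excess intersection computation: one must carefully identify the excess normal bundle as a $GL_\lambda$-equivariant vector bundle on $\cF_{\lambda\varepsilon}$, verify that its equivariant Euler class is $Q_\lambda$, and correctly track the orientation signs in relation (3). A subtle auxiliary step is checking that the set-theoretic cartesian square above is also \emph{stack-theoretically} cartesian up to a well-understood excess, which relies on the smoothness of the $\cF_\cla$ established in \cref{lem:Y-is-smooth}.
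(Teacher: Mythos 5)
Your handling of relation (1) is fine and agrees with the paper's one-line argument: the coupon $P$ is pulled back from $[\pt/GL_n]$, so it slides past the colour change by linearity of the convolution product over $H^*_{GL_n}(\pt)$. The argument for relations (2) and (3), however, is built on a square that does not exist. There is no natural map $j_\varepsilon\colon \cF_{\lambda\varepsilon}\to\cF_{n\varepsilon}$: a flag all of whose subquotients are isomorphic to $\cO_c^{\oplus\lambda^i}$ need not have total sheaf $\cO_c^{\oplus n}$ (already for $n=2$, $\lambda=(1,1)$, the indecomposable length-two torsion sheaf at $c$ is filtered with both subquotients $\cO_c$ but is not $\cO_c^{\oplus 2}$), so your square does not commute; and it is certainly not set-theoretically cartesian, since the honest fibre product $\cF_{\lambda\tau}\times_{\cF_{n\tau}}\cF_{n\varepsilon}$ is the stack of flags of subsheaves of $\cO_c^{\oplus n}$, i.e.\ $[\bfF_\lambda/GL_n]$, which is a \emph{proper} closed substack (the zero section) of $\cF_{\lambda\varepsilon}\simeq[T^*\bfF_\lambda/GL_n]$. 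This error propagates into your identification of the two sides: if the left-hand side of (2) were $\iota_*[\cF_{\lambda\varepsilon}]$ and the right-hand side $\iota_*(Q_\lambda\cdot[\cF_{\lambda\varepsilon}])$, the relation would assert $[\cF_{\lambda\varepsilon}]=Q_\lambda\cdot[\cF_{\lambda\varepsilon}]$, which is false; no rearranging repairs this.

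What actually happens (and what the paper's proof does) is the following. Both sides of (2) are classes in $H_*^{GL_n}(Y_\cla)$ with $Y_\cla\simeq T^*\bfF_\lambda$; a convolution computation in the style of~\cite[Lem.~4.6]{MakMin_KLR2023} identifies the left-hand side with the class of the zero section $[\bfF_\lambda]$ and the right-hand side with $Q_\lambda\cdot[Y_\cla]$, and these coincide by the self-intersection formula because $Q_\lambda$ is exactly the $GL_\lambda$-equivariant Euler class of the embedding $\bfF_\lambda\subset T^*\bfF_\lambda$ — the discrepancy is the cotangent-fibre directions, not an excess attached to your square. For (3) the left-hand side is a Gysin pullback $\iota^![X_{n\varepsilon}\times Y_\lambda]$ along the diagonal of $X_n$, whose excess class is $e(TX_n|_{\bfF_\lambda}/N_{\bfF_\lambda}(Y_\lambda))=\prod_{(i,j)\in N_\lambda}(x_j-x_i)=(-1)^{|N_\lambda|}Q_\lambda$; your ``dualized excess bundle'' heuristic guesses the right sign but is not anchored to a correct fibre square, and the genuinely nontrivial step — showing that the convolutions equal these intersection classes (the clean/transverse intersection checks of~\cite[Lem.~4.6]{MakMin_KLR2023}) — is absent. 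You also leave locality of the relations unaddressed (in the paper it follows by base change along the forgetful maps $\cF_\cla\to\cF_\cmu$), though this is a minor point compared with the incorrect square.
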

\begin{proof}
	Upside down versions of the relations are obtained by inverting the order of indexes.
	Locality follows by base change along the natural forgetful maps $\cF_\cla\to \cF_\cmu$, where $\cla\vDash\cmu$.
	The first relation holds because the product in $\cS_n^\cI$ is linear over $H^*_{GL_n}(\pt) = \bbk[x_1,\ldots,x_n]^{\fkS_n}$.
	For the second relation, a computation analogous to~\cite[Lem.~4.6]{MakMin_KLR2023} shows that the l.h.s. is given by $[\bfF_\lambda]\in \cS^\cI_{n\tau,\cla}$, and the r.h.s. by $Q_\lambda\cdot[Y_{\cla}]\in \cS^\cI_{n\tau,\cla}$.
	The difference between the two fundamental classes is the Euler class of the embedding $\bfF_\lambda\subset Y_{\cla} = T^*\bfF_\lambda$, which is precisely $Q_\lambda$.

	The third relation is slightly more subtle.
	While the r.h.s. is similarly given by $Q_\lambda\cdot[\bfF_{\lambda}]$, the same argument computes the l.h.s. to be $\iota^![X_{n\varepsilon}\times Y_\lambda]$, where $\iota^!$ is Gysin pullback with respect to the following fiber square:
	\[
	\begin{tikzcd}
		X_{n\varepsilon}\times Y_\lambda\ar[d] & \bfF_\lambda\ar[d]\ar[l,"\iota"'] \\
		X_n\times X_n & X_n \ar[l]
	\end{tikzcd}
	\]
	This pullback differs from $[\bfF_\lambda]$ by an excess intersection class
	\[
		e(TX_n|_{\bfF_\lambda}/ N_{\bfF_\lambda}(Y_\lambda)) = e(N_{F_\lambda}(X_n)|_{\bfF_\lambda}) = \prod_{(i,j)\in N_\lambda}(x_j-x_i) = (-1)^{|N_\lambda|}Q_\lambda,
	\]
	so we may conclude.
\end{proof}

\begin{lem}\label{lem:two-colour-changes}
	We have $C^{n\tau}_{n\varepsilon}C_{n\tau}^{n\varepsilon} = \prod_{i=1}^n c_i \prod_{1\leq i\neq j\leq n}(x_i-x_j)^2$, and $C_{n\tau}^{n\varepsilon}C^{n\tau}_{n\varepsilon} = 0$.
\end{lem}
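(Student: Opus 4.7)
Both identities reduce to standard calculations for a closed embedding. Since $\cF_{n\tau} = \cT_n$, the Steinberg stack $\cZ^\cI_{n\tau, n\varepsilon}$ collapses to $\cF_{n\varepsilon}$, with the two projections being the closed embedding $j:\cF_{n\varepsilon}\hookrightarrow \cT_n$ and the identity; analogously for $\cZ^\cI_{n\varepsilon, n\tau}$. Therefore the colour change $C^{n\tau}_{n\varepsilon}$ acts on the polynomial representation $\bfP_n$ as the pushforward $j_*$, and $C_{n\tau}^{n\varepsilon}$ acts as the pullback $j^*$.

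The vanishing $C_{n\tau}^{n\varepsilon}C^{n\tau}_{n\varepsilon} = j^*j_* = 0$ will then follow from the self-intersection formula together with a normal-bundle calculation. Under the isomorphism $\Rep^\reg_{n\delta}\simeq \cT_n$, the closed point $\cO_\infty^n$ corresponds to the representation $(h_\s, h_\n) = (\id_{\bbC^n}, 0)\in E_{n\delta}$, whose stabilizer in $G_{n\delta} = GL_n\times GL_n$ is the diagonal copy of $GL_n$. A direct tangent-space calculation identifies the normal bundle $N_j$ with $\End(\bbC^n)$ under conjugation by the diagonal $GL_n$; its equivariant weights are $\{x_i - x_j : 1\leq i, j\leq n\}$, including $n$ diagonal zero weights. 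Thus $e(N_j) = \prod_{i,j}(x_i - x_j) = 0$.

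For the first identity, the projection formula gives $j_*j^*(-) = [\cF_{n\varepsilon}]\cdot(-)$, reducing the task to computing the class $[\cF_{n\varepsilon}]\in H^*(\cT_n)$. My plan is to apply the isomorphism $\Phi: e\CA(n\delta)e \xra{\sim}\cS^\cI_n$ of \cref{prop:regular-truncate} together with \cref{prop:gen-images}: the colour changes correspond respectively to $S^\m_{(n\alpha_0, n\alpha_1),(n\delta)}$ and $M^\m_{(n\delta),(n\alpha_0, n\alpha_1)}$ in the seminilpotent quiver Schur algebra $A^\m(n\delta)$. Plugging $(a, b, c, d) = (n, 0, 0, n)$ into \cref{prop:poly-rep-quiver-Schur}, the composition $S\cdot M$ acts on the $(n\alpha_0, n\alpha_1)$-component $\bbk[u_1, \ldots, u_n, v_1, \ldots, v_n]^{\fkS_n\times \fkS_n}$ of the polynomial representation as multiplication by $K_1 = \prod_{i,j}(u_i - v_j)$. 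Hence $[\cF_{n\varepsilon}]$ is the image of $K_1$ under the restriction $H^*(\Rep_{n\delta})\twoheadrightarrow H^*(\Rep^\reg_{n\delta}) = H^*(\cT_n)$.

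Finally, I need to identify this image with $\prod_i c_i \prod_{i\neq j}(x_i - x_j)^2$. I would pull back further along $q:\cF_{1^n\tau}\to \cT_n$, whose cohomological pullback is injective by the assumption~\eqref{eq:assumption-on-curve}. In $H^*(\cF_{1^n\tau}) = H^*(C^n)[x_1, \ldots, x_n]$, the class $q^*[\cF_{n\varepsilon}] = [\cF_{1^n\varepsilon}]$ admits a clean description via the Cartesian square with $\cT_1^n$: it decomposes as the pullback of the class $[\cT_{1\varepsilon}^n] = \prod_i c_i$ times an excess intersection contributed by the fibers of $\cF_{1^n\tau}\to \cT_1^n$. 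The main obstacle is evaluating this excess intersection correctly to produce the Vandermonde-squared factor $\prod_{i<j}(x_i - x_j)^2$; the base case $n=1$ is direct from the explicit presentation $H^*(\cT_1) = \bbk[h, x]/(h^2)$, and the general case follows by iterating via splits into single strands and reassembling through the compatibility relations of \cref{prop:color-past-split}.
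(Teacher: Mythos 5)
Your treatment of the vanishing $C_{n\tau}^{n\varepsilon}C^{n\tau}_{n\varepsilon}=0$ is correct and is essentially the paper's argument seen through the quiver model: the paper computes the Euler class of the embedding $\pt\to X_n$ as $\prod_i c_i\prod_{i\neq j}(x_i-x_j+\Delta_{ij})$ and then restricts to $\cT_{n\varepsilon}$, where the $c_i$ vanish; your identification of the normal bundle of $[\pt/GL_n]$ in $\cT_n\simeq\Rep^\reg_{n\delta}$ with the adjoint representation (the $h_\n$-direction, with the diagonal $GL_n$ acting by conjugation, hence $n$ zero weights) is the same zero-Euler-class observation and checks out.

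For the first identity, however, your proof is incomplete. The reduction to computing $[\cF_{n\varepsilon}]\in H^*(\cT_n)$, and the passage through $\Phi$, \cref{prop:gen-images} and \cref{prop:poly-rep-quiver-Schur} to the image of $K_1=\prod_{i,j}(u_i-v_j)$ under restriction, is sound (it amounts to saying that $\cF_{n\varepsilon}$ is the intersection of $\Rep^\reg_{n\delta}$ with the linear substack $\{h_\n=0\}$, whose equivariant class is $K_1$); just note that to promote ``acts on the polynomial representation by $K_1$'' to an identity of algebra elements you should evaluate at $1$ (the component $\cS^\cI_{n\tau,n\tau}=H^*(\cT_n)$ acts on $\bfP_{n\tau}$ by cup product), rather than appeal to faithfulness of $\bfP_n$, which in the paper is only established \emph{after} this lemma. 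The genuine gap is the last step: you never evaluate the image of $K_1$, and instead defer to an excess-intersection computation at full flags that you yourself flag as ``the main obstacle'', verify only for $n=1$, and close with an unspecified reassembly via \cref{prop:color-past-split}. That detour is unnecessary: \cref{cor:image-phi} gives the restriction explicitly as $u_i\mapsto x_i$, $v_j\mapsto x_j+c_j$, so $\widetilde\phi(K_1)=\prod_{i,j}(x_i-x_j-c_j)=(-1)^n\prod_i c_i\prod_{i\neq j}(x_i-x_j)$ modulo $c_j^2=0$ — one line. Comparing with the paper's proof (a single push--pull Euler class computation, factored through $C^n$ as $\prod_i c_i\prod_{i\neq j}(x_i-x_j+\Delta_{ij})=\prod_i c_i\prod_{i\neq j}(x_i-x_j)$) and with \cref{prop:new-poly-rep}, the correct answer is $\pm\prod_i c_i\prod_{i\neq j}(x_i-x_j)=\pm\prod_i c_i\prod_{i<j}(x_i-x_j)^2$; the exponent $2$ over $i\neq j$ in the statement is a misprint, and trying to manufacture that extra square is exactly what drives you into the unfinished excess-intersection argument.
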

\begin{proof}
	Both products are obtained by pushing and pulling along the closed embedding $\pt \to X_n$, thus amount to multiplication by the Euler class of this embedding.
	This class is equal to 
	\[
		e(N_\pt C^n)e(N_{C^n}X_n) = \prod_{i=1}^n c_i \prod_{1\leq i\neq j\leq n}(x_i-x_j+\Delta_{ij}) = \prod_{i=1}^n c_i \prod_{1\leq i\neq j\leq n}(x_i-x_j).
	\]
	It remains to note that $c_i$'s vanish in the cohomology of $\cT_{n\varepsilon}$.
\end{proof}
Diagrammatically, we draw multiplication by $\prod_i c_i \prod_{i\neq j}(x_i-x_j)^2$ as a cross on a strand.

In order to compute relations between colour changes and crossings, we need some preparation.
Given $\lambda,\mu\in \Comp(n)$, consider the natural projection $Z_{\lambda,\mu}\to \bfF_\lambda\times \bfF_\mu$.
The $GL_n$-orbits $\Omega_w\subset \bfF_\lambda\times \bfF_\mu$ are parameterized by double cosets $w\in \fkS_\lambda\backslash \fkS_n/\fkS_\mu$.
This stratification into orbits induces a stratification $Z_{\lambda,\mu} = \bigsqcup_{w\in \fkS_\lambda\backslash \fkS_n/\fkS_\mu}Z_{\lambda,\mu}^w$.
Fixing a total order on $\fkS_\lambda\backslash \fkS_n/\fkS_\mu$ which refines the closure order in $\bfF_\lambda\times \bfF_\mu$, we obtain a filtration $H^{\leq w}_*(\cZ_{\lambda,\mu})$ on $H_*(\cZ_{\lambda,\mu})$ by~\cite[Sec.~3.3]{MakMin_KLR2023}.
Moreover, we have isomorphisms $H_*^{GL_n}(Z_{\lambda,\mu}^w)\simeq H_*(\cT_{\mu'})$, where $\mu'$ is determined by $\fkS_{\mu'} = \fkS_\mu\cap w^{-1}\fkS_\lambda w$.
All of this works just as well for the $\cI$-coloured version:
\begin{prop}\label{prop:filtr-on-Steinberg}
	The stratification $\bfF_\lambda\times \bfF_\mu = \bigsqcup_w \Omega_w$ induces a filtration on $H_*(\cZ_{\cla,\cmu})$.
	The $w$-th graded component is isomorphic to $H_*(\cT_{\cmu'})$, where $\cmu'$ is as in \cref{subs:sp-mer-diag}.
\end{prop}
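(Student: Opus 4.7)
The plan is to emulate the argument of \cite[Sec.~3.3]{MakMin_KLR2023} in the coloured setting. The first step is to pull back the orbit stratification $\bfF_\lambda \times \bfF_\mu = \bigsqcup_w \Omega_w$ along the natural projection $Z_{\cla, \cmu} \to \bfF_\lambda \times \bfF_\mu$, obtaining a decomposition $Z_{\cla, \cmu} = \bigsqcup_w Z^w_{\cla, \cmu}$. Choosing a total order on $\fkS_\lambda \backslash \fkS_n / \fkS_\mu$ refining the closure order yields a sequence of open-closed decompositions; the resulting long exact sequences in equivariant Borel-Moore homology define the desired filtration $H^{\leq w}_*(\cZ_{\cla, \cmu})$ exactly as in the uncoloured case, once one knows that the successive graded pieces have Borel-Moore homology concentrated in appropriate degrees.

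The substantive work lies in identifying the $w$-th graded piece. After fixing a pair of flags $(V^\bullet, W^\bullet) \in \Omega_w$, the remaining datum of $Z^w_{\cla, \cmu}$ is a surjection $s\colon \bbC^n \otimes \cO \twoheadrightarrow \cE$ such that each $s(V^i)/s(V^{i-1})$ lies in $\cT_{\cla^i}$ and each $s(W^j)/s(W^{j-1})$ lies in $\cT_{\cmu^j}$. Because the two flags meet in relative position $w$, the associated graded pieces of $\cE$ split into blocks indexed by intersections $V^i \cap W^j$, whose sizes are recorded precisely by \eqref{eq:betapr-gammapr}; moreover, the $(i,j)$-block must be supported at $c$ whenever either $\cla^i$ or $\cmu^j$ has colour $\varepsilon$. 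Declaring such a block to have colour $\varepsilon$, and colour $\tau$ otherwise, recovers exactly the coloured composition $\cmu'$ of \cref{subs:sp-mer-diag}. The resulting morphism $\cZ^w_{\cla, \cmu} \to \cT_{\cmu'}$ is then an iterated affine bundle by an argument analogous to \cref{lem:Y-is-smooth}, and hence induces an isomorphism on equivariant Borel-Moore homology.

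The main obstacle I anticipate is the bookkeeping: making the propagation of the support-at-$c$ condition through the $V^i \cap W^j$-grading of $\cE$ rigorous, and verifying that the colour-refinement rule emerging from the geometry coincides with the diagrammatic rule implicit in \cref{subs:sp-mer-diag}. The former should reduce to base-change along the closed embedding $\cT_{m\varepsilon} \hookrightarrow \cT_{m\tau}$ applied block by block, while the latter is a purely combinatorial unpacking of which coupons may legally carry which colour in Schur diagrammatics. Neither step is deep, but both require careful indexing over the two-dimensional array of intersection cells.
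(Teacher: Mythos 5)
Your proposal follows essentially the same route as the paper: pull back the orbit stratification to $Z_{\cla,\cmu}$, show that each stratum $Z^w_{\cla,\cmu}$ is an affine fibration (the paper phrases it as a fibration over $\Omega_w\times X_{\cmu'}$, analogous to \cref{lem:Y-is-smooth}; your version over $\cT_{\cmu'}$ is the same statement after quotienting, since the stabilizer of a flag pair in $\Omega_w$ is homotopy equivalent to $GL_{\mu'}$), and then split the long exact sequences attached to a total order refining the closure order. Your colour bookkeeping is in fact the correct reading of the statement: the cell $(i,j)$ must carry colour $\varepsilon$ as soon as \emph{either} $\cla^i$ or $\cmu^j$ does, i.e.\ the relevant composition is the colouring denoted $\cla'_w$ (equivalently its $\mu$-side analogue) in \cref{subs:basis-curve-Schur-2c}, not the colouring inherited from $\cmu$ alone --- already $\cla=(1\varepsilon)$, $\cmu=(1\tau)$ shows the latter would give the wrong answer --- and this is exactly how the proposition is used in the proof of \cref{thm:curve-Schur-basis}. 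Note also that the $\varepsilon$-conditions do not only constrain the image in $X_{\cmu'}$: the requirement that a whole block be isomorphic to $\cO_c^{\oplus m}$ kills certain extension classes, i.e.\ it also cuts the affine fibres; this is harmless (the conditions are linear, so one still has an affine fibration), but it is slightly more than a literal pullback of the argument of \cref{lem:Y-is-smooth}, so it deserves the careful indexing you anticipate.

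The one substantive point you leave hanging is the splitting of the long exact sequences, which is precisely what identifies the graded pieces of the filtration with $H_*(\cT_{\cmu'})$ rather than merely bounding them. You phrase this as a hypothesis (``once one knows that the successive graded pieces have Borel--Moore homology concentrated in appropriate degrees''), but parity is only the right reason when $C=\bbP^1$; for a general curve with $\bbQ\subset\bbk$, which is also allowed by~\eqref{eq:assumption-on-curve}, the homology of $\cT_{\cmu'}$ has odd part coming from $H^1(C)$, and the vanishing of the connecting maps instead follows from purity of weights. The paper disposes of this in one sentence (``each piece $H_*(\cT_{\cmu'})$ is either pure or even by~\eqref{eq:assumption-on-curve}''), and your proof needs to supply this input explicitly rather than assume it.
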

\begin{proof}
	Analogously to the proof of~\cref{lem:Y-is-smooth}, $Z_{\cla,\cmu}^w$ is an affine fibration over $\Omega_w\times X_{\cmu'}$.
	In particular, we have $H_*(\cZ_{\cla,\cmu}^w) \simeq H_*(\cT_{\cmu'})$.
	Since each piece $H_*(\cT_{\cmu'})$ is either pure or even by~\eqref{eq:assumption-on-curve}, the long exact sequences associated to each successive step of stratification split.
	This gives rise to a filtration with desired graded components.
\end{proof}

Now let $\cla$, $\cmu$ be as in \cref{def:mc-crossing}. We denote 
\begin{gather*}
	\cla_\tau = (\lambda^1c^1,\ldots,a\tau, b\tau,\ldots, \lambda^kc^k),\quad \cla_\varepsilon = (\lambda^1c^1,\ldots,a\varepsilon, b\varepsilon,\ldots, \lambda^kc^k),\\
	\cmu_\tau = (\lambda^1c^1,\ldots,b\tau, a\tau,\ldots, \lambda^kc^k),\quad \cmu_\varepsilon = (\lambda^1c^1,\ldots,b\varepsilon, a\varepsilon,\ldots, \lambda^kc^k).
\end{gather*}

\begin{lem}\label{lem:color-slide}
	Up to lower terms with respect to the length filtration, we have $C_{\cla}^{\cla_\tau}R_\cmu^\cla = R_{\cmu_\tau}^{\cla_\tau} C_{\cmu}^{\cmu_\tau}$, $C^\cmu_{\cmu_\varepsilon}R^{\cmu_\varepsilon}_{\cla_\varepsilon} = R_\cla^\cmu C^\cla_{\cla_\varepsilon}$.
	Diagrammatically, colour change slides past crossings:
	\[
		\tikz[thick,xscale=.2,yscale=.2,font=\footnotesize]{
		\draw [color=cyan!90!white] (0,0) -- (3,4.5);
		\draw (3,4.5) -- (4,6);
		\draw (4,0) -- (0,6);
		\ntxt{3}{4.5}{$-$}
		\ntxt{0}{-1}{$b\varepsilon$}
		\ntxt{4}{-1}{$a\tau$}
		\ntxt{0}{7}{$a\tau$}
		\ntxt{4}{7}{$b\tau$}
		\ntxt{6}{3}{$=$}
		\draw [color=cyan!90!white] (8,0) -- (9,1.5);
		\draw (9,1.5) -- (12,6);
		\draw (12,0) -- (8,6);
		\ntxt{9}{1.5}{$-$}
		\ntxt{8}{-1}{$b\varepsilon$}
		\ntxt{12}{-1}{$a\tau$}
		\ntxt{8}{7}{$a\tau$}
		\ntxt{12}{7}{$b\tau$}
		\ntxt{15}{3}{$+\ldots,$}
		}\qquad
		\tikz[thick,xscale=.2,yscale=.2,font=\footnotesize]{
		\draw [color=cyan!90!white] (0,0) -- (3,4.5);
		\draw (3,4.5) -- (4,6);
		\draw [color=cyan!90!white] (4,0) -- (0,6);
		\ntxt{3}{4.5}{$-$}
		\ntxt{0}{-1}{$a\varepsilon$}
		\ntxt{4}{-1}{$b\varepsilon$}
		\ntxt{0}{7}{$b\varepsilon$}
		\ntxt{4}{7}{$a\tau$}
		\ntxt{6}{3}{$=$}
		\draw [color=cyan!90!white] (8,0) -- (9,1.5);
		\draw (9,1.5) -- (12,6);
		\draw [color=cyan!90!white] (12,0) -- (8,6);
		\ntxt{9}{1.5}{$-$}
		\ntxt{8}{-1}{$a\varepsilon$}
		\ntxt{12}{-1}{$b\varepsilon$}
		\ntxt{8}{7}{$b\varepsilon$}
		\ntxt{12}{7}{$a\tau$}
		\ntxt{15}{3}{$+\ldots$.}
		}
	\]
\end{lem}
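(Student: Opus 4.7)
The natural framework is the length filtration on $H_*(\cZ^\cI_{\cla_\tau,\cmu})$ from \cref{prop:filtr-on-Steinberg} (for the first identity; the second is completely analogous after exchanging sources and targets). ``Lower terms'' in the statement is to be interpreted as elements of the subspace $H^{<w}_*$, where $w = w_{a,b}$ is the swap permutation indexing the multicoloured crossing. Both sides of the first identity lie in $\cS^\cI_{\cla_\tau,\cmu}$, so it suffices to show that their images in the top associated graded piece $H^{\leq w}_*/H^{<w}_*$ coincide.

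First I would pin down the top symbols of the crossings. The stack $Z_{\cla,\cmu}$ is irreducible and its dense stratum is $Z^w_{\cla,\cmu}$ (the locus where the two new blocks are in generic position); hence $R^\cla_\cmu = [Z_{\cla,\cmu}]$ projects to the class $1 \in H^0(\cT_{\cmu'})$ in the top graded piece under the identification of \cref{prop:filtr-on-Steinberg}, and likewise for $R^{\cla_\tau}_{\cmu_\tau}$. Because $\cmu$ and $\cmu_\tau$ differ only by the colour of one block, the underlying uncoloured composition $\mu'$ is the same on both sides, and the two quotients $H^{\leq w}_*/H^{<w}_*$ (for $\cZ^\cI_{\cla_\tau,\cmu}$ and $\cZ^\cI_{\cla_\tau,\cmu_\tau}$) are naturally related by the colour change $C^{\cmu_\tau}_\cmu$.

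The second step is to verify that both compositions yield the same top symbol. The colour changes $C^{\cla_\tau}_\cla$ and $C^{\cmu_\tau}_\cmu$ are pushforwards from closed embeddings onto the diagonal (stratum $w = e$). When convolved with a crossing, one computes the result via base change in the standard convolution diagram. Over the open stratum $w = w_{a,b}$ of $\bfF_{\lambda_\tau}\times \bfF_\mu$, the relevant fibre products of flag stacks intersect transversally, and the two conditions ``impose colour $\varepsilon$ on the designated block'' and ``place the two blocks in generic position'' constrain disjoint pieces of the flag data. Consequently, whether we impose $\varepsilon$ before or after crossing, the resulting class on the top stratum is the fundamental class of the same subvariety, and in particular the top symbol of each side is the image of $1 \in H^0(\cT_{\mu'})$ along $C^{\cmu_\tau}_\cmu$.

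The main obstacle I expect is the base change/transversality bookkeeping in the second step. A clean approach is to restrict the threefold convolution diagram to the preimage of the open orbit $\Omega_{w_{a,b}} \subset \bfF_{\lambda_\tau}\times \bfF_\mu$, verify that the colour-change embedding is transverse to the middle factor, and then apply the projection formula; the excess intersection class should vanish generically, leaving only the fundamental class of the open stratum. This mirrors the excess-intersection computation carried out in \cref{prop:color-past-split,prop:gen-images}, and any discrepancy between the two convolution orders is pushed into $H^{<w_{a,b}}_*$ by construction, accounting for the ``$+\ldots$'' in the statement.
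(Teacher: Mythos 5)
Your plan follows the same route as the paper's proof: reduce by base change to the two-block case $\cla=(a\tau,b\varepsilon)$, $\cmu=(b\varepsilon,a\tau)$, interpret ``lower terms'' via the length filtration of \cref{prop:filtr-on-Steinberg} so that only the open stratum $\Omega_{w_{a,b}}$ (equivalently, the locus $\cE_1\cap\cE_2=0$) matters, compute both composites there as fundamental classes via base change, and then identify the two classes. The transversality bookkeeping you defer as the ``main obstacle'' is carried out in the paper exactly along the lines you suggest: the open-stratum Steinbergs $Z^w$ are smooth (they are affine fibrations over $\Omega_w\times X_{\cmu'}$, as in the proof of \cref{prop:filtr-on-Steinberg}) and have the expected codimension, so the Gysin pullback along the diagonal is an ordinary smooth pullback ($\Delta^!=\Delta^*$), and each composite restricts to an honest fundamental class on the open stratum. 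So that part is fine.

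The genuine soft spot is your justification of the final identification. On the open stratum the composite $C_{\cla}^{\cla_\tau}R_\cmu^\cla$ restricts to $[Z^w_{\cmu,\cla}]$, where the $\varepsilon$-condition is imposed on \emph{both} the sub $\cE_1$ (coming from $\cmu$) and the quotient $\cE/\cE_2$ (coming from $\cla$), whereas $R_{\cmu_\tau}^{\cla_\tau}C_{\cmu}^{\cmu_\tau}$ restricts to $[Z^w_{\cmu,\cla_\tau}]$, where it is imposed only on $\cE_1$. Your stated reason --- that the colour condition and the generic-position condition ``constrain disjoint pieces of the flag data'' --- argues for independence of the constraints, which would make the first subvariety strictly smaller than the second and the identity false; independence is not what saves you. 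The correct reason, which is the crux of the paper's proof, is that on the locus $\cE_1\cap\cE_2=0$ the composition $\cE_1\hookrightarrow\cE\twoheadrightarrow\cE/\cE_2$ is injective between torsion sheaves of the same length $b$, hence an isomorphism; therefore $\cE_1\simeq\cO_c^{\oplus b}$ if and only if $\cE/\cE_2\simeq\cO_c^{\oplus b}$, so $Z^w_{\cmu,\cla}=Z^w_{\cmu,\cla_\tau}$ and the two top symbols coincide. With this equivalence of conditions substituted for your ``disjointness'' claim (and the smoothness/codimension argument above made explicit), your outline becomes the paper's argument.
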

\begin{proof}
	We only prove the first equality, the second one being analogous.
	By base change, we can further restrict our attention to the case $\cla = (a\tau, b\varepsilon)$, $\cmu = (b\varepsilon, a\tau)$, $n = a+b$.

	Since we only consider the equality up to lower terms, we can restrict everything to the open stratum $\Omega_w$, $w = w_{a,b}$ as in~\eqref{eq:shuffle-perm}.
	In terms of sheaves, we require our pairs of subsheaves $\cE_1,\cE_2\subset \cE$ to have trivial intersection.
	Consider the following diagram with fiber square:
	\[
		\begin{tikzcd}
			Y_\cmu\times Z_{\cmu_\tau,\cla_\tau}^w\ar[d,hook,"\iota"] & Z_{\cmu,\cla_\tau}^w\ar[l,"\Delta"']\ar[d,hook,"\iota"]\ar[dr,hook,"\iota'"]\arrow[dl, phantom, "\scalebox{1.3}{$\llcorner$}", at start, shift left=1.5ex, color=black] & \\
			(Y_\cmu\times_{X_n}Y_{\cmu_\tau})\times (Y_{\cmu_\tau}\times_{X_n}Y_{\cla_\tau}) & Y_\cmu\times_{X_n}Y_{\cmu_\tau}\times_{X_n}Y_{\cla_\tau}\ar[l,"q"']\ar[r,"p"] & Y_\cmu\times_{X_n}Y_{\cla_\tau} 
		\end{tikzcd}
	\]
	The product $R_{\cmu_\tau}^{\cla_\tau} C_{\cmu}^{\cmu_\tau}$ is equal to $p_*q^!\iota_*([Y_\cmu]\otimes[Z_{\cmu_\tau,\cla_\tau}^w])$ by definition. 
	Using base change for Gysin pullback, this equals to $\iota'_*\Delta^!([Y_\cmu]\otimes[Z_{\cmu_\tau,\cla_\tau}^w])$. 
	By the proof of \cref{prop:filtr-on-Steinberg} both $Z_{\cmu,\cla_\tau}^w$ and $Z_{\cmu_\tau,\cla_\tau}^w$ are smooth, and $\codim_{Y_\cmu\times Z_{\cmu_\tau,\cla_\tau}^w}Z_{\cmu,\cla_\tau}^w = \dim Y_{\cmu_\tau}$.
	Thus $\Delta^! = \Delta^*$ by~\cite[Rem.~6.2.1]{Ful_IT1998}, and so $R_{\cmu_\tau}^{\cla_\tau} C_{\cmu}^{\cmu_\tau} = [Z_{\cmu,\cla_\tau}^w]\in \cS_{\cmu,\cla_\tau}^w$.
	The same argument shows that $C_{\cla}^{\cla_\tau}R_\cmu^\cla$ is given by the fundamental class $[Z_{\cmu,\cla}^w]\in \cS_{\cmu,\cla_\tau}^w$. 
	Note that
	\[
		\cZ_{\cmu,\cla_\tau}\setminus \cZ_{\cmu,\cla} = \{ \cE_1,\cE_2\subset \cE : \cE_1\simeq \cO_c^{\oplus b}, \cE/\cE_2\not\simeq \cO_c^{\oplus a} \}.
	\] 
	However, these two conditions are contradictory when $\cE_1\cap\cE_2 = 0$.
	Thus $(\cZ_{\cmu,\cla_\tau}\setminus \cZ_{\cla,\cmu})^w$ is empty, and we conclude that $[Z_{\cmu,\cla_\tau}^w]=[Z_{\cla,\cmu}^w]$.
\end{proof}

\subsection{Basis theorem}
\label{subs:basis-curve-Schur-2c}
Given $\cla,\cmu\in \cI^{(n)}$, consider the underlying uncoloured compositions $\lambda,\mu\in \Comp(n)$ together with their refinements $\lambda'\vDash\lambda$, $\mu'\vDash\mu$ as in~\cref{subs:sp-mer-diag}.
These refinements can be naturally promoted to $\cI$-coloured partitions $\cla'\vDash\cla$, $\cmu'\vDash\cmu$.
Given $w\in \fkS_{\lambda}\backslash \fkS_{n}/\fkS_{\mu}$, we define another $\cI$-colouring $\cla'_w$ of $\lambda'$, where $(\lambda')^i$ is of colour $\tau$ if and only if both $(\lambda')^i$ is of colour $\tau$ in $\cla'$ and $(\mu')^{w(i)}$ is of colour $\tau$ in $\cmu'$.
For any $P\in H^*(\cF_{\cla'_w},\bbk)$, we consider a diagrammatic element $\psi^{P}_w$, defined as follows:
\begin{itemize}
	\item Take the uncoloured Schur diagram $D_w$;
	\item Colour each split and merge with their colours in $\cla$, $\cmu$. If some thin strand has different colours on two ends, put the appropriate colour change on it between the crossings and the end coloured by $\varepsilon$;
	\item Put a coupon with $P$ between splits and crossings, above the possible colour changes.
\end{itemize}
For an illustration, see the diagram on the right of~\cref{ex:diag-new-vs-old}.

\begin{thm}\label{thm:curve-Schur-basis}
	For every $\cla\in \cI^{(n)}$, fix a $\bbk$-basis $B_{\cla}$ of $H^*(\cT_{\cla},\bbk)$.
	Then the set 
	\[
		\{\psi_w^P : w\in \fkS_{\lambda}\backslash \fkS_{n}/\fkS_{\mu}, P\in B_{\cla'_w}\}
	\]
	is a basis of $\cS^\cI_{\cla,\cmu}$.
\end{thm}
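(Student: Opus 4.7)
The plan is to reduce the basis claim to a graded statement via the length filtration on $\cS^\cI_{\cla,\cmu}$ from \cref{prop:filtr-on-Steinberg}. Fix a total order on $\fkS_\lambda\backslash \fkS_n/\fkS_\mu$ refining the closure order, and denote the associated filtration by $\cS^\cI_{\cla,\cmu,\leqslant w}$. The $w$-th associated graded is identified with $H_*(\cT_{\cla'_w})$, which has $\bbk$-rank $|B_{\cla'_w}|$; summing over $w$, the total number of proposed basis elements matches $\dim_\bbk \cS^\cI_{\cla,\cmu}$. It therefore suffices to show that each $\psi_w^P$ lies in $\cS^\cI_{\cla,\cmu,\leqslant w}$ and that the family $\{\psi_w^P : P\in B_{\cla'_w}\}$ projects to a $\bbk$-basis of the $w$-th graded piece.

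First I would verify the support claim. The splits and merges in $D_w$ are fundamental classes of the diagonal subvarieties $\cF^{\lambda'}_{\cla}\hookrightarrow \cZ^\cI_{\cla',\cla}$ and $\cF^{\mu'}_{\cmu}\hookrightarrow \cZ^\cI_{\cmu,\cmu'}$, the elementary crossings realising $w$ are supported on $\overline{Z^w_{\cla',\cmu'}}$, and the colour changes placed on thin strands do not enlarge the support. Convolving all three produces a class supported on $\overline{Z^w_{\cla,\cmu}}$, hence lying in $\cS^\cI_{\cla,\cmu,\leqslant w}$.

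Next I would normalise $\psi_w^P$ to extract its leading term. Applying \cref{lem:color-slide} iteratively, one can move all colour changes to a fixed region of the diagram (say immediately above the splits), at the cost of errors lying in strictly lower filtration degrees. In this normal form, the open restriction of $\psi_w^P$ to $Z^w_{\cla,\cmu}$ is obtained from the uncoloured element of \cref{prop:old-basis-curve} by pullback along the closed embedding of $\cT_{\cla'_w}$ into $\cT_{\lambda'}$ induced by the gathered colour changes. Via the affine fibration structure from the proof of \cref{prop:filtr-on-Steinberg}, the image of $\psi_w^P$ in the $w$-th graded piece is then exactly the class of $P\in H^*(\cT_{\cla'_w})$. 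Since $B_{\cla'_w}$ is a $\bbk$-basis of $H^*(\cT_{\cla'_w})$, the family $\{\psi_w^P\}$ projects to a basis of each graded piece, and we conclude by standard filtration lifting.

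The hard part will be controlling the cumulative errors in the iterated application of \cref{lem:color-slide}: each slide produces an error supported on a strictly smaller stratum of $\Omega_w$, but after many slides one must ensure that no accumulated error re-enters the $w$-th filtered piece. I expect that organising the slides along a reduced expression for $w$ underlying the middle part of $D_w$, so that at every step the Weyl group length of the associated error strictly decreases, resolves this issue and lets the argument conclude by induction on $\ell(w)$. Once \cref{thm:curve-Schur-basis} is established, \cref{cor:span-set-eCAe} follows by specialising to $\cla,\cmu$ of pure colour $\tau$ (where $\cla'_w = \cmu'_w$ reduces to $\cmu'$ in the notation of \cref{subs:sp-mer-diag}) and transporting through the Morita equivalence $e\CA(n\delta)e\simeq \cS^\cI_n$ of \cref{prop:regular-truncate}.
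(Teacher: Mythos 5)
Your overall skeleton (length filtration from \cref{prop:filtr-on-Steinberg}, support estimate, leading-term analysis, dimension count) is the same as the paper's, but the step that carries all the content is asserted rather than proved. You claim that, after gathering the colour changes, the image of $\psi_w^P$ in $\Gr^w\cS^\cI_{\cla,\cmu}\simeq H^*(\cT_{\cla'_w})$ is ``exactly the class of $P$''. The affine fibration in the proof of \cref{prop:filtr-on-Steinberg} only identifies the graded piece; it does not compute the image of a convolution of several correspondences in it. To justify your claim you would need a transversality/excess-intersection analysis of the whole composite correspondence (splits, coupon, colour changes, crossings, merges) over the open stratum $\Omega_w$ --- i.e.\ the coloured analogue of the localization computation behind \cref{prop:old-basis-curve}, together with arguments of the type in the proof of \cref{lem:color-slide}. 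This is not a formality: in closely parallel manipulations Euler-class corrections genuinely appear (the factors $Q_\lambda$ and signs in \cref{prop:color-past-split}, the factor $\prod_i c_i\prod_{i\neq j}(x_i-x_j)^2$ in \cref{lem:two-colour-changes}), and a correction of that kind could in principle annihilate the nilpotent (curve-class) part of $P$, destroying linear independence; ruling this out is exactly where the specific choice of the colouring $\cla'_w$ must enter, and your proposal never engages with it. In contrast, your worry about accumulated lower terms under repeated slides is not the issue: lower terms stay lower under the remaining convolutions by the usual multiplicativity of the length filtration, which is the sense in which the paper's ``up to lower terms'' manipulations are harmless.

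The paper closes this gap differently: instead of computing graded images inside the coloured algebra, it conjugates by thick colour changes, $\Psi(x)=C_\cmu^\mu\, x\, C^\cla_\lambda$, landing in the uncoloured algebra $\cS_{\lambda,\mu}$ where \cref{prop:old-basis-curve} already provides a basis. Removing the colour changes via \cref{prop:color-past-split}, \cref{lem:two-colour-changes} and \cref{lem:color-slide} multiplies $P$ by explicit monic non-zero-divisor polynomials (independent of $P$) and, crucially, by the choice of $\cla'_w$ the coefficient $P$ carries no curve classes that the colour changes would kill; hence $\Gr\Psi(\psi_w^P)$, $P\in B_{\cla'_w}$, are linearly independent, and the dimension count from \cref{prop:filtr-on-Steinberg} finishes the argument. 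If you want to keep your more direct route, you must supply the leading-coefficient computation over $\Omega_w$ (or at least show the graded images are obtained from the $P$'s by an invertible, $P$-independent operation); as written, the claim ``exactly $P$'' is precisely the theorem at the graded level. A small final remark: \cref{cor:span-set-eCAe} is not obtained by specialising to pure colour $\tau$; it concerns arbitrary $\cla,\cmu\in\cI^{(n)}$ and follows from the theorem by lifting the bases along the surjection $\Pol_{\Ibe_\cla}\twoheadrightarrow H^*(\cT_\cla)$ and transporting through the isomorphism of \cref{prop:regular-truncate}.
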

\begin{proof}
	It suffices to prove the claim after passing to the associated graded with respect to the length filtration.
	Let us identify the uncoloured partitions with fully $\tau$-coloured partitions as usual.
	Consider the map $\Psi: \cS^\cI_{\cla,\cmu} \to \cS^\cI_{\lambda,\mu} = \cS_{\lambda,\mu}$ defined by $\Psi(x) = C_\cmu^\mu xC^{\cla}_\lambda$.
	Thanks to~\cref{prop:color-past-split,lem:two-colour-changes,lem:color-slide}, we can remove all colour changes in the diagram of $\Psi(\psi_w^P)$, for the price of multiplying $P$ by some polynomials $Q_\nu\in \bbk[x_1,\ldots,x_n]$ independent of $P$ and adding some lower terms. 
	For instance, for the diagram from~\cref{ex:diag-new-vs-old} we have:
	\[
		\tikz[thick,xscale=.35,yscale=.3,font=\footnotesize]{
		\ntxt{1}{-0.5}{$3\tau$}
		\ntxt{4}{-0.5}{$3\tau$}
		\ntxt{0.5}{9.5}{$2\tau$}
		\ntxt{2.5}{9.5}{$2\tau$}
		\ntxt{4.5}{9.5}{$2\tau$}
		\draw (1,0) -- (1,3);
		\draw (4,0) -- (4,0.5);
		\draw [color=cyan!90!white] (4,0.5) -- (4,3);
		\dsplit{0}{1}{2}{2.3}
		{\color{cyan!90!white}\dsplit{3}{1}{5}{2.3}}
        \ntxt{4}{0.5}{$-$}
        \ntxt{3}{2.3}{$-$}
        \ntxt{5}{2.3}{$-$}
		\draw (0,2.3) -- (0,3);
		\draw (2,2.3) -- (2,3);
		\draw (3,2.3) -- (3,3);
		\draw (5,2.3) -- (5,3);
        \opbox{-0.2}{3}{5.2}{4}{$P$}
        \draw (0,4) -- (0,6);
        \draw (1,4) -- (1,5);
        \crosin{2}{4}{3}{5}
        \crosin{1}{5}{2}{6}
        \draw (3,5) .. controls (3,5.6) and (4,5.4) .. (4,6);
		\draw [color=cyan!90!white] (4,5) .. controls (4,5.6) and (3,5.4) .. (3,6);
        \draw [color=cyan!90!white] (4,4) -- (4,5);
        \draw (5,4) -- (5,6);
        \dmerge{0}{6}{1}{8}
        {\color{cyan!90!white}\dmerge{2}{6.7}{3}{8}}
        \dmerge{4}{6}{5}{8}
        \draw (0.5,8) -- (0.5,9);
		\draw (2,6) -- (2,6.7);
		\draw [color=cyan!90!white] (3,6) -- (3,6.7);
        \draw [color=cyan!90!white] (2.5,8) -- (2.5,8.5);
		\draw (2.5,8.5) -- (2.5,9);
        \draw (4.5,8) -- (4.5,9);
        \ntxt{2.5}{8.5}{$-$}
        \ntxt{2}{6.7}{$-$}

        \ntxt{6}{4.5}{$=$}

		\ntxt{8}{-0.5}{$3\tau$}
		\ntxt{11}{-0.5}{$3\tau$}
		\ntxt{7.5}{9.5}{$2\tau$}
		\ntxt{9.5}{9.5}{$2\tau$}
		\ntxt{11.5}{9.5}{$2\tau$}
        \dsplit{7}{0}{9}{1}
        \dsplit{10}{0}{12}{0.8}
        \draw (11,0) -- (11,0.8);
        \opbox{9.8}{0.8}{12.2}{2}{$Q$}
        \draw (7,1) -- (7,3);
        \draw (8,0) -- (8,3);
        \draw (9,1) -- (9,3);
        \draw (10,2) -- (10,2.3);
		\draw [color=cyan!90!white] (10,2.3) -- (10,2.6);
		\draw (10,2.6) -- (10,3);
        \draw (11,2) -- (11,2.5);
		\draw [color=cyan!90!white] (11,2.5) -- (11,3);
		\draw (12,2) -- (12,2.3);
		\draw [color=cyan!90!white] (12,2.3) -- (12,2.6);
		\draw (12,2.6) -- (12,3);
        \ntxt{10}{2.3}{$-$}
        \ntxt{10}{2.6}{$-$}    
        \draw (8,6) -- (8,8);
        \draw (9,6) -- (9,7);
        \draw (10,6) -- (10,7);
        \draw (11,6) -- (11,8);
        \ntxt{11}{2.5}{$-$}
        \ntxt{12}{2.3}{$-$}
        \ntxt{12}{2.6}{$-$}
        \opbox{6.8}{3}{12.2}{4}{$P$}
        \draw (7,4) -- (7,8);
        \draw (8,4) -- (8,5);
        \crosin{9}{4}{10}{5}
        \crosin{8}{5}{9}{6}
		\draw (10,5) .. controls (10,5.6) and (11,5.4) .. (11,6);
		\draw [color=cyan!90!white] (11,5) .. controls (11,5.6) and (10,5.4) .. (10,6);
        \draw [color=cyan!90!white] (11,4) -- (11,5);
        \draw (12,4) -- (12,8);
        \draw (8,6) -- (8,8);
		\draw (9,6) -- (9,6.3);
		\draw [color=cyan!90!white] (9,6.3) -- (9,6.6);
		\draw (9,6.6) -- (9,7);
        \draw [color=cyan!90!white] (10,6) -- (10,6.5);
		\draw (10,6.5) -- (10,7);
        \draw (11,6) -- (11,8);
        \ntxt{9}{6.3}{$-$}
        \ntxt{9}{6.6}{$-$}
        \ntxt{10}{6.5}{$-$}
        \opbox{8.8}{7}{10.2}{8.2}{$Q'$}
        \dmerge{7}{8}{8}{9}
        \dmerge{9}{8.2}{10}{9}
        \dmerge{11}{8}{12}{9}

        \ntxt{13}{4.5}{$=$}

		\ntxt{15}{-0.5}{$3\tau$}
		\ntxt{18}{-0.5}{$3\tau$}
		\ntxt{14.5}{9.5}{$2\tau$}
		\ntxt{16.5}{9.5}{$2\tau$}
		\ntxt{18.5}{9.5}{$2\tau$}
        \draw (15,0) -- (15,2.8);
        \draw (18,0) -- (18,2.8);
        \dsplit{14}{0}{16}{2.8}
        \dsplit{17}{0}{19}{2.8}
        \strex{17.2}{2}
        \strex{18}{2}
        \strex{18.8}{2}
        \opbox{13.8}{2.8}{19.2}{4}{$PQQ'$}
        \draw (14,4) -- (14,7);
        \draw (15,4) -- (15,5.5);
        \crosin{16}{4}{17}{5.5}
        \crosin{15}{5.5}{16}{7}
        \crosin{17}{5.5}{18}{7}
        \draw (18,4) -- (18,5.5);
        \draw (19,4) -- (19,7);
        \dmerge{14}{7}{15}{9}
        \dmerge{16}{7}{17}{9}
        \dmerge{18}{7}{19}{9}
        \strex{15}{5}

        \ntxt{21}{4.5}{$+\ldots\text{ }=$}

		\ntxt{24}{-0.5}{$3\tau$}
		\ntxt{27}{-0.5}{$3\tau$}
		\ntxt{23.5}{9.5}{$2\tau$}
		\ntxt{25.5}{9.5}{$2\tau$}
		\ntxt{27.5}{9.5}{$2\tau$}
        \draw (24,0) -- (24,2.7);
        \draw (27,0) -- (27,2.7);
        \dsplit{23}{0}{25}{2.7}
        \dsplit{26}{0}{28}{2.7}
        \opbox{22.6}{2.7}{28.4}{4}{$PQQ'c_2c_4c_5c_6$}
        \draw (23,4) -- (23,7);
        \draw (24,4) -- (24,5.5);
        \crosin{25}{4}{26}{5.5}
        \crosin{24}{5.5}{25}{7}
        \crosin{26}{5.5}{27}{7}
        \draw (27,4) -- (27,5.5);
        \draw (28,4) -- (28,7);
        \dmerge{23}{7}{24}{9}
        \dmerge{25}{7}{26}{9}
        \dmerge{27}{7}{28}{9}
        \ntxt{29.5}{4.5}{$+\ldots$}
		}
	\]
	We see that $\Gr\Psi(\psi_w^P)$ is of length $w$.
	The polynomials $Q_\nu$ are non-zero divisors and monic, and $P$ does not contain any nilpotents arising from colour changes by our choice of $\cla'_w$, so the elements $\Gr\Psi(\psi_w^P)$, $P\in B_{\cla'_w}$ are linearly independent by~\cref{prop:old-basis-curve}.
	Since the graded dimensions of $H^*(\cT_{\cla'_w},\bbk)$ and $\Gr^w \cS^\cI_{\cla,\cmu}$ coincide by~\cref{prop:filtr-on-Steinberg}, we conclude that $\{\psi_w^P\}$ is a basis of $\cS^\cI_{\cla,\cmu}$ over any field $\bbk$.
\end{proof}

\begin{cor}
	The polynomial representation $\bigoplus_{\cla \in \cI^{(n)}} H^*(\cF_\cla,\bbk)$ of $\cS^\cI_n$ is faithful.
\end{cor}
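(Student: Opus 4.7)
The plan is to leverage \cref{thm:curve-Schur-basis} and the length filtration from \cref{prop:filtr-on-Steinberg} to detect each basis element $\psi_w^P$ through its action on a suitable test class.

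First I would decompose $\bfP_n=\bigoplus_\cla H^*(\cF_\cla,\bbk)$ and $\cS^\cI_n=\bigoplus_{\cla,\cmu}e_\cla\cS^\cI_n e_\cmu$, reducing the statement to showing injectivity of
\[
e_\cla\cS^\cI_n e_\cmu \longrightarrow \Hom_{\bbk}\bigl(H^*(\cF_\cmu,\bbk),H^*(\cF_\cla,\bbk)\bigr)
\]
for each pair $\cla,\cmu\in\cI^{(n)}$. Suppose $a=\sum_{w,P} c_{w,P}\psi_w^P$ lies in the kernel, and let $w_0$ be maximal (for the order refining the closure order on double cosets) such that $c_{w_0,P}\ne 0$ for some $P$. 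I would then compute $a\cdot 1_\cmu$, where $1_\cmu\in H^0(\cF_\cmu)$ is the unit, by translating the diagrammatic rules of \cref{subs:col-curve-Schur} into the corresponding pullback/pushforward/multiplication operators on cohomology.

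The key claim is that, modulo contributions coming from terms with $w<w_0$, the leading piece of $\psi_{w_0}^P\cdot 1_\cmu$ equals $(\pi_{w_0})_*(P)$, where $\pi_{w_0}:\cF_{\cla'_{w_0}}\to\cF_\cla$ is the natural proper map dictated by the diagram $D_{w_0}$ and $P$ is viewed as an element of $H^*(\cF_{\cla'_{w_0}})$. Under the length filtration on $H^*(\cF_\cla)$ pulled back via the $\bbk$-basis $B_{\cla'_{w_0}}$ of \cref{prop:filtr-on-Steinberg}, the images $\{(\pi_{w_0})_*(P)\}_{P\in B_{\cla'_{w_0}}}$ are linearly independent in the $w_0$-graded piece by the very same proposition applied to the diagonal Steinberg block $\cZ^\cI_{\cla'_{w_0},\cla'_{w_0}}$, combined with the purity assumption~\eqref{eq:assumption-on-curve}. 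This would force all $c_{w_0,P}=0$, contradicting the choice of $w_0$.

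The main obstacle is the "modulo lower terms" claim: one must verify that the action of $\psi_w^P$ with $w<w_0$ cannot produce components in the top graded piece that might cancel the leading contribution of $a_{\max}=\sum_P c_{w_0,P}\psi_{w_0}^P$. This would be carried out via a base-change argument in the convolution diagram, mirroring the calculation performed in the proof of \cref{thm:curve-Schur-basis} for the map $\Psi$; the compatibility of the action with the length filtration is essentially forced by the stratification $Z^\cI_{\cla,\cmu}=\bigsqcup_w Z^\cI_{\cla,\cmu;w}$ and the fact that each stratum is an affine bundle over a product of $\cT_\bullet$'s (\cref{prop:filtr-on-Steinberg}).
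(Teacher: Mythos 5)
There is a genuine gap, and it sits exactly where you flagged the "main obstacle" — but the problem is worse than a missing lemma about lower terms: your detection mechanism itself fails. You propose to recover the top coefficients $c_{w_0,P}$ from the single evaluation $a\cdot 1_\cmu$, claiming that the classes $(\pi_{w_0})_*(P)$, $P\in B_{\cla'_{w_0}}$, are linearly independent. They are not: the top of the diagram $D_{w_0}$ is a merge, which acts on the polynomial representation by Demazure-type pushforward operators, and these have large kernels. Concretely, take $n=2$, $\cla=\cmu=(\varepsilon,\varepsilon)$, $w_0=s$. By \cref{prop:new-poly-rep} the elementary crossing acts by $R(P)=(x_1-x_2)\,\partial(P)$, so $\psi_{s}^{x_1}\cdot 1_\cmu+\psi_{s}^{x_2}\cdot 1_\cmu=(x_1-x_2)\,\partial(x_1+x_2)=0$ on the nose, although the coefficients of $\psi_s^{x_1}$ and $\psi_s^{x_2}$ are nonzero (this element does act nontrivially elsewhere in the module, so faithfulness is safe — but your argument, which only looks at $1_\cmu$, would wrongly force these coefficients to vanish, i.e. the key linear-independence claim is false). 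Relatedly, the "length filtration on $H^*(\cF_\cla)$" you invoke does not exist as stated: \cref{prop:filtr-on-Steinberg} filters the convolution algebra $H_*(\cZ^\cI_{\cla,\cmu})$ by double cosets, not the module $H^*(\cF_\cla)$, so "the $w_0$-graded piece" of $a\cdot 1_\cmu$ is not defined. Any repair must test $a$ against general elements of the polynomial representation, at which point you are redoing the localization-type triangularity analysis of the uncoloured case.

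For comparison, the paper's proof is a two-line reduction that you do not use: given $a$ acting trivially, sandwich it with colour changes, $\Psi(a)=C^{\mu}_{\cmu}\,a\,C_{\lambda}^{\cla}$, which then also acts trivially; $\Psi$ is injective (this was established inside the proof of \cref{thm:curve-Schur-basis}), so one is reduced to faithfulness of the polynomial representation of the uncoloured curve Schur algebra $\cS_n$, which is Theorem 4.15 of the earlier paper (proved by equivariant localization, i.e. precisely the full-module leading-term analysis your sketch would need). If you want a self-contained direct proof, you should either reprove that localization statement in the coloured setting or route your argument through $\Psi$ as the paper does.
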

\begin{proof}
	Consider a non-trivial linear combination of the basis elements of $\cS^\cI_{\cla,\cmu}$ that acts trivially the polynomial representation.
	Then its image under $\Psi$ acts trivially as well.
	However, we showed that $\Psi$ is injective, so that it suffices to check that the restriction of polynomial representation to $\cS_n$ is faithful. 
	This holds by~\cite[Th.~4.15]{MakMin_KLR2023}.
\end{proof}

By \cref{cor:surj-taut-classes}, we have a surjection
\[
	\Pol_{\Ibe_\cla} = \bbk[u_1,\ldots,u_n,v_1,\ldots,v_n]^{\fkS_{\Ibe_\cla}}\twoheadrightarrow H^*(\cT_\cla);
\]
see \cref{cor:image-phi} for a precise description of this map.
In particular, let the collection of elements $\widetilde{B}_\cla\subset \Pol_{\Ibe_\cla}$ be an arbitrary lift of $B_\cla\subset H^*(\cT_\cla)$.
Then \cref{cor:span-set-eCAe} trivially follows from \cref{thm:curve-Schur-basis}.

\subsection{Polynomial representation}
Recall the polynomial representation $\bfP_n$ of $\cS_n^\cI$ defined by~\eqref{eq:poly-rep-def}.
Let $\cla = (\lambda^1c^1,\ldots,\lambda^kc^k)$, and let $H_\cla\subset H^*(C,\bbk)^{\otimes n}$ be the tensor product $\bigotimes_{i=1}^k H_i$, where $H_i = H^*(C,\bbk)^{\otimes \lambda^i}$ if $c^i = \tau$ and $\bbk$ otherwise. 
By~\eqref{eq:tor-vs-poly}, we have an embedding
\[
	H^*(\cF_\cla,\bbk) \subset \bfP_\cla\coloneqq (H_\cla[x_1,\ldots,x_n])^{\fkS_\cla}.
\] 
We will use it to describe the polynomial representation explicitly.
Let $\Delta\in H^2(C\times C,\bbk)$ be the class of the diagonal.
For any $1\leq i<j\leq n$ we have an embedding $H^*(C,\bbk)^{\otimes 2} \subset H^*(C,\bbk)^{\otimes n}$ sending $x\times y$ to $1^{\otimes i-1}\otimes x \otimes 1^{\otimes j-i-1}\otimes y\otimes 1^{\otimes n-j}$; we denote by $\Delta_{ij}\in H^2(C,\bbk)^{\otimes n}$ the image of $\Delta$.

\begin{prop}\label{prop:new-poly-rep}
	We have the following (local) formulas for the action of $\cS^\cI_n$ on the polynomial representation:
	\begin{itemize}
		\item Coupons act by multiplication;
		\item $S^{(a\tau,b\tau)}_{n\tau}(P) = P$, and $S^{(a\varepsilon,b\varepsilon)}_{n\varepsilon}(P) = P\prod_{i=1\leq i\leq a < j \leq n} (x_i-x_j)$;
		\item $M_{(a\varepsilon,b\varepsilon)}^{n\varepsilon}(P) = \partial_{a,b}(P)$, and $M_{(a\tau,b\tau)}^{n\tau}(P) = \partial_{a,b}\left( P\prod_{i=1\leq i\leq a < j \leq n} (x_i-x_j-\Delta_{ij}) \right)$;
		\item $C_{n\varepsilon}^{n\tau}: \bbk[x_1,\ldots,x_n]^{\fkS_n}\to \bfP_{n\tau}$ acts by multiplication by $\prod_{i} c_i\prod_{i\neq j}(x_i-x_j)$;
		\item $C^{n\varepsilon}_{n\tau}: \bfP_{n\tau}\to \bbk[x_1,\ldots,x_n]^{\fkS_n}$ acts by setting all classes in $H^{>0}(C^n)$ to zero.
	\end{itemize}
\end{prop}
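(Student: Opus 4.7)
The plan is to verify each itemised formula separately, handling the general case by base change along the natural smooth maps $\cF_\cla\to \cF_\cmu$ for $\cla\vDash\cmu$ (which produce the locality). The coupon formula is immediate: by construction the coupon labelled $P$ is the image of the diagonal class of $\cF_\cla$ cupped with $P$, so it acts by multiplication by $P\in H^*(\cF_\cla)$.

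For the purely $\tau$-coloured splits and merges, I would observe that the $\tau$-block $\bigoplus_{\lambda,\mu\in \Comp(n)} \cS^\cI_{\lambda\tau,\mu\tau}$ is canonically isomorphic to the curve Schur algebra $\cS_n$, and that our polynomial representation restricts on this block to the one studied in \cite{MakMin_KLR2023}. The stated formulas for $S^{(a\tau,b\tau)}_{n\tau}$ and $M^{n\tau}_{(a\tau,b\tau)}$ are then those of \cite[Th.~4.15]{MakMin_KLR2023}; in particular the twist by $\prod_{i\leq a<j}(x_i-x_j-\Delta_{ij})$ in the merge arises via equivariant localization from the excess intersection against the normal bundle of the diagonal in $\cT_n$, which brings in the class of $\Delta \subset C\times C$.

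For the purely $\varepsilon$-coloured operations, one identifies $\cF_{n\varepsilon}\simeq [\pt/GL_n]$ and $\cF_{(a\varepsilon,b\varepsilon)}\simeq [\pt/(GL_a\times GL_b)]$ (using $\bfF_{(a,b)}\simeq GL_n/P_{(a,b)}$), so that the Steinberg stack $\cZ^\cI_{(a\varepsilon,b\varepsilon),n\varepsilon}$ reduces to $[\bfF_{(a,b)}/GL_n]$ and the split/merge correspondence is the classical Borel--Weil pair $[\pt/(GL_a\times GL_b)]\rightleftarrows [\pt/GL_n]$. Pushforward along this smooth proper morphism is integration along the fiber $\bfF_{(a,b)}$ and gives the Demazure operator $\partial_{a,b}$ by Borel's formula; the pullback direction, combined with the Poincaré duality shift, produces multiplication by the equivariant Euler class of the relative tangent bundle, namely $\prod_{i\leq a<j}(x_i-x_j)$ (up to a sign absorbed into the normalization). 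Finally, for the colour changes, the embedding $\iota\colon \cT_{n\varepsilon}\hookrightarrow \cT_{n\tau}$ makes it clear from the definition of $C$ via graph fundamental classes that $C^{n\tau}_{n\varepsilon} = \iota_*$ and $C^{n\varepsilon}_{n\tau} = \iota^*$: the pushforward multiplies by the Euler class of the normal bundle of $\iota$, which by the computation of \cref{lem:two-colour-changes} is $\prod_i c_i\prod_{i\neq j}(x_i-x_j)$, and the pullback restricts to the $GL_n$-fixed point $(c,\ldots,c)\in C^n$, killing every class from $H^{>0}(C^n,\bbk)$.

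The principal difficulty is bookkeeping: one must consistently track the degree shifts and signs arising from Poincaré duality between Borel--Moore homology and cohomology, from the various excess intersection formulas, and from the identification $\cF_\cla \simeq \cF_\lambda\times_{\cT_\lambda}\cT_\cla$ that is used to combine the $\tau$- and $\varepsilon$-coloured parts. Once these conventions are fixed, the remaining mixed cases follow from locality by base change to the two-strand situations treated above.
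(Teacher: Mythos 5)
Your outline agrees with the paper on most items: the colour-$\tau$ splits and merges are taken from \cite[Th.~4.15]{MakMin_KLR2023}, $C^{n\tau}_{n\varepsilon}$ is the pushforward along $\pt = X_{n\varepsilon}\hookrightarrow X_n$ and hence multiplication by the Euler class $\prod_i c_i\prod_{i\neq j}(x_i-x_j)$ (the computation already appearing in \cref{lem:two-colour-changes}), and $C_{n\tau}^{n\varepsilon}$ is essentially restriction, killing $H^{>0}(C^n)$. Where you genuinely diverge from the paper is the colour-$\varepsilon$ split and merge: the paper does not compute these geometrically at all, but deduces them from the colour-change relations of \cref{prop:color-past-split} together with the already established $\tau$- and colour-change formulas, whereas you attempt a direct convolution computation with the Grassmannian correspondence. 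The merge part of your computation is fine: the correspondence $\cZ^{\cI}_{(a\varepsilon,b\varepsilon),n\varepsilon}$ is indeed $[\bfF_{(a,b)}/GL_n]$, and proper pushforward to $[\pt/GL_n]$ gives $\partial_{a,b}$.

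The gap is in the split. Your identification ``$\cF_{(a\varepsilon,b\varepsilon)}\simeq[\pt/(GL_a\times GL_b)]$, classical Borel--Weil pair'' erases exactly the geometry that produces the factor $\prod_{1\leq i\leq a<j\leq n}(x_i-x_j)$: in that literal model the correspondence maps to the split's target by a map inducing an isomorphism on cohomology, so pull--push would give $S(P)=P$, just as in colour $\tau$. In reality $\cF_{(a\varepsilon,b\varepsilon)}$ is the stack of extensions of $\cO_c^{\oplus b}$ by $\cO_c^{\oplus a}$ (only cohomologically a classifying stack), and the correspondence $\cF^{(a,b)}_{n\varepsilon}\simeq[\mathrm{Gr}(a,n)/GL_n]$ sits inside it as the zero section, a closed embedding of codimension $ab$ --- this is precisely the point of the remark after the definition of coloured splits and merges, that $\cF^{\lambda'}_{\cla}\to\cF_{\cla'}$ is a closed embedding and not an isomorphism. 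The multiplication operator is the Euler class of the normal bundle of this embedding, with fibre $\Ext^1(\cO_c^{\oplus b},\cO_c^{\oplus a})$ and weights $x_i-x_j$ for $1\leq i\leq a<j\leq n$, which matches the statement; it is not the relative tangent bundle of $\mathrm{Gr}(a,n)$, whose weights are $x_j-x_i$, and the resulting $(-1)^{ab}$ is a genuine sign, not a normalization to be absorbed. So either redo the $\varepsilon$-split with the correct normal bundle, or argue as the paper does and extract both $\varepsilon$-formulas algebraically from \cref{prop:color-past-split}.
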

\begin{proof}
	The formulas in colour $\tau$ were proven in~\cite[Th.~4.15]{MakMin_KLR2023}.
	$C_{n\varepsilon}^{n\tau}$ is defined as the pushforward along the closed embedding $\pt = X_{n\varepsilon} \to X_n$, and so is given by the Euler class of this embedding:
	\[
		e(N_\pt X_n) = e(N_\pt C^n)e(N_{C^n} X_n) = \prod_{i} c_i\prod_{i\neq j}(x_i-x_j+\Delta_{ij}) = \prod_{i} c_i\prod_{i\neq j}(x_i-x_j).
	\]
	For $C^{n\varepsilon}_{n\tau}$, this does nothing on polynomials in $x_i$'s by linearity, and kills anything containing classes in $H^{>0}(C^n)$, since $H^*_{GL_n}(\pt)$ has no nilpotent elements.
	The rest of the formulas follow from the relations in~\cref{prop:color-past-split}.
\end{proof}

Let $D_n = x_1^{n-1}x_2^{n-2}\ldots x_{n-1}$. 
Using the identity $\partial_{w_0}(D_n) = 1$, it is easy to check in the polynomial representations that 
\begin{equation}\label{eq:thick-thin-colour}
	C_{n\tau}^{n\varepsilon} = M^{n\varepsilon}_{\bm{\varepsilon}}D_nC_{\bm{\tau}}^{\bm{\varepsilon}}S_{n\tau}^{\bm{\tau}},\quad C_{n\varepsilon}^{n\tau} = \pm M^{n\tau}_{\bm{\tau}}D_nC_{\bm{\varepsilon}}^{\bm{\tau}}S_{n\varepsilon}^{\bm{\varepsilon}};\qquad \bm{\tau} = (\tau,\ldots,\tau), \quad \bm{\varepsilon} = (\varepsilon,\ldots, \varepsilon).
\end{equation}
In words, thick colour changes are expressed in terms of thin colour changes.

\begin{cor}
	The algebra $e\CA(n\delta)e$ is generated by (single-coloured) splits and merges, multicoloured crossings, thin colour changes, symmetric polynomials $\bbk[u_1,\ldots,u_k]^{\fkS_k}$ on strands of thickness $k$, and polynomial $(v-u)$ on thin strands of colour $\tau$.
\end{cor}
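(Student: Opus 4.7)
My plan is to work via the isomorphism $\Phi: e\CA(n\delta)e \xra{\sim} \cS_n^\cI(\bbP^1)$ established in \cref{prop:regular-truncate,thm:End(P+)}. By \cref{prop:gen-images}, $\Phi$ sends the five families of diagrammatic generators on the algebraic side to their geometric counterparts (single-coloured splits and merges, multi-coloured crossings, thin colour changes, and coupons of the specified form). Thus it suffices to prove the analogous statement for $\cS_n^\cI$.

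For this, I would invoke the basis theorem \cref{thm:curve-Schur-basis}: $\cS_n^\cI$ is spanned by the diagrammatic elements $\psi_w^P$, and each such element is, by definition, a composition of splits, merges, crossings (single- and multi-coloured), colour changes, and a coupon labelled by an element of $H^*(\cF_{\cla'_w})$. Two ingredients are immediate reductions. First, single-colour crossings equal a merge followed by a split (as in~\eqref{eq:dumb-cross}, extended to $\cI$-coloured diagrammatics), so they are not needed in the generating set. Second, thick colour changes are expressed in terms of thin colour changes, splits, merges, and a ``staircase'' coupon $D_n = x_1^{n-1}\cdots x_{n-1}$ on thin strands via~\eqref{eq:thick-thin-colour}. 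Hence the only remaining task is to generate all coupons from the listed polynomial generators.

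Since $H^*(\cF_\cla) \simeq \bigotimes_j H^*(\cT_{\lambda^j c^j})$ tensor-factors along the strands, it is enough to realize coupons on a single strand. On an $\varepsilon$-strand of thickness $k$, the cohomology is $H^*(\cT_{k\varepsilon}) = \bbk[u_1,\ldots,u_k]^{\fkS_k}$, which is among our generators. On a thin $\tau$-strand, $H^*(\cT_1(\bbP^1)) = \bbk[x,c]/c^2$ is generated by $x$ and $c$; by the polynomial representation formulas of \cref{prop:new-poly-rep}, the element $u_1$ acts as $x$, while $(v_1-u_1)$ acts as the point class $c$ (one checks this by comparing the two explicit formulae for the action of $C^{n\tau}_{n\varepsilon}$ when $n=1$). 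Both classes are therefore generators.

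The main obstacle is reducing a coupon $P \in H^*(\cT_k(\bbP^1))$ on a thick $\tau$-strand to a combination of thin coupons on $\tau$-strands. Here my plan is to split $k\tau$ into $k$ thin $\tau$-strands, place an appropriate ``lift'' $\widetilde P \in \bbk[u_i, v_i-u_i]^{\fkS_k}$ on the thin strands, and merge back. Using \cref{prop:poly-rep-quiver-Schur} the resulting operator acts on the polynomial representation as a Demazure operator $\partial_{w_0}$ applied to $\widetilde P$ times an Euler-class factor; combined with \cref{cor:surj-taut-classes}, which asserts surjectivity of the taut class map $\Pol_{k\delta} \twoheadrightarrow H^*(\cT_k(\bbP^1))$, and the standard identity $\partial_{w_0}(D_k\cdot\text{symm.})=\text{symm.}$, I expect to show that every element of $H^*(\cT_k(\bbP^1))$ is reachable. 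Combining all reductions yields the desired generation.
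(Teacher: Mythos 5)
Your reduction mirrors the paper's proof up to the last step: the paper's argument is exactly ``basis of $e\CA(n\delta)e$ (\cref{cor:span-set-eCAe}) $+$ thick-to-thin colour changes \eqref{eq:thick-thin-colour} $+$ control of coupons'', and your treatment of crossings, $\varepsilon$-coupons and thin $\tau$-coupons is fine. The gap is in how you handle coupons on a \emph{thick} $\tau$-strand. By the projection formula, the composite merge${}\circ{}$coupon$(\widetilde P)\circ{}$split is nothing but the coupon labelled by $M_{1^k}^k(\widetilde P)$, so what your construction reaches (allowing multiplication by $\bbk[u_1,\ldots,u_k]^{\fkS_k}=\Lambda$) is exactly the set of labels in $\Lambda\cdot\Im(M_{1^k}^k)\subset H^*(\cT_k(\bbP^1))$. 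To conclude you must know that this is \emph{all} of $H^*(\cT_k(\bbP^1))$ over an arbitrary field, and your proposed substitute --- \cref{cor:surj-taut-classes} plus the identity $\partial_{w_0}(D_k\cdot\mathrm{sym})=\mathrm{sym}$ --- does not deliver this. The merge carries the Euler-class factor ($K_2$, i.e.\ $\prod_{j<i}(v_j-u_i)$, restricting to $\prod(x_j-x_i+c_j)$) \emph{inside} the Demazure operator; you cannot choose $\widetilde P$ so that $\widetilde P\cdot K_2$ becomes $D_k$ times a symmetric lift, since $K_2$ is a fixed non-unit that need not divide anything, so the staircase identity never applies. Surjectivity of $\phi$ only tells you every label is of the form $\phi(\widehat P)$ with $\widehat P\in\bbk[\underline u,\underline v]^{\fkS_k\times\fkS_k}$; the whole problem is to route the $v$-dependence through thin strands, i.e.\ to prove $\Im\phi\subseteq\Lambda\cdot\Im(M_{1^k}^k)$.

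That missing inclusion is precisely \cref{cor:image-phi} ($\Lambda\cdot\Im(M_{1^n}^n)=\Im(i^*)\simeq H^*(\cT_n,\bbZ)$), which is the input the paper actually cites, and it is genuinely nontrivial: its proof rests on~\cite[Th.~4.15, Lem.~8.31]{MakMin_KLR2023} and on the appendix's analysis of the integral lattice. The subtlety is real --- $i^*$ is injective but not surjective (e.g.\ $i^*[\cT_2^{2\infty}]=2c_1c_2$, so $c_1c_2\notin\Im(i^*)$) --- so any argument must identify exactly which classes the full-flag merge hits; a soft ``Demazure plus surjectivity'' heuristic cannot see these divisibility constraints. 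So: replace the last paragraph of your proposal by an appeal to \cref{cor:image-phi} (or reprove that statement), and the rest of your argument goes through and coincides with the paper's.
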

\begin{proof}
	Follows from \cref{cor:span-set-eCAe,cor:image-phi} and~\eqref{eq:thick-thin-colour}.
\end{proof}

The computation of the action of multicoloured crossings is somewhat more involved.
\begin{prop}\label{prop:thin-mc-cros-formula}
	Let $P\in H^*(\cT_{(\varepsilon,\tau)}) = H^*(C)[x_1]\otimes \bbk[x_2]$.
	Write $P = P_1+P_2$, where $P_1\in \bbk[x_1,x_2]$, and $P_2\in H^{>0}(C)[x_1,x_2]$.
	Then $R_{(\varepsilon,\tau)}^{(\tau,\varepsilon)}(P) = s(P) + c_1\partial P_1$.
\end{prop}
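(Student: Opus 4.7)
The plan is to verify the formula by computing the action of $R := R^{(\tau,\varepsilon)}_{(\varepsilon,\tau)}$ on the polynomial representation $\bfP_{(\varepsilon,\tau)}$; by the faithfulness of this representation (established immediately before the statement), this suffices. By linearity over $H^*_{GL_2}(\pt) = \bbk[x_1,x_2]^{\fkS_2}$, and using that $H^*(C) = \bbk \oplus \bbk c$, it is enough to check the formula on the two summands $P_1 \in \bbk[x_1,x_2]$ and $c_2 Q$ with $Q \in \bbk[x_1,x_2]$, where the predicted right-hand sides are $s(P_1) + c_1 \partial P_1$ and $s(c_2Q) = c_1 s(Q)$ respectively.

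My main approach is to leverage the slide relations of Lemma~\ref{lem:color-slide}. Applying the thin colour change $C^{(\tau,\tau)}_{(\tau,\varepsilon)}$ on the left of $R$ and using the first slide relation gives, modulo a length-$0$ correction, $c_2 R(P) = R^{(\tau,\tau)}_{(\tau,\tau)}(c_2 P) + E(P)$, and the single-colour $\tau$-crossing is computed from Proposition~\ref{prop:new-poly-rep} as $R^{(\tau,\tau)}_{(\tau,\tau)}(Q) = Q + s(Q) - \Delta_{12}\partial Q$ (using $\partial((x_1-x_2)Q) = Q + s(Q)$ and the $M$/$S$ formulas). Symmetrically, the second slide relation applied to $P = c_2 Q = C^{(\varepsilon,\tau)}_{(\varepsilon,\varepsilon)}(Q)$ relates $R(c_2 Q)$ to $R^{(\varepsilon,\varepsilon)}_{(\varepsilon,\varepsilon)}(Q) = Q - s(Q)$. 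Combining the two relations and dividing by $c_1, c_2$ (using $c_i^2 = 0$ carefully on each isotypic piece of $H^*(C^2)$) produces the desired formula, provided the correction $E(P)$ is pinned down.

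The main obstacle is precisely the identification of the length-$0$ correction in the slide relations. My preferred route is a direct geometric computation: the Steinberg stack $\cZ^\cI_{(\tau,\varepsilon),(\varepsilon,\tau)}$ parameterises triples $(\cE^1_{\tau,\varepsilon}\subset\cE\supset \cE^1_{\varepsilon,\tau})$ with $\cE/\cE^1_{\tau,\varepsilon} \simeq \cO_c$ and $\cE^1_{\varepsilon,\tau}\simeq\cO_c$, and decomposes into two strata—an open one where $\cE^1_{\tau,\varepsilon}\cap\cE^1_{\varepsilon,\tau}=0$ (forcing $\cE = \cE^1_{\tau,\varepsilon}\oplus\cO_c$), and a closed one where the two subsheaves coincide and $\cE \in \cT_{2\varepsilon}$. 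By the evenness argument of Proposition~\ref{prop:filtr-on-Steinberg}, the fundamental class splits as the sum of the two stratum classes, and a Fulton-style excess-intersection computation (analogous to the proof of Lemma~\ref{lem:color-slide}) identifies the open stratum's contribution to $R(P)$ as $s(P)$ and the closed stratum's as $c_1\partial P_1$ (the $\varepsilon$-stratum factors through the colour change $C^\varepsilon_\tau$, which kills $P_2$, leaving only $P_1$; the $\varepsilon$-Hecke then contributes $\partial$, and the reinjection into the $\tau$-column contributes $c_1$).

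As a backup in case the excess class computation becomes unwieldy, one can avoid the geometry entirely by checking the formula on a generating set (e.g. $1, x_1, x_2, c_2, c_2 x_1$): by $H^*_{GL_2}$-linearity and the short list of possible monomials, these few cases determine $R$ uniquely, and each can be verified directly from the slide relations combined with $R^{(\varepsilon,\varepsilon)}_{(\varepsilon,\varepsilon)}(1) = 0$, $R^{(\tau,\tau)}_{(\tau,\tau)}(1) = 2$, together with the explicit thin-colour-change formulas $C^\tau_\varepsilon(\cdot) = c(\cdot)$ and $C^\varepsilon_\tau$ killing $H^{>0}(C)$.
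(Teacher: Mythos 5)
Your overall strategy—evaluate $R \coloneqq R_{(\varepsilon,\tau)}^{(\tau,\varepsilon)}$ on the faithful polynomial representation, reduce by $H^*_{GL_2}(\pt)$-linearity, and isolate the length-zero correction—is the right frame, and your description of $Z' = Y_{(\varepsilon,\tau)}\times_{X_2}Y_{(\tau,\varepsilon)}$ is close to where the paper's proof starts. However, the step where you actually determine the correction has a genuine gap. The fundamental class $[Z']$ does \emph{not} split as a sum of the two stratum classes: the coincident-flag locus is $Z'\cap Y_2 = Y_{(\varepsilon,\varepsilon)}$, which has dimension $2$, strictly smaller than the $3$-dimensional closure $T'_2$ of the open stratum, so $[Z'] = [T'_2]$ and there is no separate ``closed-stratum class'' whose contribution one could identify with $c_1\partial P_1$. (\cref{prop:filtr-on-Steinberg} gives a filtration on the Borel--Moore homology of the Steinberg stack, not a decomposition of a fundamental class; also your closed stratum is described incorrectly—the locus where the two subsheaves coincide allows the non-split self-extension of $\cO_c$, not only points of $\cT_{2\varepsilon}$.) Moreover, an excess-intersection computation ``analogous to the proof of \cref{lem:color-slide}'' cannot detect the correction, because that proof works only after restricting to the open orbit $\Omega_w$ and by design discards exactly the length-zero terms you need. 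The backup plan suffers from the same circularity: evaluating on module generators is fine, but the identities you propose to evaluate with are the slide relations of \cref{lem:color-slide}, which hold only modulo the very correction you are trying to compute, and degree considerations do not force it to vanish.

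What is missing is an exact (not up-to-lower-terms) input, and this is where the paper's argument differs from your plan. It shows that $T_2$, the non-diagonal component of the one-coloured Steinberg $Z_2 = Y_2\times_{X_2}Y_2$, and $T'_2 = Z'\cap T_2$ are smooth, that $[Z'] = [T'_2]$ by the dimension count above, and that the relevant intersections are transversal, so convolution with $[T'_2] = j^![T_2]$ can be computed by base change from $[T_2] = [Z_2]-[Y_2]$. This produces the exact identity $c_2\,R(P) = R_2(c_1P) - c_1P$ in the $(\tau,\tau)$-column; combining it with the known action of the single-colour crossing $R_2$ from \cref{prop:new-poly-rep} and the relation $\Delta\cdot(c_1\otimes\gamma)=0$ for $\gamma\in H^{>0}(C)$ gives $c_2R(P) = c_2\bigl(s(P)+c_1\partial P_1\bigr)$, after which $c_2$ is cancelled. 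Note that this identity shows the correction in your slide-relation step is genuinely nonzero, so it cannot be guessed or absorbed; your proposal contains no substitute for this exact geometric input, and as written it does not prove the stated formula.
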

\begin{proof}
	Consider the Steinberg variety $Z_2 = Y_2 \times_{X_2} Y_2$.
	$Z_2$ has two irreducible components, the diagonal $Y_2$ and the closure of its complement; let us call the latter $T_2$.
	It is well known that $T_2$ is smooth\footnote{One can check this étale-locally, where it becomes a claim about the Grothendieck-Springer alteration for $\fkgl_2$. In fact, one can show that $T_2$ is the blow up of $Y_2$ along the subvariety of points with underlying torsion sheaf of the form $\cO_x^{\oplus 2}$, $x\in C$}.
	Now look at $Z'\coloneqq Z_{(\varepsilon,\tau),(\tau,\varepsilon)} = Y_{(\varepsilon,\tau)}\times_{X_2} Y_{(\tau,\varepsilon)}\subset Z_2$ from~\cref{def:mc-crossing}.
	We have $Z'\cap Y_2 = Y_{(\varepsilon,\varepsilon)}$, so $\dim Z'\cap Y_2 = 2$.
	On the other hand, $T'_2\coloneqq Z'\cap T_2$ has a $3$-dimensional open, covering the locus $X_2^{\tau+\varepsilon}\subset X_2$ of quotients of the form $\cO_p \oplus \cO_x$, $x\neq p$.
	This shows that $[Z'] = [T'_2]$, and $T'_2$ is smooth\footnote{Again, étale-locally this boils down to Grothendieck-Springer for $\fksl_2$}.

	Consider the following commutative diagram:
	\[
		\begin{tikzcd}
			 Y_{(\varepsilon,\tau)}\times T_2\ar[d,hook,"\iota"] & Y_{(\varepsilon,\tau)}\times T'_2\ar[l,hook',"j"'] & T'_2\ar[l,"q'"']\ar[r,"p'"]\ar[d,hook,"\iota"] & Y_{(\tau,\varepsilon)}\ar[d,hook,"\iota"]\\
			Y_2\times T_2 & &T_2\ar[ll,"q"']\ar[r,"p"] & Y_2
		\end{tikzcd}
	\]
	We want to compute $p'_*(q')^!(P\otimes[T'_2])$, for $P\in H^*(Y_{(\varepsilon,\tau)})$.
	Since $j$ is a closed embedding of smooth varieties, we have $j^!(P\otimes[T_2]) = (P\otimes[T'_2])$.
	Furthermore, since $T_2$ and $Y_{(\varepsilon,\tau)}\times T_2$ intersect transversally, we have base change by~\cite[Prop.~2.2]{MakMin_KLR2023}:
	\[
		\iota_*p'_*(q')^!(P\otimes[T'_2]) = p_*\iota_*(q')^!j^!(P\otimes[T_2]) = p_*q^!\iota_*(P\otimes[Z_2]-[Y_2]) = R_2(c_1P) - c_1P.
	\]
	Using~\cref{prop:new-poly-rep} and the fact that $\Delta(c_1\otimes\gamma) = 0$ for all $\gamma\in H^{>0}(C)$, we have:
	\[
		c_2R_{(\varepsilon,\tau)}^{(\tau,\varepsilon)}(P) = R_2(c_1P) - c_1P = c_2s(P) + \Delta\partial(c_1P_1+c_1P_2) = c_2(s(P)+c_1\partial(P_1)).
	\]
	We conclude by removing $c_2$ from both sides.
\end{proof}

We do not give the formulas for the action of thick multicoloured crossings. 
However, we expect that a dimension count argument as in the proof of~\cref{prop:thin-mc-cros-formula} shows that splits slide past multicoloured crossings.
One can then derive explicit formulas by expressing thick crossings in terms of thin crossings:
\[
	\tikz[thick,xscale=.25,yscale=.25,font=\footnotesize]{
		\draw [color=cyan!90!white] (1,0) .. controls (1,1.8) and (4,1.2) .. (4,3);
		\draw (4,0) .. controls (4,1.8) and (1,1.2) .. (1,3);
		\dsplit{0}{3}{2}{4}
		{\color{cyan!90!white}\dsplit{3}{3}{5}{4}}
		\draw (1,3) -- (1,4);
		\draw [color=cyan!90!white] (4,3) -- (4,4);
		\draw [color=cyan!90!white] (1,-0.5) -- (1,0);
		\draw (4,-0.5) -- (4,0);

		\ntxt{1}{-1.2}{$a\varepsilon$}
		\ntxt{4}{-1.2}{$b\tau$}

		\ntxt{5.8}{1.5}{$\substack{?\\=}$}

		{\color{cyan!90!white}\dsplit{7}{0}{9}{1}}
		\dsplit{10}{0}{12}{1}
		\draw [color=cyan!90!white] (8,-0.5) -- (8,1);
		\draw (11,-0.5) -- (11,1);
		\draw [color=cyan!90!white] (7,1) .. controls (7,2.8) and (10,2.2) .. (10,4);
		\draw [color=cyan!90!white] (8,1) .. controls (8,2.8) and (11,2.2) .. (11,4);
		\draw [color=cyan!90!white] (9,1) .. controls (9,2.8) and (12,2.2) .. (12,4);
		\draw (10,1) .. controls (10,2.8) and (7,2.2) .. (7,4);
		\draw (11,1) .. controls (11,2.8) and (8,2.2) .. (8,4);
		\draw (12,1) .. controls (12,2.8) and (9,2.2) .. (9,4);

		\ntxt{8}{-1.2}{$a\varepsilon$}
		\ntxt{11}{-1.2}{$b\tau$}
	}
\]

\subsection{Relation to Savage's algebras}\label{subs:Savage}
Let us recall the definition of affine wreath product algebras after~\cite{Sav_AWPA2018}. Our setting is slightly different from \emph{op.cit.}, in that we assume that the Nakayama automorphism is trivial, but do not require the Frobenius element to be non-degenerate.
\begin{defn}
	Let $F$ be a unital algebra, and $\Delta\in F\otimes F$ an element satisfying $(f_1\otimes f_2)\Delta = \Delta(f_2\otimes f_1)$ for all $f_1,f_2\in F$; we call such $\Delta$ \textit{weak Frobenius}.
	For $n>0$, the \textit{affine wreath product algebra} $\cA_n(F) = \cA_n(F,\Delta)$ is the quotient of the free product $F^{\otimes n}[x_1,\ldots,x_n] \star \bbk\fkS_n$ modulo the following relations:
	\begin{gather*}
		s_if = f^{s_i}s_i\text{ for all }f\in F^{\otimes n},\\
		s_ix_j = x_{s_i(j)}s_i + (\delta_{i,j}-\delta_{i+1,j})\Delta_{i,i+1}. 
	\end{gather*} 
\end{defn}

Let $C$ be a smooth projective curve.
Denote by $M_\pt$ the one-dimensional ideal $H^2(C)\subset H^*(C)$, and write $V_C = H^*(C)\oplus M_\pt$.
Define $Z^e_C \coloneqq \End_{H^*(C)}(V_C)$, where we consider everything with cohomological grading, e.g. $M_\pt$ lives in degree $2$.
It is clear that 
\[
	\sum_i t^i \dim Z^e_C[i] = 2 + 2g(C)t + 3t^2,
\]
where $g(C)$ is the genus of $C$.
Observe that $\cS^\cI_1\simeq Z^e_{C}[x]$, $\bfP_1\simeq V_C[x]$.
Let $y = c\cup- :H^*(C)\to M_\pt$, and $z: M_\pt = H^2(C)\hookrightarrow H^*(C)$ the inclusion.
Then $\Delta'\coloneqq\Delta_C + y\otimes z + z\otimes y$ is a weak Frobenius element.

\begin{expl}\label{ex:zigzag-A1-ext}
	Let $C = \bbP^1$. In this case $Z^e_{\bbP^1}$ is isomorphic to the extended zigzag algebra of type $A_1$, given by $Z^e_{A_1} \coloneqq Z_{A_2}/(yz)$, see~\eqref{eq:zigzag-A2-def} for notation, and $\Delta' = 1_\tau\otimes zy + zy\otimes 1_\tau + y\otimes z + z\otimes y$. 
\end{expl}

Consider the subalgebra $\cR^\cI_n = \sum_{\cla,\cmu\in \cI^{n}} \cS^\cI_{\cla,\cmu}\subset \cS^\cI_n$.
Diagrammatically, this means we only allow thin strands at the ends.

\begin{prop}
	We have an isomorphism $\cR^\cI_n\simeq \cA_n(Z^e_C)$. 
\end{prop}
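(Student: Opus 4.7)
The plan is to exhibit an explicit algebra homomorphism $\phi : \cA_n(Z^e_C) \to \cR^\cI_n$, check the defining relations in the faithful polynomial representation, and conclude by matching graded dimensions on both sides.

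\textbf{Step 1 (definition of $\phi$).} The isomorphism $\cS^\cI_1 \simeq Z^e_C[x]$ matches the primitive idempotents $1_\tau, 1_\varepsilon \in Z^e_C$ with $e_{1\tau}, e_{1\varepsilon}$ and realises the four $\Hom$-spaces making up $Z^e_C$ via coupons and colour changes on a single thin strand. For each $i\in[1,n]$ this gives an algebra embedding $\iota_i : Z^e_C[x_i] \hookrightarrow \cR^\cI_n$ acting only on the $i$-th strand. Since strands at distinct positions do not interact, the $\iota_i$ commute pairwise and assemble into a ring homomorphism $(Z^e_C)^{\otimes n}[x_1,\ldots,x_n] \to \cR^\cI_n$. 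Extend this to $\phi$ by sending $s_i \in \bbk\fkS_n$ to $\sum_{\cla\in \cI^n} R^{s_i\cla}_{\cla}$, i.e.\ the sum over $\cla$ of the elementary crossing on strands $i,i{+}1$---the same-colour crossing when $c^\cla_i = c^\cla_{i+1}$ and the multicoloured crossing of \cref{def:mc-crossing} otherwise.

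\textbf{Step 2 (relations).} I would verify the defining relations of $\cA_n(Z^e_C)$ in the faithful polynomial representation $\bfP_n$. The key observation is that each crossing $R_i$ acts by the uniform formula $R_i = s_i^{\mathrm{poly}} + \Delta'_{i,i+1}\partial_i$, where $\Delta' = \Delta_C + y\otimes z + z\otimes y$ is the weak Frobenius element of $Z^e_C$. This follows from combining the pure-$\tau$ case $R_i(P) = s_i(P) + \Delta_{i,i+1}\partial_i P$ of \cite[Th.~4.15]{MakMin_KLR2023} with \cref{prop:new-poly-rep,prop:thin-mc-cros-formula}: the correction term $c_1\partial P_1$ for the mixed crossing is precisely the contribution of $y\otimes z + z\otimes y$ to $\Delta'\partial_i$. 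With this template the four relation families reduce to divided-difference manipulations:
\begin{itemize}
\item \emph{Involution and braid.} Since $\Delta'$ is symmetric, $\partial_i(\Delta') = 0$, whence $s_i^{\mathrm{poly}}\cdot \Delta'\partial_i = \Delta'\partial_i$, $\Delta'\partial_i\cdot s_i^{\mathrm{poly}} = -\Delta'\partial_i$ and $(\Delta'\partial_i)^2 = 0$; hence $R_i^2 = 1$. The braid relation is checked analogously.
\item \emph{Wreath commutation} $s_i f = f^{s_i} s_i$: this is the diagrammatic statement that the crossing interchanges coupons and colour changes on the two crossing strands, which follows from \cref{prop:color-past-split,lem:color-slide}.
\item \emph{Dot--crossing relation} $s_i x_j = x_{s_i(j)} s_i + (\delta_{ij}-\delta_{i+1,j})\Delta'_{i,i+1}$: trivial for $j\ne i,i{+}1$, and for $j\in\{i,i{+}1\}$ it is an immediate consequence of $\partial_i(x_i P) = x_{i+1}\partial_i P + P$ applied to $R_i = s_i^{\mathrm{poly}} + \Delta'_{i,i+1}\partial_i$.
\end{itemize}
Faithfulness of $\bfP_n$ (the corollary to \cref{thm:curve-Schur-basis}) lifts these operator identities to identities in $\cR^\cI_n$, so $\phi$ is a well-defined morphism of graded algebras.

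\textbf{Step 3 (dimension matching and conclusion).} The PBW theorem for affine wreath product algebras \cite{Sav_AWPA2018} gives $\dim_t \cA_n(Z^e_C) = n!\cdot(\dim_t Z^e_C)^n\cdot(1-t^2)^{-n}$. On the other side, \cref{thm:curve-Schur-basis} provides a basis of $e_\cmu \cR^\cI_n e_\cla$ indexed by $(w,P)$ with $w\in\fkS_n$ and $P\in B_{\cla'_w}$. Summing over $\cla,\cmu\in\cI^n$ with $w$ fixed, each position $j$ contributes independently over the four cases $(c^\cla_j, c^\cmu_{w(j)})\in\cI^2$: a factor $\dim_t H^*(C)$ from $(\tau,\tau)$ and a factor $1$ from each of the three remaining cases, yielding a total of $\dim_t H^*(C) + 3 = \dim_t Z^e_C$ per strand. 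Therefore $\cR^\cI_n$ and $\cA_n(Z^e_C)$ have the same graded dimension; as $\phi$ sends the PBW basis of the source to a spanning set of the target, it is an isomorphism.

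\textbf{Expected main obstacle.} Extracting the uniform formula $R_i = s_i^{\mathrm{poly}} + \Delta'_{i,i+1}\partial_i$ from the four case-by-case formulas (with correct identification of the off-diagonal Frobenius components $y\otimes z$, $z\otimes y$ via the $c_1\partial P_1$ correction in \cref{prop:thin-mc-cros-formula} and its mirror image) is the delicate part. Once this template is nailed down, every wreath-product relation collapses to a short computation.
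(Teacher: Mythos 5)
Your overall strategy is the one the paper uses: identify the coupon/colour-change subalgebra with $(Z^e_C)^{\otimes n}[x_1,\ldots,x_n]$, send $s_i$ to a combination of elementary crossings, verify the wreath-product relations on the faithful polynomial representation, and conclude bijectivity by matching Savage's PBW basis against the basis of \cref{thm:curve-Schur-basis}. However, Step 2 contains a genuine error at exactly the delicate point you flagged: the bare same-colour crossings do \emph{not} act by the uniform formula $s_i^{\mathrm{poly}}+\Delta'_{i,i+1}\partial_i$. A same-colour crossing is by definition a merge followed by a split, and on the polynomial representation this composite contains an identity summand. Concretely, on two thin $\varepsilon$-strands the formulas of \cref{prop:new-poly-rep} give $R(P)=(x_i-x_{i+1})\partial_i(P)=P-s_i(P)$ (a nilHecke-type operator), and on two thin $\tau$-strands $R(P)=\partial_i\bigl(P(x_i-x_{i+1}-\Delta_{i,i+1})\bigr)=P+s_i(P)-\Delta_{i,i+1}\partial_i(P)$; your citation of the pure-$\tau$ case as ``$R_i(P)=s_i(P)+\Delta_{i,i+1}\partial_i P$'' drops this identity term. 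Only the multicoloured crossings of \cref{def:mc-crossing}, computed in \cref{prop:thin-mc-cros-formula}, are free of it. Consequently, with $\phi(s_i)=\sum_{\cla}R^{s_i\cla}_{\cla}$ as you define it, the relation $s_i^2=1$ already fails on the all-$\varepsilon$ component, where the operator $1\mp s_i$ squares to $2(1\mp s_i)$; so $\phi$ is not a homomorphism and the verification in Step 2 breaks down before the braid and dot–crossing relations are ever reached.

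The repair is exactly the paper's device: one must take $\underline{s}_i$ to be the sum of the four crossings \emph{minus the identity} (removing the spurious diagonal term coming from the merge-split composites), after which the action becomes $s_i$ plus the $\Delta'$-twisted Demazure operator $\partial^{\Delta'}_i(P)=(\Delta' P-P^{s_i}\Delta')/(x_i-x_{i+1})$, and your subsequent computations (involution, Leibniz rule for the dot–crossing relation, wreath commutation via \cref{prop:color-past-split,lem:color-slide}) go through as sketched. Note also that the correct operator is this twisted Demazure rather than left multiplication by $\Delta'_{i,i+1}$ composed with $\partial_i$: on the mixed colour components only one of the terms $y\otimes z$, $z\otimes y$ survives, which is what the twist encodes and what your own case analysis implicitly uses. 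Your Step 3 dimension count is fine and is a mild repackaging of the paper's closing argument that the Savage basis maps to the basis of \cref{thm:curve-Schur-basis}.
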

\begin{proof}
	Rewrite the polynomial representation of $\cR^\cI_n$ in the following way:
	\[
		\sum_{\cla\in\cI^{n}} H^*(\cT_\cla) \simeq \bigotimes_{i=1}^n \left(H^*(\cT_\tau)\oplus H^*(\cT_\varepsilon)\right) \simeq V_C^{\otimes n}[x_1,\ldots,x_n].
	\]
	Similarly, we can identify the subalgebra of $\cR^\cI_n$ generated by colour changes and all $H^*(\cT_\cla)$, $\cla\in \cI^{n}$ with $(Z^e_C)^{\otimes n}[x_1,\ldots,x_n]$.
	Consider the elements
	\[
		\underline{s}_i \coloneqq R_{(\tau,\tau)}^{(\tau,\tau)} + R_{(\varepsilon,\varepsilon)}^{(\varepsilon,\varepsilon)} + R_{(\varepsilon,\tau)}^{(\tau,\varepsilon)} + R^{(\varepsilon,\tau)}_{(\tau,\varepsilon)} - 1.
	\]	
	Let $n=2$, $i=1$ for simplicity.
	By~\cref{prop:new-poly-rep,prop:thin-mc-cros-formula}, the action of $\underline{s}$ on the polynomial representation $(V_C)^{\otimes n}[x_1,\ldots,x_n]$ is as follows:
	\begin{align*}
		P\in H^*(\cT_{(\tau,\tau)}): & \underline{s}(P) = s(P) + \Delta_C\partial(P), \\
		P\in H^*(\cT_{(\varepsilon,\tau)}): & \underline{s}(P) = s(P) + c_1\partial(P)|_{1\otimes H^{>0}(C) = 0} = s(P) + (z\otimes y)\partial(P),\\
		P\in H^*(\cT_{(\tau,\varepsilon)}): & \underline{s}(P) = s(P) + (y\otimes z)\partial(P),\\
		P\in H^*(\cT_{(\varepsilon,\varepsilon)}): & \underline{s}(P) = s(P).
	\end{align*}
	Collecting these formulas together, we see that $\underline{s}(P) = s(P) + (\partial^{\Delta'} P)$, where $\partial^{\Delta'}$ is the $\Delta'$-twisted Demazure operator:
	\[
		\partial^{\Delta'}(P) = \frac{\Delta' P - P^s \Delta'}{x_1-x_2}.
	\]
	It is easy to check it satisfies Leibniz rule: $\partial^{\Delta'}(PQ) = P^s\partial^{\Delta'}(Q) + \partial^{\Delta'}(P)Q$.
	In particular:
	\begin{align*}
		\underline{s}(fP) & = f^s s(P) + f^s \partial^{\Delta'}(P) = f^s\underline{s}(P),\\
		\underline{s_i}(x_jP) & = x_{s_i(j)}s_i(P) + \partial^{\Delta'}_i (x_jP) = x_{s_i(j)}s_i(P) + x_{s_i(j)}\partial^{\Delta'}_i(P) + (\delta_{i,j}-\delta_{i+1,j})\Delta'_i P. \\
	\end{align*}
	We leave checking $\fkS_n$-relations to the interested reader (or see~\cite[Sec.~3]{lai2025schurification}); in any case, we obtain an algebra homomorphism $\cA_n(Z^e_C)\to \cR^\cI_n$.
	As the basis of $\cA_n(Z^e_C)$ in~\cite[Th.~4.6]{Sav_AWPA2018} gets sent to a basis of $\cR^\cI_n$ from~\cref{thm:curve-Schur-basis}, this is a bijection.
\end{proof}

\begin{rmk}
	While Savage requires $\Delta$ to be non-degenerate, the proof of \cite[Th.~4.6]{Sav_AWPA2018} does not use this.
	See~\cite[Sec.~3]{lai2025schurification} for another argument. 
\end{rmk}

\begin{cor}
	\label{cor:End(P0)}
	Over a field of characteristic $0$, the algebra $\CA(n\delta)$ is Morita-equivalent to $\cR^\cI_n\simeq \cA_n(Z_{A_1}^e)$.
\end{cor}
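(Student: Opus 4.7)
The plan is to chain together three results already established in the excerpt: the Morita reduction to a smaller corner of $\CA(n\delta)$ available in characteristic zero, the identification of that corner with a piece of the curve Schur algebra, and the identification of $\cR^\cI_n$ with the affine wreath product algebra.

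First, I would invoke the second half of \cref{prop:proj-gen}, which states that when $\ona{char} \bbk = 0$, the algebra $\CA(n\delta)$ is already Morita-equivalent to $e_0 \CA(n\delta) e_0$, where $e_0 = \sum_{\cla \in \cI^n} e_\cla$ is the sum of idempotents indexed by $\cI$-coloured compositions of $n$ whose parts all have size $1$. The characteristic-zero hypothesis enters exactly here, via the Springer-theoretic argument in the proof of \cref{prop:proj-gen} that shows only flag sheaves of type $\beta_\cla$ with all $\lambda^i=1$ are needed.

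Next, I would apply the isomorphism $\Phi\colon e\CA(n\delta)e \xra{\sim} \cS^\cI_n$ from \cref{prop:regular-truncate}. Since $e_0$ is a sub-sum of the idempotent $e$, $\Phi$ restricts to an algebra isomorphism between $e_0\CA(n\delta)e_0$ and the sum $\sum_{\cla,\cmu\in \cI^n}\cS^\cI_{\cla,\cmu}$, which is exactly the subalgebra $\cR^\cI_n$ introduced just before the statement. Finally, the preceding proposition identifies $\cR^\cI_n$ with $\cA_n(Z^e_C)$ for $C=\bbP^1$, and since $Z^e_{\bbP^1} \simeq Z^e_{A_1}$ as noted in the example accompanying the definition of $Z^e_C$, we conclude $\cR^\cI_n\simeq \cA_n(Z^e_{A_1})$.

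There is no real obstacle here: every ingredient is stated and proved earlier in the paper, and the corollary is essentially a one-line composition of \cref{prop:proj-gen}, \cref{prop:regular-truncate}, and the preceding proposition. The only subtlety worth flagging is to verify that $\Phi(e_0\CA(n\delta)e_0)$ really matches $\cR^\cI_n$ on the nose; but this is immediate from the construction of $\Phi$, which sends the idempotent $e_\cla$ for $\cla\in \cI^{(n)}$ to the identity on $\cF_\cla$, so thin compositions $\cla\in \cI^n$ map precisely to the thin-strand idempotents that cut out $\cR^\cI_n$ inside $\cS^\cI_n$.
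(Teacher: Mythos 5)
Your proposal is correct and matches the paper's argument: the paper's proof is simply ``Follows from \cref{prop:proj-gen}'', relying on exactly the chain you spell out (characteristic-zero Morita reduction to $e_0\CA(n\delta)e_0$, the restriction of the isomorphism $\Phi$ of \cref{prop:regular-truncate} to the thin idempotents cutting out $\cR^\cI_n$, and the identification $\cR^\cI_n\simeq \cA_n(Z^e_{\bbP^1})=\cA_n(Z^e_{A_1})$ from the preceding proposition). You have merely made explicit the steps the paper leaves implicit.
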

\begin{proof}
	Follows from~\cref{prop:proj-gen}.
\end{proof}

\appendix
\medskip
\section{Integral cohomology of $\cT_n$}\label{subs:erratum}

One of the technical results we needed in~\cite{MakMin_KLR2023} was the computation of integral cohomology groups $H^*(\cT_n,\bbZ)$, $\cT_n = \cT_n(\bbP^1)$.
We approached it from three angles.
First, writing $\cT_n = [X_n/GL_n]$ as in \cref{subs:curve-Schur}, restriction to torus fixed points $i:C^n\hookrightarrow X_n$ induces a map
\begin{equation*}
	i^*: H^*(\cT_n,\bbZ) \to \Sym^n\left( H^*(\bbP^1,\bbZ)[x] \right) = \left(\bbZ[x_1,\ldots,x_n,c_1,\ldots,c_n]/(c_i^2)\right)^{\fkS_n}.
\end{equation*}
Second, let $\pi_*:\cT_n\times \bbP^1\to \cT_n$ be the projection, and consider the universal sheaf $\cE$ on $\cT_n\times \bbP^1$. Consider the subring generated by Künneth-Chern classes $c_{i,0} = \pi_*(c_i(\cE)\cup [\pt])$, $c_{i,1} = \pi_*(c_i(\cE))$:
\begin{equation*}
	\jmath: TH^*(\cT_n,\bbZ) = \langle c_{i,0}, c_{i,1} : i\geq 0 \rangle \hookrightarrow H^*(\cT_n,\bbZ).
\end{equation*}
Finally, let $\Gamma = {1 \rightrightarrows 0}$. Then pullback along the open embedding $\cT_n\simeq \Rep^\reg_{n\delta}\Gamma\subset \Rep_{n\delta}\Gamma$ induces a map
\begin{equation*}
	\phi: H^*(\Rep_{n\delta},\bbZ) = \bbZ[u_1,\ldots,u_n,v_1,\ldots,v_n]^{\fkS_n\times \fkS_n}\to H^*(\cT_n,\bbZ).
\end{equation*}
Let $\Lambda\subset H^*(\cT_n,\bbZ)$ be the subring generated by classes $c_i(\pi_*\cE)$, $i\geq 0$.
It easily follows from definitions that under the identification $\cT_n = [X_n/GL_n]$ it corresponds precisely to the pullback of $H^*_{GL_n}(\pt)$.
We claimed in~\cite{MakMin_KLR2023} that $i^*$ is an isomorphism (Prop.~6.2), $\jmath$ is not onto (Ex.~6.7), and the image of $\phi$ is $\Lambda\cdot TH^*(\cT_n)$ (Prop.~8.37).
Unfortunately, the first claim is incorrect. 
However, the proofs of the other two claims were algebraic in nature, and as such relied solely on $i^*$ being injective.

\begin{lem}\label{lem:nilcone-no-torsion}
	The equivariant Borel-Moore homology $H_*^{\BM}([\cN_{\fkgl_n}/GL_n],\bbZ)$ has no torsion as a $H^*_{GL_n}(\pt)$-module.
\end{lem}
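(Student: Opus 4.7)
My plan is to stratify $\cN_{\fkgl_n} = \bigsqcup_{\lambda\vdash n}\bbO_\lambda$ by nilpotent conjugacy classes and proceed by induction on the closure order (which satisfies~\eqref{eq:strat-cond} by classical results of Gerstenhaber). The argument will be in the spirit of the parity reasoning of \cref{lem:Ext-*!-even}: once the Borel-Moore homology of each stratum is concentrated in degrees of a fixed parity, the gluing long exact sequences split and $H^*_{GL_n}(\pt,\bbZ)$-torsion-freeness propagates.

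For each $\lambda$, fix a Jordan representative $x_\lambda \in \bbO_\lambda$; the centralizer $G_\lambda = \operatorname{Stab}_{GL_n}(x_\lambda)$ is connected with reductive quotient $L_\lambda \simeq \prod_{i\geq 1} GL_{m_i(\lambda)}$ and contractible unipotent radical, where $m_i(\lambda)$ denotes the multiplicity of $i$ as a part of $\lambda$. Hence $[\bbO_\lambda / GL_n] \simeq [\pt / G_\lambda]$ is a smooth quotient stack, and equivariant Poincar\'e duality yields
\[
H^{\BM}_k([\bbO_\lambda/GL_n], \bbZ) \simeq H^{-k-2\dim G_\lambda}_{G_\lambda}(\pt, \bbZ) \simeq H^{-k-2\dim G_\lambda}_{L_\lambda}(\pt, \bbZ).
\]
The right hand side is a polynomial ring in even-degree generators, in particular torsion-free over $\bbZ$ and concentrated in a single cohomological parity. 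Because every nilpotent orbit in $\fkgl_n$ has even complex dimension (being a coadjoint, and therefore symplectic, orbit), $\dim G_\lambda$ has the same parity as $\dim GL_n$ for all $\lambda$, so the BM homology of every stratum sits in BM-degrees of one and the same parity.

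Enumerate $\lambda_1, \ldots, \lambda_N$ compatibly with the closure order and set $Z_i = \bigsqcup_{j\leq i}\bbO_{\lambda_j}$, a closed substack of $[\cN/GL_n]$. The equivariant Borel-Moore localization sequence
\[
\cdots \to H^{\BM}_{k+1}([\bbO_{\lambda_i}/GL_n]) \xrightarrow{\partial} H^{\BM}_k([Z_{i-1}/GL_n]) \to H^{\BM}_k([Z_i/GL_n]) \to H^{\BM}_k([\bbO_{\lambda_i}/GL_n]) \to \cdots
\]
has all terms concentrated in one parity by the previous paragraph (and induction on $i$, starting from the trivial case $Z_0=\emptyset$), so the connecting map $\partial$ must vanish. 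We therefore obtain short exact sequences of $H^*_{GL_n}(\pt, \bbZ)$-modules with torsion-free outer terms; torsion-freeness is preserved under extensions, so $H^{\BM}_*([Z_i/GL_n], \bbZ)$ is torsion-free, and setting $i=N$ yields the lemma.

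The delicate step is the claim that each $H^*_{L_\lambda}(\pt, \bbZ)$ is torsion-free \emph{as a module over} $H^*_{GL_n}(\pt, \bbZ)$, not merely over $\bbZ$. Since $L_\lambda$ is the centralizer of a nilpotent rather than a Levi subgroup of a parabolic in $GL_n$, one cannot directly invoke a Schubert cell decomposition of $GL_n/L_\lambda$. The cleanest way around this is to interpret $H^*_{L_\lambda}(\pt, \bbZ)$ via the Chern roots attached to the Jordan decomposition $\bbC^n \simeq \bigoplus_i \bbC^{m_i(\lambda)} \otimes \bbC^i$: the pullback from $H^*_{GL_n}(\pt, \bbZ)$ becomes the inclusion of $\fkS_n$-symmetric polynomials into a polynomial ring on the corresponding multiset of Chern roots, reducing the claim to the classical freeness of $\bbZ[x_1, \ldots, x_n]$ over $\bbZ[x_1, \ldots, x_n]^{\fkS_n}$.
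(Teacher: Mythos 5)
There is a genuine gap, and it sits exactly where you flag the ``delicate step'': the claim that each stratum contributes a module that is torsion-free over $H^*_{GL_n}(\pt,\bbZ)$ is false for every nonzero orbit. The module structure on $H_*^{\BM}([\bbO_\lambda/GL_n],\bbZ)\simeq H^*_{G_\lambda}(\pt,\bbZ)$ is induced by the restriction map $H^*_{GL_n}(\pt,\bbZ)\to H^*_{G_\lambda}(\pt,\bbZ)$, and in your Chern-root picture this sends a symmetric polynomial in $x_1,\dots,x_n$ to its evaluation at the multiset in which each Chern root of $GL_{m_i(\lambda)}$ is repeated $i$ times. That map is \emph{not} an inclusion of $\fkS_n$-invariants into a polynomial ring: its kernel contains the discriminant $\prod_{a<b}(x_a-x_b)^2$ as soon as $\lambda$ has a part of size at least $2$. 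Concretely, for $n=2$ and the regular orbit, $G_{(2)}$ retracts onto $GL_1$, the restriction sends $e_1\mapsto 2y$ and $e_2\mapsto y^2$, so $e_1^2-4e_2$ acts by zero and \emph{every} element of $H_*^{\BM}([\bbO_{(2)}/GL_2],\bbZ)$ is torsion. The same happens for all orbits except the zero orbit (the rank of $G_\lambda$ drops, so the restriction map has nontrivial kernel). Consequently the outer terms of your short exact sequences are themselves torsion modules, the ``torsion-free under extensions'' step has nothing to propagate, and the intermediate closed unions $Z_i$ are genuinely not torsion-free; no orbit-by-orbit argument of this shape can work.

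What your stratification and parity argument does correctly establish is the weaker statement that $H_*^{\BM}([\cN_{\fkgl_n}/GL_n],\bbZ)$ is even and free over $\bbZ$ --- and this is precisely the only thing the paper extracts from the orbit decomposition. The missing ingredient is global rather than stratum-wise: the paper then uses rational smoothness of $\cN_{\fkgl_n}$ to identify $H_*^{\BM}([\cN_{\fkgl_n}/GL_n],\bbQ)$ with $H^*_{GL_n}(\pt,\bbQ)$, a free module of rank one. Given that, any torsion $H^*_{GL_n}(\pt,\bbZ)$-submodule $M$ of the integral Borel--Moore homology embeds (by $\bbZ$-freeness of the ambient module) into $M\otimes_\bbZ\bbQ$, which is a torsion submodule of a free $H^*_{GL_n}(\pt,\bbQ)$-module and hence vanishes, so $M=0$. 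Some such global input about the whole cone is unavoidable here; if you want to salvage your approach, replace the false stratum-level claim by this rational-smoothness argument.
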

\begin{proof}
	We write $\bfH_R = H_*^{\BM}([\cN_{\fkgl_n}/GL_n],R)$ for a ring $R$.
	Decompose $[\cN_{\fkgl_n}/GL_n]$ into the union of nilpotent orbits $\bigsqcup_{\lambda\vdash n}[\bbO_\lambda/GL_n]$.
	It is known	that 
	\[
		H_*^{\BM}([\bbO_\lambda/GL_n],\bbZ) = H^*_{GL_n}(\bbO_\lambda,\bbZ) \simeq H^*_{G_\lambda}(\pt,\bbZ),
	\]
	where $G_\lambda$ is a product of general linear groups.
	In particular, $H_*^{\BM}([\bbO_\lambda/GL_n])$ is even and free over $\bbZ$ for all $\lambda$.
	Long exact sequence in Borel-Moore homology implies that the same holds for $\bfH_\bbZ$.

	Assume that we have a torsion $H^*_{GL_n}(\pt,\bbZ)$-submodule $M\subset \bfH_\bbZ$.
	By universal coefficients, 
	$M\otimes_\bbZ \bbQ$ is a torsion $H^*_{GL_n}(\pt,\bbQ)$-submodule of $\bfH_\bbQ$.
	But $\cN_{\fkgl_n}$ is rationally smooth~\cite{borho1983partial}, so that $\bfH_\bbQ\simeq H^*_{GL_n}(\cN_{\fkgl_n},\bbQ) \simeq H^*_{GL_n}(\pt,\bbQ)$ is a free module.
	Hence necessarily $M = 0$.
\end{proof}

\begin{prop}\label{prop:loc-is-inj}
	$i^*$ is injective.
\end{prop}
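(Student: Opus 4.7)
The plan is to reduce injectivity to two facts: rational torus localization (which gives injectivity after tensoring with $\bbQ$) and torsion-freeness of $H^*(\cT_n,\bbZ)$ as an abelian group (which ensures $H^*(\cT_n,\bbZ)\hookrightarrow H^*(\cT_n,\bbQ)$). Given these, the result follows from the commutative square
\[
\begin{tikzcd}
H^*(\cT_n,\bbZ) \ar[r,"i^*"] \ar[d,hook] & \Sym^n(H^*(\bbP^1,\bbZ)[x]) \ar[d] \\
H^*(\cT_n,\bbQ) \ar[r,"i^*_\bbQ",hook] & \Sym^n(H^*(\bbP^1,\bbQ)[x]).
\end{tikzcd}
\]
The rational part is the classical localization theorem for the $T_n\subset GL_n$-action on the smooth variety $X_n$ with $X_n^{T_n}=(\bbP^1)^n$; after taking $\fkS_n$-invariants this yields the injectivity of the bottom arrow.

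For the torsion-freeness statement, I would use the $\bbG_m$-action on $\bbP^1$ fixing $\{0,\infty\}$, scaling the remaining coordinate. This commutes with the $GL_n$-action on $X_n$, and the induced fixed stack on $\cT_n$ is
\[
    \cT_n^{\bbG_m} = \bigsqcup_{a+b=n}[\cN_{\fkgl_a}/GL_a]\times[\cN_{\fkgl_b}/GL_b],
\]
parametrizing torsion sheaves supported on $\{0,\infty\}$ of bidegree $(a,b)$. Each factor $[\cN_{\fkgl_m}/GL_m]$ deformation retracts onto $BGL_m$ via the scaling action of $\bbG_m$ on $\cN_{\fkgl_m}$, so its cohomology is $H^*_{GL_m}(\pt,\bbZ)$, which is free and concentrated in even degrees. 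Künneth then provides the same property for each fixed component.

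Next I would apply the Bialynicki-Birula stratification to the smooth $GL_n\times\bbG_m$-variety $X_n$: it decomposes $X_n$ into locally closed, $GL_n\times\bbG_m$-invariant strata, each a Zariski-locally trivial affine bundle over a connected component of $X_n^{\bbG_m}$. Homotopy invariance of equivariant cohomology under affine bundles gives that every stratum has free, even $GL_n$-equivariant $\bbZ$-cohomology. Ordering strata so that partial unions are closed and using Thom isomorphism for the smooth closed inclusions, the associated long exact sequences in equivariant cohomology split degree-wise by the parity argument, yielding that $H^*(\cT_n,\bbZ)$ is itself free (and even) over $\bbZ$.

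The main obstacle is making the BB decomposition of $\cT_n$ sufficiently explicit: one must identify the attracting cells with affine bundles over $[\cN_{\fkgl_a}/GL_a]\times[\cN_{\fkgl_b}/GL_b]$ and verify that their closures admit a filtration by smooth closed substacks refined by the closure order on pairs of Jordan types. This is standard for BB on smooth $\bbG_m$-varieties, but the stacky incarnation requires translating each step through the presentation $\cT_n=[X_n/GL_n]$. Alternatively, one can bypass BB entirely by filtering $X_n$ using the stratification~\eqref{eq:repreg-str} iterated with the paving of $\cN_{\fkgl_m}$ by Springer-type affine cells, which produces a $GL_n$-equivariant affine paving of $X_n$ and gives the required torsion-freeness directly.
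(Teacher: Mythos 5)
Your formal reduction (integral injectivity follows from injectivity of $i^*_\bbQ$ plus $\bbZ$-torsion-freeness of $H^*(\cT_n,\bbZ)$) is sound, but both inputs are left unsupported at exactly the points where the actual content lies. First, the "classical localization theorem" does not give injectivity of $i^*_\bbQ$ here: $X_n$ is not proper, and for a non-proper smooth $T$-variety the theorem only says that the kernel of restriction to the fixed locus is the $H^*_{T}(\pt,\bbQ)$-torsion submodule (consider $\bbG_m$ scaling $\bbC^2\setminus\{0\}$, where the fixed locus is empty but $H^*_{\bbG_m}$ is nonzero). So you still owe a proof that $H^*(\cT_n,\bbQ)$ has no torsion over $\Lambda_\bbQ = H^*_{GL_n}(\pt,\bbQ)$ --- which is precisely the kind of statement the paper's proof supplies (integrally), via the stratification~\eqref{eq:repreg-str} and rational smoothness of the nilpotent cone in \cref{lem:nilcone-no-torsion}. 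Second, your route to $\bbZ$-torsion-freeness is shaky: Bialynicki-Birula requires the $\bbG_m$-limits to exist, and on the variety $X_n$ (equivalently on the linear model $E^{\reg}_{n\delta}$, where the flow is $(h_\s,h_\n)\mapsto(h_\s,\lambda h_\n)$) the naive limits leave the regular locus whenever the support meets the non-fixed chart, so one would need stacky BB/$\Theta$-stratification technology rather than the presentation $[X_n/GL_n]$ alone. Worse, the fallback claim that $\cN_{\fkgl_m}$ has a "paving by Springer-type affine cells" producing a $GL_n$-equivariant affine paving of $X_n$ is false: the strata of $\cN_{\fkgl_m}$ are adjoint orbits $GL_m/Z(x)$, not affine spaces (already for $\fkgl_2$ the complement of the vertex is homotopy equivalent to $\bbR\mathrm{P}^3$, which has $2$-torsion), and no such equivariant paving is available.

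What does work --- and is in fact the paper's argument --- is to stratify $\cT_n$ by $\cT_n^{k\infty}=[\cN_{\fkgl_k}/GL_k]\times[\fkgl_{n-k}/GL_{n-k}]$ and each nilpotent-cone factor by adjoint orbits: every stratum has even, $\bbZ$-free equivariant Borel-Moore homology (each orbit contributes $H^*_{G_\lambda}(\pt,\bbZ)$ for $G_\lambda$ a product of general linear groups), so the long exact sequences split and give evenness and $\bbZ$-freeness of $H^*(\cT_n,\bbZ)$; combining this with the rational computation (again via rational smoothness of $\cN_{\fkgl_k}$) gives the absence of $\Lambda$-torsion, which is what makes the localization argument run. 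Once you fill your two gaps this way, your proof collapses to the paper's: the paper reduces injectivity of $i^*$ to the non-existence of $\Lambda$-torsion in $H^*(\cT_n,\bbZ)$ (elements of the kernel are killed by the Euler class of $C^n\subset X_n$, whose leading term is the symmetric polynomial $\prod_{i<j}(x_i-x_j)^2$) and proves it by exactly this stratification plus \cref{lem:nilcone-no-torsion}, whose mechanism ($\bbZ$-freeness plus rational freeness implies no integral torsion) is the same one your commutative square is trying to exploit globally.
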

\begin{proof}
	Let $T_n\subset GL_n$ be a maximal torus.
	The map $i^*$ is induced by restriction to $T_n$-fixed points. 
	In order to apply equivariant localization theorem, we need to show that no element of $H^*(\cT_n,\bbZ)\subset H^*_{T_n}(X_n,\bbZ)$ is annihilated by the Euler class $e = c_{\mathrm{top}}(N_{C^n}(X_n))$.
	We know by~\cite[Eq.~(8)]{MakMin_KLR2023} that the leading term of $e$ is the symmetric polynomial $\prod_{i<j}(x_i-x_j)^2$; therefore it suffices to check that $H^*(\cT_n,\bbZ)$ has no $\Lambda = H^*_{GL_n}(\pt)$-torsion.
 
	Write $\cT_n^{k\infty} = \Rep^\reg_{k\delta^\bullet}\times \Rep^\reg_{(n-k)\delta^\circ}$; we have $\cT_n = \bigsqcup_{k=0}^n \cT_n^{k\infty}$ as in \cref{subs:reps-Kronecker-quiver}.
	As $\cT_n$ is smooth and both $\Rep^\reg_{k\delta^\bullet}$ and $\Rep^\reg_{k\delta^\circ}$ have even Borel-Moore homology, by long exact sequence it is enough to show that both $H_*^{\BM}([\fkgl_k/GL_k])$ and $H_*^{\BM}([\cN_{\fkgl_k}/GL_k])$ have no $H^*_{GL_k}(\pt)$-torsion.
	This is obvious for the former, and follows from \cref{lem:nilcone-no-torsion} for the latter.
\end{proof}

\begin{expl}
	Let $n=2$, and consider the class of $\cT_2^{2\infty}$ in $H^*(\cT_2,\bbZ)$.
	We can deform $\cT_2^{2\infty}$ to a smooth substack $[N/GL_2]\subset [\fkgl_2/GL_2] = \cT_2^{0\infty}$, where
	\[
		N = \{ \begin{psmallmatrix}a & b\\ c & -a\end{psmallmatrix} : a^2+bc = 1 \}.
	\]
	$N$ intersects $(\bbP^1)^2$ inside $X_2$ transversally in two points.
	Therefore $i^*[\cT_2^{2\infty}] = 2c_1c_2$, and by the proof of \cref{prop:loc-is-inj} $c_1c_2$ cannot lie in the image of $i^*$.
\end{expl}

Recall the merge operator $M_{1^n}^n$ from~\cite[Thm.~4.15]{MakMin_KLR2023} and $\cF_{1^n}$ from \cref{subs:curve-Schur}.
We write $\underline{u}$ instead of $u_1,\ldots,u_n$ for brevity.
\begin{cor}\label{cor:image-phi}
	We have $H^*(\cT_n,\bbZ) = \Lambda\cdot TH^*(\cT_n,\bbZ)$, that is the integral cohomology of $\cT_n$ is generated by tautological classes.
	The following diagram commutes:
	\[
	\begin{tikzcd}
		\bbZ[\underline{u},\underline{v}]^{\fkS_n\times \fkS_n}\ar[d,hook]\ar[r,two heads,"\phi"] & H^*(\cT_n,\bbZ)\ar[d,hook,"i^*"] & H^*(\cF_{1^n},\bbZ)\ar[d,equal]\ar[l] \\
		\bbZ[\underline{u},\underline{v}]\ar[r,"\widetilde\phi"] & \bbZ[\underline{x},\underline{c}]/(c_i^2) & \bbZ[\underline{x},\underline{c}]/(c_i^2)\ar[l,"M_{1^n}^n"']
	\end{tikzcd}
	\]
	where $\widetilde\phi$ is given by $u_i\mapsto x_i$, $v_i\mapsto x_i+c_i$.
	Moreover, $\Lambda\cdot\Im(M_{1^n}^n) = \Im(i^*)$.
\end{cor}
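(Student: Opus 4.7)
My plan splits naturally into three stages, matching the three claims of the corollary.

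First, the commutativity of the diagram. The right square is essentially tautological: the (unlabeled) top arrow $H^*(\cF_{1^n})\to H^*(\cT_n)$ is precisely $M_{1^n}^n$ regarded via convolution as a map into $H^*(\cT_n)$, and composing with $i^*$ produces the bottom map $M_{1^n}^n:H^*(\cF_{1^n})\to H^*(\cF_{1^n})$. For the left square, I would verify $i^*\circ\phi=\widetilde\phi$ on generators by pulling back $V_0$ and $V_1$ along $p:\cF_{1^n}\to \cT_n$. The bundle $V_0=\pi_*\cE$ acquires a tautological filtration whose line-bundle subquotients are pulled back from the $\cT_1$-factors of $\cF_{1^n}\simeq \cT_1^n$, so their first Chern classes are exactly the $x_j$. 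Similarly $V_1=\pi_*(\cE\otimes\cO(1))$ acquires a filtration whose $j$-th quotient is the $j$-th quotient of $V_0$ twisted by the restriction of $\cO(1)$ to its support, contributing $c_j$; hence its first Chern class is $x_j+c_j$. This gives $i^*(c_k(V_0))=e_k(\underline x)$ and $i^*(c_k(V_1))=e_k(\underline{x+c})$, matching $\widetilde\phi$.

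Second, for $H^*(\cT_n,\bbZ)=\Lambda\cdot TH^*(\cT_n,\bbZ)$: \cref{cor:surj-taut-classes} tells us $\phi$ is surjective, so $H^*(\cT_n)$ is generated as a ring by $c_k(V_0)$ and $c_k(V_1)$. The former lie in $\Lambda$ by definition. For the latter, use the short exact sequence $0\to \cE\otimes\cO(-\infty)\to \cE\to i_\infty^*\cE\to 0$ on $\cT_n\times\bbP^1$ to derive the K-theoretic identity $[V_1]-[V_0]=[i_\infty^*\cE]$, which pushes forward to $c(V_1)=c(V_0)\cdot c(i_\infty^*\cE)$. Since $c_j(i_\infty^*\cE)=c_{j,0}\in TH^*$ and $c_j(V_0)\in\Lambda$, each $c_k(V_1)$ is an integer polynomial in elements of $\Lambda$ and $TH^*$, hence lies in $\Lambda\cdot TH^*$.

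Finally, for $\Lambda\cdot\Im(M_{1^n}^n)=\Im(i^*)$: injectivity from \cref{prop:loc-is-inj} lets us view everything inside $H^*(\cF_{1^n})=\bbZ[\underline x,\underline c]/(c_i^2)$, and by commutativity together with surjectivity of $\phi$ we get $\Im(i^*)=\widetilde\phi\bigl(\bbZ[\underline u,\underline v]^{\fkS_n\times\fkS_n}\bigr)=\bbZ[e_k(\underline x),\,e_k(\underline{x+c})]$. The inclusion $\Lambda\cdot\Im(M_{1^n}^n)\subseteq\Im(i^*)$ is immediate. For the reverse direction, by the second part it suffices to exhibit each tautological class $c_{i,0},c_{i,1}$ as an element of $\Lambda\cdot\Im(M_{1^n}^n)$. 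Using the explicit merge formula $M_{1^n}^n(P)=\partial_{w_0}\bigl(P\prod_{i<j}(x_i-x_j-\Delta_{ij})\bigr)$ from \cref{prop:new-poly-rep}, with $\Delta_{ij}=c_i+c_j$, one computes that $M_{1^n}^n$ applied to monomials in the $c_j$'s and $x_j$'s produces, modulo elements of $\Lambda$, combinations of the tautological classes $c_j(i_\infty^*\cE)$ that appeared in step two. The main obstacle is precisely this combinatorial verification: checking that the iterated Demazure operators applied to mixed expressions in $x$ and $c$ variables (where the relation $c_j^2=0$ plays a crucial role in getting clean outputs) generate, together with $\Lambda$, the full sub-ring $\bbZ[e_k(\underline{x+c})]$. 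A good warm-up case is $n=2$, where the identities $M(c_1)=c_1+c_2$, $M(x_1c_1)=x_1c_1+x_2c_2$ and $M(x_2c_1)=x_1c_2+x_2c_1+c_1c_2$ combine with multiplication by $e_1(\underline x)\in\Lambda$ to recover $c_1c_2$ (hence $c_{2,0}$), from which every element of $\Im(i^*)$ follows.
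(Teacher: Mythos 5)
Your handling of the first claim and of the diagram is fine, and in fact more self-contained than the paper's own proof: the paper simply combines \cref{cor:surj-taut-classes} with~\cite[Prop.~8.37]{MakMin_KLR2023} (whose proof is algebraic and only needs injectivity of $i^*$, i.e.\ \cref{prop:loc-is-inj}), whereas you re-derive $H^*(\cT_n,\bbZ)=\Lambda\cdot TH^*(\cT_n,\bbZ)$ geometrically from the sequence relating $\cE$, $\cE\otimes\cO(1)$ and $i_\infty^*\cE$. Two small points there deserve a word: left-exactness of that sequence is not automatic for torsion sheaves (the kernel of multiplication by the section of $\cO(\infty)$ is finite over $\cT_n$ and pushes forward to a subsheaf of the vector bundle $\pi_*\cE$ supported in positive codimension, hence vanishes), and the same vanishing is what lets you identify $c_j(\pi_*i_\infty^*\cE)$ with the K\"unneth--Chern class $c_{j,0}$ (a priori $c_{j,0}$ is a Chern class of the \emph{derived} restriction).

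The genuine gap is in the last claim, $\Lambda\cdot\Im(M_{1^n}^n)=\Im(i^*)$. The easy inclusion and the reduction to showing that $i^*(c_{j,0})$, equivalently the classes $e_k(\underline{x}+\underline{c})$, lie in $\Lambda\cdot\Im(M_{1^n}^n)$ are correct, but at that point you stop: you yourself label the required verification ``the main obstacle'' and only check $n=2$. This is not a routine computation that can be deferred -- it is exactly the integral statement that makes the corollary nontrivial: $TH^*$ alone is \emph{not} hit by $\jmath$ (\cite[Ex.~6.7]{MakMin_KLR2023}), the whole erratum is about $\bbZ$-lattice mismatches of precisely this kind, and for a general curve the analogous statement about the image of the pushforward from full flags is only stated as a conjecture at the end of the appendix. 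The paper imports this content wholesale from \cite[Thm.~4.15, Lm.~8.31]{MakMin_KLR2023}, where the image of $M_{1^n}^n$ is pinned down over $\bbZ$. Without either citing those results or actually carrying out the general-$n$ argument (e.g.\ producing for every $j$ and $n$ explicit elements of $H^*(\cF_{1^n},\bbZ)$ whose merges recover $e_j(\underline{x}+\underline{c})$ modulo $\Lambda\cdot\Im(M_{1^n}^n)$, or an induction identifying $\Lambda\cdot\Im(M_{1^n}^n)$ with $\widetilde\phi\bigl(\bbZ[\underline{u},\underline{v}]^{\fkS_n\times\fkS_n}\bigr)$), the proof of the final equality -- and hence of the hard direction $\Im(i^*)\subseteq\Lambda\cdot\Im(M_{1^n}^n)$ -- is incomplete.
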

\begin{proof}
	The left square commutes by \cref{cor:surj-taut-classes} and~\cite[Prop.~8.37]{MakMin_KLR2023}.
	The rest follows from \cite[Thm.~4.15, Lm.~8.31]{MakMin_KLR2023}.
\end{proof}
Since $H^*(\cT_n,\bbZ)$ is manifestly free over $\bbZ$, we deduce a description of $H^*(\cT_n,R)$ for any ring $R$ by universal coefficients.
For a general smooth curve $C$, we do not have an analogue of the map $\phi$, so our argument does not apply.
\begin{conj}
	Let $C$ be a smooth complex curve, and consider the subring $TH^*(\cT_n(C),\bbZ)\subset H^*(\cT_n(C),\bbZ)$ generated by classes $\pi_*(c_i(\cE)\cup \gamma)$, $i\geq 0$, $\gamma\in H^*(C,\bbZ)$.
	Then 
	\begin{enumerate}
		\item $H^*(\cT_n(C),\bbZ)$ is generated by $TH^*(\cT_n(C),\bbZ)$ over $\Lambda$;
		\item The image of the pushforward map 
		\[
			H^*(\cF_n(C),\bbZ)\to H^*(\cT_n(C),\bbZ)
		\]
		from full flags is precisely $TH^*(\cT_n(C),\bbZ)$.
	\end{enumerate}
\end{conj}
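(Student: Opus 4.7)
The plan is to handle the two parts separately: part (2) reduces to a splitting-principle computation on $\cF_n(C)$, while part (1) requires a global induction via a support stratification in $C$.

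For part (2), pull back the universal sheaf $\cE$ along $p\times\id_C\colon \cF_n(C)\times C\to \cT_n(C)\times C$, where $p$ is the projection from the full-flag stack. The pullback acquires a complete filtration whose graded pieces $\cL_i$ are line bundles supported on the graphs $\Gamma_i\subset \cF_n(C)\times C$ of the ``support of $i$-th subquotient'' maps $\cF_n(C)\to C$. Grothendieck--Riemann--Roch applied to the codimension-one embeddings $\Gamma_i\hookrightarrow\cF_n(C)\times C$, combined with the Künneth decomposition of $[\Gamma_i]$ and the projection formula for $\pi\colon\cT_n(C)\times C\to\cT_n(C)$, expresses every generator $\pi_*(c_k(\cE)\cup\gamma)$ of $TH^*$ as a $p$-pushforward from $\cF_n(C)$. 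For the reverse inclusion, the Whitney sum formula applied to the same filtration rewrites every element of the form $p_*(P(x_1,\ldots,x_n)\cdot\gamma)$ with $\gamma\in H^*(C^n,\bbZ)$ as a polynomial in the tautological classes.

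For part (1), fix a closed point $p\in C$, write $U\coloneqq C\setminus\{p\}$, and use the locally closed decomposition
\[
	\cT_n(C)=\bigsqcup_{k=0}^n\cT_k^p\times\cT_{n-k}^\circ(U),
\]
where $\cT_k^p\simeq[\cN_{\fkgl_k}/GL_k]$ is the closed substack of sheaves supported at $p$, and $\cT_{n-k}^\circ(U)$ is the open substack of sheaves with support disjoint from $p$. The long exact sequence in Borel--Moore homology for this stratification, together with torsion-freeness of $H^*(\cT_k^p,\bbZ)$ from \cref{lem:nilcone-no-torsion} and induction on $n$, reduces the claim to (i) torsion-freeness of $H^*(\cT_n(C),\bbZ)$ (a generalization of \cref{prop:loc-is-inj}) and (ii) a tautological expression for the Thom class of each stratum. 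Iterating by successively removing points reduces the base case to $\cT_n(\bbA^1)\simeq[\fkgl_n/GL_n]$, whose cohomology is $\Lambda$ and manifestly tautologically generated. Compatibility of tautological classes under restriction to open substacks of $C$ follows from part (2) and functoriality of the universal sheaf.

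The main obstacle is step (ii): giving an integral expression of the Thom class $[\cT_k^p\times\cT_{n-k}^\circ(U)]\in H^{2k}(\cT_n(C),\bbZ)$ in terms of tautological and $\Lambda$-classes. Rationally this is automatic by smoothness of $\cT_n(C)$ and a weight/Hodge-theoretic reduction to the rational cohomology of the nilpotent cone; integrally, one needs an explicit Segre-class computation for the normal cone to each stratum. The expected identification is that this normal cone is, up to a twist by tautological bundles pulled back from $\cT_n$, isomorphic to $\cN_{\fkgl_k}\times\cT_{n-k}(U)$, so that its equivariant Euler class is polynomial in tautological and $\Lambda$-classes; carrying this through with integral coefficients, and verifying compatibility across the inductive steps without introducing non-tautological torsion, is the core technical difficulty we expect to face.
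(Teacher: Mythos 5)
This statement is a \emph{conjecture} in the paper: the authors give no proof, and immediately before it they explain why their $\bbP^1$ argument (which relies on the comparison map $\phi$ from the quiver side, \cref{cor:surj-taut-classes} and \cref{cor:image-phi}) has no analogue for a general curve. So there is no proof of record to compare with; the only question is whether your sketch closes the gap, and it does not. For part (2), your GRR/filtration computation on $\cF_n(C)\times C$ computes $p^*$ of the tautological classes, but the inclusion you need is $TH^*\subseteq\Im(p_*)$, and the projection formula only yields $p_*p^*(a)=p_*(1)\cdot a=n!\,a$, i.e.\ $n!\cdot TH^*\subseteq\Im(p_*)$ integrally. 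Producing actual preimages under $p_*$ is exactly the delicate point; in the $\bbP^1$ case this is done by hand via the explicit merge formula $M^n_{1^n}$ and Demazure-operator identities such as $\partial_{w_0}(D_n)=1$, and even there the paper only establishes $\Lambda\cdot\Im(M^n_{1^n})=\Im(i^*)$, not $\Im(M^n_{1^n})=TH^*$ on the nose. The reverse inclusion also needs more than the Whitney formula: integrally, pushforwards of monomials in the $x_i$ paired with classes from $H^*(C^n)$ must be shown to be \emph{polynomials} in the $\pi_*(c_i(\cE)\cup\gamma)$, with no divisibility loss.

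For part (1), the induction as stated fails for $g(C)\geq 1$. Removing points from $C$ never produces $\bbA^1$, and $\cT_n(U)$ for an affine curve $U$ of positive genus is the stack of $n$-dimensional modules over $\cO(U)$, whose cohomology is not $\Lambda$ and whose tautological generation is essentially the original problem again; so there is no base case. Moreover, for $g\geq 1$ the strata $\cT^p_k\times\cT^\circ_{n-k}(U)$ have odd cohomology coming from $H^1(C)$, so the long exact sequences need not split and the evenness/purity inputs behind \cref{lem:nilcone-no-torsion} and \cref{prop:loc-is-inj} are unavailable; your hypothesis (i), torsion-freeness of $H^*(\cT_n(C),\bbZ)$, is itself unproven in this generality and cannot simply be assumed. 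Finally, your acknowledged step (ii) is a genuine missing ingredient: rational smoothness of $\cN_{\fkgl_k}$ gives the Thom-class statement only with $\bbQ$-coefficients, and an integral tautological expression for the classes of the strata is not known (note that already for $\bbP^1$ and $n=2$ the class of the punctual stratum is $2c_1c_2$, detected only after the factor of $2$). In short, the proposal is a plausible program, not a proof: both the integral surjectivity in (2) and steps (i), (ii) and the base case in (1) remain open.
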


In particular, the conjecture would imply an explicit description of $H^*(\cT_n(C),\bbZ)$, as the $\Lambda$-submodule of $H^*(C^n,\bbZ)[x_1,\ldots,x_n]$ generated by the image of $M_{1^n}^n$.

\subsection{Erratum for~\cite{MakMin_KLR2023}}
Because of \cref{prop:loc-is-inj}, the mistake in~\cite[Prop.~6.2]{MakMin_KLR2023} does not propagate into the rest of our computations, which are formal manipulations with polynomials.
However, we need to slightly tweak our conclusions.
\begin{itemize}
	\item Every time we mentioned the polynomial representation of $\cS^{\bbP^1}_n$, we need to replace $\bbZ[\underline{x},\underline{c}]/(c_i^2)$ with $H^*(\cT_n,\bbZ)$;
	\item We distinguished between two diagrammatic algebras $\cS^{(1)}$, resp. $\cS^{(2)}$, where thick strands could support coupons valued in $H^*(\cT_n,\bbZ)$, resp. $\bbZ[\underline{x},\underline{c}]/(c_i^2)$. The algebra $\cS^{(2)}$ described the semicuspidal algebra, and $\cS^{(1)}$ was isomorphic to the curve Schur algebra.
	\cref{cor:image-phi} tells us that both roles are played by $\cS^{(2)}$, and $\cS^{(1)}$ is a formal algebraic gadget;
	\item As a consequence, the homomorphism $\Phi_n$ from \cite[Prop.~8.35]{MakMin_KLR2023} is actually bijective over any field or $\bbZ$;
	\item \cite[Conj.~8.40]{MakMin_KLR2023} becomes trivially true.
\end{itemize}

Finally, let us pay special attention to~\cite[Prop.~A.5]{MakMin_KLR2023}.
It is wrong as stated; indeed, the stratification~\eqref{eq:stratas}, refined by further stratifying nilpotent cones into orbits, satisfies the requirements of~\cite{JMW_PS2014}.
This means that one could in principle consider the usual evenness theory $\Ev^{\mathrm{pc}}$.
However, it does \textit{not} contain every flag sheaf $\cL_\beta$, because they are not constructible with respect to this stratification.
On the other hand, our ``ersatz'' theory $\Ev^\mkp$ does contain every $\cL_\beta$ as per \cref{prop:flag-is-even}.

\begingroup
\setstretch{0.9}
\bibliography{zot-bib}{}
\bibliographystyle{alphaabbr}
\endgroup

\end{document}